\definecolor{black}{rgb}{0.0, 0.0, 0.0}
\definecolor{red}{rgb}{1.0, 0.5, 0.5}
\title[Stability of generic composite waves for Navier-Stokes-Fourier equations]
{Time-asymptotic stability of generic Riemann solutions for compressible Navier-Stokes-Fourier equations}
\author[Kang]{Moon-Jin Kang}
\address[Moon-Jin Kang]{\newline Department of Mathematical Sciences,
\newline
Korea Advanced Institute of Science and Technology, Daejeon 34141, Korea}
\email{moonjinkang@kaist.ac.kr}
\author[Vasseur]{Alexis F. Vasseur}
\address[Alexis F. Vasseur]{\newline Department of Mathematics, \newline The University of Texas at Austin, Austin, TX 78712, USA}
\email{vasseur@math.utexas.edu}
\author[Wang]{Yi Wang}
\address[Yi Wang]{\newline Institute of Applied Mathematics, AMSS, CAS, Beijing 100190, P. R. China
\newline
and School of Mathematical Sciences, University of Chinese Academy of Sciences,
\newline Beijing 100049, P. R. China}
\email{wangyi@amss.ac.cn}
\newtheorem{theorem}{Theorem}[section]
\newtheorem{lemma}{Lemma}[section]
\newtheorem{proposition}{Proposition}[section]
\newtheorem{remark}{Remark}[section]
\newcommand{\bbr}{\mathbb {R}}
\newcommand{\R}{\mathbb {R}}
\newcommand{\mb}{\mathbf}
\newcommand{\deltas}{\delta_S}
\newcommand{\deltar}{\delta_R}
\newcommand{\dc}{\delta_C}
\numberwithin{figure}{section}
\newcommand{\beq}{\begin{equation}}
\newcommand{\eeq}{\end{equation}}
\newcommand{\bsp}{\begin{split}}
\newcommand{\esp}{\end{split}}
\newcommand{\di}{\displaystyle}
\newcommand{\s}{\sigma}
\newcommand{\x}{\xi}
\newcommand{\ds}{\delta_S}
\def\eps{\varepsilon }
\newcommand\adots{\mathinner{\mkern2mu\raise1pt\hbox{.}
\mkern3mu\raise4pt\hbox{.}\mkern1mu\raise7pt\hbox{.}}}
\def\charf {\mbox{{\text 1}\kern-.30em {\text l}}}
\def\lam{\lambda}  % the "Boltzmann" scaling
\newcommand \vc{v^{C}}
 \newcommand{\bmat}{\begin{pmatrix}} 
  \newcommand{\emat}{\end{pmatrix}}
\def\l{\lambda}
\begin{document}
%%%%%%%%%%%%%%%%
\bibliographystyle{plain}

\date{\today}

\subjclass[2010]{}
\keywords{compressible Navier-Stokes-Fourier equations, viscous shock wave, rarefaction wave, viscous contact discontinuity, $a$-contraction with shifts, stability}

\thanks{\textbf{Acknowledgment.}   M.-J. Kang was partially supported by the National Research Foundation of Korea  (NRF-2019R1C1C1009355  and NRF-2019R1A5A1028324),  and the POSCO Science Fellowship of POSCO TJ Park Foundation.
A. Vasseur was partially supported by the NSF grants DMS 2219434 and 2306852. Y. Wang is supported by NSFC grants No. 12090014 and 11688101.
}

\begin{abstract}
 We establish the time-asymptotic stability of solutions to the one-dimensional compressible Navier-Stokes-Fourier equations, with initial data perturbed from Riemann data that forms a generic Riemann solution. The Riemann solution under consideration is composed of a viscous shock, a viscous contact wave, and a rarefaction wave. We prove that the perturbed solution of Navier-Stokes-Fourier converges, uniformly in space as time goes to infinity, to a viscous ansatz composed of viscous shock with time-dependent shift, a viscous contact wave and an inviscid rarefaction wave.  
 This is a first resolution of the challenging open problem associated with the generic Riemann solution.   
Our approach relies on the method of a-contraction with shifts, specifically applied to both the shock wave and the contact discontinuity wave. It enables the application of a global energy method for the generic combination of three waves.
\end{abstract}

\maketitle \centerline{\date}

\tableofcontents

\section{Introduction}
\setcounter{equation}{0}

Consider the  one-dimensional full compressible Navier-Stokes-Fourier equations. 
In the Lagrangian  mass coordinates, the system is described as 
\begin{align}\label{NS}
\begin{aligned}
\left\{ \begin{array}{ll}
        v_t- u_x =0,\qquad \quad\quad x\in\bbr,\ t\geq 0,\\[1mm]
       u_t  +p(v,\theta)_x=  (\mu\frac{u_x}{v})_x, \\[1mm]
       E_t+(p(v,\theta) u)_x=(\kappa\frac{\theta_x}{v})_x+(\mu\frac{uu_x}{v})_x
        \end{array} \right.
\end{aligned}
\end{align}
where the unknown functions $v=v(t,x)>0$, $u(t,x)$, and $\theta(t,x)$  represent respectively the specific volume, the velocity and the temperature of the gas, $E=e+\frac{u^2}2$ is the total energy function. For the ideal polytropic gas, the pressure function $p$ and the internal energy function $e$ is  given by 
\beq\label{state}
p(v,\theta)=\frac{R\theta}v,\qquad e(v, \theta)=\frac{R}{\gamma-1}\theta+const.
\eeq
where $R>0, \gamma>1$ are both constants depending on the fluid, and  the constants  $\mu>0$ and $\kappa>0$ correspond to the viscosity coefficient and the heat-conductivity coefficient. \\
%By using the entropy
%\beq\label{entropy}
%s(v,\theta) := \frac{R}{\gamma-1} \log\left(\frac{R}{A} \theta v^{\gamma-1}\right) ,
%\eeq
%the pressure can also be written as
%\[p=Av^{-\gamma}e^{\frac{\gamma-1}{R}s}\]
%with the fluid constant $A>0$.
Consider initial data $(v_0(x), u_0(x),\theta_0(x))$ connecting the two different end states $(v_\pm, u_\pm, \theta_\pm)\in \R_+\times \R\times \R_+$ (with $\bbr_+:=(0,+\infty)$) at far fields:
\begin{equation}\label{in}
(v_0(x), u_0(x),\theta_0(x)) \rightarrow(v_\pm, u_\pm,\theta_\pm), \quad {\rm as}\quad x\rightarrow\pm\infty.
\end{equation}

\vskip0.3cm
We aims to prove that 
the large-time behavior of  solutions to \eqref{NS}, with initial values verifying \eqref{in}, is determined by the Riemann problem of the associated full Euler equations: 
\begin{align}
\begin{aligned}\label{E}
\left\{ \begin{array}{ll}
        v_t- u_x =0,\\[1mm]
       u_t  +p_x= 0, \\[1mm]
       (e+\frac{u^2}2)_t+(pu)_x=0,
        \end{array} \right.
\end{aligned}
\end{align}
with the Riemann initial data
\begin{align}
\begin{aligned}\label{Ei}
(v,u,\theta)(t=0,x)=\left\{ \begin{array}{ll}
        (v_-,u_-,\theta_-),\quad x<0,\\[1mm]
       (v_+,u_+,\theta_+),\quad x>0,
        \end{array} \right.
\end{aligned}
\end{align}
corresponding to the end states \eqref{in}.
To simplify the exposition, we focus on the most challenging situation, called generic, where the related Riemann problem involves a shock, a contact discontinuity, and a rarefaction. Notably, the treatment of combinations of different waves, including both a rarefaction and a shock, was, before this paper, an outstanding open problem (see, for instance, the review paper \cite{MatsumuraBook} and the very recent book \cite{LiuBook}). 

\vskip0.3cm
%The main motivation for
In 2005  \cite{BB}, Bianchini-Bressan showed, for small BV initial values, the convergence at the inviscid limit of solutions to  a parabolic system with ``artificial'' viscosity to the unique solution of the associated hyperbolic system.
However, to this day, the result in full generality is still unknown for the physical Navier-Stokes system \eqref{NS}, even in the barotropic case. The study of large-time behavior of solutions to compressible Navier-Stokes equations \eqref{NS} towards the Riemann solutions may shed some insights about the physical inviscid limit to the Euler equations.

\vskip0.3cm
In the case of the scalar equation (replacing the system \eqref{NS} with a single viscous equation), the time-asymptotic stability of viscous waves and their connection to the inviscid problem were initially proven by Ilin-Oleinik in 1960  \cite{O}, with further contributions from Sattinger \cite{S}.

However, when dealing with systems like \eqref{NS}, the analysis becomes significantly more complex. Matsumura-Nishihara  demonstrated in \cite{MN-S} the stability of viscous shock waves for the barotropic Navier-Stokes equations, and a similar result was independently obtained by Goodman for a general system with artificial diffusion in \cite{G}, where diffusion is added to all equations in the system. Both papers relied on the crucial assumption of zero mass, in order to use the anti-derivative method. 
Subsequently, Liu \cite{Liu} , Szepessy-Xin \cite{SX}, and Liu-Zeng \cite{LZ} eliminated the need for the zero mass condition by introducing a constant shift for the viscous shock and diffusion waves, as well as for the coupled diffusion waves in the transverse characteristic fields. Masica-Zumbrun \cite{MZ} established the spectral stability of viscous shocks for the 1D compressible Navier-Stokes system, relaxing the zero mass condition to a slightly weaker spectral condition. Huang-Matsumura addressed in \cite{HM} the case of superimposed shocks for the Navier-Stokes-Fourier system. 
Han-Kang-Kim \cite{HKK} improved the result \cite{HM} by handling two shocks with small amplitudes ``independently'' chosen, for the barotropic case.   
Matsumura-Wang investigated the asymptotic stability of viscous shocks for Navier-Stokes systems with degenerate viscosities in \cite{matwan}, which was further generalized in  \cite{VY} to a wider range of viscosities using the BD entropy introduced by Bresch-Desjardins in \cite{BD-06}.
\vskip0.3cm
Regarding the stability of rarefaction waves, a different set of techniques based on direct energy methods was employed. Matsumura-Nishihara \cite{MN86, MN} initially established the time-asymptotic stability of rarefaction waves for the compressible and isentropic Navier-Stokes equations. It was later  generalized to the Navier-Stokes-Fourier system \eqref{NS} by Liu-Xin \cite{LX} and Nishihara-Yang-Zhao \cite{NYZ}. The time-asymptotic stability and vanishing viscosity limit of isentropic Navier-Stokes equatios/Navier-Stokes-Fourier equations to the planar rarefaction wave of 2D/3D compressible Euler equations could be found in \cite{LW, LWW-1, LWW1, LWW2, BWX}.  For the linear degenerate wave in the second characteristic field, it is known that the inviscid contact discontinuity is unstable for the Euler equations time-asymptotically. While for the Euler equations with viscosities, a viscous contact wave, which is a viscous version of inviscid contact discontinuity, can be constructed and proved to be time-asymptotically stable to both 1D ``artificial'' viscosity system \cite{Xin-c, Liu-Xin97} and the 1D physical Navier-Stokes system \cite{Huang-Xin-Yang, HLM}. This viscous contact wave can converge to the inviscid contact discontinuity in $L^p-$norm $(\forall p\geq1)$ at any finite time interval, however, they could be far away at large time, which is called the meta-stability of the inviscid contact discontinuity to the viscous system. For the composite wave, Huang-Li-Matsumura \cite{HLM} proved the time-asymptotic stability of the combination of two rarefaction waves and a viscous contact wave to the full compressible Navier-Stokes-Fourier system \eqref{NS} by introducing a new heat-kernel type estimate.

\vskip0.3cm
\noindent
However, the time-asymptotic stability of the superposition of a viscous shock wave, a viscous contact wave, and a rarefaction wave has remained an open problem until now, \cite{MatsumuraBook, LiuBook}.
The main challenge in addressing the general case stems from the fact that the classical anti-derivative method, commonly employed for the stability analysis of shocks, does not align well with the energy method traditionally used for the stability analysis of rarefaction and viscous contact waves. Resolving this issue has been the primary focus of our research, and our main theorem successfully tackles this problem. The main ingredient is the theory of a-contraction with shifts first introduced in the viscous setting for the scalar case in \cite{Kang-V-1}, and for the barotropic Navier-Stokes equation in \cite{Kang-V-NS17}. The method was employed for the first time in \cite{KVW-2022} to tackle the time asymptotic stability of composite waves with a shock and a rarefaction in the context of the barotropic Navier-Stokes equations by using the Bresch-Desjardins entropy to transfer the viscosity from the momentum equation to the continuity equation. However, compared with the barotropic Navier-Stokes equations, there is no Bresch-Desjardins entropy for the full compressible Navier-Stokes-Fourier system \eqref{NS} due to the energy dissipation equation. Preferably, we carry out the stability analysis in the original Navier-Stokes-Fourier equations \eqref{NS} by fully using the $a-$weighted functions $a(\xi)$ in \eqref{weight}. On the other hand, there are two dissipation terms in the momentum and energy equations in  \eqref{NS} and the sharp weighted Poincare inequality will be used respectively in the momentum and energy equations and so we need more accurate information about the velocity and the temperature in the viscous ansatz, in particular, for the viscous shock profile.

\vskip0.3cm
\noindent
 {\bf Riemann problem for the inviscid model:} Let us first describe the well-known solution  of the Riemann problem for the inviscid model \eqref{E}-\eqref{Ei}, first proposed and solved by Riemann \cite{Riemann} in the 1860s, then generalized by Lax \cite{Lax}. 
This system of conservation laws is   strictly hyperbolic. This means that the derivative of the flux function $(-u,p, p u)$ with respect to the conserved variables, about a fixed  state $(v,u,E)\in \R_+\times \R\times \R_+$:
$$
A(U)=\left(\begin{array}{l}0\ \ \ \ \ \ -1\ \ \ \ \ \ \ \ \  \  0\\[1mm]
-\frac{p}{v}\ \ -\frac{(\gamma-1)u}{v}\ \ \ \
 \frac{\gamma-1}{v}\\[1mm]
-\frac{pu}{v}\ \ p-\frac{(\gamma-1)u}{v}\ \  \frac{(\gamma-1)u}{v}\end{array}\right),
$$
is diagonalizable with real distinct eigenvalues. This matrix defines the waves generated by the linearization of the system \eqref{E} about this fixed state $(v,u, \theta)\in \R_+\times \R\times\bbr_+$.
Its  eigenvalues $\lambda_1=\lambda_1(v,\theta)=-\sqrt{\frac{\gamma p}{v}}<0$, $\lambda_2=0$ and  $\lambda_3=\lambda_3(v,\theta)=\sqrt{\frac{\gamma p}{v}}>0$ generate two genuinely nonlinear characteristic fields, and one linearly degenerate  characteristic field. Therefore, the self-similar solution, called {\it Riemann solution} of the Riemann problem, is determined by a combination of at most three  elementary solutions from the following five families: 1-rarefaction; 3-rarefaction; 1-shock; 3-shock and 2-contact discontinuity (see for instance \cite{Dafermos1}). These families are completely defined through their associated curves in the states plane $\R_+\times \R \times \R_+$.
For any  $(v_R,u_R, \theta_R)\in \R_+\times \R\times \R_+$, the 1-rarefaction curve $R_1(v_R,u_R, \theta_R)$ corresponds to the integral curve of the  first eigenvalue $\lambda_1$, and  is defined by  
\begin{equation}\label{(1.9)}
 R_1 (v_R, u_R, \theta_R):=\Bigg{ \{} (v, u, \theta)\Bigg{ |}v<v_R ,~s(v,\theta)=s(v_R, \theta_R):=s_R,~u=u_R-\int^v_{v_R}
 \lambda_1(s_R,v^\prime) dv^\prime\Bigg{ \}},
\end{equation}
where $s(v,\theta)=\frac{R}{\gamma-1}\ln(\frac{R}{A}\theta v^{\gamma-1})$ and $\lambda_1(s,v)=-\sqrt{\frac{\gamma p(v,s)}{v}}$.
The 3-rarefaction curve $R_3$ can be defined in the same way from the second eigenvalue  $\lambda_3$.  
%Especially, if $(v_-,u_-)= R_1 (v_+, u_+)$, the solution of  \eqref{Ei} is given as by a rarefaction (see section \ref{sec-preli}). 
For any initial values of the Riemann problem \eqref{Ei} with $(v_-,u_-, \theta_-)=(v_L,u_L, \theta_L)$, $(v_+,u_+,\theta_+)=(v_R,u_R,\theta_R)$, such that $(v_L,u_L,\theta_L)\in R_1(v_R,u_R,\theta_R)$, the solution $(v^r,u^r,\theta^r)$ of \eqref{E} is the 1-rarefaction wave defined as 
%If $w_- < w_m$, then \eqref{BE} has the rarefaction wave fan $w^r(t, x) = w^r(x/t)$ given by
\begin{equation} \label{BESintro}
\lambda_1(v^r(t, x),\theta^r(t,x))  = \begin{cases}
\lambda_1(v_L,\theta_L) , \qquad x < \lambda_1(v_L,\theta_L)t, \\[2mm]
\di \frac{x}{t}, \qquad  \lambda_1(v_L,\theta_L)t \leq x \leq  \lambda_1(v_R,\theta_R)t, \\[2mm]
\lambda_1(v_R,\theta_R), \qquad x > \lambda_1(v_R,\theta_R)t,
\end{cases}
\end{equation}
together with 
%Set $w_- = \lambda_1(v_-), w_m = \lambda_1(v_m)$. It is easy to check that the 1-rarefaction wave $(v^r, u^r)(t, x) = (v^r, u^r)(x/t)$ to the Riemann problem \eqref{E} - \eqref{Ei} is given explicitly by
\begin{equation}\label{rareintro}
\begin{array}{ll}
 % &\lambda_1(v^r(x/t)) = w^r(x/t), \\[1mm]
  &z_1(v^r(t,x), u^r(t,x))=  z_1 (v_L, u_L)=z_1 (v_R,u_R),\\[3mm]
  &s(v^r(t,x), \theta^r(t,x))=s(v_L, \theta_L)=s(v_R,\theta_R),
\end{array}
\end{equation}
where $\di z_1(v,u)=u+\int^v \lambda_1(s,v^\prime)dv^\prime$ and $s(v,\theta)$  are the 1-Riemann invariants to the Euler equation \eqref{E}.  The case of 3-rarefaction wave is treated similarly from the third eigenvalue $\lambda_3$.
 \vskip0.3cm
 The second characteristic field corresponding to the eigenvalue $\l_2\equiv 0$ is linearly degenerate and corresponds to the 2-contact discontinuity. For any  $(v_R,u_R, \theta_R)\in \R_+\times \R \times \R_+$, the 2-contact discontinuity curve $CD_2(v_R,u_R, \theta_R)$ can be defined by  
\begin{equation}\label{cd-c}
 CD_2 (v_R, u_R, \theta_R):=\Big{ \{} (v, u, \theta)\Big{ |}u=u_R ,~p(v,\theta)=p(v_R, \theta_R)\Big{ \}}.
\end{equation}
Whenever $(v_L,u_L,\theta_L)\in CD_2(v_R,u_R,\theta_R)$, the contact discontinuity $(v^c,u^c,\theta^c)$ connecting $(v_-,u_-,\theta_-)=(v_L,u_L,\theta_L)$ with $(v_+,u_+,\theta_+)=(v_R,u_R,\theta_R)$, is uniquely determined by 
\begin{equation} \label{cd}
 (v^c,u^c,\theta^c)(t,x)= \begin{cases}
(v_L,u_L,\theta_L) , \qquad x < u_Rt=u_L t, \\
%\frac{x}{t}, \qquad  \lambda(v_-)t \leq x \leq  \lambda(v_+)t, \\
(v_R,u_R,\theta_R), \qquad x > u_Rt=u_L t,
\end{cases}
\end{equation}
as a solution of \eqref{E}-\eqref{Ei}, that is,
\begin{eqnarray}
\begin{cases}
  v_{t}-u _{x} = 0,     \cr
    u_{t} +  p_{x}
    =0,\qquad \qquad\qquad       t>0, x\in\mathbb{R}, \cr
       \left(\frac{R}{\gamma-1}  \theta+\frac{1}{2}u^2\right) _t+    (p u)
       _{x}
    =0,\cr (v,u, \theta)( 0, x)  =
 \begin{cases}
(v_L,u_L,\theta_L), \quad x<0,\cr(v_R,u_R,\theta_R),\quad x>0.
\end{cases}
\end{cases}
\end{eqnarray}

We can now define the shock  curves using the Rankine-Hugoniot condition, as the one-parameter family of all the $(v,u,\theta)$  such that there exists $\sigma$ with:
\begin{equation}\label{RH}
%S_2(v_+,u_+):=\bigg\{(v,u)\bigg{ |}
\begin{array}{l} -\sigma(v_R-v)-(u_R-u)=0,\\[2mm]
\di  -\sigma(u_R-u)+(p(v_R,\theta_R)-p(v,\theta))=0,\\[2mm]
\di -\sigma(E_R-E)+(p(v_R,\theta_R)u_R-p(v,\theta)u)=0.
\end{array}
%\bigg\}.\label{(1.8)}
\end{equation}

The general theory shows that this condition defines actually 2 curves that meet at the point $(v_R,u_R,\theta_R)$, one for the value $\sigma=-\sqrt{-\frac{p(v_R,\theta_R)-p(v,\theta)}{v_R -v}}$ (the 1-shock curve $S_1(v_R,u_R)$ which corresponds to admissible 1-shock for $v>v_R$), and one for the value $\sigma=\sqrt{-\frac{p(v_R,\theta_R)-p(v,\theta)}{v_R -v}}$ (the 3-shock curve $S_3(v_R,u_R,\theta_R)$ with admissible 3-shock for $v<v_R$).
\vskip0.1cm
Whenever $(v_L,u_L,\theta_L)\in S_1(v_R,u_R,\theta_R)\cup S_3(v_R,u_R,\theta_R)$, the shock solution $(v^s,u^s,\theta^s)$ to \eqref{E}-\eqref{Ei} with $(v_-,u_-,\theta_-)=(v_L,u_L,\theta_L)$, $(v_+,u_+,\theta_+)=(v_R,u_R,\theta_R)$, is given by the discontinuous traveling wave defined as
\begin{equation} \label{BERintro}
 (v^s,u^s,\theta^s)(t,x)= \begin{cases}
(v_L,u_L,\theta_L) , \qquad x < \sigma t, \\
%\frac{x}{t}, \qquad  \lambda(v_-)t \leq x \leq  \lambda(v_+)t, \\
(v_R,u_R,\theta_R), \qquad x > \sigma t.
\end{cases}
\end{equation}
\vskip0.3cm
For the general case of any  states $(v_-, u_-, \theta_-), (v_+, u_+, \theta_+)\in \R_+\times \R\times \R_+$, it can be shown that there exists two (unique) intermediate states $(v_*, u_*, \theta_*), (v^*,u^*,\theta^*)\in \R^+\times \R\times \R^+$ such that $(v^*,u^*,\theta^*)$ is on a curve of the third families from $(v_+, u_+, \theta_+)$ (either $R_3(v_+, u_+, \theta_+)$ or $S_3(v_+,u_+,\theta_+)$), $(v_*, u_*, \theta_*)\in CD_2 (v^*,u^*,\theta^*)$, that is, $(v_*, u_*, \theta_*)$ is on the second contact continuity curve from $(v^*,u^*,\theta^*)$, and $(v_-, u_-, \theta_-)$ is on a curve of the first families from $(v_*,u_*,\theta_*)$ (either $R_1(v_*,u_*,\theta_*)$ or $S_1(v_*,u_*,\theta_*)$). The solution $(v,u,\theta)$ of \eqref{E}-\eqref{Ei} is then obtained by the juxtaposition of the three associated waves
$$
(v,u, \theta)(t,x)=(v_1,u_1, \theta_1)(t,x)+(v_2,u_2, \theta_2)(t,x)+(v_3,u_3, \theta_3)(t,x)-(v_*, u_*, \theta_*)-(v^*,u^*,\theta^*).
$$
The wave  $(v_1,u_1,\theta_1)$ is 1-rarefaction fan solution to \eqref{BESintro}-\eqref{rareintro} if $(v_-,u_-,\theta_-)\in R_1(v_*, u_*, \theta_*)$, or 1-shock solution to \eqref{BERintro} if $(v_-,u_-,\theta_-)\in S_1(v_*, u_*, \theta_*)$, with $(v_L,u_L,\theta_L)=(v_-, u_-, \theta_-)$, $(v_R,u_R,\theta_R)=(v_*, u_*, \theta_*)$. The wave  $(v_2,u_2,\theta_2)$ is  2-contact discontinuity solution to \eqref{cd}  if 
$(v_*, u_*, \theta_*)\in CD_2(v^*,u^*,\theta^*)$ with $(v_L,u_L,\theta_L)=(v_*, u_*, \theta_*)$, $(v_R,u_R,\theta_R)=(v^*,u^*,\theta^*)$. And the shock solution to 
\eqref{BERintro} if $(v^*,u^*,\theta^*)\in S_3(v_+,u_+,\theta_+)$, or, 3-rarefaction fan solution if $(v^*,u^*,\theta^*)\in R_3(v_+,u_+,\theta_+)$, both with the end states   $(v_L,u_L,\theta_L)=(v^*,u^*,\theta^*)$, $(v_R,u_R,\theta_R)=(v_+,u_+,\theta_+)$. Note that the cases of three single waves, i.e., shock, rarefaction, and contact discontinuity, are included as degenerate cases when $(v_-,u_-, \theta_-)$ and $(v_+,u_+,\theta_+)$ are exactly on a single wave curve without intermediate states. 

 \vskip0.3cm
 \noindent
 {\bf Viscous ansatz of the Riemann solution:}
The time-asymptotic behavior of the viscous solution to \eqref{NS} depends on whether the associated Riemann solution to the associated inviscid model \eqref{E}-\eqref{Ei} involves shock waves, a contact discontinuity, or rarefaction waves. In the case where \eqref{Ei} is a shock, the viscous counterpart   for \eqref{NS}, called viscous shock, is the traveling wave $(v^S(x-\sigma t),u^S(x-\sigma t),\theta^S(x-\sigma t))$ to \eqref{NS}:
\begin{equation}\label{VS}
\left\{
\begin{array}{ll}
\di -\sigma (v^S)^\prime-( u^S)^\prime=0,\\[3mm]
\di -\sigma( u^S)^\prime+(p^S)^\prime=\Big(\mu\frac{( u^S)^\prime}{ v^S}\Big)^\prime,\\[3mm]
\di -\sigma( E^S)^\prime+(p^S u^S)^\prime=\Big(\kappa\frac{( \theta^S)^\prime}{ v^S}\Big)^\prime+\Big(\mu\frac{u^S( u^S)^\prime}{ v^S}\Big)^\prime ,\\[3mm]
\di ( v^S, u^S, \theta^S)(-\infty)=(v^*, u^*, \theta^*), \qquad ( v^S, u^S, \theta^S)(+\infty)=(v_+, u_+, \theta_+).
\end{array}
\right.
\end{equation}
where $E^S:=\frac{R}{\gamma-1}\theta^S+\frac{(u^S)^2}{2}$ and $p^S:=p(v^S, \theta^S).$

\vskip0.3cm
%{\bf The case of the generic Riemann solution:}
The viscous version of the inviscid contact
discontinuity  connecting $(v_*,u_*,\theta_*)$ with $(v^*,u^*,\theta^*)$, called viscous contact wave $\big(v^{C}, u^{C},\theta^{C}\big)(t,x)$ can be defined by  \cite{Huang-Xin-Yang}:
\begin{equation}\label{vc}
\begin{array}{ll}
  \di  v^{C}(\frac{x}{\sqrt{1+t}})
    =  \frac{R\Theta^{\rm sim}}{p_*},\\[3mm]
   \di u^{C}(t,\frac{x}{\sqrt{1+t}})
    = u_* +\frac{(\gamma-1)\kappa\Theta^{\rm sim}_x}{R\gamma\Theta^{\rm sim}},\\[4mm]
   \di \theta^{C}(\frac{x}{\sqrt{1+t}})
    = \Theta^{\rm sim},
\end{array}
\end{equation}
where $\Theta^{\rm sim}=\Theta^{\rm sim}\left(\frac{x}{\sqrt{1+t}}\right)$ is the unique
self-similar solution to the following nonlinear diffusion equation
\begin{eqnarray}
\begin{cases}
 \Theta_t= \frac{(\gamma-1)\kappa p_*}{R^2 \gamma}\left(\frac{\Theta_x}{\Theta}\right)_x, \cr
    \Theta(t,-\infty)=\theta_{*},\ \  \Theta(t,+\infty)=\theta^{*}.
\end{cases}
\end{eqnarray}

\vskip0.3cm
The definition of the aproximate ansatz 
associated with the self-similar rarefaction fan $(v^r,  u^r, \theta^r)(\frac xt)$ will be described in  the next section ( see Section 2.1).

\vskip0.3cm

Given the end states $(v_\pm, u_\pm, \theta_\pm)\in\bbr_+\times\bbr\times\bbr_+$ in \eqref{in}, we consider the general case that there exist two unique intermediate states $(v_*, u_*, \theta_*)$ and $(v^*, u^*, \theta^*)$ such that 
\begin{equation}
(v_-,u_-,\theta_-)\in R_1(v_*, u_*, \theta_*), \ \ (v_*, u_*, \theta_*)\in CD_2 (v^*, u^*, \theta^*),\ \  (v^*, u^*, \theta^*) \in S_3 (v_+, u_+, \theta_+).
\end{equation}
%that is, there exists a unique intermediate state $(v_m, u_m)\in \bbr^+\times\bbr$ such that $(v_-,u_-)\in R_1(v_m, u_m)$ and $(v_m,u_m)\in S_2 (v_+, u_+)$.
We consider, as viscous ansatz, the superposition wave:
\beq\label{SW}
\begin{array}{ll}
\di \Big( v^r(\frac xt)+ v^C(\frac{x}{\sqrt{1+t}})+ v^S(x-\s t)-v_*-v^*,
\\
\di \quad u^r(\frac xt)+ u^C(t,\frac{x}{\sqrt{1+t}})+ u^S(x-\s t)-u_*-u^*, \\
\di \quad  \theta^r(\frac xt)+ \theta^C(\frac{x}{\sqrt{1+t}})+ \theta^S(x-\s t)-\theta_*-\theta^*\Big)
\end{array}
\eeq
where $(v^r,  u^r, \theta^r)(\frac xt)$ is the 1-rarefaction wave defined in \eqref{rare} , $(v^C, \theta^C)(\frac x{\sqrt{1+t}}), u^C(t, \frac x{\sqrt{1+t}})$ is the 2-viscous contact wave defined in \eqref{vc} and $(v^S, u^S, \theta^S)(\xi)~(\xi=x-\s t)$ is the 3-viscous shock wave defined in Lemma \ref{lemma1.3}.

For simplicity, we consider the following system in non-divergence form which is equivalent to the original system \eqref{NS}:
 \begin{align}
\begin{aligned}\label{NS-0}
\left\{ \begin{array}{ll}
        v_t- u_x =0,\\
       u_t +p(v,\theta)_x=  (\mu\frac{u_x}{v})_x,\\
       \frac{R}{\gamma-1}\theta_t +p(v,\theta) u_x=(\kappa\frac{\theta_x}{v})_x+\mu\frac{u_x^2}{v}. \\
        \end{array} \right.
\end{aligned}
\end{align}

\begin{theorem}\label{thm:main}
For a given constant state $(v_+,u_+,\theta_+)\in\bbr_+\times\bbr\times\bbr_+$, there exist constants $\delta_0, \eps_0>0$ such that the following holds true.\\
For any $(v^*,u^*,\theta^*)\in S_3(v_+,u_+,\theta_+),$ $(v_*,u_*,\theta_*)\in CD_2(v^*,u^*,\theta^*)$ and $(v_-,u_-,\theta_-)\in R_1(v_*,u_*,\theta_*)$ such that 
$$|v_+-v^*|+|v^*-v_*|+|v_*-v_-|\leq \delta_0,$$ 
denote $(v^r,u^r,\theta^r)(\frac xt)$ the 1-rarefaction solution to \eqref{E} with end states $(v_-,u_-,\theta_-)$ and $(v_*,u_*,\theta_*)$, $(v^C,u^C,\theta^C)(\frac{x}{\sqrt{1+t}})$ the 2-viscous contact wave defined in \eqref{vc} and \eqref{vc-equ} with end states $(v_*,u_*,\theta_*)$ and $(v^*,u^*,\theta^*)$, and $(v^S,u^S,\theta^S)(x-\sigma t)$ the 3-viscous shock solution of  \eqref{NS-0} with end states $(v^*,u^*,\theta^*)$ and $(v_+,u_+,\theta_+)$.
%Consider the composite wave as in \eqref{SW}:
%\[
%\wt W(t,x) := \left(v^r(\frac xt)+\wt v^S(x-\s t)-v_m, u^r(\frac xt)+\wt u^S(x-\s t)-u_m %\right) .
%\]
Let $(v_0, u_0,\theta_0)$ be  any initial data such that 
\begin{equation}\label{i-p}
\sum_{\pm}\Big( \|(v_0-v_\pm, u_0-u_\pm,\theta_0-\theta_\pm)\|_{L^2(\bbr_\pm)} \Big) + \|(v_{0x},u_{0x},\theta_{0x})\|_{L^2(\bbr)} < \eps_0,
\end{equation}
where $\bbr_- :=-\bbr_+ = (-\infty,0)$.\\
 Then,  the compressible Navier-Stokes-Fourier system \eqref{NS-0} (and \eqref{NS})  admits a unique global-in-time solution $(v,u,\theta)(t,x)$ for all $t\in \bbr_+$. Moreover, there exists an absolutely continuous shift $\mb{X}(t)$ such that
 \begin{align}
\begin{aligned}\label{ext-main}
&v(t,x)- \Big( v^r(\frac xt)+v^C(\frac{x}{\sqrt{1+t}})+ v^S(x-\sigma t-\mb X(t))-v_*-v^* \Big) \in C(0,+\infty;H^1(\bbr)),\\
& u(t,x)- \Big( u^r(\frac xt)+u^C(t,\frac{x}{\sqrt{1+t}})+u^S(x-\sigma t-\mb X(t))-u_*-u^* \Big) \in C(0,+\infty;H^1(\bbr)),\\
& \theta(t,x)- \Big( \theta^r(\frac xt)+\theta^C(\frac{x}{\sqrt{1+t}})+\theta^S(x-\sigma t-\mb X(t))-\theta_*-\theta^* \Big) \in C(0,+\infty;H^1(\bbr)),\\
%& u_{xx}(t,x) \in L^2(0,T; L^2(\bbr)),\quad \mbox{for any } T>0.
& \theta_{xx}(t,x)-\theta^S_{xx}(x-\sigma t-\mb X(t))\in L^2(0,+\infty; L^2(\bbr)),\\
& u_{xx}(t,x)-u^S_{xx}(x-\sigma t-\mb X(t))\in L^2(0,+\infty; L^2(\bbr)).
\end{aligned}
\end{align}
In addition, as $t\to+\infty$,
\begin{equation}\label{con}
\begin{array}{l}
\di \sup_{x\in\bbr}\Big|(v,u,\theta)(t,x)-  \Big(v^r(\frac xt)+v^C(\frac{x}{\sqrt{1+t}}) +v^S(x-\sigma t-\mb X(t))-v_*-v^*,\\[3mm] \di \qquad\qquad \qquad\qquad\quad u^r(\frac xt)+u^C(t,\frac{x}{\sqrt{1+t}})+u^S(x-\sigma t-\mb X(t))-u_*-u^*,\\[3mm] \di \qquad\qquad \qquad\qquad \quad\theta^r(\frac xt)+\theta^C(\frac{x}{\sqrt{1+t}})+\theta^S(x-\sigma t-\mb X(t))-\theta_*-\theta^* \Big) \Big| \to 0,
\end{array}
\end{equation}
and
\begin{equation}\label{as}
\lim_{t\rightarrow+\infty} |\dot {\mb X}(t) |=0.
\end{equation}
\end{theorem}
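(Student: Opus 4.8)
The plan is to prove Theorem \ref{thm:main} via the \emph{method of $a$-contraction with shifts} applied simultaneously to the viscous shock and the viscous contact wave, combined with a global energy method for the full system \eqref{NS-0}. First I would construct the approximate rarefaction ansatz (Section 2.1), recall the precise decay and smallness estimates for the viscous shock profile $(v^S,u^S,\theta^S)$ from Lemma \ref{lemma1.3} and for the viscous contact wave $(v^C,u^C,\theta^C)$ from \eqref{vc}, and set up the two shift functions: a shift $\mb X(t)$ for the 3-shock and (internally in the analysis) a shift for the 2-contact wave. The shifts are defined by ODEs of the form $\dot{\mb X}(t) = -\frac{M}{\text{(weighted mass)}}\big(\text{cross terms}\big)$, chosen so that the bad first-order terms in the relative entropy are absorbed. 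Then I would define the perturbation $(\phi,\psi,\zeta) = (v-\tilde v, u-\tilde u, \theta-\tilde\theta)$ around the shifted superposition ansatz $(\tilde v,\tilde u,\tilde\theta)$, derive the perturbation system, and introduce the $a$-weighted relative entropy functional $\int a(\xi)\,\eta(U\,|\,\tilde U)\,dx$ where $\eta$ is the physical entropy of \eqref{NS} and $a(\xi)$ is the weight from \eqref{weight} (increasing across the shock, tuned to the shock amplitude).

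The core of the argument is a Grönwall-type a priori estimate. Assuming a smallness bootstrap assumption $N(T)\le \epso$ on the solution norm, I would compute $\frac{d}{dt}\int a\,\eta(U|\tilde U)\,dx$ and organize the resulting terms into: (i) good negative terms — the hyperbolic term from the shock providing $\sim|\sigma|\int a'\,|U-\tilde U|^2$, and the two parabolic dissipation terms $\int a\,\mu\frac{|\psi_x|^2}{v}$ and $\int a\,\kappa\frac{|\zeta_x|^2}{v}$ coming from the momentum and energy equations; (ii) shift-controlled terms, killed by the choice of $\dot{\mb X}$ and the contact-wave shift; (iii) interaction terms between the three waves, which decay in time because the waves are spatially separated (shock travels at speed $\sigma\ne 0$, contact at a different speed, rarefaction spreads), handled by the standard Huang–Li–Matsumura-type interaction estimates plus heat-kernel estimates for the contact wave; and (iv) genuinely higher-order error terms controlled by $N(T)$ times the dissipation. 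The delicate point is that the $a$-contraction only directly controls $L^2$ of the perturbation and its first derivative via the parabolic terms, so one must also close higher-order estimates: I would differentiate the perturbation equations and run weighted energy estimates for $(\phi_x,\psi_x,\zeta_x)$ and then for $(\psi_{xx},\zeta_{xx})$ (explaining the last two lines of \eqref{ext-main}), using the sharp weighted Poincaré inequality separately in the momentum and energy equations — this is exactly why, as remarked in the introduction, accurate pointwise information on $u^S$ and $\theta^S$ in the viscous shock profile is needed.

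After closing the uniform-in-time estimate $\sup_{t}\|(\phi,\psi,\zeta)(t)\|_{H^1}^2 + \int_0^\infty \big(\|(\phi_x,\sqrt{a'}\,(U-\tilde U))\|^2 + \|(\psi_x,\zeta_x)\|_{H^1}^2\big)\,dt \le C\epso^2$, the global existence follows by the standard local existence plus continuation argument, and the bootstrap constant $\epso$ is fixed. For the asymptotics \eqref{con}: from the uniform bound one gets that $\|(\phi_x,\psi_x,\zeta_x)\|_{L^2}^2$ is integrable in time and (by the equations) has bounded derivative, hence $\to 0$; combined with $\|(\phi,\psi,\zeta)\|_{L^2}$ bounded, interpolation gives $\|(\phi,\psi,\zeta)\|_{L^\infty}\to 0$, and since the rarefaction ansatz converges uniformly to the inviscid rarefaction fan, \eqref{con} follows. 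The decay \eqref{as} of $\dot{\mb X}(t)$ comes from its defining ODE: $\dot{\mb X}$ is bounded by the $L^2$ norm of the perturbation localized near the shock plus decaying interaction terms, all of which tend to zero. The main obstacle I expect is the simultaneous bookkeeping in step (i)–(iv): making the single $a$-weighted functional yield \emph{both} parabolic dissipations with the correct signs while the weight $a$ must be essentially constant on the supports of the contact and rarefaction waves yet strictly increasing across the shock — reconciling this with the contact-wave shift and the heat-kernel-type contact estimates, and ensuring that the higher-order (second-derivative) estimates for $u$ and $\theta$ feed back consistently into the relative-entropy estimate without circular loss of smallness, is where the real work lies.
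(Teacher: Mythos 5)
Your plan follows essentially the same route as the paper: a single $a$-weighted relative entropy with the shock shift $\mb X(t)$ defined by an ODE, good terms coming from $a'$ together with both parabolic dissipations combined via the weighted Poincar\'e inequality (which is exactly where the sharp profile relations for $u^S_\xi,\theta^S_\xi$ enter), wave-interaction estimates and Huang--Li--Matsumura-type heat-kernel estimates for the contact wave, separate higher-order energy estimates for $(\phi_x,\psi_x,\zeta_x)$ and $(\psi_{xx},\zeta_{xx})$, and the standard continuation-plus-integrability argument for \eqref{con} and \eqref{as}. The one deviation is your proposed internal shift for the viscous contact wave: the paper shifts only the shock (the contact wave is treated unshifted, as in the classical viscous contact-wave stability theory, and the theorem's ansatz contains no contact shift), so that extra shift is unnecessary and would only force you to show a posteriori that it can be removed without affecting the stated asymptotics.
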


\begin{remark}
Theorem \ref{thm:main} states that if the two far-field states $(v_\pm, u_\pm,\theta_\pm)$ in \eqref{in} are connected by the superposition of a shock, a contact discontinuity,  and a rarefaction wave, then the solution to the full compressible Navier-Stokes-Fourier equations \eqref{NS} or \eqref{NS-0} converges in long time to the superposition wave made from the inviscid self-similar rarefaction wave, the viscous contact wave, and viscous shock wave with the shift $\mb{X}(t)$.
%which solves the open problem proposed by Matsumura-Nishihara \cite{MN} since 1992.
\end{remark}

\begin{remark}
The shift function $\mb{X}(t)$ (defined in \eqref{X(t)}) is proved to satisfy the time-asymptotic behavior \eqref{as}, which implies that 
$$
\lim_{t\rightarrow+\infty}\frac{{\mb X}(t)}{t}=0,
$$
that is, the shift function ${\mb X}(t)$ grows at most sub-linearly w.r.t. the time $t$ and the shifted viscous shock wave still keeps the original traveling wave profile time-asymptotically.

\end{remark}

%\begin{remark}
%Note that our result in Theorem \ref{thm:main} also holds true in the case of a single viscous shock, that is, $\deltar\equiv0$. In this case, Theorem \ref{thm:main} provides an alternative proof for stability of a single viscous shock. Our proof is far simpler than the ones of Masica-Zumbrun \cite{MZ}, or Liu-Zeng \cite{LZ}. Moreover, our approach does not have the disadvantages of the anti-derivative method, as the necessity to consider zero mass initial perturbations for instance. This simplification is what allows us to consider the combination of waves of different kinds. Therefore, our approach follows exactly the suggestion of Matsumura in \cite[Section 4.2, page 2540]{MatsumuraBook} to find a simpler proof, for the stability of viscous shock, than the ones in \cite{MZ} or \cite{LZ}, in order to attack many other open problems.  Note however, that our simplification comes at the cost of  less precise information, especially on the shift ${\mb X}(t)$. 
%\end{remark}

%\begin{remark}
%The extension of Theorem \ref{thm:main}  to general smooth viscosity function $\mu=\mu(v)>0$ and general pressure function $p(v) > 0$ satisfying $p'(v) < 0, p''(v) > 0$ follows without meaningful added difficulties, since we consider small $H^1$-perturbations. For the sake of clarity, and to simplify slightly the arguments, we made the choice to write the paper in the physical relevant context of constant viscosities and power  pressure laws. 
%\end{remark}

The main new ingredient of our proof is the use of the method of $a$-contraction with shifts  \cite{Kang-V-NS17} to track the stability of the viscous shock. The method is based on the relative entropy introduced by Dafermos \cite{Dafermos4} and DiPerna \cite{DiPerna}. It is  energy based, and so meshes seamlessly with the treatments of the rarefaction and the viscous contact wave as in \cite{Huang-Xin-Yang, HLM}.  
\vskip0.3cm
\noindent
{\bf The method of $a$-contraction with shifts:}
The method of $a$-contraction with shifts was developed in \cite{KVARMA} (see also \cite{LV}) to study the stability of extremal shocks for inviscid system of conservation laws, as for example, the Euler system \eqref{E}.  
Consider the entropy of the system (which is actually the physical energy) defined for any state $U=(v,u,E)$ as $s(U).$
We then consider the relative entropy defined in \cite{Dafermos4} for any two states $U=(v,u, E)$, $\overline{U}=(\bar v,\bar u, \bar E)$:
$$
s(U|\bar U):=s(U)-s(\bar U)-ds(\bar U)\cdot (U-\bar U).
$$
Note that the physical entropy $$\eta(U|\overline U):=-\bar\theta \ s(U|\bar U)$$ is nonnegative and equal to zero if and only if $U=\overline U$. Therefore $\eta(U|\overline U)$ can be used as a pseudo-distance between $U$ and $\overline U$. 
It can be shown that rarefactions $\overline U$ (that is solutions to  \eqref{BESintro}-\eqref{rareintro}) have a contraction property for this pseudo-metric (see for instance \cite{Vasseur_Book}). Indeed, for any weak entropic solution $U$ to \eqref{E},  it can be shown that
$$
\frac{d}{dt} \int_\R \eta(U|\bar U)\,dx\leq 0.
$$
 The contraction property is not true  if $\overline U$ is a shock (that is traveling waves  \eqref{BERintro} verifying the Rankine-Hugoniot conditions \eqref{RH}). However, the contraction property can be recovered  up to a shift, after weighting the relative entropy (see  \cite{KVARMA}). Indeed, there exists weights $a_-,a_+>0$ (depending only on the shock $\overline U$) such that for any weak entropic solution $U$ of \eqref{E} (verifying a mild condition called strong trace property)  there exists a Lipschitz  shift function $t\to X(t)$ such that 
 $$
\frac{d}{dt}\left\{ a_-\int_{-\infty}^{X(t)}\eta(U|\bar{U})\,dx+a_+\int_{X(t)}^\infty\eta(U|\bar U)\,dx\right\}\leq 0.
 $$
This was first proved in the scalar case by Leger \cite{Leger} for $a_-=a_+$. It has been shown in \cite{Serre-Vasseur} that the contraction with $a_-=a_+$ is usually false for most systems. Therefore the weighting via the coefficients $a_-,a_+$ is essential. Note that in the case of the full Euler system, the a-contraction property up to shifts is  true for all the single wave patterns, including the 1-shocks, 3-shocks (see \cite{Vasseur-2013}), and  the 2-contact discontinuities (see \cite{SV-16}). Although the $a$-contraction property with shifts holds for general extremal shocks, it is not always true for intermediate shocks  (see \cite{Kang} for instance). 
\vskip0.3cm
The first extension of the method  to viscous models was done in the 1D scalar case \cite{Kang-V-1}  (see also \cite{Kang19}) and then in the multi-D case \cite{KVW}. The case of the barotropic Navier-Stokes equation \eqref{NS} was treated in \cite{Kang-V-NS17} (see also \cite{Kang-V-2shocks} for the case of 2 shocks). The $a$-contraction property takes place in variables associated with the BD entropy (see \cite{BD-06}): $U=(v,h)$, where $h$ is the effective velocity defined as $h=u-(\mathrm{ln} \ v)_x$. In these variables, system \eqref{NS} with $ \mu=1$ is transformed as 
\begin{align}
\begin{aligned}\label{hNS-1intro}
\left\{ \begin{array}{ll}
 v_t -h_x= (\ln v)_{xx}, \\ % -\sigma v_x
        h_t+p(v)_x=0. % -\sigma h_x
        \end{array} \right.
\end{aligned}
\end{align}
 The only nonlinear term of the hyperbolic system \eqref{E} is the pressure which is a function of $v$. The system \eqref{hNS-1intro} is then better than \eqref{NS} since the diffusion is in the variable $v$ corresponding to the nonlinear term $p(v)$. It was shown in \cite{Kang-V-NS17} that  there exists a monotonic function $x\to a(x)$ (with limits $a_\pm$ at $\pm\infty$), depending only on the viscous shock $\bar{U}=(\bar v,\bar h)$ solution to %\eqref{VSintro}
 (in the $(v,u)$ variables), such that for any solution $U$ to \eqref{hNS-1intro}, there exists a shift function $t\to \mb{X}(t)$ with
 $$
 \frac{d}{dt}\int_\R a(x-\mb{X}(t))\eta(U(t,x)|\bar U(x-\mb{X}(t)))\,dx \leq 0.
$$
Note that the $a$-contraction result of   \cite{Kang-V-NS17} provides uniform stability for viscous shocks with respect to the strength of the viscosity. This is used in  \cite{KV-Inven} to obtain the stability of inviscid shocks of the Isentropic Euler equation among any inviscid limits of the associated Navier-Stokes equation (see also \cite{VW, KVW2} for an example of similar stability for a single 1D contact discontinuity and 3D planar contact discontinuity).

\vskip0.3cm
In the simplified situation of the barotropic Navier-Sotkes equation, the $a$-contraction method has been extended to composite waves made of a shock and a rarefaction in \cite{KVW-2022} (see also \cite{WangTW} for a 3D extension). The aim of this paper is to extend the $a$-contraction theory to the more complex Navier-Stokes-Fourier system \eqref{NS} and to apply it to the study of the long-time stability  to the composite wave made of a viscous shock wave, a viscous contact wave,  and an inviscid rarefaction wave.

\vskip0.3cm
The rest of the paper is organized as follows. We begin with preliminaries in Section \ref{sec-preli}. It includes known properties on the rarefaction and on the viscous contact and shock waves. % We also include a simple Poincar\'e inequality used in the theory of $a$-contraction for viscous models.  
The general set up is laid out in Section \ref{sec-thm}. We introduce the local existence of solution, the construction of the shift function and an a priori estimates result  in Proposition \ref{prop2}, which together with a continuing argument implies Theorem \ref{thm:main}. The last two sections are dedicated to the proof of Proposition \ref{prop2} for the main a priori estimates . The $a$-contraction argument is set up in Section \ref{sec-acontraction} where  global a priori estimates are proved to conclude the proof of Proposition \ref{prop2}. Finally, three Appendixs are devoted to the derivation of relative entropy, the sharp estimate for the diffusion and the proof of Lemma \ref{cw-lemma}, respectively.
\section{Preliminaries} \label{sec-preli}
\setcounter{equation}{0}
We gather in this section some well-known results which will be useful in the rest of the paper.

%\subsection{Relative quantities}

\subsection{Approximate rarefaction wave}

We now recall important properties of the 1-rarefaction waves. Consider a $(v_*, u_*,\theta_*)$ in \eqref{in}, and $(v_-,u_-,\theta_-)\in R_1(v_*, u_*, \theta_*)$.
%Since we are concerned with the time-asymptotic stability of the superposition of the rarefaction wave and viscous shock wave, that is, $(v_\pm, u_\pm)$ in \eqref{in} satisfies that 
%\begin{equation}
%(v_-,u_-)\in R_1-S_2 (v_+, u_+),
%\end{equation}
%i.e,, there exists a unique intermediate state $(v_m, u_m)\in \bbr^+\times\bbr$ such that $(v_-,u_-)\in R_1(v_m, u_m)$ and $(v_m,u_m)\in S_2 (v_+, u_+)$ where the rarefaction wave curve $R_1(v_m, u_m)$ and the shock wave curve $S_2 (v_+, u_+)$ are defined in \eqref{(1.9)} and \eqref{(1.8)}, respectively.
Set $w_- = \lambda_1(v_-,\theta_-), w_* = \lambda_1(v_*,\theta_*)$, and
consider the Riemann problem for the inviscid Burgers equation:
\begin{equation} \label{BE}
\begin{cases}
\displaystyle w_t + ww_x = 0, \\
\displaystyle w(0, x) = w_0^r(x) = \begin{cases}
w_-,  \quad x < 0,\\
w_*,  \quad x > 0.
\end{cases}
\end{cases}
\end{equation}
If $w_- < w_*$, then \eqref{BE} has the rarefaction wave fan $w^r(t, x) = w^r(x/t)$ given by
\begin{equation} \label{BES}
w^r(t, x) = w^r(\frac{x}{t}) = \begin{cases}
w_- , \qquad x < w_-t, \\
\frac{x}{t}, \qquad w_-t \leq x \leq w_*t, \\
w_* , \qquad x > w_*t.
\end{cases}
\end{equation}
 It is easy to check that the 1-rarefaction wave $(v^r, u^r,\theta)(t, x) = (v^r, u^r,\theta^r)(x/t)$ to the Riemann problem \eqref{E}-\eqref{Ei}, defined in \eqref{BESintro}-\eqref{rareintro},  is given explicitly by
\begin{equation}\label{rare}
\begin{array}{ll}
  &\lambda_1(v^r(\frac xt),\theta^r(\frac xt)) = w^r(\frac xt), \\[1mm]
  &z_1(v^r(\frac xt), u^r(\frac xt))=  z_1 (v_-, u_-)=z_1 (v_*,u_*),\\[1mm]
  &s(v^r(\frac xt),\theta^r(\frac xt))=s(v_-,\theta_-)=s(v_*,\theta_*).
\end{array}
\end{equation}
%where $z_1(v,u)=u-\int^v \lambda_1(s)ds$ is 1-Riemann invariant to the Euler equations \eqref{E}.
 The self-similar 1-rarefaction wave $(v^r, u^r, \theta^r)(x/t)$ is Lipschitz continuous and satisfies the Euler system a.e. for $t>0$,
\begin{align}
\begin{aligned}\label{E-ra}
\left\{ \begin{array}{ll}
        v^r_t- u^r_x =0,\\[1mm]
       u^r_t  +p(v^r,\theta^r)_x= 0,\\[1mm]
       \frac{R}{\gamma-1}\theta^r_t+p(v^r,\theta^r) u^r_x=0. 
        \end{array} \right.
\end{aligned}
\end{align}
Let $\delta_R:=|v_*-v_-|$ denote the strength of the rarefaction wave. Notice that $\delta_R\sim |u_*-u_-|\sim |\theta_*-\theta_-|$ by $\eqref{rare}_2$.

As in \cite{MN86}, by using the smooth solution to the Burgers equation:
\begin{equation} \label{ABE}
\begin{cases}
\displaystyle w_t + ww_x = 0, \\
\displaystyle w(0, x) = w_0(x) = \frac{w_* + w_-}{2} + \frac{w_* - w_-}{2} \tanh x.
\end{cases}
\end{equation}
we will consider the smooth approximate 1-rarefaction wave $(v^R, u^R, \theta^R)(t, x)$ of the 1-rarefaction wave fan $(v^r, u^r, \theta^r)(\frac xt)$ by
\begin{align}
  \begin{aligned} \label{AR}
   &\lambda_{1-}:=\lambda_1(v_-,\theta_-) = w_-,\ \lambda_{1*}:=\lambda_1(v_*, \theta_*) = w_*,\\
    &\lambda_1(v^R,\theta^R)(t, x) = w(1+t, x), \\
    &z_1(  v^R,  u^R)(t, x) = z_1 (v_-, u_-)=z_1 (v_*,u_*),\\
    &s(v^R, \theta^R)(t, x)=s(v_-, \theta_-)=s(v_*,\theta_*),
  \end{aligned}
\end{align}
where $w(t,x)$ is the smooth solution to the Burgers equation in \eqref{ABE}.
One can easily check that the above approximate rarefaction wave $(v^R, u^R, \theta^R)$ satisfies the Euler system:
\begin{equation}  \label{ARW}
  \begin{cases}
    \displaystyle v^R_t -u^R_x = 0, \\
    \displaystyle u^R_t + p(v^R, \theta^R)_x = 0,\\
    \di \frac{R}{\gamma-1}\theta^R_t+p(v^R, \theta^R) u^R_x=0. \\
  \end{cases}
\end{equation}
The following lemma comes from \cite{MN86}.
\begin{lemma} \label{lemma1.2}
	The smooth approximate 1-rarefaction wave $(v^R, u^R, \theta^R)(t, x)$ defined in \eqref{AR} satisfies the following properties. Let $\delta_R$ denote the rarefaction wave strength as $\delta_R := |v_* - v_-|\sim |u_*-u_-|\sim |\theta_*-\theta_-|$.
	
	(1)~$ u^R_x = \frac{2 v^R}{(\gamma + 1) }w_x > 0,$ $ v^R_x = \frac{v^R}{\sqrt{R\gamma\theta^R}} u^R_x>0$ and $\theta^R_x=-\frac{(\gamma-1)\theta^R}{v^R}v^R_x<0,$  $\forall x \in \bbr,\ t \geq 0$.
	
	(2)~The following estimates hold for all $t \geq 0$ and $p \in [1, + \infty]$:
	\begin{align*}
	&\|(v^R_x, u^R_x, \theta^R_x)\|_{L^p} \leq C \min\{\delta_R, \delta_R^{1/p}(1+t)^{-1+1/p}\}, \\
	&\|(v^R_{xx}, u^R_{xx}, \theta^R_{xx})\|_{L^p} \leq C \min\{\delta_R, (1+t)^{-1}\}, \\
	& |u^R_{xx}| \leq C |u^R_{x}|,\quad |\theta^R_{xx}| \leq C |\theta^R_{x}| ,\quad \forall x\in\bbr.
	\end{align*}
	
	(3)~ For $x\geq \lambda_{1*}(1+t), t\geq 0,$ it holds that
	\begin{align*}
    &|(v^R, u^R, \theta^R)(t, x)-(v_*, u_*, \theta_*)| \leq C\delta_R \ e^{-2|x-\lambda_{1*}(1+t)|}, \\
    &|(v^R_x, u^R_x, \theta^R_x)(t, x)|\leq C\delta_R\ e^{-2|x-\lambda_{1*}(1+t)|}.
  \end{align*}
  
  (4)~ For $x\le\lambda_{1-}(1+t), t\geq 0,$ it holds that
	\begin{align*}
    &|(v^R, u^R, \theta^R)(t, x)-(v_-,u_-, \theta_-)| \leq C\delta_R \ e^{-2|x-\lambda_{1-}(1+t)|}, \\
    &|(v^R_x, u^R_x, \theta^R_x)(t, x)|\leq C\delta_R\ e^{-2|x-\lambda_{1-}(1+t)|}.
  \end{align*}
  
(5)~$\di \lim_{t \to +\infty} \sup_{x \in \bbr} |(v^R, u^R, \theta^R)(t, x) - (v^r, u^r, \theta^r)( \frac xt)| = 0$.
\end{lemma}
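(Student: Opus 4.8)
\medskip
\noindent\textbf{Proof strategy.}
The plan is to reduce every assertion to a property of the smooth Burgers profile $w(1+t,x)$ from \eqref{ABE} and then transfer it to $(v^R,u^R,\theta^R)$ through the algebraic identities \eqref{AR}. Since $\delta_0$, hence $\delta_R$, is small, the states $(v^R,u^R,\theta^R)(t,x)$ stay in a fixed compact subset of $\bbr_+\times\bbr\times\bbr_+$, and along the curve $\{\,s=s(v_-,\theta_-),\ z_1=z_1(v_-,u_-)\,\}$ the map $v\mapsto\lambda_1(v,\theta(v))$ is a smooth diffeomorphism with derivative bounded away from $0$; so $v^R,u^R,\theta^R$ are smooth functions of $w$ alone, with first derivatives bounded away from $0$ and second derivatives bounded. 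Differentiating the identities $\lambda_1(v^R,\theta^R)=w$, $z_1(v^R,u^R)=\mathrm{const}$, $s(v^R,\theta^R)=\mathrm{const}$ in $x$, and using $s=\frac{R}{\gamma-1}\ln(\frac{R}{A}\theta v^{\gamma-1})$, $\lambda_1=-\sqrt{\gamma R\theta}/v$, $z_1=u+\int^v\lambda_1\,dv'$, yields $u^R_x=-\lambda_1(v^R,\theta^R)v^R_x=\frac{\sqrt{R\gamma\theta^R}}{v^R}v^R_x$, $\theta^R_x=-\frac{(\gamma-1)\theta^R}{v^R}v^R_x$ and $w_x=\frac{\gamma+1}{2v^R}u^R_x$; since $w_-<w_*$ gives $w_x>0$, this is item (1) with its sign statements. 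Item (5) is the classical fact that the smooth Burgers solution with $\tanh$ data satisfies $\sup_x|w(1+t,x)-w^r(x/t)|\to0$ as $t\to\infty$, and the uniform continuity of $w\mapsto(v^R,u^R,\theta^R)$ transfers it.

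The core is items (2)--(4), which follow from the corresponding estimates for $w$: the decay bounds $\|w_x\|_{L^p}\le C\min\{\delta_R,\delta_R^{1/p}(1+t)^{-1+1/p}\}$ and $\|w_{xx}\|_{L^p}\le C\min\{\delta_R,(1+t)^{-1}\}$, the pointwise bound $|w_{xx}|\le C|w_x|$, and, for $x\ge w_*(1+t)$, the exponential decay $|w(1+t,x)-w_*|+|w_x(1+t,x)|\le C\delta_R e^{-2|x-w_*(1+t)|}$ (and symmetrically for $x\le w_-(1+t)$). I would obtain these by the method of characteristics. Writing $x=x_0+w_0(x_0)(1+t)$ with $w_0'(x_0)=\frac{w_*-w_-}{2}\,\mathrm{sech}^2 x_0>0$, one has the explicit formula $w_x=\frac{w_0'(x_0)}{1+(1+t)w_0'(x_0)}$, hence $\|w_x\|_{L^\infty}\le(1+t)^{-1}$ while $\|w_x\|_{L^1}=|w_*-w_-|\sim\delta_R$ (the fan region having width $\sim\delta_R(1+t)$); interpolating these two yields the stated $L^p$ rate, and one further differentiation together with $|w_0''|\lesssim|w_0'|$ gives the bounds on $w_{xx}$ and $|w_{xx}|\le C|w_x|$. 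The exponential decay follows from $w_0'(x_0)\lesssim\delta_R e^{-2|x_0|}$ and the observation that on $\{x\ge w_*(1+t)\}$ the characteristic origin satisfies $x_0\ge0$ and $x-w_*(1+t)=x_0-(1+t)\big(w_*-w_0(x_0)\big)\le x_0$, so $|x_0|\ge|x-w_*(1+t)|$ there; the bound on $|w(1+t,x)-w_*|=w_*-w_0(x_0)$ is then immediate and $|w_x|\le w_0'(x_0)$ handles the derivative.

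Finally I would transfer the Burgers bounds to $(v^R,u^R,\theta^R)$ through item (1): the first $x$-derivatives are bounded functions of $w$ times $w_x$, and the second $x$-derivatives are bounded functions of $w$ times $w_{xx}$ plus bounded functions of $w$ times $(w_x)^2$. Since $\|(w_x)^2\|_{L^p}=\|w_x\|_{L^{2p}}^2\le C\min\{\delta_R,(1+t)^{-1}\}$ for $p\ge1$, and since pointwise $(w_x)^2\le C|w_x|$ and $|w_{xx}|\le C|w_x|$, items (2)--(4) follow, the last pointwise bounds in (2) using also $|u^R_x|,|\theta^R_x|\ge c\,|w_x|$. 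The only genuinely delicate point is the \emph{sharp} interpolated rate $\min\{\delta_R,\delta_R^{1/p}(1+t)^{-1+1/p}\}$ in (2), which must balance the amplitude bound $\|w_x\|_{L^\infty}\lesssim(1+t)^{-1}$ against the conserved mass $\|w_x\|_{L^1}\sim\delta_R$; the remainder is routine bookkeeping. This is precisely the content of \cite{MN86}.
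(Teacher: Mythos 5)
Your proposal is correct and is essentially the standard Matsumura--Nishihara argument: the paper itself gives no proof of this lemma but cites \cite{MN86}, and your reduction to the Burgers profile via characteristics (the explicit formulas $w_x=\frac{w_0'(x_0)}{1+(1+t)w_0'(x_0)}$, $|w_0''|\le C|w_0'|$, interpolation between $\|w_x\|_{L^1}\sim\delta_R$ and $\|w_x\|_{L^\infty}\lesssim(1+t)^{-1}$, and the observation $|x_0|\ge|x-\lambda_{1*}(1+t)|$ on $\{x\ge\lambda_{1*}(1+t)\}$), followed by the transfer through the Riemann-invariant identities of \eqref{AR}, is exactly that proof. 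The only part stated rather than argued is item (5), but that too is the classical content of \cite{MN86} and follows from the same characteristic representation, so no genuine gap remains.
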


\subsection{Viscous contact wave}
It is known that the inviscid contact discontinuity is  time-asymptotically unstable for the compressible Euler equations \eqref{E}. However, a viscous contact wave, which is a viscous version of inviscid contact discontinuity, can be constructed and proved to be time-asymptotically stable to both "artificial viscosity" system \cite{Xin-c, Liu-Xin97} and the physical Navier-Stokes system \cite{Huang-Xin-Yang, HLM}. 
\begin{lemma}\label{Lemma 2.2.}
(\cite{Huang-Xin-Yang}) The
viscous contact wave  $\big(\vc, u^{C},
\theta^{C}\big)(t,x)$ defined in \eqref{vc} satisfies
\beq\label{vc-pe}
\begin{array}{ll}
\di\big(v^C-v_*, u^C-u_*,\theta^{C}-\theta_* \big)= O(1)\delta_{_C}e^{ -\frac{C_1 x^2}{1+t}}, \quad \forall x<0;\\
\di\big(v^C-v^*, u^C-u^*,\theta^{C}-\theta^* \big)= O(1)\delta_{_C}e^{ -\frac{C_1 x^2}{1+t}}, \quad \forall x>0;\\
\di (\partial_x^nv^{C},\partial_x^n\theta^{C})(t,x)
   =O(1)\delta_{_C}(1+t)^{-\frac{n}{2}}e^{ -\frac{C_1 x^2}{1+t}}, \qquad\forall x\in \mathbb{R}, \quad n=1,2,\cdots;\\
\di \partial_x^n u^{C}(t, x)
  =O(1)\delta_{_C}(1+t)^{-\frac{1+n}{2}}e^{ -\frac{C_1 x^2}{1+t}},\qquad\qquad\ \   \forall x\in \mathbb{R},\quad n=1,2,\cdots;\\
%\di \big(V^{d}, U^{d}, \T^{d}\big)(t,\x= 0)-(v_-, u_-, \theta_-)=O(1) \delta_C e^{-ct}.
\end{array}
\eeq
where $\delta_{_C}=|v^*-v_*|\sim|\theta^{*}-\theta_{*}|$ is the amplitude of the viscous contact
wave, and $C_1>0 $ is generic constant. 
\end{lemma}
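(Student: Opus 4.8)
My plan is to reduce everything to the single self-similar profile appearing in \eqref{vc} and then undo the scaling. Writing $\xi=x/\sqrt{1+t}$ and $\Theta^{\rm sim}(t,x)=\phi(\xi)$, the nonlinear diffusion equation defining $\Theta^{\rm sim}$ becomes the autonomous ODE
\begin{equation}\label{pp-ode}
-\tfrac{\xi}{2}\,\phi'=b\Big(\tfrac{\phi'}{\phi}\Big)',\qquad b:=\tfrac{(\gamma-1)\kappa p_*}{R^2\gamma},\qquad \phi(-\infty)=\theta_*,\ \ \phi(+\infty)=\theta^*.
\end{equation}
First I would invoke the classical ODE analysis (Atkinson--Peletier; see also \cite{Huang-Xin-Yang}): \eqref{pp-ode} has a unique solution $\phi$, it is strictly monotone, and the maximum principle gives $\min(\theta_*,\theta^*)\le\phi\le\max(\theta_*,\theta^*)$, hence $|\phi-\theta_*|+|\phi-\theta^*|\lesssim\dc$. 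Since along the contact curve \eqref{cd-c} we have $p(v_*,\theta_*)=p(v^*,\theta^*)=p_*$ and $u_*=u^*$, so that $v_*=R\theta_*/p_*$ and $v^*=R\theta^*/p_*$, all the assertions of the lemma will follow once $\phi$ and its derivatives are controlled.

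\emph{Gaussian decay of $\phi'$.} Set $w:=\phi'$; then \eqref{pp-ode} is equivalent to $w'=w^2/\phi-(\xi\phi/2b)\,w$, and since $w\equiv0$ is the only equilibrium of this equation while $\int_\R w\,d\xi=\theta^*-\theta_*\neq0$, $w$ cannot vanish. Dividing by $w$ gives the scalar identity
\begin{equation}\label{pp-log}
(\ln|w|)'=\frac{w}{\phi}-\frac{\xi\phi}{2b}=(\ln\phi)'-\frac{\xi\phi}{2b}.
\end{equation}
Integrating \eqref{pp-log} from $0$ to $\xi$, using $\big|\ln(\phi(\xi)/\phi(0))\big|\lesssim\dc$ and $\tfrac{\theta_{\min}}{2}\xi^2\le\int_0^\xi s\,\phi(s)\,ds\le\tfrac{\theta_{\max}}{2}\xi^2$ (with $\theta_{\min}=\min(\theta_*,\theta^*)$, $\theta_{\max}=\max(\theta_*,\theta^*)$), one finds $w(\xi)$ bounded above and below by fixed multiples of $w(0)\,e^{-C_1\xi^2}$ for a suitable $C_1>0$. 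Because $\int_\R|w|\,d\xi=|\theta^*-\theta_*|$ while $\int_\R e^{-C_1\xi^2}\,d\xi$ is a fixed positive constant, this forces $|\phi'(0)|\sim\dc$, hence $|\phi'(\xi)|\le O(1)\dc\,e^{-C_1\xi^2}$ on $\R$. Integrating $\phi'$ from $-\infty$ (resp.\ from $+\infty$) and using that a Gaussian controls its own one-sided tail then yields $|\phi(\xi)-\theta_*|\le O(1)\dc\,e^{-C_1\xi^2}$ for $\xi<0$ and $|\phi(\xi)-\theta^*|\le O(1)\dc\,e^{-C_1\xi^2}$ for $\xi>0$.

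\emph{Higher derivatives and scaling back.} Differentiating \eqref{pp-ode} repeatedly expresses $\phi^{(n+1)}$ as a polynomial in $\xi$, $1/\phi$ and $\phi',\dots,\phi^{(n)}$, so an induction propagates $|\phi^{(n)}|\le O(1)\dc\,e^{-C_1\xi^2}$ to every $n\ge1$ (only finitely many derivatives enter the paper, so the exponent constant may be kept uniform). From \eqref{vc} we then have $\partial_x^n v^C=(R/p_*)(1+t)^{-n/2}\phi^{(n)}(\xi)$ and $\partial_x^n\theta^C=(1+t)^{-n/2}\phi^{(n)}(\xi)$, giving the claimed $O(1)\dc(1+t)^{-n/2}e^{-C_1x^2/(1+t)}$ bounds for $n\ge1$ and, through the one-sided estimates above, the $n=0$ bounds near $(v_*,\theta_*)$ for $x<0$ and near $(v^*,\theta^*)$ for $x>0$. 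For $u^C=u_*+\tfrac{(\gamma-1)\kappa}{R\gamma}\,\tfrac{\Theta^{\rm sim}_x}{\Theta^{\rm sim}}=u_*+\tfrac{(\gamma-1)\kappa}{R\gamma}(1+t)^{-1/2}\tfrac{\phi'(\xi)}{\phi(\xi)}$, note that $1/\phi$ and all its $\xi$-derivatives are bounded (those of order $\ge1$ being $O(1)\dc\,e^{-C_1\xi^2}$ by the quotient rule and the profile bounds), so the Leibniz rule shows each extra $x$-derivative contributes a further factor $(1+t)^{-1/2}$ on top of the one already present, giving $\partial_x^n u^C=O(1)\dc(1+t)^{-(1+n)/2}e^{-C_1x^2/(1+t)}$ for all $n\ge0$; since $u_*=u^*$, the $n=0$ estimate is at once the bound near $u_*$ for $x<0$ and near $u^*$ for $x>0$. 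Finally $\dc=|v^*-v_*|=(R/p_*)|\theta^*-\theta_*|\sim|\theta^*-\theta_*|$, as claimed.

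The main obstacle is the rigorous construction of the self-similar profile $\phi$ together with the identification of its \emph{Gaussian} (rather than merely exponential) tail and of the scaling $|\phi'(0)|\sim\dc$ at the origin: this is exactly where the nonlinearity $(\phi'/\phi)'$ must be tamed, and the linearizing identity \eqref{pp-log} for $\ln|w|$ is the crucial device. Once that first-derivative bound is secured, the higher derivatives and the change of variables $x\mapsto\xi$ are routine. Since the lemma is classical, one may alternatively just cite \cite{Huang-Xin-Yang}; the sketch above is the self-contained route.
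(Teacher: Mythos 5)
The paper does not prove this lemma at all—it simply cites \cite{Huang-Xin-Yang}—and your sketch correctly reconstructs the standard argument behind that reference: reduction to the self-similar profile ODE, the logarithmic identity $(\ln|w|)'=(\ln\phi)'-\xi\phi/(2b)$ giving the Gaussian decay of $\phi'$ with $|\phi'(0)|\sim\delta_C$ forced by $\int_{\R}\phi'\,d\xi=\theta^*-\theta_*$, and the rescaling $\partial_x^n=(1+t)^{-n/2}\partial_\xi^n$ together with $u_*=u^*$ and $v=R\theta/p_*$ on the contact curve. Two minor remarks that do not affect validity: the profile equation is not autonomous (it carries the explicit factor $\xi$), and the existence and uniqueness of the profile $\phi$ is still deferred to the literature rather than established in your sketch.
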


Then
 the viscous contact wave  $\big(\vc, u^{C},
\theta^{C}\big)(t,x)$ defined in \eqref{vc}
 satisfies the system
\begin{eqnarray}\label{vc-equ}
\begin{cases}
  v^{C}_t-u^{C}_{x} = 0, \cr
    u^{C}_t+ p^{C}_x
    =\mu\big( \frac{u^{C}_{x}}{ v^{C}} \big)_x+Q^C_1, \cr
        \frac{R}{\gamma-1}\theta^{C}_t+  p^{C} u^{C}_x
    =\kappa\big(\frac{\theta^{C}_{x}}{v^{C}} \big)_x+\mu\frac{(u^{C}_{x})^2}{v^{C}}+Q^C_2
\end{cases}
\end{eqnarray}
where $ p^{C}=p\big( v^{C}, \theta^{C}\big)$ and
\beq\label{QC12}
\begin{aligned}
&Q^C_1=u^C_t-\mu\big( \frac{u^{C}_{x}}{ v^{C}} \big)_x=O(1)\delta_{_C}(1+t)^{-\frac32}e^{-\frac{2C_1 x^2}{1+t}},\\
& Q^C_2=-\mu\frac{(u^{C}_{x})^2}{v^{C}}=O(1)\delta_{_C}(1+t)^{-2}e^{-\frac{2C_1 x^2}{1+t}},
\end{aligned}
\eeq
as $x\rightarrow\pm \infty$ due to Lemma \ref{Lemma 2.2.}. Moreover, from \eqref{vc} and Lemma \ref{Lemma 2.2.}, it holds that for $\forall p\geq1,$
\beq
\|\big(\vc, u^{C},
\theta^{C}\big)(t,\cdot)-\big(v^c, u^{c},
\theta^{c}\big)(t,\cdot)\|_{L^p(\bbr)}=O(1)\kappa^{\frac{1}{2p}}(1+t)^{\frac{1}{2p}},
\eeq
which implies that viscous contact wave $(\vc, u^{C},
\theta^{C})(t,x)$ can converge to the inviscid contact discontinuity $(v^c, u^{c},
\theta^{c})(t,x)$ in $L^p-$norm $(\forall p\geq1)$ at any finite time interval as the heat conductivity coefficient $\kappa\rightarrow0+$, however, they could be far away at large time.

\subsection{Viscous shock wave}
We turn to the 3-viscous shock wave connecting $(v^*, u^*, \theta^*)$ and $(v_+,u_+,\theta_+)$ such that $(v^*, u^*, \theta^*)\in S_3(v_+,u_+,\theta_+)$. Recall the Rankine-Hugoniot condition \eqref{RH}
%\begin{equation}\label{RH}
%\begin{array}{ll}
%\di -\sigma(v_+-v_m)-(u_+-u_m)=0,\\
%\di -\sigma(u_+-u_m)+(p(v_+)-p(v_m))=0,
%\end{array}
%\end{equation}
and the Lax entropy condition
\begin{equation}\label{ec}
\lambda_3(v_+,\theta_+)<\sigma<\lambda_3(v^*,\theta^*).
\end{equation}
The Riemann problem \eqref{E}-\eqref{Ei} admits a unique 3-shock solution
\begin{align}
\begin{aligned}\label{Shock}
(v^s,u^s,\theta^s)(t,x)=\left\{ \begin{array}{ll}
        (v^*,u^*,\theta^*),\quad x<\sigma t,\\[2mm]
       (v_+,u_+,\theta_+),\quad x>\sigma t, \\
        \end{array} \right.
\end{aligned}
\end{align}
where it follows from \eqref{RH} that
\begin{equation}
\sigma=\sqrt{-\frac{p(v_+,\theta_+)-p(v^*,\theta^*)}{v_+-v^*}}.
\end{equation}
As a traveling wave solution of \eqref{NS-0} (equivalently, of \eqref{NS}), the 3-viscous shock wave $(v^S, u^S, \theta^S)(\xi)$ satisfies the system of ODEs \eqref{VS}.

Integrating the system \eqref{VS} over $(\pm \infty, \xi]$ gives that
\begin{equation}\label{VS1}
\left\{
\begin{array}{ll}
\di -\sigma (v^S-v_+)-( u^S-u_+)=-\sigma (v^S-v^*)-( u^S-u^*)=0,\\[3mm]
\di \mu\frac{( u^S)^\prime}{v^S}=-\sigma( u^S-u_+)+(p^S-p_+)=-\sigma( u^S-u^*)+(p^S-p^*),\\[3mm]
\di \kappa\frac{( \theta^S)^\prime}{ v^S}+\mu\frac{u^S( u^S)^\prime}{ v^S}=-\sigma( E^S-E_+)+(p^S u^S-p_+u_+)\\[3mm]
\di \qquad\qquad\qquad\qquad\ =-\sigma( E^S-E^*)+(p^S u^S-p^*u^*).\\[3mm]
%\di ( v^S, u^S, \theta^S)(-\infty)=(v^*, u^*, \theta^*), \qquad ( v^S, u^S, \theta^S)(+\infty)=(v_+, u_+, \theta_+).
\end{array}
\right.
\end{equation}
Then the existence of shock profiles  $(v^S, u^S, \theta^S)(\xi)$ is equivalent to the existence of solution to the following autonomous system of ODEs:
\begin{equation}\label{VS2}
\left\{
\begin{array}{ll}
%\di -\sigma (v^S-v_+)-( u^S-u_+)=-\sigma (v^S-v^*)-( u^S-u^*)=0,\\[3mm]
\di \mu\sigma\frac{( v^S)^\prime}{v^S}=(p^S-p_+)+\sigma^2(v^S-v_+)=(p^S-p^*)+\sigma^2(v^S-v^*),\\[3mm]
\di \kappa\frac{( \theta^S)^\prime}{\sigma v^S}=\frac{R}{\gamma-1}(\theta^S-\theta_+)+p_+(v^S -v_+)-\frac12\sigma^2(v^s-v_+)^2\\[3mm]
\di \qquad\quad =\frac{R}{\gamma-1}(\theta^S-\theta^*)+p^*(v^S -v^*)-\frac12\sigma^2(v^s-v^*)^2.
\end{array}
\right.
\end{equation}

The properties of the 3-viscous shock wave $(v^S, u^S, \theta^S)(\xi)$ can be listed as follows. 
%The proof of this lemma can be found in \cite{MN-S} or \cite{G} (see also \cite{Kang-V-NS17}).

\begin{lemma} \label{lemma1.3}
For any state  $(v_+,u_+,\theta_+)$, there exists a constant $C>0$ such that the following is true. For any end state such that $(v^*, u^*,\theta^*)\in S_3(v_+,u_+,\theta_+)$, there exists a unique solution  $(v^S, u^S, \theta^S)(\xi)$ to \eqref{VS}. Let $\delta_S$ denote the strength of the shock as $\delta_S:=|v_+-v^*|\sim|u_+-u^*|\sim |\theta_+-\theta^*|$. Then, it holds that
 \begin{align}
\begin{aligned}\label{shock-base}
& u^S_\xi<0, \qquad v^S_\xi >0, \qquad \theta^S_\xi<0,\quad  \forall\xi\in\bbr,\\
& | (v^S(\xi) -v^*, u^S(\xi) -u^*, \theta^S(\xi) -\theta^*)|\leq C\deltas\ e^{-C\delta_S |\xi|}, \quad \xi<0,\\[1mm]
& | (v^S(\xi) -v_+, u^S(\xi) -u_+, \theta^S(\xi) -\theta_+)|\leq C\deltas\ e^{-C\delta_S |\xi|}, \quad \xi>0,\\[1mm]
&|( v^S_\xi, u^S_\xi,\theta^S_\x)|\leq C\deltas^2\ e^{-C\delta_S |\xi|}, \quad\forall\xi\in\bbr,\\[1mm]
&|( v^S_{\xi\xi}, u^S_{\xi\xi},\theta^S_{\x\x})|\leq C\delta_S |( v^S_\xi, u^S_\xi, \theta_\x^S)|, \quad \forall\xi\in\bbr.
\end{aligned}
\end{align}		
In particular,  $|v^S_\xi|\sim  |u^S_\xi | \sim |\theta^S_\xi |$ for all $\xi\in\bbr$, more explicitly,
\beq\label{shock-vu}
\Big| (u^S)_\xi + \sigma^* (v^S)_\xi \Big| \le C \deltas|(v^S)_\xi|, \quad\forall \xi\in\bbr,
\eeq
and
\beq\label{theta-s}
\Big| (\theta^S)_\xi + \frac{(\gamma-1)p^*}{R}(v^S)_\xi \Big| \le C \deltas|(v^S)_\xi|,\quad\forall \xi\in\bbr,
\eeq
where 
\beq\label{pstar}
p^*:=p(v^*, \theta^*)=\frac{R\theta^*}{v^*}\quad \mbox{and}\quad \sigma^*:=\sqrt\frac{\gamma p^*}{v^*} = \frac{\sqrt{\gamma R\theta^*}}{v^*},
\eeq
which satisfies
\beq\label{sm1}
|\sigma-\sigma^*|\le C\delta_S.
\eeq
\end{lemma}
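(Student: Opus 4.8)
The plan is to reduce the profile equations to a scalar problem via a phase--plane analysis, deriving every estimate from that, and to dispose first of the Rankine--Hugoniot bound \eqref{sm1}. Parametrizing the $3$-shock curve $S_3(v_+,u_+,\theta_+)$ by the specific volume $v^*$ near $v_+$ and Taylor-expanding $\sigma^2=-\frac{p(v_+,\theta_+)-p(v^*,\theta^*)}{v_+-v^*}$ about $v^*=v_+$ gives $\sigma^2=\lambda_3(v_+,\theta_+)^2+O(\delta_S)$, and similarly $(\sigma^*)^2=\lambda_3(v^*,\theta^*)^2=\lambda_3(v_+,\theta_+)^2+O(\delta_S)$; since $\sigma,\sigma^*$ stay bounded away from $0$ this yields $|\sigma-\sigma^*|\le C\delta_S$, hence $\sigma\in[\tfrac12\sigma^*,2\sigma^*]$ for $\delta_0$ small, and already \eqref{shock-vu}, because the traveling-wave form of $v_t=u_x$ gives the exact identity $(u^S)_\xi=-\sigma(v^S)_\xi$, so $(u^S)_\xi+\sigma^*(v^S)_\xi=(\sigma^*-\sigma)(v^S)_\xi$.

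Next I would reduce \eqref{VS} to a planar autonomous system. Integrating from $\pm\infty$ and eliminating $u^S$ through $(u^S)'=-\sigma(v^S)'$ produces \eqref{VS2}, a $2\times2$ system for $(v^S,\theta^S)$ with right-hand sides $f_1,f_2$ whose only equilibria are $P_-=(v^*,\theta^*)$ and $P_+=(v_+,\theta_+)$. A direct computation shows that the determinant of the linearization of $(f_1,f_2)$ at $P_-$ equals $\frac{R}{\gamma-1}\big(\sigma^2-\lambda_3(v^*,\theta^*)^2\big)$ and at $P_+$ equals $\frac{R}{\gamma-1}\big(\sigma^2-\lambda_3(v_+,\theta_+)^2\big)$, while the trace is $O(1)$; by the Lax condition \eqref{ec} the first is negative and the second positive, so $P_-$ is a hyperbolic saddle and $P_+$ a hyperbolic node. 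The structural point — a reflection of \eqref{sm1} — is that both determinants are only $O(\delta_S)$, so each equilibrium carries one \emph{fast} eigenvalue of size $O(1)$ and one \emph{slow} eigenvalue of size $O(\delta_S)$, the slow eigendirection at $P_-$ being $O(\delta_S)$-close to the tangent of the $3$-rarefaction curve, i.e.\ to $\big(1,-\tfrac{(\gamma-1)p^*}{R}\big)$. The profile is the $v^S$-increasing branch of the one-dimensional unstable manifold $W^u(P_-)$; for $\delta_0$ small this branch is trapped near the segment joining $P_-$ and $P_+$ and must limit on the only nearby attractor $P_+$, which gives the heteroclinic and its uniqueness up to translation. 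Invariant-manifold theory — or, equivalently, solving the singular scalar equation $\frac{d\theta^S}{dv^S}=\frac{\mu\sigma^2}{\kappa}\,\frac{f_2}{f_1}$ obtained by dividing the two equations of \eqref{VS2} — shows the orbit is a $C^2$ graph $\theta^S=\Theta(v^S)$ over $v^S\in[v^*,v_+]$ with $\Theta$ within $O(\delta_S)$ in $C^1$ of the affine interpolant of $(v^*,\theta^*)$ and $(v_+,\theta_+)$; hence $\Theta'(v)=-\tfrac{(\gamma-1)p^*}{R}+O(\delta_S)$ uniformly, which is \eqref{theta-s} after $(\theta^S)_\xi=\Theta'(v^S)(v^S)_\xi$.

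The remaining estimates follow from the scalar equation obtained by restricting the first equation of \eqref{VS2} to the invariant curve: $\mu\sigma\,(v^S)'/v^S=F(v^S)$ with $F(v):=f_1(v,\Theta(v))$, $F(v^*)=F(v_+)=0$. The connecting orbit cannot cross an interior equilibrium, so $F$ has no zero in $(v^*,v_+)$; evaluating $F'(v^*)=(p_v+\sigma^2)|_{P_-}+\tfrac{R}{v^*}\Theta'(v^*)$, whose leading terms cancel leaving $O(\delta_S)$ with positive leading coefficient by genuine nonlinearity of the third field, forces $F>0$ on $(v^*,v_+)$, so $v^S_\xi>0$, $u^S_\xi=-\sigma v^S_\xi<0$, and $\theta^S_\xi=\Theta'(v^S)v^S_\xi<0$. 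Since $F$ vanishes at the ends of an interval of length $\delta_S$ and is $C^2$ with bounds independent of $\delta_S$, one has $\|F'\|_{L^\infty[v^*,v_+]}\le C\delta_S$ and $|F|\le C\delta_S^2$ there; a comparison argument on $\mu\sigma\,(v^S)'/v^S=F(v^S)$ with linearized rates $F'(v^*)\sim c\delta_S>0$ at $-\infty$ and $F'(v_+)\sim -c\delta_S<0$ at $+\infty$, together with the trivial bound $|v^S-v^*|\le\delta_S$, yields $|(v^S,u^S,\theta^S)-(v^*,u^*,\theta^*)|\le C\delta_S e^{-C\delta_S|\xi|}$ for $\xi<0$ and the mirror bound for $\xi>0$. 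Then $|(v^S)_\xi|=\tfrac{v^S}{\mu\sigma}|F(v^S)-F(v^*)|\le C\|F'\|_\infty|v^S-v^*|\le C\delta_S^2 e^{-C\delta_S|\xi|}$, and $|u^S_\xi|=\sigma|v^S_\xi|$, $|\theta^S_\xi|=|\Theta'(v^S)||v^S_\xi|$ give the rest of the first-derivative bounds and $|v^S_\xi|\sim|u^S_\xi|\sim|\theta^S_\xi|$, $\sigma$ and $|\Theta'|$ being bounded above and below. Differentiating once more, $(v^S)''=\tfrac{(v^S)'}{\mu\sigma}\big(F(v^S)+v^SF'(v^S)\big)$ with $|F(v^S)+v^SF'(v^S)|\le C\delta_S$ gives $|v^S_{\xi\xi}|\le C\delta_S|v^S_\xi|$, and $u^S_{\xi\xi}=-\sigma v^S_{\xi\xi}$, $\theta^S_{\xi\xi}=\Theta''(v^S)(v^S_\xi)^2+\Theta'(v^S)v^S_{\xi\xi}$ (with $|v^S_\xi|\le C\delta_S^2$) give the same for the other two components.

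The step I expect to be the main obstacle is the weak-shock construction in the second paragraph. The leading-order degeneracy — the linearization at both equilibria having an $O(\delta_S)$ determinant, which reflects that the shock speed approaches a characteristic speed as the strength tends to zero — means that neither the existence of the connection nor its correct (slow, $O(\delta_S)$) exponential rate can be read off from naive hyperbolicity; one must keep the $O(\delta_S)$ correction to the slow eigenvalue and use genuine nonlinearity to pin its sign. Carrying this out uniformly in the shock strength, so that the connecting orbit is a $C^2$ graph $O(\delta_S)$-close to the Hugoniot chord, is exactly what yields the sharp relations \eqref{shock-vu}--\eqref{theta-s} among $v^S_\xi,u^S_\xi,\theta^S_\xi$ needed for the weighted Poincar\'e estimates in the momentum and energy equations.
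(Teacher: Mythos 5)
Your proposal is correct in outline, but it takes a genuinely different and much more self-contained route than the paper. The paper's own proof of Lemma \ref{lemma1.3} is narrow: existence, uniqueness, monotonicity, the exponential decay and the first-derivative bounds in \eqref{shock-base} are simply quoted from Matsumura--Nishihara and Huang--Matsumura (and the barotropic analogue in Kang--Vasseur); only the second-derivative bound and \eqref{shock-vu}--\eqref{theta-s} are proved there, and by direct ODE identities rather than phase-plane analysis: \eqref{shock-vu} from $(u^S)_\xi=-\sigma (v^S)_\xi$ and \eqref{sm1}; \eqref{theta-s} by viewing $\theta^S$ as a function of $v^S$, computing the exact limiting slope $l^*=\lim_{v^S\to v^*}d\theta^S/dv^S$ from the quadratic equation it satisfies, comparing it with $l_*=-\frac{(\gamma-1)p^*}{R}$ via \eqref{sm1}, and then extending along the orbit by a Lipschitz bound on the ratio in \eqref{first-der0}; and the second derivatives by differentiating the profile equations and exploiting exact cancellations at $(v^*,\theta^*,\sigma^*)$. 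You instead rebuild the entire lemma from a weak-shock phase-plane analysis of \eqref{VS2}: your determinant computation at the two rest points is correct (it is exactly $\frac{R}{\gamma-1}(\sigma^2-\lambda_3^2)$ up to a positive factor), the slow eigendirection at $(v^*,\theta^*)$ is indeed $O(\deltas)$-close to $(1,-\frac{(\gamma-1)p^*}{R})$, and reducing to the scalar equation on the invariant graph $\theta^S=\Theta(v^S)$ then yields \eqref{theta-s}, the monotonicity, the $e^{-C\deltas|\xi|}$ decay and the derivative bounds from $\Theta'=-\frac{(\gamma-1)p^*}{R}+O(\deltas)$ together with $\|F'\|_{L^\infty}\le C\deltas$ (Rolle). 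What your route buys is a unified construction (essentially the classical Gilbarg-type argument underlying the cited references) that makes the $O(\deltas)$ rates transparent; what it costs is that the key uniform-in-$\deltas$ statement---the connecting orbit is a $C^2$ graph, $O(\deltas)$-close in $C^1$ to the chord---is deferred to ``invariant-manifold theory'' and not executed, as you yourself flag; note, however, that the paper's step $|F(v^S)-F(v^*)|\le C|v^S-v^*|$ is an unproved assertion of exactly the same kind, so on this point your plan is no weaker than the written proof, only longer where the paper cites the literature. One bookkeeping caveat: as printed, \eqref{VS2} is inconsistent with $(u^S)_\xi=-\sigma(v^S)_\xi$ and $v^S_\xi>0$ (eliminating $u^S$ from \eqref{VS1} gives $\mu\sigma\,(v^S)_\xi/v^S=-\big[(p^S-p_+)+\sigma^2(v^S-v_+)\big]$), so the sign of $F$ on $(v^*,v_+)$ and the traces at the equilibria (hence which rest point is the attractor) come out opposite to what you wrote; the determinant-based saddle/node classification, the ratio \eqref{first-der0}, and the structure of your argument are unaffected, but the orientation should be fixed when you carry the construction out.
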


\begin{proof}
We here prove the last estimate of \eqref{shock-base}, and the explicit estimates \eqref{shock-vu}-\eqref{theta-s}. The remaining estimates can be found in \cite{HM} and \cite{MN-S} (see also \cite{Kang-V-NS17}).

\noindent$\bullet$ {\bf Proof of \eqref{shock-vu}-\eqref{theta-s}:}

The estimate \eqref{shock-vu} is easily shown by \eqref{sm1} and the equation 
$( u^S)_\xi=-\sigma (v^S)_\xi$.\\
To prove \eqref{theta-s}, we use the following fraction from  \eqref{VS2} that
 \begin{align}
\begin{aligned}\label{first-der0}
\frac{\theta^S_\x}{v^S_\x}=\frac{\mu \sigma^2}{\kappa}\frac{\frac{R}{\gamma-1}(\theta^S-\theta^*)+p^*(v^S -v^*)-\frac12\sigma^2(v^S-v^*)^2}{(p^S-p^*)+\sigma^2(v^S-v^*)}.
\end{aligned}
\end{align}
Since $\theta^S_\xi<0$ and $v^S_\xi >0$ for all $\x$, we regard $\theta^S$ as a smooth function of $v^S$ by considering $\theta^S\circ (v^S)^{-1} : [v^*,v_+]\to [\theta_+,\theta^*]$ with $\theta^*=\theta^S(v^*)$ and  $\theta_+=\theta^S(v_+)$.
Then, we define a smooth function $F(v^S)$ by the right-hand side of \eqref{first-der0}, that is,
\[
F(v^S):= \frac{\mu \sigma^2}{\kappa}\frac{\frac{R}{\gamma-1}(\theta^S-\theta^*)+p^*(v^S -v^*)-\frac12\sigma^2(v^S-v^*)^2}{(p^S-p^*)+\sigma^2(v^S-v^*)}.
\]
We will estimate $F(v_-)$ as below.
Since
\[
F(v_-)= \lim_{\xi\to-\infty} \frac{\theta^S_\x}{v^S_\x} = \lim_{v^S\to v^*} \frac{d\theta^S}{dv^S} =  \lim_{v^S\to v^*} \frac{\theta^S-\theta^*}{v^S-v^*} =: l^*<0,
\]
and
\[
\lim_{v^S\rightarrow v^*}\frac{p^S-p^*}{v^S-v^*}=\lim_{v^S\rightarrow v^*}\Big(\frac{R}{v^S}\frac{\theta^S-\theta^*}{v^S-v^*}-\frac{p^*}{v^S}\Big)
=\frac{Rl^*-p^*}{v^*},
\]
we take $v^S\to v^*$ on \eqref{first-der0} to have
$$
l^* = \lim_{\xi\to-\infty} \frac{\theta^S_\x}{v^S_\x}=\frac{\mu \sigma^2}{\kappa}\frac{\frac{R}{\gamma-1}  \lim_{v^S\to v^*} \frac{\theta^S-\theta^*}{v^S-v^*}+p^*}{\di \lim_{v^S\rightarrow v^*}\frac{p^S-p^*}{v^S-v^*}+\sigma^2} = \frac{\mu \sigma^2}{\kappa}\frac{\frac{R}{\gamma-1}  l^* +p^*}{\frac{Rl^*-p^*}{v^*} +\sigma^2} ,
$$
and so,
\[
(l^*)^2-\left(\frac{p^*-\s^2v^*}{R}+\frac{\mu\s^2 v^*}{\kappa(\gamma-1)}\right)l^*-\frac{\mu\s^2\theta^*}{\kappa}=0.
\]
But, since $l_*:=-\frac{(\gamma-1)p^*}{R}$ satisfies
\[
(l_*)^2-\left(\frac{p^*-(\s^*)^2v^*}{R}+\frac{\mu(\s^*)^2 v^*}{\kappa(\gamma-1)}\right)l_*-\frac{\mu(\s^*)^2\theta^*}{\kappa}=0,
\]
we use \eqref{sm1} to have
\[
|l^*-l_*| \le C\deltas,
\]
and so,
\[
\left| F(v_-) + \frac{(\gamma-1)p^*}{R}\right| \le C\deltas.
\]
This together with $|F(v^S)-F(v_-)|\le C|v^S-v_-|\le C\deltas$ for all $\xi$ implies
\[
\left| \frac{\theta^S_\x}{v^S_\x} + \frac{(\gamma-1)p^*}{R}\right| \le C\deltas,\quad \forall \x,
\]
which proves \eqref{theta-s}.

\noindent$\bullet$ {\bf Proof of $|( v^S_{\xi\xi}, u^S_{\xi\xi},\theta^S_{\x\x})|\leq C\delta_S |( v^S_\xi, u^S_\xi, \theta_\x^S)|, \quad \forall\xi\in\bbr$:}

First, it holds from \eqref{VS} that 
\[
\mu \left(\frac{u^S_\xi}{v^S}\right)_\x = \s^2 v^S_\xi + p^S_\x ,
\]
and so,
\beq\label{unisec}
\mu  \frac{u^S_{\xi\xi}}{v^S}  = \mu \frac{u^S_{\xi} v^S_\xi}{(v^S)^2} + \s^2 v^S_\xi + \frac{R\theta^S_\x}{v^S} - \frac{p^S v^S_\x}{v^S}.
\eeq
Then, using \eqref{theta-s} and \eqref{sm1}, we have
 \begin{align*}
\begin{aligned}
\mu \left| \frac{u^S_{\xi\xi}}{v^S} \right| &\le \mu \frac{|u^S_{\xi}| |v^S_\xi| }{(v^S)^2}  +\underbrace{ \left|  (\s^*)^2 -\frac{(\gamma-1)p^*}{v^*} - \frac{p^*}{v^*}  \right|}_{=0} |v^S_\xi | + |\s^2-(\s^*)^2| |v^S_\xi | \\
&\quad +    \frac{|R\theta^S_\x - (\gamma-1)p^* v^S_\x|}{v^S} + (\gamma-1)p^* \left| \frac{1}{v^S}  -\frac{1}{v^*}  \right| |v^S_\xi | + \left| \frac{p^*}{v^*}  - \frac{p^S}{v^S} \right| |v^S_\xi | \\
&\le C\deltas  |v^S_\xi |,
\end{aligned}
\end{align*}
which yields
\beq\label{uvsec}
|u^S_{\xi\xi}| \le C\deltas  |v^S_\xi |.
\eeq
To estimate $|\theta^S_{\xi\xi}|$, we first find from $\eqref{VS}$ that $(v^S, u^S, \theta^S)$ satisfies (as a solution of \eqref{NS-0})
\beq\label{stheta-eq}
-\frac{R\sigma}{\gamma-1}(\theta^S)_\xi + p^S(u^S)_\xi=\kappa\Big(\frac{(\theta^S)_\xi}{v^S}\Big)_\xi+\mu \frac{((u^S)_\xi)^2}{v^S}.
\eeq
This together with \eqref{unisec} gives
\[
\kappa  \frac{\theta^S_{\xi\xi}}{v^S}  = \kappa \frac{\theta^S_{\xi} v^S_\xi}{(v^S)^2} -\mu \frac{(u^S_\xi)^2}{v^S} -\frac{\sigma}{\gamma-1} \left( \mu u^S_{\xi\xi} -\mu  \frac{u^S_{\xi} v^S_\xi}{v^S} 
-  \s^2 v^S v^S_\xi + p^S v^S_\x \right)+  p^S u^S_\x.
\]
Then, using \eqref{uvsec}, we have
\[
\left|\kappa  \frac{\theta^S_{\xi\xi}}{v^S}  \right| \le C\deltas  |v^S_\xi | + \frac{\sigma}{\gamma-1} \left|- \s^2 v^S v^S_\xi + p^S v^S_\x  - \frac{\gamma-1}{\s} p^S u^S_\x \right|.
\]
In addition, since $u^S_\xi=-\sigma v^S_\xi$ and \eqref{sm1} imply
\[
\left|- \s^2 v^S v^S_\xi + p^S v^S_\x  - \frac{\gamma-1}{\s} p^S u^S_\x \right| \le \underbrace{|-\s^* v^* + p^* + (\gamma-1)p^*|}_{=0} | v^S_\xi | + C\deltas  |v^S_\xi |,
\]
we have
\[
|\theta^S_{\xi\xi}| \le C\deltas  |v^S_\xi |.
\]
Finally, using $u^S_\xi=-\sigma v^S_\xi$ and \eqref{shock-vu}-\eqref{theta-s}, we complete the desired estimates.
%To show \eqref{sm1}, use the Taylor approximation
%\[\Big|p(v_+,\theta_+) - p(v^*, \theta^*) - \partial_v p(v^*,\theta^*) (v_+-v^*) -  \partial_\theta p(v^*,\theta^*) (\theta_+-\theta^*) \Big| \le C\delta_S^2.\]
%Notice from the Rankine-Hugoniot condition \eqref{RH} that ($p_+:=p(v_+,\theta_+)$)
%\[\theta_+-\theta^* = - \frac{\gamma-1}{R}\frac{p_++p^*}{2} (v_+-v^*).\]
%Indeed, it follows from $\eqref{RH}_1$ and $\eqref{RH}_3$ as
% \begin{align*}
%&-\s (v_+ -v^*) -( u_+ -u^*) = 0,\\
%& -\frac{\s R}{\gamma -1} (\theta_+-\theta^*) + \frac{\s^2}{2}(v_+-v^*) (u_++u^*) + p_+u_+ - p^* u^* =0.
%   \end{align*}
%   Thus, using $\partial_v p(v^*,\theta^*) = -\frac{R\theta^*}{(v^*)^2}$, $\partial_\theta p(v^*,\theta^*) = \frac{R}{v^*}$ and $\frac{p_+ +p^*}{2}=p^* +\frac{p_+ -p^*}{2} = p^* + O(\delta_S)$, we have
%\[\Big|\frac{p_+ - p^*}{v_+-v^*} +\frac{p^*}{v^*} + \frac{\gamma-1}{v^*}p^* \Big| \le C\delta_S,\]   
%which yields
%\[|\sigma^2-(\sigma^*)^2|\le C \Big|\frac{p_+ - p^*}{v_+-v^*} + \frac{\gamma p^*}{v^*} \Big| \le C\delta_S,\]
%which implies \eqref{sm1}.
\end{proof}

\subsection{Weighted Poincare inequality}

The method of $a$-contraction with shift in the viscous cases relies on the following Poincar\'e type inequality (see \cite[Lemma 2.9]{Kang-V-NS17}).  
\begin{lemma}\label{lem-poin}
For any $f:[0,1]\to\bbr$ satisfying $\int_0^1 y(1-y)|f'|^2 dy<\infty$, 
\beq\label{poincare}
\int_0^1\Big|f-\int_0^1 f dy \Big|^2 dy\le \frac{1}{2}\int_0^1 y(1-y)|f'|^2 dy.
\eeq
\end{lemma}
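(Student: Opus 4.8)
The plan is to reduce the inequality to an orthogonality argument in a suitably weighted $L^2$ space and then to exploit the explicit spectral structure of the associated Sturm--Liouville operator. First I would subtract off the mean: writing $\bar f := \int_0^1 f\,dy$, the left-hand side is $\|f-\bar f\|_{L^2(0,1)}^2$, so the claim is that the first nonzero eigenvalue of the operator $Lg := -\big(y(1-y)g'\big)'$ on $(0,1)$ with the natural (no-flux) boundary conditions, acting on the subspace of functions with zero mean against the Lebesgue measure, is at least $2$. Equivalently, I want to show $\inf \{ \int_0^1 y(1-y)|g'|^2\,dy : \int_0^1 g = 0,\ \int_0^1 g^2 = 1 \} \ge 2$. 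The weight $y(1-y)$ degenerates at both endpoints, which is exactly why no boundary conditions need to be imposed and why the constant function lies in the kernel.

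The key step is to identify the eigenfunctions explicitly. The equation $-(y(1-y)g')' = \lambda g$ is a hypergeometric-type (Jacobi / shifted Legendre) equation: after the change of variable $y = (1+t)/2$ mapping $(0,1)$ to $(-1,1)$, the operator becomes (up to the factor $\tfrac14$ absorbed into $\lambda$) the Legendre operator $-\big((1-t^2)g'\big)' = \mu g$, whose eigenvalues are $\mu_n = n(n+1)$ with eigenfunctions the Legendre polynomials $P_n$. Tracking the scaling, the eigenvalues of $L$ on $(0,1)$ are $\lambda_n = n(n+1)$ as well, with eigenfunctions the shifted Legendre polynomials $\widetilde P_n(y) = P_n(2y-1)$. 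The zeroth one, $\widetilde P_0 \equiv 1$, spans the kernel; the smallest positive eigenvalue is $\lambda_1 = 1\cdot 2 = 2$, attained by $\widetilde P_1(y) = 2y-1$. Since the $\widetilde P_n$ form an orthogonal basis of $L^2(0,1)$ and since a zero-mean function is precisely one orthogonal to $\widetilde P_0$, the Rayleigh quotient of any admissible $f-\bar f$ is bounded below by $\lambda_1 = 2$, which is exactly \eqref{poincare}. To make this rigorous without invoking abstract spectral theory, I would instead verify directly that for $g$ with $\int_0^1 g = 0$ one has $\int_0^1 y(1-y)|g'|^2 \ge 2\int_0^1 g^2$ by an integration-by-parts completion-of-squares trick: expand $g$ in the (dense) span of the $\widetilde P_n$, $g = \sum_{n\ge 1} c_n \widetilde P_n$, and use $\int_0^1 y(1-y)\widetilde P_m' \widetilde P_n' = \lambda_n \delta_{mn}\|\widetilde P_n\|^2$ together with $\lambda_n \ge 2$ for $n\ge 1$.

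Alternatively, and perhaps cleaner for a self-contained write-up, I would avoid series entirely: given $f$ with finite weighted Dirichlet energy, set $g = f - \bar f$ and note it suffices to prove the inequality for smooth $g$ with zero mean, by a density/truncation argument handling the endpoint degeneracy. Then write $\int_0^1 g^2\,dy$ and integrate by parts against the primitive-type function $\Phi(y)$ solving $\Phi' = $ something adapted to the weight; concretely one uses that $\int_0^1 g^2 = -\int_0^1 g^2 \big(\tfrac{d}{dy}(\tfrac12 - y)\big)\cdot(\text{const})$ and integrates by parts, producing a cross term $\int (1-2y) g g'$ which Cauchy--Schwarz bounds by $\big(\int y(1-y)|g'|^2\big)^{1/2}\big(\int \tfrac{(1-2y)^2}{y(1-y)} g^2\big)^{1/2}$; controlling the last integral against $\int g^2$ using the zero-mean condition closes the estimate with the sharp constant $\tfrac12$.

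The main obstacle I anticipate is the endpoint degeneracy of the weight $y(1-y)$: one must be careful that the integration by parts produces no boundary contributions and that the class of admissible $f$ (only $\int_0^1 y(1-y)|f'|^2 < \infty$ is assumed, with no pointwise control at $0$ or $1$) is genuinely handled. This is dealt with by an approximation argument — cutting off near the endpoints, applying the inequality to the truncation, and passing to the limit using the finiteness of the weighted energy, which forces the boundary terms $y(1-y)g'g\big|_{0}^{1}$ to vanish along a subsequence. The sharpness of the constant $\tfrac12$ (equivalently $\lambda_1 = 2$) is confirmed by testing with $f(y) = y$, for which $\int_0^1 y(1-y)\,dy = \tfrac16$ and $\int_0^1 |y - \tfrac12|^2\,dy = \tfrac1{12}$, so both sides of \eqref{poincare} are equal.
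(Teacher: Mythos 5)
Your main argument is correct, and it is not the one the paper supplies: the paper gives no proof at all for Lemma \ref{lem-poin}, simply citing Lemma 2.9 of \cite{Kang-V-NS17}, so there is no in-paper argument to compare against. Your spectral route is sound: under $y=(1+t)/2$ the operator $-\big(y(1-y)g'\big)'$ transforms exactly into the Legendre operator $-\big((1-t^2)\dot g\big)'$ (note the two chain-rule factors of $2$ cancel the $\tfrac14$ from the weight, so there is in fact no leftover factor to absorb), the shifted Legendre polynomials $\widetilde P_n(y)=P_n(2y-1)$ are eigenfunctions with eigenvalues $n(n+1)$, and the zero-mean constraint removes the kernel, giving the Rayleigh-quotient bound with the sharp constant $\lambda_1=2$, i.e. exactly \eqref{poincare}; your equality check with $f(y)=y$ ($\tfrac1{12}=\tfrac12\cdot\tfrac16$) confirms sharpness. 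For the rigor issue you raise about the degenerate weight, the cleanest fix is not a density/truncation argument but Bessel's inequality applied to $\sqrt{y(1-y)}\,g'$ against the orthogonal family $\{\sqrt{y(1-y)}\,\widetilde P_n'\}_{n\ge1}$: since $\int_0^1 y(1-y)\widetilde P_m'\widetilde P_n'\,dy=\lambda_n\delta_{mn}\|\widetilde P_n\|_{L^2}^2$ and (after one integration by parts, whose boundary term vanishes along a subsequence because the weighted energy is finite and $f$ can grow at most like $\sqrt{|\log y|}$ near the endpoints) the pairings equal $\lambda_n c_n\|\widetilde P_n\|_{L^2}^2$, one gets $\int_0^1 y(1-y)|g'|^2\,dy\ge\sum_{n\ge1}\lambda_n c_n^2\|\widetilde P_n\|_{L^2}^2\ge 2\|g\|_{L^2}^2$ without needing completeness of the derivative family or smoothness of $g$. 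By contrast, the "cleaner" alternative in your third paragraph does not close as stated: after Cauchy--Schwarz you would need $\int_0^1\frac{(1-2y)^2}{y(1-y)}g^2\,dy\lesssim\int_0^1 g^2\,dy$ with a precise constant, but that weight blows up like $1/y$ and $1/(1-y)$ at the endpoints, and the zero-mean condition does not by itself control such a Hardy-type integral, let alone with the sharp constant; so you should present the Legendre/Bessel argument as the proof and drop or substantially rework that alternative.
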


%\subsection{Ideas of Proof}
%Define a shift function such that
%\begin{equation}\label{xa}
%|\dot{\mb{X}}(t)|\sim  |\int_0^1{\rm w} dy|.
%\end{equation}

\section{Proof of Theorem \ref{thm:main}}\label{sec-thm}
\setcounter{equation}{0}

\subsection{Local in time estimates on the solution} 
For simplicity, we rewrite the system \eqref{NS-0} into the following system, based on the change of variable associated to the speed of propagation of the shock $(t,x)\mapsto (t, \xi=x-\s t)$: 
 \begin{align}
\begin{aligned}\label{NS-1}
\left\{ \begin{array}{ll}
        v_t-\sigma v_\xi- u_\xi =0,\\
       u_t -\sigma u_\xi +p(v,\theta)_\xi=  (\mu\frac{u_\xi}{v})_\xi,\\
       \frac{R}{\gamma-1}\theta_t-\frac{R\sigma}{\gamma-1}\theta_\x+p(v,\theta) u_\xi=(\kappa\frac{\theta_\x}{v})_\x+\mu\frac{u_\x^2}{v}. \\
        \end{array} \right.
\end{aligned}
\end{align}
It follows from \eqref{ARW} that the approximate rarefaction wave $(v, u, \theta)(t,\xi)=(v^R, u^R, \theta^R)(t,\xi+\sigma t) $ verifies 
\begin{equation}  \label{rarexi}
  \begin{cases}
    \displaystyle v_t -\s  v_\xi-u_\x = 0, \\
    \displaystyle u_t -\s  u_\xi+ p(v, \theta)_\x = 0,\\
    \di \frac{R}{\gamma-1}\theta_t-\frac{R\s}{\gamma-1}\theta_\x+p(v, \theta) u_\x=0. \\
  \end{cases}
\end{equation}
and from \eqref{vc-equ} that
 the viscous contact wave  $(v, u, \theta)(t,\xi)=\big(\vc, u^{C},
\theta^{C}\big)(t,\xi+\sigma t)$ satisfies the system
\begin{eqnarray}\label{vcex}
\begin{cases}
  v_t-\s v_\x-u_{\x} = 0, \cr
    u_t-\s u_\x+ p_\x
    =\mu\big( \frac{u_{\x}}{ v} \big)_\x+Q^C_1, \cr
        \frac{R}{\gamma-1}\theta_t-\frac{R\s}{\gamma-1}\theta_\x+  p u_\x
    =\kappa\big(\frac{\theta_{\x}}{v} \big)_\x+\mu\frac{(u_{\x})^2}{v}+Q^C_2,
\end{cases}
\end{eqnarray}
with the error terms $Q^C_i~(i=1,2)$ defined in \eqref{QC12}.
We will consider stability of the solution to \eqref{NS-1} around the superposition wave of the approximate rarefaction wave, the viscous contact wave and  the viscous shock wave shifted by $\mb X(t)$ (to be defined in \eqref{X(t)}) : 
\beq\label{shwave}
\begin{array}{l}
\di (\bar v, \bar u,\bar \theta) (t,\xi):= \Big(v^R(t,\xi+\sigma t)+v^C(\xi+\sigma t)+ v^S(\xi -\mb X(t))-v_*-v^*,\\[4mm]
\di \qquad\qquad\qquad\qquad u^R(t,\xi+\sigma t)+u^C(t,\xi+\sigma t)+u^S(\xi-\mb X(t))-u_*-u^*,\\[4mm]
\di \qquad\qquad\qquad\qquad  \theta^R(t,\x+\s t)+ \theta^C(\xi+\sigma t)+ \theta^S(\xi-\mb X(t))-\theta_*-\theta^* \Big).
\end{array}
\eeq
Then the superposition wave $(\bar v, \bar u,\bar \theta) (t,\xi)$ satisfies the system
\beq\label{bar-system}
\left\{
\begin{array}{ll}
\di \bar v_t-\s \bar v_\xi+\dot{\mb X}(t)(v^S)^{-\mb X}_\xi-\bar u_\x=0,\\[3mm]
\di \bar u_t-\s \bar u_\xi+\dot{\mb X}(t)(u^S)^{-\mb X}_\xi+\bar p_\x=\mu\left(\frac{\bar u_\x}{\bar v}\right)_\xi+Q_1,\\[3mm]
\di \frac{R}{\gamma-1}\bar \theta_t-\frac{R\s}{\gamma-1}\bar \theta_\xi+\frac{R}{\gamma-1}\dot{\mb X}(t)(\theta^S)^{-\mb X}_\xi+\bar p\bar u_\x=\kappa\left(\frac{\bar \theta_\x}{\bar v}\right)_\xi+\mu\frac{\bar u_\xi^2}{\bar v}+Q_2,
\end{array}
\right.
\eeq
where $\bar p=\frac{R\bar\theta}{\bar v}$ and the error terms
\beq\label{Q12}
Q_i:=Q^I_i+Q^R_i+Q^C_i, \quad i=1,2,
\eeq
with the wave interactions terms
\beq\label{QI1}
Q_1^I:= \left(\bar p-p^R-p^C-(p^S)^{-\mb X}\right)_\xi-\mu\left(\frac{\bar u_\x}{\bar v}-\frac{u^R_\x}{v^R}-\frac{u^C_\x}{v^C}-\frac{(u^S)^{-\mb X}_\x}{(v^S)^{-\mb X}}\right)_\x,
\eeq
\beq\label{QI2}
\begin{array}{ll}
\di 
Q_2^I:= \left(\bar p\bar u_\x-p^Ru^R_\xi-p^Cu^C_\xi-(p^S)^{-\mb X}(u^S)^{-\mb X}_\x\right)-\kappa\left(\frac{\bar \theta_\x}{\bar v}-\frac{\theta^R_\x}{v^R}-\frac{\theta^C_\x}{v^C}-\frac{(\theta^S)^{-\mb X}_\x}{(v^S)^{-\mb X}}\right)_\x\\
\di \qquad\quad -\mu\left(\frac{\bar u^2_\x}{\bar v}-\frac{(u^R_\x)^2}{v^R}-\frac{(u^C_\x)^2}{v^C}-\frac{((u^S)^{-\mb X}_\x)^2}{(v^S)^{-\mb X}}\right),
\end{array}
\eeq
and the error terms due to the inviscid rarefaction wave
\beq\label{QR}
Q^R_1:=-\mu\left(\frac{u^R_\x}{v^R}\right)_\x,\qquad
Q^R_2:=-\kappa\left(\frac{\theta^R_\x}{v^R}\right)_\x -\mu \frac{(u^R_\x)^2}{v^R},
\eeq
and the error terms $Q_1^C, Q_2^C$ due to the viscous contact wave are given in \eqref{QC12}. %and \eqref{QC2}, respectively.

For any initial $H^1$ perturbation of the superposition of waves \eqref{shwave}, there exists a global strong solution to \eqref{NS-1} (see for instance \cite{Kanel-79}). We will use a standard argument of continuation process to show the global in time control of this perturbation. 
For that, we first recall local in time estimates for strong solutions to \eqref{NS} (and so also for \eqref{NS-1}). They can be found in  \cite{Nash} in the general setting.% we ensure the local existence of strong solutions for \eqref{NS} (thus, for \eqref{NS-1}) as follows.
\begin{proposition} \label{prop:soln}
Let $\underline v$, $\underline u$ and $\underline{\theta}$ be smooth monotone functions such that
\beq\label{sm}
(\underline v(x) , \underline u(x), \underline \theta(x)) =(v_\pm, u_\pm, \theta_\pm), \quad\mbox{for } \pm x \ge 1.
\eeq
For any constants $M_0, M_1,  \underline \kappa_0,  \overline \kappa_0, \underline\kappa_1, \overline\kappa_1$ with $M_1>M_0>0$ and $ \overline \kappa_1>\overline \kappa_0>  \underline \kappa_0>\underline\kappa_1>0$, there exists a constant $T_0>0$ such that if 
\begin{align*}
\begin{aligned}
&\|(v_0-\underline v, u_0 -\underline u, \theta_0-\underline \theta)\|_{H^1(\bbr)}  \le M_0,\\
&0< \underline \kappa_0 \leq v_0(x), \theta_0(x)\leq  \overline \kappa_0, \qquad  \forall x\in \bbr, \\
\end{aligned}
\end{align*}
then \eqref{NS-1} has a unique solution $(v,u)$ on $[0,T_0]$ such that 
\begin{align*}
\begin{aligned}
&v -\underline v \in C([0,T_0];H^1(\bbr)), \\
&u -\underline u, \theta-\underline{\theta}\in C([0,T_0];H^1(\bbr)) \cap  L^2(0,T_0;H^2(\bbr)).
\end{aligned}
\end{align*}
and
\[
\|(v-\underline v, u-\underline u, \theta-\underline\theta)\|_{L^\infty(0,T_0;H^1(\bbr))} \le M_1.
\]
Moreover:
\beq\label{bddab}
\underline  \kappa_1 \le v(t,x), \theta(t,x) \le \overline  \kappa_1,\qquad \forall (t,x)\in [0,T_0]\times \bbr.
\eeq
\end{proposition}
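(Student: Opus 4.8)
This is a classical short-time well-posedness statement (cf.\ \cite{Nash, Kanel-79}), and the plan is to combine a linearized iteration scheme with uniform short-time energy estimates; the single delicate point is the preservation of the pointwise bounds on $v$ and $\theta$. Throughout one works on $\bbr$, so boundary terms at $\pm\infty$ in the energy identities vanish for $H^1$ perturbations.

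\emph{Reformulation.} First I would pass to the perturbation variables $(\phi,\psi,\omega):=(v-\underline v,\,u-\underline u,\,\theta-\underline\theta)$. Since $\underline v,\underline u,\underline\theta$ are smooth and equal to the constant far-field states outside $[-1,1]$, substituting them into \eqref{NS-1} generates only smooth inhomogeneities whose $\xi$-derivatives are supported in $[-1,1]$, hence harmless. In these variables the system is a transport equation $\phi_t-\sigma\phi_\xi-\psi_\xi=(\text{smooth source})$ coupled to two equations for $\psi$ and $\omega$ with leading operators $\mu(\psi_\xi/v)_\xi$ and $\kappa(\omega_\xi/v)_\xi$, which are uniformly parabolic as long as $v$ stays in a fixed compact subset of $(0,\infty)$.

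\emph{Iteration scheme and uniform estimates.} I would then run the standard linearized iteration: given $(v^n,u^n,\theta^n)$ with $v^n,\theta^n\in[\underline\kappa_1,\overline\kappa_1]$, define $v^{n+1}$ by integrating the linear transport equation $v^{n+1}_t-\sigma v^{n+1}_\xi=u^n_\xi$ with datum $v_0$ along the characteristics $\dot\xi=-\sigma$, and define $(u^{n+1},\theta^{n+1})$ as the solution of the linear uniformly parabolic system obtained from the momentum and energy equations by freezing the coefficient $v=v^{n+1}$ and the quadratic source $\sim(u^n_\xi)^2$; linear parabolic theory yields $u^{n+1},\theta^{n+1}\in C([0,T];H^1)\cap L^2(0,T;H^2)$. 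The core is the \emph{uniform} bound: carrying out $L^2$ estimates (pairing the equations with $(\phi^{n+1},\psi^{n+1},\omega^{n+1})$) and then $H^1$ estimates (pairing with $(-\phi^{n+1}_{\xi\xi},-\psi^{n+1}_{\xi\xi},-\omega^{n+1}_{\xi\xi})$), one uses the parabolic dissipation $\gtrsim\|(\psi^{n+1}_{\xi\xi},\omega^{n+1}_{\xi\xi})\|_{L^2}^2$, the one-dimensional interpolation $\|f\|_{L^\infty}\le C\|f\|_{L^2}^{1/2}\|f_\xi\|_{L^2}^{1/2}$, and Young's inequality to absorb every nonlinear term into the dissipation; in particular the worst term $\mu\int u^{n+1}_{\xi\xi}\,u^{n+1}_\xi v^{n+1}_\xi/(v^{n+1})^2$ is controlled by $\varepsilon\|u^{n+1}_{\xi\xi}\|_{L^2}^2+C_\varepsilon\|u^{n+1}_\xi\|_{L^2}^2\|v^{n+1}_\xi\|_{L^2}^4$, so no second derivative of $v$ is ever needed. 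A Gr\"onwall argument then closes the bound: there is $T_0=T_0(M_0,M_1,\underline\kappa_1,\overline\kappa_1,\underline v,\underline u,\underline\theta)>0$ for which $\|(\phi^{n+1},\psi^{n+1},\omega^{n+1})\|_{L^\infty(0,T_0;H^1)}^2+\int_0^{T_0}\|(\psi^{n+1}_{\xi\xi},\omega^{n+1}_{\xi\xi})\|_{L^2}^2\,dt\le M_1^2$ as soon as the analogous bound holds at level $n$ (the base case uses $\|(\phi_0,\psi_0,\omega_0)\|_{H^1}\le M_0<M_1$).

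\emph{Pointwise bounds --- the main obstacle.} The step I expect to be the most delicate, and the one that really pins down how small $T_0$ must be, is the propagation of $v^{n+1},\theta^{n+1}\in[\underline\kappa_1,\overline\kappa_1]$, since the system degenerates as either quantity approaches $0$. For $v^{n+1}$ there is no smoothing, so I would read the bound off the characteristics: $v^{n+1}(t,\xi)=v_0(\xi+\sigma t)+\int_0^t u^n_\xi(s,\xi-\sigma(s-t))\,ds$, and by Cauchy--Schwarz and the bounds of the previous step, $\int_0^t\|u^n_\xi\|_{L^\infty}\,ds\le C\big(\int_0^t\|u^n_\xi\|_{L^2}\,ds\big)^{1/2}\big(\int_0^t\|u^n_{\xi\xi}\|_{L^2}\,ds\big)^{1/2}\le C\,t^{3/4}M_1$, which is $\le\tfrac12\min(\underline\kappa_0-\underline\kappa_1,\,\overline\kappa_1-\overline\kappa_0)$ for $T_0$ small; since $v_0\in[\underline\kappa_0,\overline\kappa_0]$ this keeps $v^{n+1}$ inside $[\underline\kappa_1,\overline\kappa_1]$. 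For $\theta^{n+1}$, which solves a parabolic equation carrying the non-sign-definite pressure-work term $-p\,u^n_\xi$ (linear in $\theta$) but a nonnegative quadratic source $\mu(u^n_\xi)^2/v^{n+1}\ge0$, I would either invoke the parabolic maximum principle against spatially constant ODE sub/supersolutions that leave $[\underline\kappa_0,\overline\kappa_0]$ by only $O(T_0)$, or, more elementarily, observe from the equation and the estimates above that $\theta^{n+1}_t\in L^2(0,T_0;L^2)$ and interpolate $\|\theta^{n+1}(t)-\theta_0\|_{L^\infty}\le C\|\theta^{n+1}(t)-\theta_0\|_{L^2}^{1/2}M_1^{1/2}\le C\big(t^{1/2}\|\theta^{n+1}_t\|_{L^2(0,t;L^2)}\big)^{1/2}M_1^{1/2}$, again $O(T_0^{1/4})$ and hence small.

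\emph{Passage to the limit and uniqueness.} The previous steps show the iteration is well-defined and confined to a fixed ball; I would then conclude either by extracting, via Aubin--Lions compactness, a subsequence converging strongly in $C([0,T_0];L^2_{\mathrm{loc}})$ and passing to the limit in the weak formulation, or by showing directly that the scheme is contractive in a weaker norm (for velocity and temperature, $L^\infty_tL^2_x\cap L^2_tH^1_x$; for $v$, $L^\infty_tL^2_x$) after possibly shrinking $T_0$. The limit is the desired strong solution and inherits the bounds of the two preceding steps by weak lower semicontinuity and closedness of the constraint set; $M_0$ enters only through the requirement $M_1>M_0$ and the fact that $T_0$ can be chosen uniformly over all data with $\|(v_0-\underline v,u_0-\underline u,\theta_0-\underline\theta)\|_{H^1}\le M_0$. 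Uniqueness follows by applying the same-type energy estimate to the difference of two solutions with identical data --- the strong regularity $u,\theta\in L^2(0,T_0;H^2)$ being exactly what controls the variable coefficients --- together with Gr\"onwall's inequality.
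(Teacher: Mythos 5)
The paper itself does not prove this proposition: it is taken as classical input, with the local-in-time theory attributed to the references (Nash; also Kanel) and used only to launch the continuation argument. Your proposal reconstructs the standard proof that underlies that citation, and as a plan it is sound and essentially complete in outline: the linearized iteration (transport for $v$ along the characteristics $\dot\xi=-\sigma$, linear uniformly parabolic equations for $u,\theta$ with frozen coefficient $v$), uniform $L^\infty_tH^1\cap L^2_tH^2$ energy bounds closed by one-dimensional interpolation and Young's inequality, and Gr\"onwall, followed by contraction (or Aubin--Lions) and a difference estimate for uniqueness, is exactly the classical route. You also correctly isolate the two genuinely delicate points and handle them correctly: the worst commutator-type term $\int u_{\xi\xi}u_\xi v_\xi/v^2$ is indeed absorbed as $\varepsilon\|u_{\xi\xi}\|_{L^2}^2+C_\varepsilon\|u_\xi\|_{L^2}^2\|v_\xi\|_{L^2}^4$, and the propagation of the pointwise bounds giving \eqref{bddab} follows from the $O(T_0^{3/4})$ drift of $v$ along characteristics and the $O(T_0^{1/4})$ $L^\infty$-drift of $\theta$ via $\theta_t\in L^2_tL^2$ and interpolation. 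The only bookkeeping point worth making explicit is that the induction hypothesis for the iteration must carry simultaneously the $H^1$-ball bound and the pointwise confinement $v^n,\theta^n\in[\underline\kappa_1,\overline\kappa_1]$ (the latter is what keeps the linear problems uniformly parabolic at the next step), and that $T_0$ must be chosen uniformly over all data satisfying the $M_0$ and $[\underline\kappa_0,\overline\kappa_0]$ hypotheses --- both of which your estimates already deliver, since all constants depend only on $M_0,M_1,\underline\kappa_1,\overline\kappa_1$ and the fixed background $(\underline v,\underline u,\underline\theta)$.
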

%In fact, by \cite{MV-sima},  the local strong solution $(v,u)$ for \eqref{NS} in Proposition \ref{prop:soln} is a global one in any finite time interval $[0,T]$.

\subsection{Construction of shift} For the continuation argument, the main tool is the a priori estimates of  Proposition \ref{prop2}. These estimates depend on the shift function, and for this reason, we are giving its definition right now. The definition depends on the weight function  $a:\bbr\to\bbr$ defined in \eqref{weight}. For now, we will only use the fact that $\|a\|_{C^1(\bbr)}\leq 2$. 
We then define the shift  $\mb X$ as a solution to the ODE:
\begin{equation}\label{X(t)}
\left\{
\begin{array}{ll}
\di \dot{\mb{X}}(t)=-\frac{M}{\delta_S}\Big[\int_{\mathbb{R}}a(\xi-\mb{X})\Big[u^S_\xi(\xi-\mb{X})(u-\bar u)+\frac{R\theta^S_\xi(\xi-\mb{X})}{(\gamma-1)\bar\theta}(\theta-\bar\theta)\\[4mm]
\di \qquad\qquad\qquad \qquad\qquad \qquad +\frac{\bar p v^S_\x(\xi-\mb{X})}{\bar v}(v-\bar v)\Big]d\xi,\\[4mm]
\di \mb X(0)=0,
\end{array}
\right.
\end{equation}
where $M$ is the specific constant chosen as $
M:=\frac{\gamma(\gamma+1)p^*}{2(v^*)^2(\sigma^*)^3} \left(1+\frac{2\kappa (\gamma-1)^2}{\mu R\gamma}\right)$, which will be used in the proof of Lemma \ref{lem-sharp}.

The following lemma ensures that \eqref{X(t)} has a unique absolutely continuous solution defined on any interval in time $[0,T]$ for which   \eqref{bddab} is verified.
\begin{lemma}\label{lem:xex}
For any $c_1,c_2>0$, there exists a constant $C>0$ such that the following is true.  For any $T>0$, and any   function $v\in L^\infty ((0,T)\times \R)$ %such that for some positive constants $c_1,c_2, T$,
verifying
\beq\label{odes}
c_1 \le v(t,x), \theta(t,x)\le c_2,\qquad \forall (t,x)\in [0,T]\times \bbr,
\eeq
 the ODE \eqref{X(t)} has a unique absolutely continuous solution $\mb X$ on $[0,T]$. Moreover, %and there exists a constant $C>0$ such that
\beq\label{roughx}
|{\mb X}(t)| \le Ct,\quad \forall t\le T.
\eeq
\end{lemma}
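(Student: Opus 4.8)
The plan is to read \eqref{X(t)} as a Carathéodory ODE $\dot{\mb X}(t)=\mathcal F(t,\mb X(t))$, $\mb X(0)=0$, where $\mathcal F(t,X)$ denotes the right‑hand side of \eqref{X(t)} with the shift $X$ inserted (so that $\bar v,\bar u,\bar\theta,\bar p$ there carry the translated shock profiles $(v^S)^{-X}$, etc.), and then to establish, in order: (i) a uniform bound $\sup_{[0,T]\times\bbr}|\mathcal F|\le C$; (ii) continuity of $X\mapsto\mathcal F(t,X)$ for a.e.\ $t$ and measurability of $t\mapsto\mathcal F(t,X)$; (iii) existence on all of $[0,T]$ together with \eqref{roughx}; and (iv) uniqueness.

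For (i), the change of variables $\zeta=\xi-X$ turns $\mathcal F(t,X)$ into an integral against $a(\zeta)(v^S_\zeta,u^S_\zeta,\theta^S_\zeta)(\zeta)$ of $X$‑translates of $v,u,\theta$ minus those of the smooth rarefaction and contact profiles. Using $\|a\|_{L^\infty}\le 2$, the shock bounds of Lemma \ref{lemma1.3} — in particular $|(v^S_\xi,u^S_\xi,\theta^S_\xi)|\le C\delta_S^2 e^{-C\delta_S|\xi|}$, so that $\|(v^S_\xi,u^S_\xi,\theta^S_\xi)\|_{L^1(\bbr)}\le C\delta_S$ — the bounds $c_1\le v,\theta\le c_2$ from \eqref{odes}, the two‑sided bounds on $\bar v,\bar\theta$ (hence on $\bar p$) that follow from Lemmas \ref{lemma1.2}, \ref{Lemma 2.2.}, \ref{lemma1.3} when $\delta_0$ is small, and the boundedness of $u-\bar u$ available for the solutions under consideration (e.g.\ $u-\underline u\in C([0,T];H^1)\hookrightarrow L^\infty$ in Proposition \ref{prop:soln}), one obtains $|\mathcal F(t,X)|\le C$ uniformly in $(t,X)$; note the factor $\delta_S^{-1}$ in \eqref{X(t)} is absorbed by $\|(v^S_\xi,u^S_\xi,\theta^S_\xi)\|_{L^1}\lesssim\delta_S$.

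For (ii), after the same change of variables the $X$‑dependence of $\mathcal F(t,\cdot)$ is only through $X$‑translations of $v(t,\cdot),u(t,\cdot),\theta(t,\cdot)$ (continuous on $L^2_{loc}$) and of the $C^1$ profiles $v^R,u^R,\theta^R,v^C,u^C,\theta^C$; dominated convergence with the majorant from (i), whose kernel decays exponentially, gives continuity in $X$, and measurability in $t$ is standard from Fubini. Carathéodory's existence theorem then produces a locally absolutely continuous solution on a maximal subinterval of $[0,T]$; since $|\mathcal F|\le C$ there, the solution cannot escape in finite time, so $\mb X$ exists on all of $[0,T]$ with $|\dot{\mb X}|\le C$ a.e., and $|\mb X(t)|=\big|\int_0^t\dot{\mb X}\big|\le Ct$, which is \eqref{roughx}.

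I expect (iv), uniqueness, to be the crux, since $\mathcal F$ is not Lipschitz in $X$ when $v,u,\theta$ are merely $L^\infty$: shifting an $L^\infty$ function is continuous but not Lipschitz. To close this I would invoke the Sobolev regularity of the solutions being tracked, $v-\bar v\in C([0,T];H^1)$ and $u-\bar u,\theta-\bar\theta\in C([0,T];H^1)\cap L^2(0,T;H^2)$: then for a.e.\ $t$ one has $\|f(t,\cdot+X_1)-f(t,\cdot+X_2)\|_{L^2}\le|X_1-X_2|\,\|f_x(t,\cdot)\|_{L^2}$ for $f=v,u,\theta$, which, combined with $\|a\|_{C^1}\le 2$ and the boundedness of the shock profiles and of $\bar v$ from below, yields $|\mathcal F(t,X_1)-\mathcal F(t,X_2)|\le m(t)\,|X_1-X_2|$ with $m(t)\le C\big(1+\|(v_x,u_x,\theta_x)(t,\cdot)\|_{L^2}\big)\in L^1(0,T)$. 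For two solutions $\mb X_1,\mb X_2$ of \eqref{X(t)} this gives $\frac{d}{dt}|\mb X_1-\mb X_2|\le m(t)|\mb X_1-\mb X_2|$, and since $\mb X_1(0)=\mb X_2(0)=0$, Grönwall forces $\mb X_1\equiv\mb X_2$. Thus only \eqref{odes} is needed for existence and \eqref{roughx}, while uniqueness is where the $H^1$/$H^2$ structure of the solution enters.
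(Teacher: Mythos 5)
Your existence argument and the linear bound \eqref{roughx} are fine, but the uniqueness step is where the proposal breaks down as a proof of the lemma \emph{as stated}. The lemma asserts uniqueness for \emph{any} $v,\theta$ satisfying only the $L^\infty$ bounds \eqref{odes}; your Gr\"onwall argument invokes $v-\bar v\in C([0,T];H^1)$, $u-\bar u,\theta-\bar\theta\in C([0,T];H^1)\cap L^2(0,T;H^2)$, which are not hypotheses of the lemma, so what you prove is uniqueness only for the more regular solutions of Proposition \ref{prop2} — a strictly weaker statement. Moreover, the premise that forced this detour, namely that $\mathcal F(t,\cdot)$ fails to be Lipschitz in $X$ for merely bounded $v,u,\theta$, is incorrect: the non-Lipschitzness of translation in $L^2$ or $L^\infty$ is irrelevant here because the rough functions are never translated. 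In \eqref{X(t)} the $X$-dependence enters only through $a(\xi-X)$, the shifted derivatives $u^S_\xi(\xi-X),\theta^S_\xi(\xi-X),v^S_\xi(\xi-X)$, and through $\bar v,\bar u,\bar\theta,\bar p$ via the $C^2$ profile $(v^S,u^S,\theta^S)(\xi-X)$ entering \eqref{shwave}.

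Differentiating under the integral in $X$ (equivalently, in your changed variables, integrating by parts so the derivative falls on the $W^{1,1}$ kernel $a\,(v^S_\zeta,u^S_\zeta,\theta^S_\zeta)$ rather than on the translate of $u$), every term produced is of the form (bounded factor, controlled by \eqref{odes} and the uniform bounds on $\bar v,\bar\theta$) times $|(v^S_\xi,u^S_\xi,\theta^S_\xi,v^S_{\xi\xi},u^S_{\xi\xi},\theta^S_{\xi\xi})(\xi-X)|$, and by Lemma \ref{lemma1.3} these have $L^1(\bbr)$ norm $\le C\deltas$, which absorbs the prefactor $\deltas^{-1}$ in \eqref{X(t)}. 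Hence $\sup_X|\partial_X\mathcal F(t,X)|\le C\,\|(v-\bar v,u-\bar u,\theta-\bar\theta)(t,\cdot)\|_{L^\infty(\bbr)}\le C$, with $C$ depending only on $c_1,c_2$, $\|a\|_{C^1}\le 2$ and $\|(v^S,u^S,\theta^S)\|_{C^2}$. So $\mathcal F$ is uniformly Lipschitz in $X$ with a constant in $L^\infty(0,T)$, and the Cauchy--Lipschitz adaptation (Lemma \ref{lem_ckkv}) gives existence \emph{and} uniqueness in one stroke under \eqref{odes} alone — this is exactly the paper's route; no Carath\'eodory theory and no $H^1/H^2$ input is needed. (A shared minor caveat: like the paper's own proof, this uses boundedness of $u-\bar u$ as well, which the statement of the lemma leaves implicit.)
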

\begin{proof}
We will use the following lemma as a simple adaptation of the well-known Cauchy-Lipschitz theorem.  

 \begin{lemma} \cite[Lemma A.1]{CKKV} \label{lem_ckkv}
 Let $p>1$ and $T>0$. Suppose that a function 
 $F:[0,T]\times\bbr\rightarrow\bbr$  satisfies 
 $$\sup_{x\in\bbr }|F(t,x)|\leq f(t)\ \ \mbox{and}\ \ 
\sup_{x,y\in\bbr,x\neq y }\Big|\frac{F(t,x)-F(t,y)}{x-y}\Big|\leq g(t)
\quad \mbox{for } t\in[0,T] $$ for some functions $f \in L^1(0,T)$ and $\, g\in L^p(0,T)$. Then for any $x_0\in\bbr$, there exists a unique absolutely continuous function $\mb{X}:[0,T]\rightarrow \bbr$ satisfying
\beq\label{ode_eq}\left\{ \begin{array}{ll}
        \dot{\mb{X}}(t) = F(t,\mb{X}(t))\quad\mbox{for \textit{a.e.} }t\in[0,T],\\
       \mb{X}(0)=x_0.\end{array} \right.\eeq
 \end{lemma}

To apply the above lemma, let $F(t,\mb{X})$ denote the right-hand side of the ODE \eqref{X(t)}. \\
Then the sufficient conditions of the above lemma are verified thanks to the facts that $\|a\|_{C^1(\bbr)}\leq 2$, $\|(v^S,u^S,\theta^S)\|_{C^2(\bbr)}\leq C$, and $\|( v^S_\xi, u^S_\x,\theta^S_\x)\|_{L^1} \le C\delta_S$.
Indeed, using \eqref{odes}, we find that for some constant $C>0$,
\beq\label{f1t}
\begin{array}{l}
\di \sup_{\mb{X}\in\bbr}|F(t,\mb{X})| \le \frac{C}{\deltas}  \|\big(v-\bar v, u-\bar u, \theta-\bar\theta) \|_{L^\infty(\bbr)} \int_\bbr |( v^S_\xi, u^S_\x,\theta^S_\x)| d\xi\le C,
\end{array}
\eeq
and
\begin{align*}
\begin{aligned}
\sup_{\mb{X}\in \bbr} |\partial_{\mb{X}}F(t,\mb{X})| &\le  \frac{C}{\deltas} \|a\|_{C^1}\|\big(v-\bar v, u-\bar u, \theta-\bar\theta) \|_{L^\infty(\bbr)} \int_\bbr |( v^S_\xi, u^S_\x,\theta^S_\x)|d\xi\\ & \le C.
\end{aligned}
\end{align*}
Especially, since $|\dot{\mb X}(t)| \le C$ by \eqref{f1t}, we have \eqref{roughx}.
\end{proof}

\subsection{A priori estimates}

%Note that the shift $\mb{X}(t)$ is performed only in the shock layer. The terms $F_1$ and $F_2$ are the wave interactions due to nonlinearity of the viscosity and the pressure and error terms due to the inviscid rarefaction.\\

We now state the key step for the proof of Theorem \ref{thm:main}. 

\begin{proposition} \label{prop2}
For a given point $(v_+,u_+,\theta_+)\in\bbr_+\times\bbr\times\bbr_+$, there exist positive constants $C_0, \delta_0,\eps_1$ such that the following holds.\\
Suppose that $(v,u,\theta)$ is the solution to \eqref{NS-1} on $[0,T]$ for some $T>0$,  and $(\bar v,\bar u,\bar\theta)$ is the superposition wave defined in \eqref{shwave} with the shift $\mb X$ only performed in the viscous shock and being the absolutely continuous solution to \eqref{X(t)} with weight function $a$ defined in \eqref{weight}. Assume that both the rarefaction and shock waves strength satisfy $\deltar, \delta_C, \deltas<\delta_0$ and that 
\begin{align*}
\begin{aligned}
&v -\bar v \in C([0,T];H^1(\bbr)), \\
&(u -\bar u,\theta-\bar\theta)\in C([0,T];H^1(\bbr)) \cap  L^2(0,T;H^2(\bbr)),
\end{aligned}
\end{align*}
and 
\beq\label{apri-ass}
\|(v-\bar v, u-\bar u, \theta-\bar\theta)\|_{L^\infty(0,T;H^1(\bbr))} \le \eps_1.
\eeq
Then,  for all $t\le T$,
\begin{align}
\begin{aligned}\label{finest}
&\sup_{t\in[0,T]}\Big[\|(v-\bar v, u-\bar u, \theta-\bar \theta)\|_{H^1(\bbr)}\Big] +\sqrt{\deltas\int_0^t|\dot{\mb{X}}(\tau)|^2 d\tau} \\
&\quad+\sqrt{\int_0^t ( \mathcal{G}^R(U) + \mathcal{G}^S(U))d\tau}\\
&\quad+\sqrt{\int_0^t\|(v-\bar v)_x\|^2_{L^2(\bbr)} d\tau+\int_0^t\|(u-\bar u, \theta-\bar \theta)_x\|_{H^1(\bbr)}^2 d\tau}\\
&\le C_0 \|\big(v_0-\bar v(0,\cdot),u_0 -\bar u(0,\cdot),\theta-\bar\theta(0,\cdot)\big)\|_{H^1(\bbr)}  + C_0\delta_0^{1/4} ,
\end{aligned}
\end{align}
where 
\begin{align}
\begin{aligned}\label{maingood}
& \mathcal{G}^R(U):= \int_\bbr|v^R_\x| |(v-\bar v, \theta-\bar \theta)|^2 d\x ,\\
&\mathcal{G}^S(U):=\int_\bbr|(v^S)^{-\mb X}_\x| |(v-\bar v, u-\bar u, \theta-\bar \theta)|^2 d\x.
\end{aligned}
\end{align}
In addition, by \eqref{X(t)},
\beq\label{xprop}
|\dot{\mb{X}}(t)|\leq C_0\|(v-\bar v,u-\bar u, \theta-\bar \theta)(t,\cdot)\|_{L^\infty(\bbr)},\qquad t\le T.
\eeq
\end{proposition}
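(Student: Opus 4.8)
\textbf{Proof plan for Proposition \ref{prop2}.}

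The plan is to run the $a$-contraction with shifts argument for the viscous shock component simultaneously with the classical energy method for the rarefaction and the viscous contact components, all inside a single nonlinear weighted functional. First I would introduce the perturbation $(\phi,\psi,\omega):=(v-\bar v, u-\bar u,\theta-\bar\theta)$ and set up the weighted relative entropy $\int_\bbr a(\xi-\mb X(t))\,\eta(U\,|\,\bar U)\,d\xi$, where $\eta$ is the physical (negative) entropy of \eqref{NS-1} built from $s(v,\theta)$ and the weight $a$ is the one fixed in \eqref{weight}, localized near the $3$-shock with limits $a_\pm$ and $\|a\|_{C^1}\le 2$. Differentiating in time and using the equations \eqref{NS-1} and \eqref{bar-system}, the time derivative splits into: (i) a ``good'' term $-\dot{\mb X}(t)\,\mathbf{Y}(U)$ whose coefficient $\mathbf{Y}$ is exactly the bracket appearing in the definition \eqref{X(t)} of the shift, so that the shift ODE is chosen precisely to make $-\dot{\mb X}\,\mathbf Y = -\frac{\delta_S}{M}|\dot{\mb X}|^2\le 0$ up to controllable remainders; (ii) the hyperbolic flux terms producing the negative definite ``shock'' good term $\mathcal G^S(U)$ via the $a$-contraction structure (this is where the sign of $a'$ and the Rankine–Hugoniot/Lax conditions enter); (iii) the interaction of the weight with the rarefaction and contact profiles, producing $\mathcal G^R(U)$ plus the error integrals $\int a'\,(\cdots)$ controlled by Lemma \ref{lemma1.2}(2)--(4) and Lemma \ref{Lemma 2.2.}; (iv) the parabolic dissipation terms $-\int a\,\mu\frac{|\psi_\xi|^2}{v} - \int a\,\kappa\frac{|\omega_\xi|^2}{v\theta}$ modulo cross terms. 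The key mechanism is that the bad parts of the parabolic cross terms localized on the shock are absorbed using the \emph{sharp} weighted Poincar\'e inequality of Lemma \ref{lem-poin}: this is the role of the precise constant $M$ and of the sharp profile estimates \eqref{shock-vu}--\eqref{theta-s} (one needs $|\theta^S_\xi+\frac{(\gamma-1)p^*}{R}v^S_\xi|\lesssim \delta_S|v^S_\xi|$ and $|u^S_\xi+\sigma^* v^S_\xi|\lesssim\delta_S|v^S_\xi|$ so that the quadratic form restricted to the shock is dominated, after completing squares, by the Poincar\'e constant $\tfrac12$ with room to spare, the excess being $O(\delta_S)\mathcal G^S(U)$). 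This yields the zeroth-order estimate: $\sup_t\|(\phi,\psi,\omega)\|_{L^2}^2 + \delta_S\!\int_0^t|\dot{\mb X}|^2 + \int_0^t(\mathcal G^R+\mathcal G^S)(U) + \int_0^t(\|\phi_x\|_{L^2}^2+\|(\psi,\omega)_x\|_{L^2}^2) \lesssim \|(\phi,\psi,\omega)(0)\|_{L^2}^2 + \delta_0^{1/2} + (\text{absorbable errors})$, where the $\delta_0^{1/2}$ comes from the wave-interaction errors $Q_i^I$ and the rarefaction/contact source errors $Q_i^R,Q_i^C$ estimated via Cauchy–Schwarz against the good terms and the heat-kernel bounds, following the technique of \cite{HLM}.

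Next I would close the first-order (derivative) estimates. Multiplying the $\xi$-differentiated momentum and energy equations by $\psi_\xi$ and $\omega_\xi$ respectively (and handling $\phi_\xi$ through the continuity equation, exploiting that the effective quantity $\phi_\xi$ can be traded for $v_t$-type terms), one obtains $\sup_t\|(\phi_x,\psi_x,\omega_x)\|_{L^2}^2 + \int_0^t\|(\psi_{xx},\omega_{xx})\|_{L^2}^2\,d\tau$ controlled by the zeroth-order good terms, the lower-order norm, the shift integral $\delta_S\int|\dot{\mb X}|^2$, and again $\delta_0^{1/4}$-errors; here the $a$-weight plays no essential role and the estimates are parallel to \cite{HLM, KVW-2022}, except that one must carry the $(v^S)^{-\mb X}_{xx}$ terms (bounded by $\delta_S|(v^S)^{-\mb X}_\xi|$ via Lemma \ref{lemma1.3}) and terms involving $\dot{\mb X}$ times shock derivatives, which are absorbed into $\delta_S\int_0^t|\dot{\mb X}|^2$ using \eqref{xprop}. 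Combining the zeroth- and first-order estimates and using the a priori smallness \eqref{apri-ass} together with $\delta_0,\eps_1$ small to absorb all the nonlinear and cubic remainders (of the form $\|(\phi,\psi,\omega)\|_{L^\infty}\times(\text{good terms})$, using $\|f\|_{L^\infty}^2\le 2\|f\|_{L^2}\|f_x\|_{L^2}$) yields \eqref{finest}. Finally \eqref{xprop} is immediate from \eqref{X(t)}: bounding the integrand by $\|(\phi,\psi,\omega)(t,\cdot)\|_{L^\infty}\int_\bbr|(u^S_\xi,\theta^S_\xi,v^S_\xi)|\,d\xi \lesssim \delta_S\|(\phi,\psi,\omega)(t,\cdot)\|_{L^\infty}$ and dividing by $\delta_S/M$.

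The main obstacle is step (iii)–(iv) above: making the quadratic form coming from the sum of the $a$-contraction hyperbolic part, the parabolic dissipation, and the shift term \emph{simultaneously} nonnegative after subtracting $\mathcal G^S, \mathcal G^R$ and the dissipation of $\psi_\xi,\omega_\xi$. Because \eqref{NS} has \emph{two} dissipation terms (in the momentum and in the energy equations), one must apply the sharp weighted Poincar\'e inequality in \emph{both} the $\psi$ and the $\omega$ channels, which forces the accurate profile relations \eqref{shock-vu}--\eqref{theta-s} and the very specific choice of $M$; any slack there would leave an $O(1)$ (not $O(\delta_S)$) bad term that cannot be absorbed. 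A secondary difficulty is the bookkeeping of the wave-interaction errors $Q_i^I$: one must show they are integrable in time against the good terms with total bound $O(\delta_0^{1/2})$, which relies on the spatial separation of the three waves (the rarefaction escaping to the left at speed $\lambda_{1*}<0$, the contact spreading like $\sqrt{t}$, the shock stationary in $\xi$) so that products of distinct profiles are exponentially small in $|\xi|+t$ or decay like $(1+t)^{-3/2}$; this is the part that is technically heaviest but conceptually routine given Lemmas \ref{lemma1.2}, \ref{Lemma 2.2.}, \ref{lemma1.3}.
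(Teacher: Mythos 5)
Your overall strategy coincides with the paper's: a weighted relative entropy $\int a^{-\mb X}\bar\theta\,\eta(U|\bar U)\,d\xi$ with the shift ODE \eqref{X(t)} cancelling the $\mb Y$-term up to $-\frac{\deltas}{M}|\dot{\mb X}|^2$, the sharp profile relations \eqref{shock-vu}--\eqref{theta-s} and the weighted Poincar\'e inequality (Lemma \ref{lem-poin}) applied in both the velocity and temperature channels with the specific constant $M$ (Lemma \ref{lem-sharp}), the wave-interaction estimates of Lemma \ref{lemma2.2}, the HLM-type heat-kernel estimate for the contact wave (Lemma \ref{cw-lemma}), and a final $H^1$-level energy estimate combined with the $L^2$ level. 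Your treatment of \eqref{xprop} is also the paper's.

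There is, however, one concrete bookkeeping error that would make your zeroth-order step fail as stated: you claim the relative-entropy estimate already yields $\int_0^t\|(v-\bar v)_x\|_{L^2}^2\,d\tau$ on the left-hand side. It cannot, because the system \eqref{NS-1} has no diffusion in the mass equation and (as the paper stresses) no BD-entropy substitute is available for Navier--Stokes--Fourier; the dissipation $\mb D$ produced at that level involves only $(u-\bar u)_\xi$ and $(\theta-\bar\theta)_\xi$. In the paper the $L^2$ estimate (Lemma \ref{lem-zvh}, \eqref{esthv}) is in fact \emph{not closed}: it carries $C\delta_0\int_0^t\|(v-\bar v)_\xi\|_{L^2}^2\,d\tau$ on the \emph{right}, inherited both from the contact-wave heat-kernel lemma \eqref{cwe} and from interaction terms. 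The missing $v$-derivative dissipation is generated only at the first-order stage by the Matsumura--Nishihara-type cross functional: one multiplies the (undifferentiated) momentum equation \eqref{psi-equ} by $-v\phi_\xi$ and the $\xi$-differentiated continuity equation \eqref{psiphire2} by $\mu\phi_\xi$, so that $\frac{d}{dt}\int(\frac{\mu}{2}\phi_\xi^2-v\psi\phi_\xi)\,d\xi$ produces $\int\frac{R\theta}{v}\phi_\xi^2\,d\xi$ (estimate \eqref{phifinal}); your first-order output list omits $\int_0^t\|\phi_\xi\|^2$ entirely, and "trading $\phi_\xi$ for $v_t$-type terms" is not by itself this mechanism. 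Consequently the two levels must be combined with carefully chosen coefficients (as in \eqref{fineq}--\eqref{pptfinal}) precisely so that the $\delta_0\int\|\phi_\xi\|^2$ remainders of the $L^2$ step and of Lemma \ref{cw-lemma} are absorbed by the newly produced $\phi_\xi$-dissipation; without identifying this source, the chain of estimates does not close, even though every other ingredient of your plan matches the paper.
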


We postpone the proof of this key proposition to Sections \ref{sec-acontraction} and \ref{sec-vu}. We are proving in the rest of this section  how Proposition \ref{prop2} implies Theorem \ref{thm:main}.

\subsection{Conclusion}
Based on Propositions \ref{prop:soln} and \ref{prop2}, we use the continuation argument to prove \eqref{ext-main} for the global-in-time existence of perturbations. We can also use Proposition \ref{prop2} to prove  \eqref{con} for the long-time behavior. Those proofs are typical and use the same arguments as in the previous paper \cite{KVW-2022}. 
Therefore, we omit those details, and complete the proof of Theorem \ref{thm:main}.

Hence, the remaining part of this paper is dedicated to the proof of Proposition \ref{prop2}.
 
\noindent$\bullet$ {\bf Notations:} In what follows, we use the following notations  for notational simplicity. \\
1. $C$ denotes a positive $O(1)$-constant which may change from line to line, but which is independent of the small constants $\delta_0, \eps_1, \deltas,\deltar$, $\lambda$ (to appear in \eqref{weight}) and the time $T$.\\
2. For any function $f : \bbr_+\times \bbr\to \bbr$ and any time-dependent shift $\mb X(t)$, 
\[
f^{\pm \mb X}(t, \xi):=f(t,\xi\pm \mb X(t)).
\]
%3. We omit the dependence on $\mb X$ for \eqref{shwave} as follows:
%\[
%(\bar v, \bar u) (t,\xi) := \Big(v^R(t,\xi+\s t)+v^C(\frac{\xi+\s t}{\sqrt{1+t}})+ v^S(\xi -\mb X(t))-v_*-v^*, u^R(t,\xi+\s t)+u^S(\xi-\mb X(t))-u_*-u^* \Big) .
%\]
For simplicity, we also omit the arguments of the waves without confusion: for example, 
\begin{align*}
\begin{aligned}
&v^R:=v^R(t,\xi+\s t),\quad  (v^R)^{\mb X}:=v^R(t,\xi+\s t + \mb X(t)),\\
&v^C:=v^C(t,\xi+\s t),\quad  (v^C)^{\mb X}:=v^C(t,\xi+\s t + \mb X(t)),\\
&\bar v^{\mb X}:=v^R(t,\xi+\s t + \mb X(t))+v^C(\frac{\xi+\s t+ \mb X(t)}{\sqrt{1+t}})+v^S(\xi)-v_*-v^*.
\end{aligned}
\end{align*}

\section{Relative entropy estimates}\label{sec-acontraction}
\setcounter{equation}{0}
This section is dedicated to the proof of the following lemma.
\begin{lemma}\label{lem-zvh}
Under the hypotheses of Proposition \ref{prop2}, there exists $C>0$ (independent of $\delta_0, \eps_1, T$) such that for all $t\in [0,T]$,
\begin{align}
\begin{aligned}\label{esthv}
&\sup_{t\in[0,T]}\Big[\|(v-\bar v, u-\bar u, \theta-\bar \theta)(t,\cdot)\|^2_{L^2(\bbr)}\Big] +\deltas\int_0^t|\dot{\mb{X}}(\tau)|^2 d\tau \\
&\quad+\int_0^t ( \mathcal{G}^R(U) + \mathcal{G}^S(U)) d\tau +\int_0^t\|(u-\bar u, \theta-\bar \theta)_x\|^2_{L^2(\bbr)} d\tau\\
&\le C \|\big(v_0-\bar v(0,\cdot),u_0 -\bar u(0,\cdot),\theta_0-\bar\theta(0,\cdot)\big)\|^2_{L^2(\bbr)} +C\delta_0\int_0^t\|(v-\bar v)_x\|^2_{L^2(\bbr)} d\tau+ C\delta_0^{1/2},
\end{aligned}
\end{align}
where the good terms $ \mathcal{G}^R(U),  \mathcal{G}^S(U)$ are as in \eqref{maingood}.
\end{lemma}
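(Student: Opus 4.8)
The plan is to run the method of $a$-contraction with shifts for the relative entropy of the full Navier-Stokes-Fourier system \eqref{NS-1}, directly in the $(v,u,\theta)$ variables (there being no Bresch-Desjardins entropy available here). Let $\eta(U|\bar U)$ denote the relative entropy attached to the physical entropy $s$, normalized so that $\eta\geq 0$ and $\eta(U|\bar U)=0$ iff $U=\bar U$ (its precise form is computed in Appendix A). First I would consider the weighted functional $t\mapsto\int_\bbr a(\xi-\mb X(t))\,\eta\big(U(t,\xi)\,\big|\,\bar U(t,\xi)\big)\,d\xi$, differentiate it in time using \eqref{NS-1} for $U$ and \eqref{bar-system} for $\bar U$, and organize the resulting identity into: (i) the shift terms proportional to $\dot{\mb X}(t)$; (ii) the hyperbolic relative-flux contributions, in particular the term $\tfrac12\int a_\xi(\xi-\mb X)\,[\text{relative flux}]\,d\xi$ localized on the viscous shock; (iii) the parabolic dissipation terms generated by $\mu u_\xi/v$ and $\kappa\theta_\xi/v$, namely (after integration by parts and completing squares) $-\int\frac{\mu}{v}|(u-\bar u)_\xi|^2$ and $-\int\frac{\kappa}{v\theta}|(\theta-\bar\theta)_\xi|^2$ modulo lower order; (iv) the good terms coming from the genuine nonlinearity of the rarefaction, exploiting $v^R_\xi>0$, $\theta^R_\xi<0$ from Lemma \ref{lemma1.2}; and (v) the error terms $Q_1,Q_2$ of \eqref{Q12}.

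The heart of the argument is the shock-localized part of (i)-(iii). The ODE \eqref{X(t)} is designed exactly so that the linear-in-perturbation contribution driven by $(v^S)^{-\mb X}_\xi,(u^S)^{-\mb X}_\xi,(\theta^S)^{-\mb X}_\xi$ is cancelled up to a term bounded by $\delta_S|\dot{\mb X}|^2$ plus a small multiple of $\mathcal{G}^S(U)$, and the constant $M$ in \eqref{X(t)} is tuned so that the sharp weighted Poincar\'e inequality of Lemma \ref{lem-poin} can be applied. I would then show that the remaining bad quadratic form in the shock region — the term $\int a_\xi\,[\text{relative flux}]$ together with the cross terms between $v-\bar v$, $u-\bar u$ and $\theta-\bar\theta$ forced by the two dissipations — is nonpositive up to quantities of size $\delta_S$ times the two dissipation functionals and $\mathcal{G}^S(U)$, once $\delta_S$ is small and $a$ is chosen as in \eqref{weight}. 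This is precisely where the refined profile estimates \eqref{shock-vu}-\eqref{theta-s} of Lemma \ref{lemma1.3} are indispensable: since Poincar\'e must be invoked separately for the velocity equation and for the temperature equation, one needs $(u^S)_\xi\approx-\sigma^*(v^S)_\xi$ and $(\theta^S)_\xi\approx-\tfrac{(\gamma-1)p^*}{R}(v^S)_\xi$ with only $O(\delta_S)$ errors. Because \eqref{NS-1} provides no direct dissipation for $v-\bar v$, the $v$-derivative cross terms that cannot be closed in this step produce exactly the factor $C\delta_0\int_0^t\|(v-\bar v)_x\|^2_{L^2}\,d\tau$ on the right-hand side of \eqref{esthv}, to be absorbed later in Section \ref{sec-vu} by parabolic estimates on the higher derivatives.

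For the remaining contributions: the rarefaction yields, via Lemma \ref{lemma1.2}, a good term $-c\,\mathcal{G}^R(U)$ (note $\mathcal{G}^R$ controls only $v-\bar v$ and $\theta-\bar\theta$, consistent with $z_1$ being constant along $R_1$) together with the inviscid error $Q^R_i$, whose $L^1_t$ norm is $O(\delta_R)$ and whose slowly decaying part is handled by Young's inequality against the dissipation and the good terms; the viscous contact errors $Q^C_i$ of \eqref{QC12} decay like $(1+t)^{-3/2}$, $(1+t)^{-2}$ with Gaussian spatial weight, hence are integrable and contribute $O(\delta_C)$ after pairing with the perturbation and using the heat-kernel estimates of Lemma \ref{Lemma 2.2.} (as in \cite{Huang-Xin-Yang, HLM}); and the wave-interaction terms $Q^I_i$ of \eqref{QI1}-\eqref{QI2}, supported where two of the three waves overlap, are bounded in $L^1_tL^2_x$ (resp.\ $L^1_{t,x}$) by $C\delta_0^{1/2}$ using the exponential/Gaussian localizations of Lemmas \ref{lemma1.2}, \ref{Lemma 2.2.}, \ref{lemma1.3} and the fact that the shock moves at nonzero speed while the rarefaction and the contact wave spread only sublinearly. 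Collecting (i)-(v), integrating from $0$ to $t$, and using $\eta(U|\bar U)\sim|U-\bar U|^2$ under the a priori smallness \eqref{apri-ass} and the uniform bounds \eqref{bddab} to convert the weighted relative entropy into $\|(v-\bar v,u-\bar u,\theta-\bar\theta)\|^2_{L^2}$, one arrives at \eqref{esthv}.

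The main obstacle I anticipate is step (ii)-(iii): reconciling the shock's $a$-contraction mechanism with the two separate parabolic dissipations and with the rarefaction and contact-wave good terms simultaneously, i.e.\ choosing the weight $a$ and the constant $M$ so that, after the shift-cancellation, the full quadratic form in $(v-\bar v,u-\bar u,\theta-\bar\theta)$ on the shock layer is nonpositive up to terms of size $\delta_0$ times quantities that are already controlled ($\mathcal{G}^R$, $\mathcal{G}^S$, the dissipations, and $\|(v-\bar v)_x\|^2_{L^2}$). Everything else — the rarefaction, the contact wave, and the interaction errors — is, by comparison, routine decay-rate bookkeeping.
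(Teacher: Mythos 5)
Your overall strategy — differentiating the shifted, $a$-weighted relative entropy $\int a^{-\mb X}\bar\theta\,\eta(U|\bar U)\,d\xi$, cancelling the shock-linear terms through the ODE \eqref{X(t)}, completing squares to produce $\mb G$-type good terms, invoking the weighted Poincar\'e inequality of Lemma \ref{lem-poin} separately in the velocity and temperature channels with the refined profile estimates \eqref{shock-vu}--\eqref{theta-s}, and extracting $\mathcal{G}^R$ from the monotonicity of the rarefaction — is the same as the paper's. However, there is a genuine gap in how you account for the viscous contact wave, and it shows up concretely in your explanation of the term $C\delta_0\int_0^t\|(v-\bar v)_x\|^2_{L^2}\,d\tau$. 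In the relative entropy computation there are no $(v-\bar v)_\xi$ cross terms coming from the shock layer: all shock-localized contributions are weighted by $a_\xi$ or $(v^S)^{-\mb X}_\xi$ and are zeroth order in the perturbation derivatives, so the mechanism you invoke ("$v$-derivative cross terms that cannot be closed in this step") does not occur and cannot be the source of that term.

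What actually forces it is the contact wave. The bad terms of the form $\dc(1+t)^{-1}\int_\bbr e^{-\frac{C_1|\xi+\sigma t|^2}{1+t}}|(v-\bar v,\theta-\bar\theta)|^2\,d\xi$ (coming from $\theta^C_t-\sigma\theta^C_\xi$, $|u^C_\xi|$, $|\theta^C_\xi|^2$, etc., in $\mb B_3$--$\mb B_5$ and $J_2,J_3$) decay only at the borderline rate $(1+t)^{-1}$ and are supported where neither $|(v^S)^{-\mb X}_\xi|$ nor $|v^R_\xi|$ is large, so they are absorbed neither by $\mathcal{G}^S$, $\mathcal{G}^R$, nor by the dissipation, and they are not time-integrable by "routine decay-rate bookkeeping." The paper handles them with a separate Huang--Li--Matsumura-type heat-kernel energy estimate (Lemma \ref{cw-lemma}, proved in Appendix \ref{app-cont}), which tests the perturbation equations against auxiliary functions built from the Gaussian $W$ and, after integrations by parts, necessarily produces $\int_0^t\|(v-\bar v)_\xi\|^2_{L^2}\,d\tau$ on its right-hand side; multiplying by the small prefactor $\delta_0$ of the borderline terms is precisely what yields the $C\delta_0\int_0^t\|(v-\bar v)_x\|^2_{L^2}\,d\tau$ in \eqref{esthv}. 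Your proposal cites only the pointwise profile bounds of Lemma \ref{Lemma 2.2.} and treats the contact-wave contributions as $O(\delta_C)$ errors, which would leave the estimate unclosed; this auxiliary lemma (and its interplay with the yet-to-be-proved first-derivative bound on $v-\bar v$) is a necessary ingredient, not bookkeeping.
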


\subsection{Wave interaction estimates}
We first present useful estimates for the wave interaction terms such as $Q_i^I$ in \eqref{QI1}. 
Notice that the a priori assumption \eqref{apri-ass} with the Sobolev embedding implies
\beq\label{smp1}
\|(v-\bar v, u-\bar u, \theta-\bar \theta)\|_{L^\infty((0,T)\times\bbr)}\le C\eps_1, \quad\mbox{and so}\quad v, u, \theta \in L^\infty((0,T)\times\bbr).
\eeq
Then, the ODE \eqref{X(t)} together with Lemma \ref{lemma1.3} yields that
\beq\label{dxbound}
\begin{aligned}
|\dot{\mb X}(t)| &\le \frac{C}{\deltas}  \|(v-\bar v, u-\bar u, \theta-\bar \theta)(t,\cdot) \|_{L^\infty(\bbr)} \int_\bbr ( v^S)^{-\mb X}_\xi d\xi \\
&\le C \|(v-\bar v, u-\bar u, \theta-\bar \theta)(t,\cdot) \|_{L^\infty(\bbr)}.
\end{aligned}
\eeq
This especially proves \eqref{xprop}, and 
will be used to get the wave interaction estimates in Lemma \ref{lemma2.2}.

\begin{lemma}\label{lemma2.2}
Let $\mb X$ be the shift defined by \eqref{X(t)} and $Q_i^I(~i=1,2)$ is defined in \eqref{QI1} and \eqref{QI2}. Under the same hypotheses as in Proposition \ref{prop2}, the following holds: for $i=1,2,$ and $\forall t\le T,$ $\exists$ a  constant $C>0$ independent of $T, \delta_R, \deltas$ and $\delta_C$ such that
\beq\label{wave-interactions}
\begin{aligned}
&\|Q_i^I\|_{L^2(\bbr)} \le C\deltas (\deltar+\delta_C) e^{-C \deltas t}+C\deltar\delta_Ce^{-Ct},\\[3mm]
& \| |(v^S)^{-\mb X}_\xi| |\big(v^R -v_*,\theta^R -\theta_*\big) |\|_{L^2(\bbr)}+\| |(v^S)^{-\mb X}_\xi| |\big(v^C -v^*,\theta^C -\theta^*\big) |\|_{L^2(\bbr)}\\
&\leq C\deltas^{3/2} (\deltar+\delta_C) e^{-C \deltas t} .
\end{aligned}
\eeq
\end{lemma}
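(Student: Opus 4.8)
The plan is to estimate each wave interaction term by exploiting the spatial separation of the three wave profiles: the rarefaction $v^R$ is localized near $\xi \sim (\lambda_{1*}-\sigma)t < 0$ (since the $1$-rarefaction travels with negative speed relative to the $3$-shock frame), the contact wave $v^C$ is localized near $\xi \sim -\sigma t$, and the shifted shock profile $(v^S)^{-\mb X}$ is localized near $\xi \sim \mb X(t)$ which by \eqref{roughx} satisfies $|\mb X(t)| \le Ct$ but, more importantly, stays in a bounded-slope cone so that the shock profile remains near $\xi = O(\delta_S^{-1})$-type scales while the other two waves escape at linear rate. First I would record the pointwise decay estimates: from Lemma \ref{lemma1.3}, $|(v^S)^{-\mb X}_\xi| \le C\delta_S^2 e^{-C\delta_S|\xi - \mb X|}$ and from Lemma \ref{lemma1.2}(3)--(4) and Lemma \ref{Lemma 2.2.}, the rarefaction and contact quantities $v^R - v_*$, $v^C - v^*$ etc.\ decay like $e^{-c|\xi+\sigma t - \lambda_{1*}(1+t)|}$ and $e^{-C_1(\xi+\sigma t)^2/(1+t)}$ respectively in the relevant half-lines. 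Since $\sigma > 0$ while $\lambda_{1*} < 0$ and $\sigma > 0$, on the support of the shock profile (where $|\xi - \mb X|$ is moderate and $\mb X$ grows at most linearly with a controllable constant) the arguments $\xi + \sigma t - \lambda_{1*}(1+t)$ and $\xi + \sigma t$ are both $\ge ct$ for $t$ large, giving exponential-in-$t$ smallness.

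The second estimate in \eqref{wave-interactions} is the cleaner one, so I would do it first: bound $\||(v^S)^{-\mb X}_\xi|\, |(v^R - v_*, \theta^R - \theta_*)|\|_{L^2}$ by splitting $\bbr$ at the midpoint between the shock location $\mb X(t)$ and the rarefaction location. On the shock side, use $|(v^S)^{-\mb X}_\xi| \le C\delta_S^2 e^{-C\delta_S|\xi-\mb X|}$, which integrates in $L^2$ to $C\delta_S^{3/2}$, times the rarefaction amplitude evaluated there, which is $\le C\delta_R e^{-Ct}$ by the separation argument; on the rarefaction side, $|v^R - v_*|$ is merely $O(\delta_R)$ but $|(v^S)^{-\mb X}_\xi|$ is exponentially small in $t$ there. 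Combining yields the claimed $C\delta_S^{3/2}(\delta_R + \delta_C)e^{-C\delta_S t}$; the same argument with the Gaussian bounds of Lemma \ref{Lemma 2.2.} handles the contact-wave factor.

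For the first estimate, the key algebraic observation is that $Q_i^I$ as defined in \eqref{QI1}--\eqref{QI2} is a sum of terms each of which is a \emph{product of derivatives or differences coming from (at least) two distinct waves} — e.g.\ $\bar p - p^R - p^C - (p^S)^{-\mb X}$, being the ``cross'' part of the pressure, vanishes whenever any two of the three waves degenerate, and similarly for the viscous fluxes after expanding $\frac{\bar u_\xi}{\bar v} - \sum \frac{u^w_\xi}{v^w}$. Concretely I would Taylor-expand around, say, the constant states $v_*, v^*$, write each cross term as $(\text{difference from one wave})\times(\text{difference or derivative from another wave})$, and then in each such bilinear expression put the $L^\infty$ norm on the more slowly decaying factor and the $L^2$ norm on the faster one, choosing the split point of $\bbr$ adapted to which pair of waves appears. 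The shock--rarefaction and shock--contact cross terms produce the $C\delta_S(\delta_R+\delta_C)e^{-C\delta_S t}$ contribution (one factor contributes $\delta_S$-worth of $L^2$ mass from $v^S_\xi$ or $\delta_S$ amplitude, the other an $e^{-C\delta_S t}$ from separation in the shock frame), while the rarefaction--contact cross term, where both waves separate at the \emph{full} linear rate, produces the $C\delta_R\delta_C e^{-Ct}$ piece. One also needs $|\dot{\mb X}| \le C\eps_1$ from \eqref{dxbound} to ensure the shock stays in its cone and the separation estimates are uniform.

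The main obstacle I anticipate is bookkeeping rather than conceptual: correctly organizing the expansion of the nonlinear cross terms (pressure ratio $R\bar\theta/\bar v$, the quotients $\bar u_\xi/\bar v$ and $\bar u_\xi^2/\bar v$ appearing in $Q_1^I, Q_2^I$) into genuinely bilinear ``two-wave'' pieces with no leftover single-wave remainder, and verifying that the second-derivative contributions (from the viscous flux differences, which involve $v^S_{\xi\xi}$, $v^C_{\xi\xi}$, $v^R_{\xi\xi}$) are controlled by the bounds $|(v^S)^{-\mb X}_{\xi\xi}| \le C\delta_S|v^S_\xi|$, $|u^R_{\xi\xi}| \le C|u^R_\xi|$, and the Gaussian derivative bounds of Lemma \ref{Lemma 2.2.}, so that no term escapes the $\delta_S \cdot e^{-c t}$-type control. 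Once the bilinear structure is exposed, the $L^2$ estimates are routine applications of the separation-of-supports argument combined with $\|v^S_\xi\|_{L^2}^2 \sim \delta_S^3$ and $\|v^S_\xi\|_{L^1} \le C\delta_S$.
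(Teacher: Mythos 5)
Your proposal follows essentially the same route as the paper's proof: after differentiation, each interaction term is expanded into bilinear products of one wave's derivative with the other waves' deviations from their end states, and then $\bbr$ is split according to the wave locations (using $|\dot{\mb X}|\le C\eps_1$ from \eqref{dxbound} so that $|\mb X(t)|<\tfrac{\sigma t}{4}$), with the pointwise decay of Lemmas \ref{lemma1.2}, \ref{Lemma 2.2.} and \ref{lemma1.3} giving the $e^{-C\deltas t}$ rate from the slow shock tail, the $e^{-Ct}$ rate for the rarefaction–contact interaction, and the powers of $\deltas$ from $\|v^S_\xi\|_{L^1}\le C\deltas$, $\|v^S_\xi\|_{L^2}\sim\deltas^{3/2}$, exactly as in the paper. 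The only cosmetic point is that the undifferentiated cross pressure $\bar p-p^R-p^C-(p^S)^{-\mb X}$ reduces to a nonzero constant (not zero) when two waves degenerate; since it only ever appears differentiated, your bilinear bookkeeping is precisely the computation carried out in the paper.
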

%Remark that the above more accurate estimate in $\eqref{wave-interactions}_2$ with the power $\deltas^{3/2}$ is crucial in estimating \eqref{a-est}.
\begin{proof} For brevity, we only estimate $\|Q_1^I\|_{L^2(\bbr)}$, since the proof for $\|Q_2^I\|_{L^2(\bbr)}$ is almost the same. 
Recall
\[
Q_1^I:= \left(\bar p-p^R-p^C-(p^S)^{-\mb X}\right)_\xi-\mu\left(\frac{\bar u_\x}{\bar v}-\frac{u^R_\x}{v^R}-\frac{u^C_\x}{v^C}-\frac{(u^S)^{-\mb X}_\x}{(v^S)^{-\mb X}}\right)_\x.
\]
Since $\bar p = \frac{R\bar\theta}{\bar v}$, $\bar{\theta}_\xi = \theta^R_\xi+\theta^C_\xi+\theta^S_\xi$ and $\bar{v}_\xi = v^R_\xi+v^C_\xi+v^S_\xi$, the first term of $Q_1^I$ can be written as
$$\begin{array}{ll}
\di  \big(\bar p-p^R-p^C-(p^S)^{-\mb X}\big)_\xi \\[3mm]
\di =R\theta^R_\xi(\frac {1}{\bar v}-\frac {1}{v^R})+R\theta^C_\xi(\frac {1}{\bar v}-\frac {1}{v^C})+R(\theta^S)^{-\mb X}_\xi(\frac {1}{\bar v}-\frac {1}{(v^S)^{-\mb X}})\\[3mm]
\di\quad - v^R_\xi(\frac {\bar p}{\bar v}-\frac {p^R}{v^R}) -v^C_\xi(\frac {\bar p}{\bar v}-\frac {p^C}{v^C}) - (v^S)^{-\mb X}_\xi(\frac {\bar p}{\bar v}-\frac {(p^S)^{-\mb X}}{(v^S)^{-\mb X}}).
\end{array}
$$
Thus, we have
\beq\label{pcomp}
 \big|\big(\bar p-p^R-p^C-(p^S)^{-\mb X}\big)_\xi \big| \le C(R_1+R_2+R_3),
\eeq
where
\[
\begin{array}{ll}
\di  R_1:= |(v^R_\xi,\theta^R_\xi)| |(v^C-v_*, \theta^C-\theta_*, (v^S)^{-\mb X}-v^*, (\theta^S)^{-\mb X}-\theta^*)|, \\[3mm]
\di R_2:= |(v^C_\xi,\theta^C_\xi)| |(v^R-v_*, \theta^R-\theta_*, (v^S)^{-\mb X}-v^*, (\theta^S)^{-\mb X}-\theta^*) | ,\\[3mm]
\di R_3:= |((v^S)^{-\mb X}_\xi,(\theta^S)^{-\mb X}_\xi)| |(v^R-v_*, \theta^R-\theta_*,v^C-v^*, \theta^C-\theta^*)| .
\end{array}
\]
First, for fixed $t\in[0,T]$ and $\l_{1*}:=\l_1(v_*, \theta_*)<0$, we have
\beq\label{Q1I-1}
\begin{aligned}
 \|R_1 \|_{L^2(\bbr)} &\le  \bigg[\bigg(\int_{\big\{\xi+\s t \leq \frac{\l_{1*}}{2}(1+t)\big\}}+\int_{\big\{\xi+\s t > \frac{\l_{1*}}{2}(1+t)\big\}}\bigg) \big|(v^R_\xi,\theta^R_\xi)\big|^2 \\[4mm]
& \qquad \cdot\big|(v^C-v_*, \theta^C-\theta_*, (v^S)^{-\mb X}-v^*, (\theta^S)^{-\mb X}-\theta^*) \big|^2 d\xi\bigg]^{\frac12}\\
& =: C\sqrt{R_{11}+R_{12}}
\end{aligned}
\eeq
By \eqref{smp1}
and
\eqref{dxbound}, 
it holds that
$$
|\dot{\mb{X}}(t)|\leq C\eps_1,\qquad 0\le t\le T,
$$
which together with $\mb X(0)=0$ yields
$$
|\mb{X}(t)|\leq C\eps_1 t,\qquad 0\le t\le T.
$$
Let us take $\eps_1$ so small such that the above bound is less than $ \frac{\s t}{4}$, that is,
\beq\label{X(t)-bound}
|\mb{X}(t)|\leq C\eps_1t < \frac{\s t}{4}.
\eeq
Since 
\[
\xi+\s t \leq  \frac{\l_{1*}}{2}(1+t)\quad\Rightarrow\quad\xi- \mb{X}(t)\leq \frac{\l_{1*}}{2}(1+t)-\s t-\mb{X}(t)\leq  \frac{\l_{1*}}{2}(1+t)-\frac 34\s t < -\frac 34\s t <0,
\] 
and so $|\xi- \mb{X}(t)|\ge \frac 34\s t $, it follows from Lemma \ref{lemma1.3} and Lemma \ref{Lemma 2.2.} that for all $\xi$ with $\xi+\s t \leq  \frac{\l_{1*}}{2}(1+t)<0$,
\beq\label{shock-pe}
\begin{aligned}
\big|\big((v^S)^{-\mb X}-v^*, (\theta^S)^{-\mb X}-\theta^*\big) \big| &\le C\deltas e^{-C\deltas |\xi -\mb X(t)|} \\
&\le C\deltas e^{-\frac{C\deltas |\xi -\mb X(t)|}{2}} e^{-C\deltas t},
\end{aligned}
\eeq
and
\beq\label{vc-pe1}
\big|(v^C-v_*, \theta^C-\theta_*)|+\big|(v^C_\x, \theta^C_\x)\big| \le C\delta_C e^{-\frac{C|\xi+\s t|^2}{1+t}}\le C\delta_C e^{-\frac{C|\xi+\s t|^2}{2(1+t)}}e^{-Ct}.
\eeq
Thus,
\begin{align*}
R_{11} &\le C\Big[(\deltas)^2 e^{-2C\deltas t}+(\delta_C)^2 e^{-2Ct} \Big]  \int_\bbr \big|(v^R_\xi,\theta^R_\xi)\big|^2 d\xi,
\end{align*}
which together with Lemma \ref{lemma1.2} (1) yields
\[
R_{11} \le C\delta_R^2 \Big[(\deltas)^2 e^{-2C\deltas t}+(\delta_C)^2 e^{-2Ct} \Big].
\]
On the other hand, since 
$$
\xi+\s t > \frac{\l_{1*}}{2}(1+t)  \quad\Rightarrow\quad \xi+\s t-\l_{1*}(1+t)> -\frac{\l_{1*}}{2}(1+t)=\frac{|\l_{1*}|}{2}(1+t)>0,
$$
it follows from Lemma \ref{lemma1.2} (3) that  for all $\xi$ with $\xi+\s t > \frac{\l_{1*}}{2}(1+t) $,
\beq\label{rarefaction-pe}
|(v^R -v_*, \theta^R -\theta_*)| + |(v_\xi^R,\theta_\x^R)| \le C\deltar e^{-2 |\xi +\s t-\lambda_{1*}(1+t) |}.
\eeq
In addition, for $\xi+\s t > \frac{\l_{1*}}{2}(1+t) $, 
\beq\label{rarefaction-pe2}
e^{-2 |\xi +\s t-\lambda_{1*}(1+t) |} = e^{-2 (\xi +\s t-\frac{\lambda_{1*}}{2}(1+t) )} e^{\lambda_{1*}(1+t)} = e^{-2 |\xi +\s t-\frac{\lambda_{1*}}{2}(1+t) |} e^{-|\lambda_{1*}|(1+t)}.
\eeq
This together with  Lemma \ref{lemma1.3} and Lemma \ref{Lemma 2.2.} implies
\begin{align*}
R_{12} &\le C\deltar^2 \big(\deltas^2+\delta_C^2 \big) e^{-|\lambda_{1*}|(1+t)} \int_{\big\{\xi+\s t > \frac{\l_{1*}}{2}(1+t)\big\}}   e^{-2 |\xi +\s t-\frac{\lambda_{1*}}{2}(1+t) |} d\xi \\
&\le  C\deltar^2 \big(\deltas^2+\delta_C^2 \big) e^{-C t} 
\end{align*}
Thus,
\[
 \|R_1 \|_{L^2(\bbr)} \le C \deltar \deltas e^{-C \deltas t}+C\deltar\big(\deltas+\delta_C \big)e^{-Ct}.
\]
To estimate $ \|R_2\|_{L^2(\bbr)}$, consider
\beq\label{Q1I-2}
\begin{aligned}
\|R_2 \|_{L^2(\bbr)}
 &\le C \bigg[\bigg(\int_{\big\{\xi+\s t \leq \frac{\l_{1*}}{2}(1+t)\big\}}+\int_{\big\{\xi+\s t > \frac{\l_{1*}}{2}(1+t)\big\}}\bigg)   \big|(v^C_\xi,\theta^C_\xi)\big|^2 \big|(v^R-v_*, \theta^R-\theta_*) \big|^2 d\x\\[4mm]
&\quad \quad\quad + \bigg(\int_{\big\{\xi\leq -\frac{\s}{2} t\big\}} + \int_{\big\{\xi> -\frac{\s}{2} t\big\}}\bigg) \big|(v^C_\xi,\theta^C_\xi)\big|^2 \big|( (v^S)^{-\mb X}-v^*, (\theta^S)^{-\mb X}-\theta^*) \big|^2 d\xi\bigg]^{\frac12}\\
& =: C\sqrt{R_{21}+R_{22}+R_{23}+R_{24}}.
\end{aligned}
\eeq
First, using \eqref{vc-pe1}, we have
\[
R_{21} \le C \delta_R^2 \delta_C^2 e^{-Ct} \int_{\big\{\xi+\s t \leq \frac{\l_{1*}}{2}(1+t)\big\}}  e^{-\frac{C|\xi+\s t|^2}{2(1+t)}} d\xi \le C \delta_R^2 \delta_C^2 e^{-Ct} \int_{\bbr}  e^{-C|\xi+\s t|} d\xi \le C \delta_R^2 \delta_C^2 e^{-Ct}.
\]
Using the same estimates on $R_{12}$ with \eqref{rarefaction-pe}, 
\[
R_{22} \le C\deltar^2 \delta_C^2 e^{-Ct}.
\]
For $R_{23},$ since
\begin{align*}
\forall \xi\leq-\frac{\s t}{2}<0,\quad & \xi-\mb X(t) \leq -\frac{\s t}{2} +C\eps_1 t <-\frac{\s t}{4}<0 \quad\mbox{and}\  \mbox{then}  \\
&| \xi-\mb X(t)| \ge  \frac{\s t}{4},
\end{align*}
it holds from Lemma \ref{lemma1.3} that $\forall \xi\leq-\frac{\s t}{2},$
\beq\label{shock-pe-1}
\begin{aligned}
\big|\big((v^S)^{-\mb X}-v^*, (\theta^S)^{-\mb X}-\theta^*\big) \big| &\le C\deltas e^{-C\deltas |\xi -\mb X(t)|} \\
&\le C\deltas e^{-\frac{C\deltas |\xi -\mb X(t)|}{2}} e^{-C\deltas t},\\[3mm]
|((v^S)^{-\mb X}_\x, (\theta^S)^{-\mb X}_\x)| &\le C\deltas^2 e^{-C\deltas |\xi -\mb X(t)|} \\
&\le C\deltas^2 e^{-\frac{C\deltas |\xi -\mb X(t)|}{2}} e^{-C\deltas t}.
\end{aligned}
\eeq
This and Lemma \ref{Lemma 2.2.} imply
\[
R_{23} \le C \deltas^2 e^{-C\deltas t} \big\|(v^C_\x, \theta^C_\x)\big\|_{L^2(\bbr)}^2 \le C \deltas^2\delta_C^2 e^{-C\deltas t}.
\]
On the other hand, since
\[
\forall \xi> -\frac{\s t}{2},\quad \xi +\s t> \frac{\s t}{2} > 0, 
\]
it holds from Lemma \ref{Lemma 2.2.} that $\forall \xi>-\frac{\s t}{2},$
\beq\label{vc-pe-2}
\big|(v^C-v_*, \theta^C-\theta_*)|+ |(v^C_\x, \theta^C_\x)\big|=O(1)\delta_C e^{-\frac{C|\xi+\s t|^2}{1+t}}=O(1)\delta_C e^{-\frac{C|\xi+\s t|^2}{2(1+t)}}e^{-Ct}.
\eeq
Then,
\[
R_{24} \le C \delta_S^2 \delta_C^2 e^{-Ct} \int_{\big\{\xi +\s t> \frac{\s t}{2} \big\}}  e^{-\frac{C|\xi+\s t|^2}{2(1+t)}} d\xi \le C \delta_S^2 \delta_C^2 e^{-Ct}.
\]
Thus, 
\[
R_2 \le C \delta_C \deltas e^{-C\deltas t} + C \delta_C \deltar e^{-Ct}.
\]
As in $R_{23}, R_{24}$, decompose $ R_3$ as
\beq\label{Q1I-3}
\begin{aligned}
\|R_3 \|_{L^2(\bbr)}
 &=\bigg[\bigg(\int_{\big\{\xi\leq -\frac{\s}{2} t\big\}} + \int_{\big\{\xi> -\frac{\s}{2} t\big\}}\bigg) \big|((v^S)^{-\mb X}_\xi,(\theta^S)^{-\mb X}_\xi)\big|^2 \\[4mm]
& \qquad\qquad\qquad\qquad\qquad\qquad \cdot \big|(v^R-v_*, \theta^R-\theta_*,v^C-v^*, \theta^C-\theta^*) \big|^2 d\x\bigg]^{\frac12}\\
& =: C\sqrt{R_{31}+R_{32}}.
\end{aligned}
\eeq
Using \eqref{shock-pe-1},
\[
R_{31} \le C\big(\deltar^2+\delta_C^2\big) \int_\bbr (\deltas)^4 e^{-C\deltas |\xi -\mb X(t)|} e^{-C\deltas t} d\xi \le C\deltas^3 \big(\deltar^2+\delta_C^2\big) e^{-C\deltas t} .
\]
Since 
\[
\forall \xi> -\frac{\s t}{2},\quad \xi +\s t> \frac{\s t}{2} > 0 > \frac{\lambda_{1*}}{2}(1+t), 
\]
using \eqref{rarefaction-pe} with  \eqref{rarefaction-pe2}, and \eqref{vc-pe-2}, we have
\[
R_{32} \le C\deltas^2 \big(\deltar^2+\delta_C^2\big)e^{-Ct} \int_\bbr \big|((v^S)^{-\mb X}_\xi,(\theta^S)^{-\mb X}_\xi)\big| d\x \le C\deltas^3 \big(\deltar^2+\delta_C^2\big)e^{-Ct} .
\]
Thus, 
\[
R_3\le C\deltas^{3/2} (\deltar+\delta_C) e^{-C \deltas t} ,
\]
which gives $\eqref{wave-interactions}_2$.  \\
Combining the above estimates, we have
\beq\label{pcomp-1}
\big\|\big(\bar p-p^R-p^C-(p^S)^{-\mb X}\big)_\xi\big\|_{L^2(\bbr)}\leq C\deltas (\deltar+\delta_C) e^{-C \deltas t}+C\deltar\delta_Ce^{-Ct}.
\eeq
Moreover, we have
\begin{align*}
\begin{aligned}
&\left|\mu\left(\frac{\bar u_\x}{\bar v}-\frac{u^R_\x}{v^R}-\frac{u^C_\x}{v^C}-\frac{(u^S)^{-\mb X}_\x}{(v^S)^{-\mb X}}\right)_\x\right|\\[3mm]
&\le C\Big[|(u^R_{\xi\xi}, u^R_\x v^R_\xi)||(v^C-v_*, (v^S)^{-\mb X}-v^*)|+|(u^C_{\xi\xi}, u^C_\x v^C_\xi)|\\
& \qquad\quad\ \cdot|(v^R-v_*, (v^S)^{-\mb X}-v^*)|+|(u^S)^{-\mb X}_{\xi\xi}, (u^S)^{-\mb X}_{\xi} (v^S)^{-\mb X}_{\xi})||(v^R-v_*, v^C-v^*)| \\
&\qquad\quad\  +|u^R_\xi||( v^C_\xi, (v^S)^{-\mb X}_\xi)|+|u^C_\xi||(v^R_\xi, (v^S)^{-\mb X}_\xi) |+|(u^S)^{-\mb X}_\xi||( v^R_\xi, v^C_\xi) | \Big],
\end{aligned}
\end{align*}
which can be bounded by the same bound as in \eqref{pcomp-1}, by using the same techniques as above. Therefore, the wave interactions estimate $\eqref{wave-interactions}_1$ holds true.
\end{proof}

\subsection{Construction of weight function}
We define the weight function $a$ by
\begin{equation}\label{weight}
a(\xi):=1+\frac{\lam}{\deltas}(v^S(\xi)-v^*),
\end{equation}
where the constant $\lam$ is chosen to be so small but far bigger than $\deltas:=v_+-v^*$ such that 
\beq\label{lamsmall}
\deltas\ll \lam \le C\sqrt{\deltas}.
\eeq
Notice that 
\begin{equation}\label{a-bound}
1<a(\xi)<1+\lam,
\end{equation}
and 
\begin{equation}\label{a-prime}
a^\prime(\xi)=\frac{\lam}{\deltas}v^S_\xi>0,
\end{equation}
and so,
\beq\label{d-weight}
|a'|\sim\frac{\lam}{\deltas} |u^S_\xi|\sim \frac{\lam}{\deltas} |\theta^S_\xi|.
\eeq

\subsection{Relative entropy method} \label{ssec:ent}
For simplicity of computations on the evolution of the relative entropy, we will use the non-conserved quantities $U=(v,u,\theta)^t$ and $\bar U=(\bar v,\bar u,\bar \theta)^t$, where $U=(v,u,\theta)^t$ is the solution to the system \eqref{NS-1}, and $\bar U=(\bar v,\bar u,\bar \theta)^t$ the superposition of 1-rarefaction wave, 2-viscous contact wave and 3-viscous shock wave shifted by $\mb{X}$. Then, by \eqref{shwave},
\begin{equation}\label{tilvh}
\bar U(t,\xi) =\left(
\begin{array}{c}
\bar v(t,\xi)\\
\bar u(t,\xi)\\
\bar \theta(t,\xi)
\end{array}
\right) 
=\left(
\begin{array}{c}
\di v^R(t,\xi+\sigma t)+v^C(\xi+\sigma t)+ v^S(\xi -\mb X(t))-v_*-v^*\\
\di u^R(t,\xi+\sigma t)+u^C(t,\xi+\sigma t)+u^S(\xi-\mb X(t))-u_*-u^* \\
\di  \theta^R(t,\x+\s t)+ \theta^C(\xi+\sigma t)+ \theta^S(\xi-\mb X(t))-\theta_*-\theta^* 
\end{array}
\right) .
\end{equation}
First of all, it follows from Appendix \ref{app-ent} that for the (mathematical) entropy $\eta:=-s$, the relative entropy of $U$ and $\bar U$ is given by
\[
\eta\big(U|\bar U \big) = R  \left(\frac{v}{\bar v} -1- \log\frac{v}{\bar v}\right) +\frac{R}{\gamma-1} \left(\frac{\theta}{\bar\theta} -1- \log\frac{\theta}{\bar\theta}\right)  + \frac{(u-\bar u)^2}{2\bar\theta}.
\] 
Then, using the convex function $\Phi(z):=z-1-\ln z$, we have
\[
\eta\big(U|\bar U \big) =R\Phi\left(\frac{v}{\bar v}\right)+\frac{R}{\gamma-1}\Phi\left(\frac{\theta}{\bar \theta}\right) + \frac{(u-\bar u)^2}{2\bar\theta}.
\]
So, the relative entropy weighted by $\bar\theta$ is given by
\beq\label{def-rent}
\bar\theta\eta\big(U|\bar U \big) =R\bar\theta\Phi\left(\frac{v}{\bar v}\right)+\frac{R\bar\theta}{\gamma-1}\Phi\left(\frac{\theta}{\bar \theta}\right) + \frac{(u-\bar u)^2}{2}.
\eeq
Below, we will compute the evolution of the relative entropy of $U$ and $\bar U$ weighted by $a(\xi-\mb{X})\bar\theta(t,\xi)$ : 
\[
\int_\bbr a^{-\mb X}(\xi)\bar \theta(t,\xi) \eta\big(U(t,\xi)|\bar U(t,\xi) \big) d\xi.
\]

\begin{lemma}\label{lem-relative}
Let $a$ be the weight function defined by \eqref{weight}. Let $U$ be a solution to \eqref{NS-1}, and $\bar U$ the shifted wave given by \eqref{tilvh}.
Then,
\begin{align}
\begin{aligned}\label{ineq-0}
\frac{d}{dt}\int_{\bbr} a^{-\mb X}(\xi)\bar \theta(t,\xi) \eta\big(U(t,\xi)|\bar U(t,\xi) \big) d\xi =\dot{\mb X} (t) \mb{Y}(U) +\mathcal{J}^{bad}(U) - \mathcal{J}^{good}(U),
\end{aligned}
\end{align}
where
\begin{align}
\begin{aligned}\label{ybg-first}
&\mb Y(U):=-\int_{\bbr} \! a_\xi^{-\mb X} \bar \theta\eta(U|\bar U ) d\xi +\int_\bbr a^{-\mb X}\Big[-R (\theta^S)^{-\mb{X}}_\xi\Phi\left(\frac{v}{\bar v}\right) - \frac{R }{\gamma-1}(\theta^S)^{-\mb{X}}_\xi\Phi\left(\frac{\theta}{\bar \theta}\right) \Big]d\xi\\
&\quad\qquad\quad+\int_\bbr a^{-\mb X} \Big[(u^S)^{-\mb{X}}_\xi(u-\bar u)+\frac{(v^S)^{-\mb{X}}_\xi \bar p}{\bar v}(v-\bar v)+\frac{R}{\gamma-1}\frac{(\theta^S)^{-\mb{X}}_\xi}{\bar\theta}(\theta-\bar\theta)\Big]d\xi,
\end{aligned}
\end{align}
\begin{align}
\begin{aligned}\label{ybg-bad}
&\mathcal{J}^{bad}(U):= \int_\bbr a_\xi^{-\mb X} (u-\bar u)(p-\bar p) d\xi \\
&\quad+ \int_\bbr \!a^{-\mb X} \Big[R\Big((\theta_t^R-\sigma \theta_\x^R)+(\theta_t^C-\sigma \theta_\x^C)-\sigma (\theta^S)^{-\mb X}_\xi \Big)\Phi(\frac{v}{\bar v})-\frac{\bar p\bar u_\xi}{v\bar v}(v-\bar v)^2 \Big]d\x\\
& \quad + \int_\bbr \!a^{-\mb X} \Big[\frac{R}{\gamma-1}\Big((\theta_t^R-\sigma \theta_\x^R)+(\theta_t^C-\sigma \theta_\x^C)-\sigma (\theta^S)^{-\mb X}_\xi \Big)\Phi\left(\frac{\theta}{\bar \theta}\right) - \frac{\bar u_\xi}{\theta}(\theta-\bar \theta)(p-\bar p) +\bar p \bar u_\xi \frac{(\theta-\bar\theta)^2}{\theta\bar\theta} \Big]d\x\\ 
&\quad + \int_\bbr \!a^{-\mb X} \Big[ \mu (u-\bar u) \Big(\frac{u_\xi}{v}- \frac{\bar u_\xi}{\bar v} \Big) + \kappa \frac{\theta-\bar\theta}{\theta} \Big(\frac{\theta_\xi}{v}- \frac{\bar \theta_\xi}{\bar v}  \Big) \Big]_\xi \\
&\quad +\int_\bbr \!a^{-\mb X}  \Big[-\mu \bar u_\xi (\frac{1}{v}-\frac{1}{\bar v})(u-\bar u)_\x+\kappa \frac{\theta-\bar\theta}{\theta^2}\theta_\x(\frac{\theta_\x}{v}-\frac{\bar\theta_\x}{\bar v})-\kappa\frac{(\theta-\bar\theta)_\xi}{\theta}\bar\theta_\x(\frac1v-\frac 1{\bar v})\\
&\qquad\qquad\quad +\mu\frac{\theta-\bar\theta}{\theta}(\frac{u_\x^2}{v}-\frac{\bar u_\x^2}{\bar v})-Q_1(u-\bar u)-Q_2(1-\frac{\bar\theta}{\theta})\Big] d\xi,
\end{aligned}
\end{align}
and
\begin{align}
\begin{aligned}\label{ybg-good}
&\mathcal{J}^{good}(U):= \s\int_{\bbr} \! a_\xi^{-\mb X} \bar \theta\eta(U|\bar U ) d\xi  +\int_\bbr \!a^{-\mb X}\Big[ \frac{\mu}{v}|(u-\bar u)_\xi|^2+\frac{\kappa}{v\theta}|(\theta-\bar \theta)_\xi|^2\Big]d\x.
\end{aligned}
\end{align}
\end{lemma}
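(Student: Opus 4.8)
The plan is to establish the identity \eqref{ineq-0} by a direct relative-entropy computation performed in the moving frame $\xi=x-\sigma t$, and then to sort the resulting terms into the three groups $\dot{\mb X}(t)\mb Y(U)$, $\mathcal J^{bad}(U)$, $\mathcal J^{good}(U)$ by inspection. First I would record the equations in play: $U=(v,u,\theta)^t$ solves \eqref{NS-1}, the ansatz $\bar U=(\bar v,\bar u,\bar\theta)^t$ solves \eqref{bar-system}, and the individual building blocks obey \eqref{rarexi} (rarefaction), \eqref{vcex} (viscous contact wave) and the traveling-wave ODEs for the viscous shock; in particular $\bar U$ fails to solve \eqref{NS-1} exactly, the defect being the shift terms $\dot{\mb X}(t)\big((v^S)^{-\mb X}_\xi,(u^S)^{-\mb X}_\xi,\frac{R}{\gamma-1}(\theta^S)^{-\mb X}_\xi\big)$ together with the error terms $Q_1,Q_2$ from \eqref{Q12}. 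Starting from the explicit formula \eqref{def-rent} for $\bar\theta\,\eta(U|\bar U)$ established in Appendix \ref{app-ent}, I would differentiate $\int_\bbr a^{-\mb X}\bar\theta\,\eta(U|\bar U)\,d\xi$ in time and write the integrand's $\partial_t$ as the sum of three pieces: the weight contribution $\partial_t a^{-\mb X}=-\dot{\mb X}\,a_\xi^{-\mb X}$; the ansatz contribution through $\partial_t\bar\theta$ and the Hessian of $\eta$ in its second slot; and the solution contribution $\partial_t\eta(U|\bar U)$ at frozen $\bar U$, evaluated by the classical Dafermos--DiPerna relative-entropy identity applied to the conservative form of \eqref{NS-1}.

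In the solution contribution the hyperbolic part collapses, as usual, to a perfect $\xi$-derivative (which integrates to zero) plus the cross terms: weighting by $a$ produces the bad term $\int a_\xi^{-\mb X}(u-\bar u)(p-\bar p)\,d\xi$ together with the good term $\sigma\int a_\xi^{-\mb X}\bar\theta\,\eta(U|\bar U)\,d\xi$, the latter coming from the $-\sigma$-transport of \eqref{NS-1} acting on the weight (nonnegative since $a_\xi>0$ and $\sigma>0$ for a $3$-shock), while the viscous and heat-conduction terms are integrated by parts once. Keeping the relative entropy in the $\bar\theta$-weighted form \eqref{def-rent} is exactly what makes the heat-flux integration by parts close: the diagonal parts yield the coercive dissipation $\int a^{-\mb X}\big(\frac{\mu}{v}|(u-\bar u)_\xi|^2+\frac{\kappa}{v\theta}|(\theta-\bar\theta)_\xi|^2\big)\,d\xi$ of \eqref{ybg-good}, and the leftover boundary-type term $[\,\cdot\,]_\xi$, the coefficient-times-$\bar u_\xi,\bar\theta_\xi$ terms, and the dissipation-mismatch term $\frac{\theta-\bar\theta}{\theta}\big(\frac{u_\xi^2}{v}-\frac{\bar u_\xi^2}{\bar v}\big)$ go into $\mathcal J^{bad}$. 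Substituting \eqref{bar-system}, \eqref{rarexi}, \eqref{vcex} for $\partial_t\bar U$ then produces the $\Phi(v/\bar v)$- and $\Phi(\theta/\bar\theta)$-type terms: those carrying the explicit factor $\dot{\mb X}$ are collected into $\mb Y(U)$, together with the weight term $-\int a_\xi^{-\mb X}\bar\theta\,\eta(U|\bar U)\,d\xi$ and the direct flux contribution $\int a^{-\mb X}\big[(u^S)^{-\mb X}_\xi(u-\bar u)+\frac{(v^S)^{-\mb X}_\xi\bar p}{\bar v}(v-\bar v)+\frac{R}{\gamma-1}\frac{(\theta^S)^{-\mb X}_\xi}{\bar\theta}(\theta-\bar\theta)\big]\,d\xi$ of the shift, giving precisely \eqref{ybg-first}; those with coefficient $(\theta^R_t-\sigma\theta^R_\xi)+(\theta^C_t-\sigma\theta^C_\xi)-\sigma(\theta^S)^{-\mb X}_\xi$ go into $\mathcal J^{bad}$; and the error terms $-Q_1(u-\bar u)$ and $-Q_2(1-\bar\theta/\theta)$ are appended to $\mathcal J^{bad}$ as well. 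Matching the constants $R$, $\frac{R}{\gamma-1}$ and the pressure/energy cross terms $-\frac{\bar p\bar u_\xi}{v\bar v}(v-\bar v)^2$, $-\frac{\bar u_\xi}{\theta}(\theta-\bar\theta)(p-\bar p)$, $\bar p\bar u_\xi\frac{(\theta-\bar\theta)^2}{\theta\bar\theta}$ against \eqref{ybg-bad} then finishes the identity.

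The main obstacle I anticipate is not conceptual but is the bookkeeping: one must be scrupulous about (a) which occurrences of $\bar v_t,\bar u_t,\bar\theta_t$ are eliminated via \eqref{bar-system} versus via the building-block equations, so that the $\dot{\mb X}$-terms are neither double-counted nor dropped, and (b) the precise sign and weight of each viscous cross term after integration by parts, since the whole scheme hinges on $\mathcal J^{good}\ge0$ and on $\mathcal J^{bad}$ consisting only of terms that will later be absorbable -- via the decay estimates of Lemmas \ref{lemma1.2}, \ref{Lemma 2.2.}, \ref{lemma1.3}, the wave-interaction bounds of Lemma \ref{lemma2.2}, the weight smallness \eqref{lamsmall}, the weighted Poincar\'e inequality of Lemma \ref{lem-poin}, and the a priori smallness \eqref{apri-ass}. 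It is also worth noting that $\mb Y(U)$ is, up to the two $\Phi$-terms and the $a_\xi$-term, exactly the bracket appearing in the shift ODE \eqref{X(t)}, which is why the design of \eqref{X(t)} will render the leading part of $\dot{\mb X}\,\mb Y(U)$ negative and absorbable in the subsequent estimates.
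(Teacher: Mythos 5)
Your proposal is correct and follows essentially the same route as the paper: differentiate the $a^{-\mb X}\bar\theta$-weighted relative entropy, use the perturbation system obtained from \eqref{NS-1} and \eqref{bar-system} (with the shift defects $\dot{\mb X}(v^S)^{-\mb X}_\xi$, etc., and the errors $Q_1,Q_2$), integrate the viscous and heat-flux terms by parts in the $\bar\theta$-weighted form, and sort the resulting terms into $\dot{\mb X}\,\mb Y$, $\mathcal J^{bad}$ and $\mathcal J^{good}$ exactly as in \eqref{ybg-first}--\eqref{ybg-good}. The only cosmetic difference is that you phrase the hyperbolic part via the abstract Dafermos--DiPerna identity, whereas the paper computes $\partial_t-\sigma\partial_\xi$ of each piece $R\bar\theta\Phi(v/\bar v)$, $\frac{R\bar\theta}{\gamma-1}\Phi(\theta/\bar\theta)$, $\frac{(u-\bar u)^2}{2}$ directly; the content is the same.
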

\begin{remark}
Since $\s a'(\xi) >0$, $\mathcal{J}^{good}$ consists of nonnegative terms. 
\end{remark}
\begin{proof}
First, we use \eqref{def-rent} to have
\begin{align}
\begin{aligned}\label{fieq}
&\frac{d}{dt}\int_{\bbr} a^{-\mb X}(\xi)\bar \theta(t,\xi) \eta\big(U(t,\xi)|\bar U(t,\xi) \big) d\xi \\
&= -\dot{\mb X} (t)  \int_{\bbr} \! a_\xi^{-\mb X} \bar \theta\eta(U|\bar U ) d\xi + \int_{\bbr} \! a^{-\mb X} \partial_t\bigg[R\bar\theta\Phi\left(\frac{v}{\bar v}\right)+\frac{R\bar\theta}{\gamma-1}\Phi\left(\frac{\theta}{\bar \theta}\right) + \frac{(u-\bar u)^2}{2}\bigg] d\xi 
\end{aligned} 
\end{align}
To compute the second term above, we first use the two systems \eqref{NS-1} and \eqref{bar-system} (satisfied by $\bar U$) to find
\beq\label{pereq1}
\left\{
\begin{array}{ll}
\di (v-\bar v)_t-\s (v-\bar v)_\xi-\dot{\mb X}(t)(v^S)^{-\mb X}_\xi-(u-\bar u)_\x=0,\\[3mm]
\di (u-\bar u)_t-\s (u-\bar u)_\xi-\dot{\mb X}(t)(u^S)^{-\mb X}_\xi+(p-\bar p)_\x=\mu\left(\frac{u_\xi}{v}-\frac{\bar u_\x}{\bar v}\right)_\xi-Q_1,\\[3mm]
\di \frac{R}{\gamma-1}(\theta-\bar \theta)_t-\frac{R\s}{\gamma-1}(\theta-\bar \theta)_\xi -\frac{R}{\gamma-1}\dot{\mb X}(t)(\theta^S)^{-\mb X}_\xi+(pu_\x -\bar p\bar u_\x)\\
\di\qquad\qquad\qquad \qquad\qquad\qquad=\kappa\left(\frac{\theta_\x}{v}-\frac{\bar \theta_\x}{\bar v}\right)_\xi+\mu\left(\frac{u_\x^2}{v}-\frac{\bar u_\xi^2}{\bar v}\right)-Q_2.
\end{array}
\right.
\eeq
Using $\Phi'(z)=1-1/z$ and $\eqref{pereq1}_1$ and $\eqref{bar-system}_1$, we have
\begin{align*}
\begin{aligned} 
\partial_t \bigg[\bar\theta\Phi\left(\frac{v}{\bar v}\right)\bigg] &= \bar\theta_t \Phi\left(\frac{v}{\bar v}\right) + \bar\theta \Phi' \left(\frac{v}{\bar v}\right) \left(\frac{v}{\bar v}\right)_t \\
&=  \bar\theta_t \Phi\left(\frac{v}{\bar v}\right) + \bar\theta\left(\frac{1}{\bar v}-\frac{1}{v} \right) \bigg[ (v-\bar v)_t +\bar v_t \left(1- \frac{v}{\bar v}\right)  \bigg]  \\
&=  \bar\theta_t \Phi\left(\frac{v}{\bar v}\right) + \bar\theta\left(\frac{1}{\bar v}-\frac{1}{v} \right) \bigg[ \s (v-\bar v)_\xi +\dot{\mb X}(t)(v^S)^{-\mb X}_\xi +(u-\bar u)_\x  \bigg] \\
&\quad +  \bar\theta\left(\frac{1}{\bar v}-\frac{1}{v} \right)  \left(1- \frac{v}{\bar v}\right) \bigg[ \s \bar v_\xi -\dot{\mb X}(t)(v^S)^{-\mb X}_\xi +\bar u_\x  \bigg]. 
\end{aligned} 
\end{align*}
In addition, since
\[
\partial_\x \bigg[\s\bar\theta\Phi\left(\frac{v}{\bar v}\right)\bigg] = \s\bar\theta_\x \Phi\left(\frac{v}{\bar v}\right)  +\s \bar\theta\left(\frac{1}{\bar v}-\frac{1}{v} \right) \bigg[  (v-\bar v)_\xi  +\bar v_\x \left(1- \frac{v}{\bar v}\right)  \bigg] ,
\]
we have
\begin{align}
\begin{aligned} \label{tvest}
&\partial_t \bigg[R\bar\theta\Phi\left(\frac{v}{\bar v}\right)\bigg] -\partial_\x \bigg[R\s\bar\theta\Phi\left(\frac{v}{\bar v}\right)\bigg]  \\
&= R( \bar\theta_t -\s\bar\theta_\x) \Phi\left(\frac{v}{\bar v}\right) + \dot{\mb X}(t)(v^S)^{-\mb X}_\xi \frac{\bar p}{\bar v} (v-\bar v) \\
&\quad -\frac{\bar p\bar u_\xi}{v\bar v}(v-\bar v)^2 + R\bar\theta \left(\frac{1}{\bar v} -\frac{1}{v} \right) (u-\bar u)_\x .
\end{aligned} 
\end{align}
Likewise, using $\eqref{pereq1}_3$ and $\eqref{bar-system}_3$, we have
\begin{align}
\begin{aligned} \label{tuest} 
&\partial_t \bigg[\frac{R\bar\theta}{\gamma-1}\Phi\left(\frac{\theta}{\bar \theta}\right)  \bigg] -\partial_\x \bigg[\frac{R\s\bar\theta}{\gamma-1}\Phi\left(\frac{\theta}{\bar \theta}\right)  \bigg]  \\
&= \frac{R}{\gamma-1}( \bar\theta_t -\s\bar\theta_\x)\Phi\left(\frac{\theta}{\bar \theta}\right)  + \dot{\mb X}(t)\frac{R}{\gamma-1}\frac{(\theta^S)^{-\mb{X}}_\xi}{\bar\theta}(\theta-\bar\theta) \\
&\quad + \frac{R}{v} (\bar\theta - \theta) (u-\bar u)_\x -\frac{\bar u_\x}{\theta}(\theta-\bar\theta) (p-\bar p) + \bar p \bar u_\xi \frac{(\theta-\bar\theta)^2}{\theta\bar\theta}   \\
&\quad +\Big[\kappa \frac{\theta-\bar\theta}{\theta} \Big(\frac{\theta_\xi}{v}- \frac{\bar \theta_\xi}{\bar v}  \Big) \Big]_\xi -\frac{\kappa}{v\theta}|(\theta-\bar \theta)_\xi|^2 +\kappa \frac{\theta-\bar\theta}{\theta^2}\theta_\x(\frac{\theta_\x}{v}-\frac{\bar\theta_\x}{\bar v}) \\
&\quad -\kappa\frac{(\theta-\bar\theta)_\xi}{\theta}\bar\theta_\x(\frac1v-\frac 1{\bar v}) +\mu\frac{\theta-\bar\theta}{\theta}(\frac{u_\x^2}{v}-\frac{\bar u_\x^2}{\bar v})-Q_2(1-\frac{\bar\theta}{\theta}) .
\end{aligned} 
\end{align}
Using  $\eqref{pereq1}_2$, we find
\begin{align}
\begin{aligned}  \label{ttest}
&\partial_t \bigg[\frac{(u-\bar u)^2}{2}\bigg] - \partial_\x \bigg[ \s \frac{(u-\bar u)^2}{2}\bigg] \\
&= \dot{\mb X}(t)(u^S)^{-\mb X}_\xi (u-\bar u) -\Big[ (p-\bar p) (u-\bar u)\Big]_\x + (p-\bar p) (u-\bar u)_\x \\
&\quad + \Big[ \mu\left(\frac{u_\xi}{v}-\frac{\bar u_\x}{\bar v}\right) (u-\bar u)\Big]_\xi -\frac{\mu}{v}|(u-\bar u)_\xi|^2 -\mu(u-\bar u)_\xi \bar u_\x \left(\frac{1}{v}-\frac{1}{\bar v}\right)  -Q_1 (u-\bar u)
\end{aligned} 
\end{align}
Therefore, combining \eqref{tvest}, \eqref{tuest} and \eqref{ttest}, we have
\begin{align*}
\begin{aligned} 
&\partial_t\bigg[R\bar\theta\Phi\left(\frac{v}{\bar v}\right)+\frac{R\bar\theta}{\gamma-1}\Phi\left(\frac{\theta}{\bar \theta}\right) + \frac{(u-\bar u)^2}{2}\bigg]  \\
& = \s  \partial_\x \bigg[ \bar\theta \eta (U |\bar U)\bigg] + ( \bar\theta_t -\s\bar\theta_\x) \bigg[ R\Phi\left(\frac{v}{\bar v}\right) +\frac{R}{\gamma-1}\Phi\left(\frac{\theta}{\bar \theta}\right) \bigg] \\
&\quad + \dot{\mb X}(t) \Big[\frac{(v^S)^{-\mb{X}}_\xi \bar p}{\bar v}(v-\bar v)+\frac{R}{\gamma-1}\frac{(\theta^S)^{-\mb{X}}_\xi}{\bar\theta}(\theta-\bar\theta)+(u^S)^{-\mb{X}}_\xi(u-\bar u)\Big] \\
&\quad -\frac{\bar p\bar u_\xi}{v\bar v}(v-\bar v)^2 -\frac{\bar u_\x}{\theta}(\theta-\bar\theta) (p-\bar p) +\partial_\x \Big[ \mu (u-\bar u) \Big(\frac{u_\xi}{v}- \frac{\bar u_\xi}{\bar v} \Big) + \kappa \frac{\theta-\bar\theta}{\theta} \Big(\frac{\theta_\xi}{v}- \frac{\bar \theta_\xi}{\bar v}  \Big) \Big] \\
&\quad -\mu(u-\bar u)_\xi \bar u_\x \left(\frac{1}{v}-\frac{1}{\bar v}\right)  +\kappa \frac{\theta-\bar\theta}{\theta^2}\theta_\x(\frac{\theta_\x}{v}-\frac{\bar\theta_\x}{\bar v})-\kappa\frac{(\theta-\bar\theta)_\xi}{\theta}\bar\theta_\x(\frac1v-\frac 1{\bar v})\\
&\quad +\mu\frac{\theta-\bar\theta}{\theta}(\frac{u_\x^2}{v}-\frac{\bar u_\x^2}{\bar v})-Q_1(u-\bar u)-Q_2(1-\frac{\bar\theta}{\theta})  -\frac{\mu}{v}|(u-\bar u)_\xi|^2-\frac{\kappa}{v\theta}|(\theta-\bar \theta)_\xi|^2
\end{aligned} 
\end{align*}
In addition, since
\[
\bar\theta_t -\s\bar\theta_\x = (\theta_t^R-\sigma \theta_\x^R)+(\theta_t^C-\sigma \theta_\x^C)-\sigma (\theta^S)^{-\mb X}_\xi - \dot{\mb X}(t) (\theta^S)^{-\mb X}_\xi,
\]
we substitute the above relations into \eqref{fieq} to get the desired representation.
 
\end{proof}

\subsection{Decompositions}
First of all, we will decompose the second and third terms of $\mathcal{J}^{bad}$ into the main term $\mb B_1$ of leading order, and some lower order terms, together with an additional good term $ \mathcal{G}^R$ as below:
\begin{align}
\begin{aligned}\label{b1gr}
&\mb B_1(U):=  \int_\bbr a^{-\mb X} (v^S)_\xi^{-\mb X} \Big[ \frac{(\gamma+1)p^*\sigma^*}{2(v^*)^2} (v-\bar v)^2+\frac{R\sigma^*}{2v^*\theta^*}(\theta-\bar\theta)^2-\frac{p^*\sigma^*}{v^*\theta^*}(v-\bar v)(\theta-\bar\theta)\Big]d\x,\\
& \mathcal{G}^R(U):= \int_\bbr|v^R_\x| |(v-\bar v, \theta-\bar \theta)|^2 d\x,
\end{aligned}
\end{align}
where the constant $p^*$ and $\s^*$ are as in \eqref{pstar}, that is,
\[
p^*:=p(v^*, \theta^*)=\frac{R\theta^*}{v^*}\quad \mbox{and}\quad \sigma^*:=\sqrt\frac{\gamma p^*}{v^*} = \frac{\sqrt{\gamma R\theta^*}}{v^*},
\]
For that, we first handle the second term of $\mathcal{J}^{bad}$:
\[
\int_\bbr \!a^{-\mb X} \underbrace{ \Big[R\Big((\theta_t^R-\sigma \theta_\x^R)+(\theta_t^C-\sigma \theta_\x^C)-\sigma (\theta^S)^{-\mb X}_\xi \Big)\Phi(\frac{v}{\bar v})-\frac{\bar p\bar u_\xi}{v\bar v}(v-\bar v)^2 \Big] }_{=: J_2}d\x .
\]
Since $\bar u_\xi = u_\xi^R +u_\xi^C +u_\xi^S$ and it holds from $\eqref{rarexi}_3$ that
\[
\theta_t^R-\sigma \theta_\x^R= -\frac{\gamma-1}{R} p^R u_\xi^R,
\]
we have
\begin{align*}
\begin{aligned}
J_2 & =\underbrace{-u^R_\xi\Big[(\gamma-1)p^R\Phi(\frac{v}{\bar v})+\frac{\bar p(v-\bar v)^2}{v\bar v}\Big] }_{=:J_{21}}+\underbrace{R(\theta_t^C-\sigma \theta_\x^C)\Phi(\frac{v}{\bar v})-\frac{\bar p u^C_\xi}{v\bar v}(v-\bar v)^2}_{=:J_{22}} \\
&\quad \ \underbrace{-R\sigma (\theta^S)^{-\mb X}_\xi \Phi(\frac{v}{\bar v})-\frac{(u^S)^{-\mb X}_\xi \bar p }{v\bar v}(v-\bar v)^2}_{=:J_{23}}.
\end{aligned}
\end{align*}
Using the fact $|v-\bar v|\le C\eps_1$, and by $\Phi(1)=\Phi'(1)=0, \Phi''(1)=1$,
\beq\label{app-phi}
\Phi(\frac{v}{\bar v}) =  \frac{(v-\bar v)^2}{2\bar v^2} + O(|v-\bar v|^3),
\eeq
and 
\begin{align*}
\begin{aligned}
|\bar p - p^R| & \le C( |\bar v - v^R| +|\bar \theta - \theta^R| ) \\
&\le C(|v^S - v^*| +|v^C - v_*| +|\theta^S - \theta^*| +|\theta^C - \theta_*| )  \le C(\deltas +\delta_C), 
\end{aligned}
\end{align*}
we have
\begin{align*}
J_{21} & \le -\frac{(\gamma+1)\bar p}{2\bar v^2}u^R_\xi (v-\bar v)^2 + C(\delta_0 +\eps_1 ) |u^R_\xi| |v-\bar v|^2.
\end{align*}
Using $\eqref{vcex}_3$, we have
\[
J_{22} \le C( |u^C_\xi|+ |\theta^C_{\xi\xi}| + |\theta^C_\xi||v^C_\xi| + |Q^C_2| ) (v-\bar v)^2,
\]
which together with \eqref{vc-pe} and \eqref{QC12} yields
\[
J_{22} \le C \dc(1+t)^{-1}e^{-\frac{C_1 |\xi+\sigma t|^2}{1+t}}(v-\bar v)^2.
\]
Using \eqref{shock-vu}, \eqref{theta-s}, \eqref{sm1} and 
\begin{align} 
\begin{aligned}\label{perror}
|\bar p - p^*| & \le C( |\bar v - v^*| +|\bar \theta - \theta^*| ) \\
&\le C(|v^R - v_*| + |v^S - v^*| +|v^C - v^*| +|\theta^R - \theta_*|+|\theta^S - \theta^*| +|\theta^C - \theta^*| ) \\
& \le C(\deltar+\deltas +\delta_C) \le C\delta_0, 
\end{aligned}
\end{align}
we have
\[
J_{23} \le \frac{(\gamma+1)p^*\sigma^*}{2(v^*)^2}(v^S)^{-\mb X}_\xi (v-\bar v)^2+ C(\delta_0 +\eps_1 ) |(v^S)^{-\mb X}_\xi | |v-\bar v|^2.
\]
Similarly, we handle the third term of $\mathcal{J}^{bad}$:
\[
\int_\bbr \!a^{-\mb X} \underbrace{ \Big[ \frac{R}{\gamma-1}\Big((\theta_t^R-\sigma \theta_\x^R)+(\theta_t^C-\sigma \theta_\x^C)-\sigma (\theta^S)^{-\mb X}_\xi \Big) \Phi\left(\frac{\theta}{\bar \theta}\right) - \frac{\bar u_\xi}{\theta}(\theta-\bar \theta)(p-\bar p) +\bar p \bar u_\xi \frac{(\theta-\bar\theta)^2}{\theta\bar\theta}  \Big] }_{=: J_3}d\x ,
\]
as follows: Using $p-\bar p =\frac{R}{v} (\theta-\bar\theta) -\frac{\bar p}{v} (v-\bar v) $ and $(u^S)^{-\mb X}_\xi <0$, we have
\begin{align*}
J_3 & = u^R_\xi\Big[ - p^R \Phi\left(\frac{\theta}{\bar \theta}\right) - \frac{ \theta-\bar \theta}{\theta} \Big(\frac{R}{v} (\theta-\bar\theta) -\frac{\bar p}{v} (v-\bar v) \Big) +\bar p \frac{(\theta-\bar\theta)^2}{\theta\bar\theta}  \Big] \\
&\quad -\Big[ \frac{R}{\gamma-1}(\theta_t^C-\sigma \theta_\x^C) \Phi(\frac{\bar\theta}{\theta}) +  u_\xi^C \frac{ \theta-\bar \theta}{\theta} \Big(\frac{R}{v} (\theta-\bar\theta) -\frac{\bar p}{v} (v-\bar v) \Big) -\bar p u^C_\xi \frac{(\theta-\bar\theta)^2}{\theta\bar\theta} \Big] \\
&\quad +\Big[\frac{R\sigma}{\gamma-1}(\theta^S)^{-\mb X}_\xi \Phi(\frac{\bar\theta}{\theta}) -(u^S)^{-\mb X}_\xi \frac{ \theta-\bar \theta}{\theta} \Big(\frac{R}{v} (\theta-\bar\theta) -\frac{\bar p}{v} (v-\bar v) \Big)  \Big] +\bar p (u^S)^{-\mb X}_\xi \frac{(\theta-\bar\theta)^2}{\theta\bar\theta}  \\
&\le u^R_\x \Big[ -\frac{R}{2\bar v\bar\theta}(\theta-\bar \theta) + \frac{\bar p}{\bar v \bar \theta} (v-\bar v)\Big] (\theta-\bar \theta) +C \dc(1+t)^{-1}e^{-\frac{C_1 |\xi+\sigma t|^2}{1+t}}|(v-\bar v,\theta-\bar\theta)|^2\\
&\quad + \frac{\sigma^* (v^S)^{-\mb X}_\xi}{2v^*\theta^*}\Big( R (\theta-\bar\theta)- 2p^* (v-\bar v)\Big)(\theta-\bar\theta) \\
&\quad +C(\delta_0 +\eps_1 ) (|u^R_\xi |+ |(v^S)^{-\mb X}_\xi | )  |(v-\bar v,\theta-\bar\theta)|^2.
\end{align*}
Therefore, by combining the above estimates together with $\bar p = \frac{R\bar\theta}{\bar v}$,
\begin{align*}
&\int_\bbr \!a^{-\mb X} (J_2 +J_3) d\xi \\
&\quad\le \mb B_1(U)  -  \underbrace{ \int_\bbr \!a^{-\mb X}   \frac{R u^R_\xi }{2\bar v^2}\Big[\frac{\gamma \bar\theta}{\bar v}(v-\bar v)^2+\Big(\sqrt{\frac{\bar\theta}{\bar v}}(v-\bar v) - \sqrt{\frac{\bar v}{\bar\theta}}(\theta-\bar\theta)\Big)^2\Big] d\xi }_{=: K} \\
&\qquad + C \dc(1+t)^{-1} \int_\bbr \!a^{-\mb X}  e^{-\frac{C_1 |\xi+\sigma t|^2}{1+t}} |(v-\bar v,\theta-\bar\theta)|^2 d\xi \\
&\qquad +C(\delta_0 +\eps_1 ) \int_\bbr \!a^{-\mb X}  (|u^R_\xi |+ |(v^S)^{-\mb X}_\xi | )  |(v-\bar v,\theta-\bar\theta)|^2 d\xi.
\end{align*}
We now derive the simpler form $\mathcal{G}^R$ from the above good term $K$ as follows. Using
\[
2(v-\bar v)(\theta-\bar\theta) \le  \frac{ 2\bar\theta}{\bar v} (v-\bar v)^2 + \frac{\bar v}{2\bar\theta}(\theta-\bar\theta)^2,
\]
and $u^R_\xi \sim v^R_\xi >0$ by Lemma \ref{lemma1.2} (1), 
we have
\[
K  \ge \int_\bbr \!a^{-\mb X}   \frac{R u^R_\xi }{2\bar v^2}\Big[\frac{(\gamma-1) \bar\theta}{\bar v}(v-\bar v)^2 + \frac{\bar v}{2\bar\theta}(\theta-\bar\theta)^2 \Big] d\xi  \ge C \mathcal{G}^R(U).
\]
Thus, 
\begin{align*}
\int_\bbr \!a^{-\mb X} (J_2 +J_3) d\xi &\le \mb B_1(U)  + C \mathcal{G}^R(U) \\
&\qquad + C \dc(1+t)^{-1} \int_\bbr \!a^{-\mb X}  e^{-\frac{C_1 |\xi+\sigma t|^2}{1+t}} |(v-\bar v,\theta-\bar\theta)|^2 d\xi \\
&\qquad +C(\delta_0 +\eps_1 ) \int_\bbr \!a^{-\mb X}  (|u^R_\xi |+ |(v^S)^{-\mb X}_\xi | )  |(v-\bar v,\theta-\bar\theta)|^2 d\xi.
\end{align*}

Therefore, it holds from Lemma \ref{lem-relative} that
\begin{align}
\begin{aligned}\label{ineq-1}
&\frac{d}{dt}\int_{\bbr} a^{-\mb X}(\xi)\bar \theta(t,\xi) \eta\big(U(t,\xi)|\bar U(t,\xi) \big) d\xi \\
&\qquad \le \dot{\mb X} (t) \mb{Y}(U) +\sum_{i=1}^5 \mb B_i(U) +\mb S_1(U)+\mb S_2(U) - C \mathcal{G}^R(U) -\mb G(U)  -\mb D(U),
\end{aligned}
\end{align}
where $\mb B_1,  \mathcal{G}^R$ are as in \eqref{b1gr}, and
\begin{align}
\begin{aligned}\label{dec-bad}
&\mb B_2(U):=  \int_\bbr a_\xi^{-\mb X} (u-\bar u)(p-\bar p) d\xi, \\
&\mb B_3(U):= - \int_\bbr \!a_\xi^{-\mb X} \Big[ \mu (u-\bar u) \Big(\frac{u_\xi}{v}- \frac{\bar u_\xi}{\bar v} \Big) + \kappa \frac{\theta-\bar\theta}{\theta} \Big(\frac{\theta_\xi}{v}- \frac{\bar \theta_\xi}{\bar v}  \Big) \Big] d\xi, \\
&\mb B_4(U):= \int_\bbr \!a^{-\mb X}  \Big[-\mu \bar u_\xi (\frac{1}{v}-\frac{1}{\bar v})(u-\bar u)_\x+\kappa \frac{\theta-\bar\theta}{\theta^2}\theta_\x(\frac{\theta_\x}{v}-\frac{\bar\theta_\x}{\bar v})-\kappa\frac{(\theta-\bar\theta)_\xi}{\theta}\bar\theta_\x(\frac1v-\frac 1{\bar v})\\
&\qquad\qquad\quad\qquad\qquad +\mu\frac{\theta-\bar\theta}{\theta}(\frac{u_\x^2}{v}-\frac{\bar u_\x^2}{\bar v})\Big] d\xi,\\
&\mb B_5(U):= C(\delta_0 +\eps_1 ) \int_\bbr \!a^{-\mb X}  (|u^R_\xi |+ |(v^S)^{-\mb X}_\xi | )  |(v-\bar v,\theta-\bar\theta)|^2 d\xi \\
&\qquad\qquad\quad + C \dc(1+t)^{-1} \int_\bbr \!a^{-\mb X}  e^{-\frac{C_1 |\xi+\sigma t|^2}{1+t}} |(v-\bar v,\theta-\bar\theta)|^2 d\xi, \\
&\mb S_1(U):= - \int_\bbr \!a^{-\mb X} Q_1(u-\bar u) d\xi, \quad\qquad  \mb S_2(U):= - \int_\bbr \!a^{-\mb X} Q_2(1-\frac{\bar\theta}{\theta}) d\xi, \\
\end{aligned}
\end{align}
and
\begin{align}
\begin{aligned}\label{dec-good}
& \mb G(U) := \s\int_{\bbr} \! a_\xi^{-\mb X} \bar \theta\eta(U|\bar U ) d\xi , \\
&\mb D(U) := \int_\bbr \!a^{-\mb X}\Big[ \frac{\mu}{v}|(u-\bar u)_\xi|^2+\frac{\kappa}{v\theta}|(\theta-\bar \theta)_\xi|^2\Big]d\x.
\end{aligned}
\end{align}

We decompose the functional $\mb Y$ as follows:
\[
\mb Y:= \sum_{i=1}^6\mb{Y}_i,
\]
where
\begin{align*}
\begin{aligned}
&\mb Y_1(U):= \int_\bbr a^{-\mb X} (u^S)^{-\mb{X}}_\xi(u-\bar u) d\xi,\\
&\mb Y_2(U):= \int_{\bbr} a^{-\mb X}\frac{(v^S)^{-\mb{X}}_\xi \bar p}{\bar v}(v-\bar v) d\xi,\\
&\mb Y_3(U):=\int_{\bbr}a^{-\mb X} \frac{R}{\gamma-1}\frac{(\theta^S)^{-\mb{X}}_\xi}{\bar\theta}(\theta-\bar\theta) d\xi,\\
&\mb Y_4(U):= - \int_\bbr a^{-\mb X} R (\theta^S)^{-\mb{X}}_\xi\Phi(\frac{v}{\bar v}) d\xi,\\
&\mb Y_5(U):= \int_\bbr a^{-\mb X} \frac{R }{\gamma-1}(\theta^S)^{-\mb{X}}_\xi\Phi(\frac{\bar \theta}{\theta}) d\xi , \\
&\mb Y_6(U):= -\int_{\bbr} \! a_\xi^{-\mb X} \bar \theta\eta(U|\bar U ) d\xi .
\end{aligned}
\end{align*}
Notice from \eqref{X(t)} that 
\beq\label{defxy}
\dot{\mb{X}}(t)=-\frac{M}{\delta_S} (\mb{Y}_1+\mb{Y}_2+\mb{Y}_3),
\eeq
and so,
\begin{equation}\label{XY}
\dot{\mb{X}}(t)\mb{Y}= -\frac{\deltas}{M} |\dot{\mb X} (t) |^2 +\dot{\mb{X}}(t)\sum_{i=4}^6\mb{Y}_i.
\end{equation}

\subsection{Leading order estimates}
\begin{lemma}\label{lem-sharp}
There exists $C>0$ such that 
\begin{align*}
\begin{aligned}
&-\frac{\deltas}{2M} |\dot{\mb X}|^2 +\mb B_1+\mb B_2 - \mb{G} -\frac{3}{4}\mb D \\&\le -C\mathcal{G}^S(U) + C ( \deltar^{4/3}+ \delta_C^{4/3})  \deltas^{4/3} e^{-C\deltas t},
\end{aligned}
\end{align*}
where
\beq\label{gs}
\mathcal{G}^S(U):=\int_\bbr|(v^S)^{-\mb X}_\x| |(v-\bar v, u-\bar u, \theta-\bar \theta)|^2 d\x,
\eeq
\end{lemma}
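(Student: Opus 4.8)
The goal is to reduce this functional inequality to a finite--dimensional quadratic inequality on the ``shock averages'' of the perturbation, choosing the Young splittings and the constant $M$ so that the reduced inequality is \emph{marginally} true, the leftover slack being provided by the large weight amplitude $\lambda$. First I would pass to the variable $y=\frac{v^S(\xi-\mb X(t))-v^*}{\deltas}\in(0,1)$, under which $(v^S)^{-\mb X}_\xi\,d\xi=\deltas\,dy$ and $a^{-\mb X}_\xi\,d\xi=\lambda\,dy$, so that $\mathcal{G}^S$ and $\mb B_1$ become $\deltas$--weighted while $\mb B_2$ and $\mb G$ become $\lambda$--weighted integrals over $(0,1)$; by \eqref{VS2} for small $\deltas$ the Jacobian satisfies $(v^S)^{-\mb X}_\xi\sim\deltas^2\,y(1-y)$, so $\mb D$ dominates a constant times $\deltas\int_0^1 y(1-y)\big(|(u-\bar u)_y|^2+|(\theta-\bar\theta)_y|^2\big)\,dy$, and the $(v^S)^{-\mb X}_\xi$--weighted average of a function coincides with $\int_0^1(\cdot)\,dy$. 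I would then linearize: replace every slowly varying coefficient by its value at $(v^*,u^*,\theta^*)$, use \eqref{shock-vu}--\eqref{theta-s} and \eqref{sm1} to replace $(u^S)_\xi,(\theta^S)_\xi,\sigma$ by $-\sigma^*(v^S)_\xi,-\tfrac{(\gamma-1)p^*}{R}(v^S)_\xi,\sigma^*$, and replace $\Phi(v/\bar v),\Phi(\theta/\bar\theta),p-\bar p$ by $\tfrac{(v-\bar v)^2}{2\bar v^2},\tfrac{(\theta-\bar\theta)^2}{2\bar\theta^2},\tfrac{R}{\bar v}(\theta-\bar\theta)-\tfrac{\bar p}{\bar v}(v-\bar v)$. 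Each resulting error is either $O(\delta_0+\eps_1)$ times the integrand of $\mathcal{G}^S$ (absorbed into $-C\mathcal{G}^S$ once $\delta_0,\eps_1$ are small) or, where $v^R,\theta^R,v^C,\theta^C$ enter, controlled by the wave--interaction bounds of Lemma \ref{lemma2.2}; applying Young's inequality against a small multiple of $\mathcal{G}^S$ to the latter produces exactly the residual $C(\deltar^{4/3}+\delta_C^{4/3})\deltas^{4/3}e^{-C\deltas t}$.

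For the order--$\lambda$ terms I would complete the square in $\mb B_2-\mb G$: writing $\mb G=\sigma^*\!\int a_\xi\big[\tfrac{R\bar\theta}{2\bar v^2}(v-\bar v)^2+\tfrac{R}{2(\gamma-1)\bar\theta}(\theta-\bar\theta)^2+\tfrac12(u-\bar u)^2\big]$ and $\mb B_2=\int a_\xi(u-\bar u)g$ with $g=\tfrac{R}{\bar v}(\theta-\bar\theta)-\tfrac{\bar p}{\bar v}(v-\bar v)$, the Young split in $u-\bar u$ with the natural weight $\sigma^*$ leaves the quadratic form $\tfrac{g^2}{2\sigma^*}-\sigma^*\big[\tfrac{R\theta^*}{2(v^*)^2}(v-\bar v)^2+\tfrac{R}{2(\gamma-1)\theta^*}(\theta-\bar\theta)^2\big]$, which, using $p^*v^*=R\theta^*$ and $(\sigma^*)^2=\gamma p^*/v^*$, has identically zero discriminant and hence is a \emph{nonpositive perfect square} in $(v-\bar v,\theta-\bar\theta)$, along the direction $\ell_d:=(\gamma-1)\theta^*(v-\bar v)+v^*(\theta-\bar\theta)$. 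This yields
\[
\mb B_2-\mb G\ \le\ -\tfrac{\sigma^*}{2}\!\int a_\xi\big[(u-\bar u)-\tfrac{g}{\sigma^*}\big]^2-c\!\int a_\xi\,\ell_d^2+(\text{linearization errors}),
\]
two good terms of order $\lambda$. I would then invoke the weighted Poincar\'e inequality Lemma \ref{lem-poin} applied to $u-\bar u$ and $\theta-\bar\theta$ as functions of $y$, absorbing their fluctuations about $\int_0^1(\cdot)\,dy$ into $\tfrac12\mb D+\tfrac14\mb D$ (split between the $\mu$-- and $\kappa$--halves of $\mb D$). Combining the two $\lambda$--order good terms with this fluctuation control shows that \emph{all} fluctuations are controlled: those of $u-\bar u,\theta-\bar\theta$ directly by $\mb D$, the $\ell_d$--direction of $(v-\bar v,\theta-\bar\theta)$ by the second good term, and the complementary (i.e.\ $g$--) direction, since $g\simeq\sigma^*(u-\bar u)$ modulo the first good term and $\mb D$.

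With all fluctuations absorbed, $\mb B_1$, $\mathcal{G}^S$ and $\dot{\mb X}=-\frac{M}{\deltas}(\mb Y_1+\mb Y_2+\mb Y_3)$ reduce, up to already--controlled errors, to expressions in the averages $X=\int_0^1(v-\bar v)\,dy$, $Y=\int_0^1(u-\bar u)\,dy$, $Z=\int_0^1(\theta-\bar\theta)\,dy$: $\mb B_1\approx\deltas\,Q(X,Z)$ with $Q$ a fixed positive definite form, $\mathcal{G}^S\approx\deltas(X^2+Y^2+Z^2)$, and $-\frac{\deltas}{2M}|\dot{\mb X}|^2\approx-\frac{\deltas M}{2}L(X,Y,Z)^2$ with $L$ the linear form of \eqref{X(t)}. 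On the line $\{Y=g/\sigma^*,\ \ell_d=0\}$ --- enforced, up to good terms, by the two $\lambda$--order good terms --- $Q$ and $L$ become one--variable, and the constant $M=\frac{\gamma(\gamma+1)p^*}{2(v^*)^2(\sigma^*)^3}\big(1+\frac{2\kappa(\gamma-1)^2}{\mu R\gamma}\big)$ is exactly the value for which $\deltas Q-\frac{\deltas M}{2}L^2$ is non-positive there; the strictly negative $-C\mathcal{G}^S$ is then recovered from leftover $\lambda$--scale good terms, which beat the $\deltas$--scale losses because $\lambda\gtrsim\deltas$. Collecting the linearization errors gives the stated bound.

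I expect the main obstacle to be precisely this marginal balancing. Unlike the barotropic case of \cite{Kang-V-NS17,KVW-2022}, here $v-\bar v$ carries no dissipation, so its fluctuations cannot be controlled by $\mb D$ alone; the scheme must route that control through the large--$\lambda$ good terms coming from the exact zero--discriminant identity for $\mb B_2-\mb G$ and through the $u$--$v$--$\theta$ coupling. Since that identity is critical and the quadratic balanced by $M$ is exactly borderline, the linearization errors of the first step must be kept genuinely of lower order, and the constraints $\deltas\ll\lambda\le C\sqrt{\deltas}$, the precise value of $M$ (which is why the factor $1+\frac{2\kappa(\gamma-1)^2}{\mu R\gamma}$ mixes both viscosities), and the two separate Poincar\'e inequalities all have to be used with essentially no slack.
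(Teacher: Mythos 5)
Your plan has the same skeleton as the paper's proof: the change of variable $y=(v^S(\xi-\mb X)-v^*)/\deltas$, the exact completion of squares in $\mb B_2-\mb G$ (your two squares, in $(u-\bar u)-g/\sigma^*$ and in $\ell_d$, are just a different basis for the paper's $\mb G_1,\mb G_2$ from \eqref{g1g2}, spanning the range of the same rank-two nonnegative form), the sharp profile/Jacobian estimate, the weighted Poincar\'e inequality of Lemma \ref{lem-poin}, the calibration of $M$ against the zero mode, and Lemma \ref{lemma2.2} for the residual $C(\deltar^{4/3}+\delta_C^{4/3})\deltas^{4/3}e^{-C\deltas t}$.

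There is, however, one step in your bookkeeping that would fail as written: you assert that the reduced $\deltas$-scale inequality is only ``marginally true'' and that the strict negativity $-C\mathcal{G}^S$ is ``recovered from leftover $\lambda$-scale good terms, which beat the $\deltas$-scale losses because $\lambda\gtrsim\deltas$.'' Along the kernel direction of $\mb B_2-\mb G$, i.e. $(v-\bar v)=-\frac{u-\bar u}{\sigma^*}$ and $(\theta-\bar\theta)=\frac{(\gamma-1)\theta^*}{v^*\sigma^*}(u-\bar u)$ --- exactly your line $\{Y=g/\sigma^*,\ \ell_d=0\}$ --- both $\lambda$-order squares vanish identically, so no $\lambda$-scale good term can ever produce the $u$-component $\deltas\int_0^1 w^2\,dy$ of $\mathcal{G}^S$. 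That component must come from a strict $\deltas$-scale margin: on this direction $\mb B_1$ carries the sharp coefficient $\alpha^*$ (cf.\ \eqref{b1-est}), while $\frac34\mb D$, after the sharp identity \eqref{sharp-d} of Appendix \ref{app-est} and the Poincar\'e inequality \eqref{poincare} with its constant $\frac12$ applied to both $u-\bar u$ and $\theta-\bar\theta$, yields the coefficient $\frac34\cdot 2\alpha^*=\frac32\alpha^*$ on $\int_0^1 w^2\,dy$ together with a zero-mode loss $\frac32\alpha^*\big(1+\frac{2\kappa(\gamma-1)^2}{\mu R\gamma}\big)\deltas\bar w^2$, which is what $-(\sigma^*)^2M\deltas\bar w^2$ from \eqref{gxest} is designed to cancel (this, not a non-positivity condition for a reduced quadratic in the averages, fixes $M$ and explains the factor $1+\frac{2\kappa(\gamma-1)^2}{\mu R\gamma}$); the surviving margin of order $\frac{\alpha^*}{4}\deltas\int_0^1 w^2\,dy$ gives the $u$-part of $-C\mathcal{G}^S$, and only the $v$- and $\theta$-parts are then recovered from $\mb G_1,\mb G_2$ at cost $O(\deltas/\lambda)$. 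For the same reason, your statement that ``$\mb D$ dominates a constant times $\deltas\int_0^1 y(1-y)|\partial_y(\cdot)|^2\,dy$'' is insufficient: the comparison $\frac32\alpha^*>\alpha^*$ is borderline, and the argument needs the precise constants in \eqref{sharp-d} and Lemma \ref{lem-poin}, not merely their orders of magnitude. With the margin relocated to the $\deltas$ scale in this way, your plan reproduces the paper's argument.
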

\begin{proof}
We first rewrite the main terms in terms of the new variables $y$ and $w$:
\begin{equation}\label{omega}
w :=u-\bar u^{\mb{X}},
\end{equation}
and
\begin{equation}\label{y-xi}
y:=\frac{ v^S(\xi -\mb{X}(t))-v^*}{\deltas}.
\end{equation}
Since $\mb X(t)$ is bounded on $[0,T]$ by \eqref{dxbound}, it follows from $\deltas:=v_+-v^*>0$ and $v^S_\xi>0$ that for any fixed $t$, the change of variable $\xi\in\bbr\mapsto y\in (0,1)$ is well-defined, together with
\begin{equation}\label{dery}
\frac{dy}{d\xi} =\frac{(v^S)^{-\mb{X}}_\xi}{\delta_S}>0.
\end{equation}
Note also that $a(\xi)=1+\lam y$ and  $a'(\xi)=\lam (dy/d\xi)>0$.\\
To perform the sharp estimates, we will use the $O(1)$-constants $p^*, \s^*$ defined in \eqref{pstar},
which are indeed independent of the small constants $\delta_0, \eps_1$, since $\frac{v_+}{2}\le v^*\le v_+$ and $\frac{\theta_+}{2}\le \theta^*\le \theta_+$. \\

\noindent$\bullet$ {\bf Estimates on $\mb B_2 - \mb G$: } \\
First, by \eqref{def-rent}, 
\begin{align*}
\begin{aligned}
&(u-\bar u)(p-\bar p) -\sigma \bar \theta \eta\big(U(t,\xi)|\bar U(t,\xi) \big) \\
&\qquad = (u-\bar u)(p-\bar p) -\sigma \Big[R\bar\theta\Phi(\frac{v}{\bar v})+\frac{R\bar\theta}{\gamma-1}\Phi(\frac{\theta}{\bar \theta}) + \frac{(u-\bar u)^2}{2} \Big].
\end{aligned}
\end{align*}
Using  $p-\bar p =\frac{R}{v} (\theta-\bar\theta) -\frac{\bar p}{v} (v-\bar v) $, and \eqref{sm1}, \eqref{app-phi}, \eqref{perror}, we have
\begin{align*}
\begin{aligned}
&(u-\bar u)(p-\bar p) -\sigma \bar \theta \eta\big(U(t,\xi)|\bar U(t,\xi) \big) \\
&\quad = (u-\bar u)\Big(\frac{R}{v} (\theta-\bar\theta) -\frac{\bar p}{v} (v-\bar v)  \Big) -\sigma \Big[R\bar\theta\Phi(\frac{v}{\bar v})+\frac{R\bar\theta}{\gamma-1}\Phi(\frac{\theta}{\bar \theta}) + \frac{(u-\bar u)^2}{2} \Big]\\
&\quad\le (u-\bar u)\Big[\frac{R}{v^*}(\theta-\bar\theta)-\frac{p^*}{v^*}(v-\bar v)\Big]  -\frac{R\sigma^*\theta^*}{2(v^*)^2}(v-\bar v)^2 - \frac{R\sigma^*}{2(\gamma-1)\theta^*}(\theta-\bar\theta)^2 \\
&\quad\quad -\frac{\sigma^*}{2}(u-\bar u)^2 +C\big(|v-\bar v| + |\bar v - v^*| + |\bar\theta - \theta^*| \big) |(v-\bar v, u-\bar u,\theta-\bar\theta)|^2\\
&\quad=  -\frac{R\sigma^*\theta^*}{2(v^*)^2} \Big[(v-\bar v)+\frac{u-\bar u}{\sigma^*}  \Big]^2 - \frac{R\sigma^*}{2(\gamma-1)\theta^*} \Big[(\theta-\bar\theta)-\frac{(\gamma-1)\theta^*}{v^*\s^*}(u-\bar u)\Big]^2\\
&\quad\quad +C\big(|v-\bar v|+ \deltas + |(v^R - v_* ,\theta^R - \theta_*)| +|(v^C - v^*,\theta^C - \theta^*)|  \big) |(v-\bar v, u-\bar u,\theta-\bar\theta)|^2,
\end{aligned}
\end{align*}
where the last equality is obtained by using
\[
\frac{R\theta^*}{2(v^*)^2 \s^*} + \frac{R(\gamma-1)\theta^*}{2(v^*)^2 \s^*} = \frac{R\gamma\theta^*}{2(v^*)^2 \s^*} = \frac{\s^*}{2} \quad \mbox{by }  \sigma^*= \frac{\sqrt{\gamma R\theta^*}}{v^*},
\]
Therefore, we have
\beq\label{b2g}
\mb B_2(U) - \mb G (U) \le - \mb G_1 (U) -  \mb G_2 (U) + \mb B_{new} (U), 
\eeq
where 
\begin{align}
\begin{aligned} \label{g1g2}
& \mb G_1 (U):= \frac{R\sigma^*\theta^*}{2(v^*)^2} \int_\bbr a_\xi^{-\mb X}  \Big[(v-\bar v)+\frac{u-\bar u}{\sigma^*}  \Big]^2 d\xi,\\
& \mb G_2 (U):= \frac{R\sigma^*}{2(\gamma-1)\theta^*} \int_\bbr a_\xi^{-\mb X}  \Big[(\theta-\bar\theta)-\frac{(\gamma-1)\theta^*}{v^*\s^*}(u-\bar u)\Big]^2 d\x,\\
& \mb B_{new} (U) := C\deltas \int_\bbr a_\xi^{-\mb X}  |(v-\bar v, u-\bar u,\theta-\bar\theta)|^2 d\x \\
&\qquad\qquad  + C  \int_\bbr a_\xi^{-\mb X}  |(v-\bar v, u-\bar u,\theta-\bar\theta)|^3 d\x \\
&\qquad\qquad  + C  \int_\bbr a_\xi^{-\mb X} \big( |(v^R - v_* ,\theta^R - \theta_*)| +|(v^C - v^*,\theta^C - \theta^*)|  \big) |(v-\bar v, u-\bar u,\theta-\bar\theta)|^2 d\x .
\end{aligned}
\end{align}
The two good terms $\mb G_1, \mb G_2$ will be used in the remaining estimates, while
the bad term $\mb B_{new}$ can be controlled by the three good terms $\mb G_1, \mb G_2,\mb D$ and $\mathcal{G}^S$,
as follows.\\
First, using \eqref{a-prime},
\[
\deltas \int_\bbr a_\xi^{-\mb X}  |(v-\bar v, u-\bar u,\theta-\bar\theta)|^2 d\x \le C\l \mathcal{G}^S.
\]
Using \eqref{a-prime} and the interpolation inequality, and $\lam\le C\sqrt{\deltas}$ by \eqref{lamsmall}, we have
\begin{align*}
\begin{aligned}
&  \int_\bbr a_\xi^{-\mb X}  |(v-\bar v, u-\bar u,\theta-\bar\theta)|^3 d\x \\
&\quad \le C \int_\bbr  |a_\xi^{-\mb X} |\Big|(v-\bar v)+\frac{u-\bar u}{\sigma^*}\Big| ^3d\xi+C\int_\bbr |a_\xi^{-\mb X}||w|^3d\xi\\
&\qquad+C \int_\bbr |a_\xi^{-\mb X} |\Big| (\theta-\bar\theta)-\frac{(\gamma-1)\theta^*}{v^*\s^*}(u-\bar u) \Big|^3 d\xi \\
&\quad \le C\eps_1 (\mb{G}_1+\mb{G}_2)+C \frac{\l}{\deltas} \int_\bbr |(v^S)_\xi^{-\mb X} |  \|w\|_{L^\infty(\bbr)}^2 |w| d\xi \\
&\quad \le C \eps_1(\mb{G}_1+\mb{G}_2)+C\frac{\lam}{\deltas}  \|w_\xi\|_{L^2(\bbr)} \|w\|_{L^2(\bbr)}  \sqrt{\int_\bbr   |(v^S)_\xi^{-\mb X} | w^2 d\xi} \sqrt{\int_\bbr  |(v^S)_\xi^{-\mb X} |  d\xi} \\
&\quad \le C \eps_1(\mb{G}_1+\mb{G}_2)+C\eps_1 \frac{\lam}{\sqrt{\deltas}}  \sqrt{\mb D} \sqrt{\mathcal{G}^S}\\
&\quad   \le C \eps_1(\mb{G}_1+\mb{G}_2 + \mb D +\mathcal{G}^S),
\end{aligned}
\end{align*}
Likewise, using the interpolation inequality and Lemma \ref{lemma2.2}, we have
\begin{align*}
\begin{aligned}
& \int_\bbr a_\xi^{-\mb X} \big( |(v^R - v_* ,\theta^R - \theta_*)| +|(v^C - v^*,\theta^C - \theta^*)|  \big) |(v-\bar v, u-\bar u,\theta-\bar\theta)|^2 d\x \\
&\quad \le C(\delta_R +\delta_C)  (\mb{G}_1+\mb{G}_2) \\
&\qquad+C \frac{\l}{\deltas} \int_\bbr  |(v^S)_\xi^{-\mb X} |  \big( |(v^R - v_* ,\theta^R - \theta_*)| +|(v^C - v^*,\theta^C - \theta^*)|  \big) |w|^2 d\xi  \\
&\quad\le C(\delta_R +\delta_C)  (\mb{G}_1+\mb{G}_2)+C\frac{\lam}{\deltas} \|w\|_{L^4(\bbr)}^2 \Big[\| |(v^S)_\xi^{-\mb X}| | (v^R - v_* ,\theta^R - \theta_*)|\|_{L^2(\bbr)}\\
 &\qquad \quad +\| |(v^S)_\xi^{-\mb X} | | (v^C - v^* ,\theta^C - \theta^*)|\|_{L^2(\bbr)}  \Big]  \\
 &\quad \le C \delta_0(\mb{G}_1+\mb{G}_2)+C\frac{1}{\sqrt\deltas} \|w_\xi\|_{L^2(\bbr)}^{1/2}  \|w\|_{L^2(\bbr)}^{3/2}  \deltas^{\frac32}(\deltar+\delta_C)e^{-C\deltas t}\\
&\quad\leq  C \delta_0(\mb{G}_1+\mb{G}_2)+C\mb D^{\frac14} \eps_1^{\frac32}\deltas (\deltar+\delta_C)e^{-C\deltas t}\\
&\quad\leq  C \delta_0(\mb{G}_1+\mb{G}_2)+C\eps_1 \mb D +C \deltas^{4/3} ( \deltar^{4/3}+ \delta_C^{4/3}) e^{-C\deltas t}.
\end{aligned}
\end{align*}
Therefore, 
\beq\label{bnew}
 \mb B_{new} \le C (\l+\eps_1+\delta_0) ( \mb{G}_1+\mb{G}_2 + \mb D +\mathcal{G}^S)+C \deltas^{4/3} ( \deltar^{4/3}+ \delta_C^{4/3}) e^{-C\deltas t}.
\eeq

\noindent$\bullet$ {\bf Estimates on $-\frac{\deltas}{2M} |\dot{\mb X}|^2$:}
First, to estimate the term $-\frac{\deltas}{2M} |\dot{\mb X}|^2$, we will estimate $\mb Y_1, \mb Y_2, \mb Y_3$ due to \eqref{defxy}.\\
By the change of variable \eqref{dery} and $\eqref{VS}_1$, we have
\[
\mb Y_1 = \int_\bbr a^{-\mb X} (u^S)^{-\mb{X}}_\xi(u-\bar u) d\xi =-\deltas \s\int_0^1 a^{-\mb X} w  dy.
\]
Using \eqref{sm1} and $|a-1|\le \lam$, we have
\beq\label{y1}
\left|\mb Y_1 + \deltas \s^*\int_0^1 w dy \right| \le C\deltas \l \int_0^1|w| dy.
\eeq
For 
\[
\begin{array}{ll}
\di \mb Y_2 = \int_{\bbr} a^{-\mb X} \frac{(v^S)^{-\mb{X}}_\xi \bar p}{\bar v}(v-\bar v) d\xi \\[3mm]
\di\quad\ \   =-\int_{\bbr}a^{-\mb X} \frac{(v^S)^{-\mb{X}}_\xi \bar p }{\bar v \s^*}(u-\bar u) d\x +\int_{\bbr} a^{-\mb X} \frac{(v^S)^{-\mb{X}}_\xi \bar p}{\bar v}\Big[(v-\bar v)+\frac{u-\bar u}{\sigma^*}\Big] d\x,
\end{array}
\] 
using \eqref{perror} and \eqref{a-prime}, we have
\beq\label{y2}
\left|\mb Y_2 + \frac{p^*\deltas}{v^*\sigma^*}\int_0^1 \omega dy\right| \le C\deltas (\lam+\delta_0)\int_0^1|\omega| dy+ C\frac{\deltas}{\l}\int_{\bbr} a^{-\mb X}_\xi \left|(v-\bar v)+\frac{u-\bar u}{\sigma^*}\right| d\x.
\eeq
Likewise, since
\[
\begin{array}{ll}
\di \mb Y_3 = \int_{\bbr}a^{-\mb X} \frac{R}{\gamma-1}\frac{(\theta^S)^{-\mb{X}}_\xi}{\bar\theta}(\theta-\bar\theta) d\xi  \\[4mm]
\di\quad\ \   = \frac{R \theta^*}{v^*\s^*} \int_{\bbr} a^{-\mb X} \frac{(\theta^S)^{-\mb{X}}_\xi}{\bar\theta}  (u-\bar u^{\mb X}) d\x \\
\di\qquad\qquad +\int_{\bbr}a^{-\mb X} \frac{R}{\gamma-1}\frac{(\theta^S)^{-\mb{X}}_\xi}{\bar\theta}\Big[(\theta-\bar\theta)-\frac{(\gamma-1)\theta^*}{v^*\s^*}(u-\bar u)\Big] d\x,
\end{array}
\] 
using $|\theta-\theta^*| \le C(\eps_1 +\delta_0)$ and \eqref{theta-s}, \eqref{d-weight}, we have
\beq\label{y3}
\begin{array}{ll}
\di 
\left|\mb Y_3 +  \frac{(\gamma-1)p^*\deltas}{v^*\sigma^*}\int_0^1 \omega dy \right| \le C\deltas (\lam+\delta_0+\varepsilon_1)\int_0^1|\omega| dy\\[3mm]
\di \qquad\qquad\qquad\qquad\qquad  + C\frac{\deltas}{\l}\int_{\bbr} a^{-\mb{X}}_\xi \left| (\theta-\bar\theta)-\frac{(\gamma-1)\theta^*}{v^*\s^*}(u-\bar u)  \right|  d\x.
\end{array}
\eeq
Therefore, using \eqref{defxy}, \eqref{y1}, \eqref{y2} and \eqref{y3} together with $\s^* =\sqrt\frac{\gamma p^*}{v^*}$, we have
\[
\begin{array}{ll}
\di\left| \dot{\mb X} -2\sigma^*M\int_0^1 \omega dy\right| &\di \le C (\lam+\delta_0+\eps_1)\int_0^1|\omega| dy+ \frac{C}{\l}\int_{\bbr} a^{-\mb{X}}_\xi \left|(v-\bar v)+\frac{u-\bar u}{\sigma^*}\right| d\x\\[4mm]
&\di\quad + \frac{C}{\l}\int_{\bbr} a^{-\mb{X}}_\xi \left| (\theta-\bar\theta)-\frac{(\gamma-1)\theta^*}{v^*\s^*}(u-\bar u) \right| d\x,
\end{array}
\]
which yields
\[
 \left( \left|2\sigma^*M \int_0^1 \omega dy\right| - |\dot{\mb X} | \right)^2  \le  C(\lam+\delta_0+\eps_1)^2 \int_0^1|\omega|^2 dy + \frac{C}{\l^2}  (\mb G_1 +\mb G_2)\int_{\bbr} a^{-\mb{X}}_\xi d\x.
\]
This and the algebraic inequality $\frac{p^2}{2}-q^2 \le (p-q)^2$ for all $p,q\ge 0$ imply
\[
2(\sigma^*)^2M^2\left(\int_0^1 \omega dy\right)^2 - |\dot{\mb X}|^2 \le  C(\lam+\delta_0+\eps_1)^2 \int_0^1|\omega|^2 dy + \frac{C}{\l}  (\mb G_1 +\mb G_2).
\]
Thus,
\beq\label{gxest}
 -\frac{\deltas}{2M} |\dot{\mb X}|^2 \le -(\sigma^*)^2M\deltas\left(\int_0^1 \omega dy\right)^2 +C\ds(\lam+\delta_0+\eps_1)^2 \int_0^1\omega^2 dy
+ \frac{C\delta_S}{\l}  (\mb G_1 +\mb G_2). 
\eeq

\noindent$\bullet$ {\bf Estimates on $\mb B_1$ :}\\
Set
\beq\label{b1*}
\begin{array}{ll}
\di \mb B_1(U)=  \int_\bbr a^{-\mb X} (v^S)_\xi^{-\mb X} \Big[ \frac{(\gamma+1)p^*\sigma^*}{2(v^*)^2} (v-\bar v)^2+\frac{R\sigma^*}{2v^*\theta^*}(\theta-\bar\theta)^2-\frac{p^*\sigma^*}{v^*\theta^*}(v-\bar v)(\theta-\bar\theta)\Big]d\x\\[4mm]
\di \qquad\quad := \mb B_{11}(U)+ \mb B_{12}(U)+\mb B_{13}(U).
\end{array}
\eeq
First, using Young's inequality, 
\begin{align*}
\begin{aligned}
\mb B_{11} & = \frac{(\gamma+1)p^*\sigma^*}{2(v^*)^2}  \int_\bbr a^{-\mb X} (v^S)_\xi^{-\mb X} \Big|\Big((v-\bar v)+\frac{u-\bar u}{\sigma^*}\Big) - \frac{u-\bar u}{\sigma^*} \Big|^2 d\xi  \\
&\le  \frac{(\gamma+1)p^*\sigma^*}{2(v^*)^2} \left(1+\l+\Big(\frac{\deltas}{\l}\Big)^\frac12 \right) \int_\bbr (v^S)_\xi^{-\mb X} \Big| \frac{u-\bar u}{\sigma^*} \Big|^2 d\xi \\
&\quad + C\Big(\frac{\deltas}{\l}\Big)^{-\frac12} \int_\bbr  (v^S)_\xi^{-\mb X} \Big|(v-\bar v)+\frac{u-\bar u}{\sigma^*} \Big|^2 d\xi,
\end{aligned}
\end{align*}
which together with \eqref{a-prime} yields
\beq\label{b11}
\mb B_{11}  \le \frac{(\gamma+1)p^*}{2(v^*)^2\s^*} \left(1+\l+\Big(\frac{\deltas}{\l}\Big)^\frac12 \right) \deltas \int_0^1 w^2 dy + C\Big(\frac{\deltas}{\l}\Big)^{\frac12} \mb G_1.
\eeq
Similarly, we estimate (using $p^*=R\theta^*/v^*$)
\begin{align}\label{b12}
\begin{aligned}
\mb B_{12} & =\frac{R\sigma^*}{2v^*\theta^*}  \int_\bbr a^{-\mb X} (v^S)_\xi^{-\mb X}  \left| \Big( (\theta-\bar\theta)-\frac{(\gamma-1)\theta^*}{v^*\s^*}(u-\bar u) \Big) + \frac{(\gamma-1)\theta^*}{v^*\s^*}(u-\bar u) \right|^2 d\xi  \\
&\le  \frac{(\gamma-1)^2p^*}{2(v^*)^2\s^*} \left(1+\l+\Big(\frac{\deltas}{\l}\Big)^\frac12 \right) \deltas \int_0^1 w^2 dy + C\Big(\frac{\deltas}{\l}\Big)^{\frac12} \mb G_2.
\end{aligned}
\end{align}
and 
\begin{align}\label{b13}
\begin{aligned}
\mb B_{13} & \le \frac{p^*\sigma^*}{v^*\theta^*} \int_\bbr a^{-\mb X} (v^S)_\xi^{-\mb X} \Big|\Big((v-\bar v)+\frac{u-\bar u}{\sigma^*}\Big) - \frac{u-\bar u}{\sigma^*} \Big| \\
&\qquad\qquad\quad \cdot \left| \Big( (\theta-\bar\theta)-\frac{(\gamma-1)\theta^*}{v^*\s^*}(u-\bar u) \Big) + \frac{(\gamma-1)\theta^*}{v^*\s^*}(u-\bar u) \right| d\xi  \\
&\le  \frac{(\gamma-1)p^*}{(v^*)^2\s^*} \left(1+\l+\Big(\frac{\deltas}{\l}\Big)^\frac12 \right) \deltas \int_0^1 w^2 dy + C\Big(\frac{\deltas}{\l}\Big)^{\frac12} (\mb G_1 + \mb G_2).
\end{aligned}
\end{align}
Let $\alpha^*$ be the $O(1)$-constant defined by
\beq\label{alpha*}
\alpha^*:= \frac{\gamma(\gamma+1) p^*}{2(v^*)^2\s^*}.
\eeq
Substituting \eqref{b11}, \eqref{b12} and \eqref{b13} into \eqref{b1*}, we have
\beq\label{b1-est}
\mb B_1 \le \alpha^* \left(1+\l+\Big(\frac{\deltas}{\l}\Big)^\frac12 \right) \deltas \int_0^1 w^2 dy+ C\Big(\frac{\deltas}{\l}\Big)^{\frac12} (\mb G_1 + \mb G_2).
\eeq

\noindent$\bullet$ {\bf Estimates on $\mb D$ :} \\
Set
\beq\label{D-*}
\mb D(U)= \int_\bbr \!a^{-\mb X}\Big[ \frac{\mu}{v}|(u-\bar u)_\xi|^2+\frac{\kappa}{v\theta}|(\theta-\bar \theta)_\xi|^2\Big]d\x :=\mb D_1(U)+\mb D_2(U).
\eeq
First, using $a\ge 1$ and the change of variable, we have
\[
\mb D_1\ge  \int_\bbr \frac{\mu}{v}|(u-\bar u)_\xi|^2 d\xi = \int_0^1 |\partial_y \omega|^2 \frac{\mu}{v} \Big(\frac{dy}{d\xi}\Big) dy.
\]
To estimate the right-hand side, set $v^S:=(v^S)^{-\mb X}, p^S:=p ((v^S)^{-\mb X},(\theta^S)^{-\mb X}), p_+:= p(v_+,\theta_+)$ for simplicity.
Then, using $\eqref{VS}_1$ and then integrating $\eqref{VS}_2$ over $(-\infty,\xi]$, we have
\[
\mu \frac{v^S_\xi }{v^S} = -\s (v^S -v^*) - \frac{p^S-p^*}{\s},
\]
and so,
\begin{align*}
\begin{aligned}
\deltas  \frac{\mu}{v^S}\frac{dy}{d\xi}= \mu \frac{v^S_\xi }{v^S}&= -\s (v^S -v^*) - \frac{p^S-p^*}{\s}\\
&=-\frac{1}{\s}\left[ \s^2(v^S -v^*) + p^S-p^*\right],
\end{aligned}
\end{align*}
which together with $\s^2=\frac{p^*-p_+}{v_+-v^*}$ yields
\begin{align*}
\begin{aligned}
\deltas  \frac{\mu}{v^S} \frac{dy}{d\xi}
&=-\frac{1}{\s(v_+ -v^*)}\left[ (p^*-p_+) (v^S -v^*) + (v_+-v^*)(p^S-p^*) \right]\\
&=-\frac{1}{\s(v_+ -v^*)}\bigg[(p^S-p_+) (v^S -v^*) +  (v^S -v^*) (p^*-p^S) \\
&\qquad +  (v^S- v^* ) (p^S-p^*) + (v_+-v^S)(p^S-p^*) \bigg] \\
&=-\frac{1}{\s(v_+ -v^*)}\left[(p^S-p_+) ( v^S -v^*) + (v_+- v^S)(p^S-p^*) \right].
\end{aligned}
\end{align*}
Since $y=\frac{v^S-v^*}{\deltas}$ and $1-y=\frac{v_+-v^S}{\deltas}$,
\[
\frac{1}{y(1-y)} \frac{\mu}{v^S} \frac{dy}{d\xi}=\frac{1}{\s}\left( \frac{p^S-p_+}{v^S - v_+}-\frac{p^S -p^*}{v^S-v^*} \right).
\]
By Appendix \ref{app-est}, we have
\beq\label{sharp-d}
\left| \frac{1}{y(1-y)} \frac{\mu}{v^S} \frac{dy}{d\xi} -\deltas \alpha^*\frac{\mu R\gamma}{\mu R\gamma+\kappa (\gamma-1)^2} \right|\leq C\deltas^2,
\eeq
This together with $|v-v^S|\le C(\eps_1+\delta_0)$ implies
\begin{align}
\begin{aligned}\label{D1-e1}
\mb D_1&\ge  \int_0^1 |\partial_y \omega|^2\frac{\mu}{v^S} \Big(\frac{dy}{d\xi}\Big) dy+\int_0^1 |\partial_y \omega|^2(\frac{v^S}{v}-1)  \frac{\mu}{v^S}\Big(\frac{dy}{d\xi}\Big) dy\\
&\ge \alpha^*\frac{\mu R\gamma}{\mu R\gamma+\kappa (\gamma-1)^2} \big(1-C(\delta_0+\eps_1)\big)  \deltas \int_0^1y(1-y)  |\partial_y \omega|^2 dy.
\end{aligned}
\end{align}
Likewise, using the change of variable, and \eqref{sharp-d}, we have
\beq\label{D2e}
\begin{array}{ll}
\di \mb D_2\geq\int_\bbr \frac{\kappa}{v\theta}|(\theta-\bar \theta)_\xi|^2d\x=\int_0^1\frac{\kappa}{v\theta}|(\theta-\bar \theta)_y|^2 \Big(\frac{dy}{d\xi}\Big)  dy\\[3mm]
\di \qquad =\int_0^1\frac{\kappa v^S}{\mu v\theta}|(\theta-\bar \theta)_y|^2 \frac{\mu}{v^S}\Big(\frac{dy}{d\xi}\Big)  dy\\[3mm]
\di \qquad =\int_0^1\frac{\kappa }{\mu \theta^*}|(\theta-\bar \theta)_y|^2 \frac{\mu}{v^S}\Big(\frac{dy}{d\xi}\Big)  dy+\int_0^1\frac{\kappa }{\mu}\left(\frac{v^S}{v\theta}-\frac{1}{\theta^*}\right)|(\theta-\bar \theta)_y|^2 \frac{\mu}{v^S}\Big(\frac{dy}{d\xi}\Big)  dy\\
\di\qquad \ge  \alpha^*\frac{\mu R\gamma}{\mu R\gamma+\kappa (\gamma-1)^2} \frac{\kappa }{\mu \theta^*}\big(1-C(\delta_0+\eps_1)\big)\deltas  \int_0^1y(1-y)  |(\theta-\bar \theta)_y|^2  \, dy.
\end{array}
\eeq

Our intention is to use the Poincare-type inequality of Lemma \ref{lem-poin} to absorb the main bad term $\mb B_1$ by the diffusion term $\mb D$ with \eqref{gxest}. However, since 
\[
1> \frac{\mu R\gamma}{\mu R\gamma+\kappa (\gamma-1)^2} \to 0 \quad\mbox{as } \gamma\to \infty,
\]
it follows from \eqref{b1-est} and \eqref{D1-e1} that $\mb D_1$ is not enough to control $\mb B_1$. Thus, we need to extract an additional good term on $w$ from $\mb D_2$, as follows.\\
First, using Lemma \ref{lem-poin} and 
\[
\int_0^1 |\omega-\bar \omega|^2 dy = \int_0^1\omega^2 dy -{\bar \omega}^2, \quad \bar \omega:=\int_0^1 \omega dy,
\]
we have
\begin{align*}
\begin{aligned}
\mb D &\ge 2 \alpha^*\frac{\mu R\gamma}{\mu R\gamma+\kappa (\gamma-1)^2} \big(1-C(\delta_0+\eps_1)\big)  \deltas \bigg[ \int_0^1\omega^2 dy -{\bar \omega}^2 \\
&\qquad\qquad\qquad\qquad\qquad\qquad + \frac{\kappa }{\mu \theta^*} \left(\int_0^1 |\theta-\bar \theta|^2  dy -\Big(\int_0^1(\theta-\bar \theta) dy\Big)^2\right) \bigg] .
\end{aligned}
\end{align*}
Observe that Young's inequality yields
\begin{align*}
\begin{aligned}
\int_0^1 |\theta-\bar \theta|^2  dy & =  \int_0^1  \left| \Big( (\theta-\bar\theta)-\frac{(\gamma-1)\theta^*}{v^*\s^*}(u-\bar u) \Big) + \frac{(\gamma-1)\theta^*}{v^*\s^*}(u-\bar u) \right|^2 dy  \\
&\ge  \left(\frac{(\gamma-1)\theta^*}{v^*\s^*}\right)^2 \left(1-\Big(\frac{\deltas}{\l}\Big)^{\frac12} \right) \int_0^1 w^2 dy \\
&\quad - C\Big(\frac{\deltas}{\l}\Big)^{-\frac12}  \int_0^1   \Big( (\theta-\bar\theta)-\frac{(\gamma-1)\theta^*}{v^*\s^*}(u-\bar u) \Big)^2 dy,
\end{aligned}
\end{align*}
and 
\begin{align*}
\begin{aligned}
\left(\int_0^1(\theta-\bar \theta) dy\right)^2 & \le  2\left(\int_0^1 \Big( \frac{(\gamma-1)\theta^*}{v^*\s^*}(u-\bar u) \Big)  dy\right)^2 \\
&\quad +2\left(\int_0^1 \Big( (\theta-\bar\theta)-\frac{(\gamma-1)\theta^*}{v^*\s^*}(u-\bar u) \Big)  dy\right)^2 \\
&\le 2 \left(\frac{(\gamma-1)\theta^*}{v^*\s^*}\right)^2 \bar w^2 +2 \int_0^1   \Big( (\theta-\bar\theta)-\frac{(\gamma-1)\theta^*}{v^*\s^*}(u-\bar u) \Big)^2 dy,
\end{aligned}
\end{align*}
which together with $\s^*=\frac{\sqrt{\gamma R \theta^*}}{v^*}$, \eqref{dery} and \eqref{a-prime} implies
\begin{align}
\begin{aligned}\label{D12}
\mb D &\ge 2 \alpha^* (1-C(\delta_0+\eps_1))  \deltas \frac{\mu R\gamma}{\mu R\gamma+\kappa (\gamma-1)^2}  \left(1+ \frac{\kappa (\gamma-1)^2}{\mu R\gamma}\right) \left(1-\Big(\frac{\deltas}{\l}\Big)^{\frac12} \right) \int_0^1\omega^2 dy \\
&\qquad  -2\alpha^* \deltas \left[1+ 2\frac{\kappa }{\mu \theta^*} \left(\frac{(\gamma-1)\theta^*}{v^*\s^*}\right)^2 \right] \bar w^2  - C\Big(\frac{\deltas}{\l}\Big)^{\frac12} \mb G_2\\
&= 2 \alpha^* \left(1-C(\delta_0+\eps_1)-\Big(\frac{\deltas}{\l}\Big)^{\frac12} \right)  \deltas \int_0^1 w^2 dy \\
&\quad -2\alpha^* \deltas  \left[1+ \frac{2\kappa(\gamma-1)^2}{\mu R\gamma} \right] \bar w^2  - C\Big(\frac{\deltas}{\l}\Big)^{\frac12} \mb G_2.
\end{aligned}
\end{align}

\noindent$\bullet$ {\bf Conclusion:}
First, combining \eqref{b2g}, \eqref{bnew}, \eqref{gxest},   \eqref{b1-est}, \eqref{D12} and using the smallness of $\frac{\deltas}{\l}, \l$ (as \eqref{lamsmall}), and $\delta_0$, $\eps_1$, we have
\begin{align*}
\begin{aligned}
-\frac{\deltas}{2M} |\dot{\mb X}|^2 + \mb B_1+\mb B_2-\mb{G} -\frac{3}{4}\mb D & \le C (\l+\eps_1+\delta_0) \mathcal{G}^S +C \deltas^{4/3} ( \deltar^{4/3}+ \delta_C^{4/3}) e^{-C\deltas t}  \\
&\quad  -\frac{\alpha^*}{4} \deltas \int_0^1 w^2 dy- \frac{1}{2} (\mb G_1+ \mb G_2)\\
&\quad  -(\sigma^*)^2M\deltas \bar w^2   + \frac{3}{2} \alpha^* \deltas  \left[1+ \frac{2\kappa(\gamma-1)^2}{\mu R\gamma} \right] \bar w^2 .
\end{aligned}
\end{align*}
Choosing the specific $O(1)$-constant $M$ as in \eqref{X(t)}, that is,
\[
M=\frac{3}{2(\sigma^*)^2} \alpha^* \left(1+\frac{2\kappa (\gamma-1)^2}{\mu R\gamma}\right),
\]
we have
\begin{align*}
\begin{aligned}
-\frac{\deltas}{2M} |\dot{\mb X}|^2 + \mb B_1+\mb B_2-\mb{G} -\frac{3}{4}\mb D & \le C (\l+\eps_1+\delta_0) \mathcal{G}^S +C \deltas^{4/3} ( \deltar^{4/3}+ \delta_C^{4/3}) e^{-C\deltas t}  \\
&\quad  -\frac{\alpha^*}{4} \deltas \int_0^1 w^2 dy- \frac{1}{2} (\mb G_1+ \mb G_2) .
\end{aligned}
\end{align*}
Finally, using
\[
 \deltas \int_0^1 w^2 dy = \int_\bbr |(v^S)^{-\mb X}_\x| |u-\bar u|^2 d\x,
\]
and
\begin{align*}
\begin{aligned}
\int_\bbr |(v^S)^{-\mb X}_\x| (v-\bar v)^2 d\xi &\leq 2\int_\bbr |(v^S)^{-\mb X}_\x| \Big|(v-\bar v)+\frac{u-\bar u}{\sigma^*}\Big|^2 d\xi + 2\int_\bbr |(v^S)^{-\mb X}_\x| \Big(\frac{u-\bar u}{\sigma^*}\Big) ^2d\xi \\
&\leq C\frac{\deltas}{\l}\mb G_1 + C \int_\bbr   |(v^S)^{-\mb X}_\x|  \big(u-\bar u\big)^2d\xi,
\end{aligned}
\end{align*}
and
\begin{align*}
\begin{aligned}
\int_\bbr (\theta-\bar \theta)^2 d\xi &\leq 2\int_\bbr  |(v^S)^{-\mb X}_\x| \Big|(\theta-\bar\theta)-\frac{(\gamma-1)\theta^*}{v^*\s^*}(u-\bar u)\Big|^2 d\xi + C\int_\bbr   |(v^S)^{-\mb X}_\x| (u-\bar u)^2d\xi\\
&\leq C\frac{\deltas}{\l}\mb G_2 + C \int_\bbr   |(v^S)^{-\mb X}_\x|  \big(u-\bar u\big)^2d\xi,
\end{aligned}
\end{align*}
we have
\[
-\frac{\deltas}{2M} |\dot{\mb X}|^2 + \mb B_1+\mb B_2-\mb{G} -\frac{3}{4}\mb D \le - C\int_\bbr|(v^S)^{-\mb X}_\x| |(v-\bar v, u-\bar u, \theta-\bar \theta)|^2 d\x +C \deltas^{4/3} ( \deltar^{4/3}+ \delta_C^{4/3}) e^{-C\deltas t} ,
\]
which implies the desired estimate in Lemma \ref{lem-sharp}.
\end{proof}

\subsection{Proof of Lemma \ref{lem-zvh}}
First of all, it follows from \eqref{ineq-1} and \eqref{XY} to have
\begin{align*}
\begin{aligned}
\frac{d}{dt}\int_{\bbr} a^{-\mb X}\bar \theta \eta\big(U|\bar U \big) d\xi  &\le -\frac{\deltas}{2M} |\dot{\mb X}|^2 +\mb B_1+\mb B_2 - \mb{G} -\frac{3}{4}\mb D \\
&\quad -\frac{\deltas}{2M} |\dot{\mb X}|^2 +\dot{\mb{X}}\sum_{i=4}^6\mb{Y}_i  +\sum_{i=3}^5 \mb B_i +\mb S_1+\mb S_2 - C \mathcal{G}^R  -\frac{1}{4} \mb D.
\end{aligned}
\end{align*}
Then, using Lemma \ref{lem-sharp} and Young's inequality, there exists a constant $C>0$ such that
\begin{align}
\begin{aligned}\label{ineq-2}
\frac{d}{dt}\int_{\bbr} a^{-\mb X}\bar \theta \eta\big(U|\bar U \big) d\xi  &\le -C(\mathcal{G}^S+\mathcal{G}^R) + C ( \deltar^{4/3}+ \delta_C^{4/3})  \deltas^{4/3} e^{-C\deltas t} \\
&\quad -\frac{\deltas}{4M} |\dot{\mb X}|^2 +\frac{C}{\deltas}\sum_{i=4}^6 |\mb{Y}_i|^2  +\sum_{i=3}^5 \mb B_i +\mb S_1+\mb S_2   -\frac{1}{4} \mb D.
\end{aligned}
\end{align}
In what follows, we will control the bad term on the right-hand side of \eqref{ineq-2}.

\noindent$\bullet$ {\bf Estimates on the terms $\mb Y_i~(i=4, 5, 6)$:}
Using \eqref{app-phi} and \eqref{theta-s},
we have
\[
\di |(\mb Y_4, \mb Y_5)|\le C\int_\bbr |(v^S)^{-\mb{X}}_\xi| |(v-\bar v , \theta-\bar\theta)|^2d\xi.
\]
In addition, since Lemma \ref{lemma1.3} and \eqref{apri-ass} yields
\[
 |(\mb Y_4, \mb Y_5)|\le \deltas \int_\bbr |(v-\bar v , \theta-\bar\theta)|^2d\xi \le \deltas^2\eps_1^2,
\]
we have
\[
\frac{C}{\deltas}|(\mb{Y}_4, \mb Y_5) |^2 \le C\deltas \eps_1^2\mathcal{G}^S.
\]
Similarly, we have
\[
\begin{array}{ll}
\di \frac{C}{\deltas}|\mb{Y}_6|^2 \le \frac{C}{\deltas} \left(\int_\bbr  |a^{-\mb{X}}_\xi| |(v-\bar v, u-\bar u, \theta-\bar\theta)|^2 d\xi\right)^2\\
\di 
\di \qquad\qquad  \le \frac{C\lam^2}{\deltas^3} \left(\int_\bbr |(v^S)_\xi|  |(v-\bar v, u-\bar u, \theta-\bar\theta)|^2 d\xi\right)^2\\
\di \qquad\qquad  \le \frac{C\lam^2}{\deltas} \|(v-\bar v, u-\bar u, \theta-\bar\theta)\|_{L^2(\bbr)}^2 \int_\bbr |(v^S)^{-\mb{X}}_\xi| |(v-\bar v, u-\bar u, \theta-\bar\theta)|^2d\xi\\
\di \qquad\qquad  \le C\eps_1^2  \int_\bbr |(v^S)^{-\mb{X}}_\xi| |(v-\bar v, u-\bar u, \theta-\bar\theta)|^2 d\xi\le C\eps_1^2 \mathcal{G}^S.
\end{array}
\]
Thus,
\beq\label{yis}
\frac{C}{\deltas}\sum_{i=4}^6 |\mb{Y}_i|^2 \le  C \eps_1\mathcal{G}^S.
\eeq

\noindent$\bullet$ {\bf Estimates on the terms $\mb B_i~(i=3,4,5)$:}
First, observe that
\begin{align*}
\begin{aligned}
\mb B_3(U) &\le C \int_\bbr \! |a_\xi^{-\mb X} | |u-\bar u| \Big(  |(u-\bar u)_\xi| + |\bar u_\xi| |v-\bar v| \Big) d\xi \\
&\quad + C \int_\bbr \! |a_\xi^{-\mb X} | |\theta-\bar \theta| \Big(  |(\theta-\bar \theta)_\xi| + |\bar \theta_\xi| |v-\bar v| \Big) d\xi \\
&\le \frac{1}{80} \mb D +  C\lambda\deltas \int_\bbr \! |(v^S)_\xi^{-\mb X} |^2 |(u-\bar u, \theta-\bar \theta) |^2 d\xi  + C \int_\bbr ( |\bar u_\xi|^2 + |\bar \theta_\xi|^2)  |v-\bar v|^2 d\xi \\
&\le  \frac{1}{80}  \mb D + C\deltas \mathcal{G}^S  + C \int_\bbr ( |\bar u_\xi|^2 + |\bar \theta_\xi|^2)  |v-\bar v|^2 d\xi.
\end{aligned}
\end{align*}
Since Lemma \ref{lemma1.2}, \eqref{vc-pe} and \eqref{shock-base} yield
\begin{align}
\begin{aligned} \label{b3model}
& \int_\bbr ( |\bar u_\xi|^2 + |\bar \theta_\xi|^2)  |v-\bar v|^2 d\xi \\
&\quad\le C\int_\bbr \Big[ |(u^R)_\xi|^2+|(u^C)_\xi|^2+|(u^S)^{-\mb X}_\xi|^2 + |(\theta^R)_\xi|^2+|(\theta^C)_\xi|^2+|(\theta^S)^{-\mb X}_\xi|^2\Big] |v-\bar v|^2 \\
&\quad\le C\deltar \mathcal{G}^R+C\deltas \mathcal{G}^S+ C(\delta_C)^2 (1+t)^{-1}\int_\bbr  e^{-\frac{2C_1|\xi+\sigma t|^2}{1+t}}|v-\bar v|^2 d\xi ,
\end{aligned}
\end{align}
we have
\[
\mb B_3(U)  \le  \frac{1}{40}  \mb D + C\deltar \mathcal{G}^R+C\deltas \mathcal{G}^S + C(\delta_C)^2 (1+t)^{-1}\int_\bbr  e^{-\frac{2C_1|\xi+\sigma t|^2}{1+t}}|v-\bar v|^2 d\xi .
\]
Likewise, we have
\begin{align*}
\begin{aligned}
\mb B_4 &\le  \int_\bbr \Big[ |\bar u_\xi| |v-\bar v|  |(u-\bar u)_\xi| +  |\theta-\bar \theta| \Big(  |(\theta-\bar \theta)_\xi| + |\bar \theta_\xi| \Big) \Big(  |(\theta-\bar \theta)_\xi| + |\bar \theta_\xi| |v-\bar v| \Big) \Big]d\xi \\
&\quad +  \int_\bbr \Big[  |(\theta-\bar \theta)_\xi| |\bar \theta_\xi| |v-\bar v| +|\theta-\bar\theta| \Big( |(u-\bar u)_\xi|^2 +  |\bar u_\xi| |(u-\bar u)_\xi| + |\bar u_\xi|^2 |v-\bar v|  \Big) \Big] d\xi ,
\end{aligned}
\end{align*}
which together with \eqref{smp1} yields
\begin{align*}
\begin{aligned}
\mb B_4
&\le \frac{1}{80} \mb D  + C \int_\bbr ( |\bar u_\xi|^2 + |\bar \theta_\xi|^2)  |(v-\bar v,\theta-\bar\theta)|^2 d\xi \\
&\le   \frac{1}{80}  \mb D + C\deltar \mathcal{G}^R+C\deltas \mathcal{G}^S  + C(\delta_C)^2 (1+t)^{-1}\int_\bbr  e^{-\frac{2C_1|\xi+\sigma t|^2}{1+t}}  |(v-\bar v,\theta-\bar\theta)|^2 d\xi.
\end{aligned}
\end{align*}
For $\mb B_6$, since $u^R_\xi \sim v^R_\xi$, we have
\[
\mb B_5 \le   C(\delta_0+\eps_1) (\mathcal{G}^R+ \mathcal{G}^S )+ C \dc(1+t)^{-1} \int_\bbr  e^{-\frac{C_1 |\xi+\sigma t|^2}{1+t}} |(v-\bar v,\theta-\bar\theta)|^2 d\xi.
\]
Therefore, 
\beq\label{bis}
\sum_{i=3}^5 \mb B_i  \le  \frac{1}{20}\mb D + C(\delta_0+\eps_1) (\mathcal{G}^R+ \mathcal{G}^S )  + C\delta_C (1+t)^{-1}\int_\bbr  e^{-\frac{C_1|\xi+\sigma t|^2}{1+t}} |(v-\bar v,\theta-\bar\theta)|^2 d\xi .
\eeq

\noindent$\bullet$ {\bf Estimates on the terms $\mb S_i~(i=1,2)$:}
%We first  compute that (using $\wt v^S,  \wt v^{\mb X}, (\wt v^R)^{\mb X} \in (v_+/2, 2v_+)$, $\wt v^{\mb X}=(\wt v^R)^{\mb X}+\wt v^S -v_m$, and Lemmas \eqref{lemma1.2}-\eqref{lemma1.3})
First, by \eqref{QR}, for $i=1,2,$ 
\beq\label{Q-est}
\begin{array}{ll}
\di |Q^R_i| \le C \Big[|u^R_{\xi\xi}|+|u^R_{\xi}||v^R_{\xi}|+|\theta^R_{\xi\xi}|+|\theta^R_{\xi}||v^R_{\xi}|+|u^R_{\xi}|^2\Big]\\[3mm]
\di \qquad\ \le C  \Big[\big|\big(u^R_{\xi\xi},\theta^R_{\xi\xi}\big)\big|+\big|\big(v^R_{\xi},u^R_{\xi},\theta^R_{\xi}\big)\big|^2\Big],
\end{array}
\eeq
and then
$$
\|Q^R_i\|_{L^1(\bbr)}\leq C\Big[\big\|\big(u^R_{\xi\xi},\theta^R_{\xi\xi}\big)\big\|_{L^1}+\big\|\big(v^R_{\xi},u^R_{\xi},\theta^R_{\xi}\big)\big\|_{L^2}^2\Big].
$$
Notice that by Lemma \ref{lemma1.2},
\[
\big\|\big(u^R_{\xi\xi},\theta^R_{\xi\xi}\big)\big\|_{L^1} \le \left\{ \begin{array}{ll}
     C \deltar&\quad\mbox{if } 1+t \le \deltar^{-1}\\
      C \frac{1}{1+t} &\quad\mbox{if }  1+t \ge \deltar^{-1} , \\
        \end{array} \right.
\]
and
\beq\label{dr2}
\big\|\big(v^R_{\xi},u^R_{\xi},\theta^R_{\xi}\big)\big\|_{L^2} \le \left\{ \begin{array}{ll}
    C  \deltar&\quad\mbox{if } 1+t \le \deltar^{-1}\\
    C  \deltar^{1/2}\frac{1}{(1+t)^{1/2}} &\quad\mbox{if }  1+t \ge \deltar^{-1} , \\
        \end{array} \right.
\eeq
Thus,
\beq\label{v21}
\int_0^\infty \|Q_i^R\|_{L^1}^{4/3}ds \le C\big(\deltar^{1/3}+\deltar^{5/3}\big)\le C\deltar^{1/3},\quad i=1,2.
\eeq
By \eqref{QC12}, it holds that
\beq\label{vc-21}
\|Q_1^C\|_{L^2}\le C \delta_C (1+t)^{-\frac 54},\qquad \|Q_2^C\|_{L^2}\le C \delta_C (1+t)^{-\frac 74}.
\eeq
Therefore,
\begin{align*}
\begin{aligned}
&\mb S_1+\mb S_2=\int_\bbr a^{\mb X}Q_1(u-\bar u)  d\x+ \int_\bbr  a^{\mb X}Q_2 (1-\frac{\bar \theta}{\theta})d\x \\
&\leq C\int_\bbr \big(|Q^I_1|+|Q_1^R|+|Q_1^C|\big)|u-\bar u| d\x+ C\int_\bbr  \big(|Q^I_2|+|Q_2^R|+|Q_2^C|\big)|\theta-\bar \theta|d\x\\
&\le C \big(\|Q_1^I\|_{L^2}+\|Q_1^C\|_{L^2}\big) \|u-\bar u\|_{L^2}+C\|Q_1^R\|_{L^1(\bbr)} \|u-\bar u\|_{L^\infty(\bbr)}\\
&\qquad + C \big(\|Q_2^I\|_{L^2}+\|Q_2^C\|_{L^2}\big) \|\theta-\bar \theta\|_{L^2}+C\|Q_2^R\|_{L^1(\bbr)} \|\theta-\bar \theta\|_{L^\infty(\bbr)} ,
\end{aligned}
\end{align*}
which together with Lemma \ref{lemma2.2} and \eqref{vc-21}
\begin{align}\label{S12-e}
\begin{aligned}
&\mb S_1+\mb S_2\\
&\leq C\big[\deltas (\deltar+\delta_C) e^{-C \deltas t}+\deltar\delta_Ce^{-Ct}+\delta_C (1+t)^{-\frac 54}\big]\big(\|u-\bar u\|_{L^2}+\|\theta-\bar \theta\|_{L^2}\big)\\
&\quad + C\|(Q_1^R, Q_2^R)\|_{L^1}\|(u-\bar u, \theta-\bar\theta)_\x\|_{L^2}^{\frac 12}\|(u-\bar u, \theta-\bar\theta)\|_{L^2}^{\frac 12}\\
&\leq C\big[\deltas (\deltar+\delta_C) e^{-C \deltas t}+\deltar\delta_Ce^{-Ct}+\delta_C (1+t)^{-\frac 54}\big]\|(u-\bar u,\theta-\bar \theta)\|_{L^2}\\
&\quad +\frac{1}{20} \mb D  +C\|(Q_1^R, Q_2^R)\|_{L^1}^{\frac 43}\|(u-\bar u, \theta-\bar\theta)\|_{L^2}^{\frac 23}.
\end{aligned}
\end{align}

\noindent$\bullet$ {\bf \underline{Conclusion}:} From \eqref{ineq-2}, \eqref{yis}, \eqref{bis} and \eqref{S12-e}, with the smallness of $\delta_0, \eps_1$,  we have
\begin{align*}
\begin{aligned}
&\frac{d}{dt}\int_{\bbr} a^{-\mb X}\bar \theta \eta\big(U|\bar U \big) d\xi  \\
&\le -\frac{\deltas}{4M} |\dot{\mb X}|^2  -\frac{C}{2}(\mathcal{G}^S+\mathcal{G}^R) -\frac{1}{8} \mb D  + C ( \deltar^{4/3}+ \delta_C^{4/3})  \deltas^{4/3} e^{-C\deltas t}   \\
&\quad + C\delta_C (1+t)^{-1}\int_\bbr  e^{-\frac{C_1|\xi+\sigma t|^2}{1+t}} |(v-\bar v,\theta-\bar\theta)|^2 d\xi \\
&\quad + C\big[\deltas (\deltar+\delta_C) e^{-C \deltas t}+\deltar\delta_Ce^{-Ct}+\delta_C (1+t)^{-\frac 54}\big] \|(u-\bar u,\theta-\bar \theta)\|_{L^\infty(0,T;L^2(\bbr))}  \\
&\quad   +C\|(Q_1^R, Q_2^R)\|_{L^1}^{\frac 43}  \|(u-\bar u,\theta-\bar \theta)\|_{L^\infty(0,T;L^2(\bbr))}^{\frac 23}.
\end{aligned}
\end{align*}
Integrating the above inequality over $[0,t]$ for any $t\le T$, we have
\begin{align*}
\begin{aligned}
&\sup_{t\in[0,T]}\int_{\bbr}  \eta\big(U|\bar U \big) d\xi +\deltas\int_0^t|\dot{\mb{X}}|^2 d\tau +\int_0^t (\mathcal{G}^S+\mathcal{G}^R+ \mb D) d\tau\\
&\quad\le C\int_{\bbr} \eta\big(U_0|\bar U(0,\xi) \big) d\xi  + C ( \deltar^{4/3}+ \delta_C^{4/3})  \deltas^{1/3} \\
&\qquad   +C(\deltar+\delta_C) \|(u-\bar u,\theta-\bar \theta)\|_{L^\infty(0,T;L^2(\bbr))}+C\deltar^{1/3}  \|(u-\bar u,\theta-\bar \theta)\|_{L^\infty(0,T;L^2(\bbr))}^{\frac 23}    \\
&\qquad  +C\delta_C \int_0^t(1+\tau)^{-1}\int_\bbr  e^{-\frac{C_1|\xi+\sigma \tau|^2}{1+\tau}}|(v-\bar v,\theta-\bar \theta)|^2 d\xi d\tau .
\end{aligned} 
\end{align*}
Then, using Young's inequality with the fact that
$$
\|U-\bar U\|^2_{L^2(\bbr)} \sim \int_{\bbr}  \eta\big(U|\bar U \big) d\xi, \quad \forall t\in[0,T],
$$
we have
\begin{align}
\begin{aligned} \label{last-ecom}
&\sup_{t\in[0,T]}\|U-\bar U\|^2_{L^2(\bbr)} +\deltas\int_0^t|\dot{\mb{X}}|^2 d\tau +\int_0^t (\mathcal{G}^S+\mathcal{G}^R+ \mb D) d\tau\\
&\quad\le \|U_0-\bar U(0,\cdot)\|^2_{L^2(\bbr)} + C\delta_0^{1/2} \\
&\qquad  + C\delta_0 \int_0^t(1+\tau)^{-1}\int_\bbr  e^{-\frac{C_1|\xi+\sigma \tau|^2}{1+\tau}}|(v-\bar v,\theta-\bar \theta)|^2 d\xi d\tau .
\end{aligned} 
\end{align}
Finally, using the following Lemma \ref{cw-lemma} with $\delta_0\ll1$, we have
\begin{align*}
\begin{aligned}
&\sup_{t\in[0,T]}\|U-\bar U\|^2_{L^2(\bbr)} +\deltas\int_0^t|\dot{\mb{X}}|^2 d\tau +\int_0^t (\mathcal{G}^S+\mathcal{G}^R+ \mb D) d\tau\\
&\quad\le \|U_0-\bar U(0,\cdot)\|^2_{L^2(\bbr)} + C\delta_0^{1/2} +C\int_0^t\|(v-\bar v)_\xi \|^2_{L^2(\bbr)} d\tau  ,
\end{aligned} 
\end{align*}
which completes the proof of Lemma \ref{lem-zvh}.

\begin{lemma}\label{cw-lemma}
It holds that
\beq\label{cwe}
\begin{array}{ll}
\di  \int_0^t(1+\tau)^{-1}\int_\bbr  e^{-\frac{C_1|\xi+\sigma \tau|^2}{1+\tau}}|(v-\bar v, u-\bar u, \theta-\bar \theta)|^2 d\xi d\tau \\[3mm]
\di \le  C\sup_{t\in[0,T]}\|U-\bar U\|^2_{L^2(\bbr)}  +C\deltas\int_0^t|\dot{\mb X}(\tau)|^2 d\tau+C\int_0^t (\mathcal{G}^S+\mathcal{G}^R+ \mb D) d\tau  \\[3mm]
\di \quad+ C\int_0^t\|(v-\bar v)_\xi \|^2_{L^2(\bbr)} d\tau +C\delta_0^{\frac13}.
\end{array}
\eeq
\end{lemma}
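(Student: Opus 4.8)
The plan is to exploit the fact that the weight $e^{-C_1|\xi+\sigma\tau|^2/(1+\tau)}$ is concentrated near the contact wave $\xi+\sigma\tau\approx 0$, which is well-separated from both the shock (located near $\xi\approx 0$ in the original frame, i.e.\ near $\xi\approx\mb X(\tau)$ here, but the Gaussian is centered at $\xi=-\sigma\tau$) and the rarefaction (centered at $\xi+\sigma\tau\approx\lambda_{1*}(1+\tau)<0$, far to the left). This separation means that near the core of the Gaussian, $\bar v$ is essentially $v^C$ up to exponentially small corrections, and we can close a heat-kernel type weighted estimate. The natural device is to introduce the anti-derivative variable in a localized sense, or alternatively—following the heat-kernel estimate of Huang-Li-Matsumura \cite{HLM}—to multiply the perturbation equation $\eqref{pereq1}_1$ by $(v-\bar v)$ against the weight $e^{-C_1|\xi+\sigma\tau|^2/(1+\tau)}$ and integrate.

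First I would set $\Psi(\tau,\xi):=(1+\tau)^{-1/2}e^{-C_1|\xi+\sigma\tau|^2/(1+\tau)}$ or a closely related profile solving (approximately) the backward heat equation associated with the contact wave's diffusion, so that $\Psi_\tau+\sigma\Psi_\xi$ produces a favorable sign after integration by parts. Testing the $v$-equation $\eqref{pereq1}_1$ against $\Psi\cdot(v-\bar v)$ and integrating over $[0,t]\times\bbr$, the time-derivative term yields $\sup_\tau\|U-\bar U\|_{L^2}^2$-controlled boundary contributions plus $\int_0^t\int\Psi_\tau(v-\bar v)^2$; the transport term $\sigma(v-\bar v)_\xi$ combines with $\Psi_\xi$; the shift term $\dot{\mb X}(\tau)(v^S)^{-\mb X}_\xi$ contributes $\delta_S\int_0^t|\dot{\mb X}|^2$ after Cauchy-Schwarz together with $\mathcal G^S$; and the term $(u-\bar u)_\xi$ is integrated by parts to land on $\Psi_\xi(v-\bar v)$ plus $\Psi(u-\bar u)_\xi$, the latter absorbed by $\mb D$ and a weighted $L^2$ norm of $(v-\bar v)$. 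Crucially, all interactions of the Gaussian with $\bar v-v^C$ (i.e.\ with $v^R-v_*$ and $(v^S)^{-\mb X}-v^*$) are exponentially small in $\tau$ by Lemma \ref{lemma1.2}(3), Lemma \ref{lemma1.3}, and the bound \eqref{X(t)-bound}, hence integrable in time and contributing only $C\delta_0^{1/3}$ after using the rarefaction decay rates \eqref{dr2}. The remaining term $\int_0^t\int\Psi_\xi^2(v-\bar v)^2$-type contributions, after the parabolic scaling $\Psi_\xi\sim(1+\tau)^{-1/2}\Psi$ and absorbing a factor $(1+\tau)^{-1/2}$, are controlled by $\int_0^t\|(v-\bar v)_\xi\|_{L^2}^2\,d\tau$ together with the good terms $\mathcal G^S+\mathcal G^R+\mb D$.

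The main obstacle is that the diffusion in \eqref{NS-1} acts on $u$ and $\theta$ but \emph{not} on $v$, so one cannot directly run a parabolic energy estimate for $(v-\bar v)$; the heat-kernel weight must be chosen to match the \emph{effective} diffusion seen by $v$ through the coupling (essentially the pressure relation $p-\bar p=\frac{R}{v}(\theta-\bar\theta)-\frac{\bar p}{v}(v-\bar v)$), and controlling the cross terms requires carefully trading the $(v-\bar v)_\xi$ norm—which is only available on the right-hand side of \eqref{cwe}, not as a good term—against the Gaussian localization. One must verify that the coefficient of $\int_0^t\|(v-\bar v)_\xi\|_{L^2}^2$ produced this way is $O(1)$ (not large), which is exactly what makes the final absorption in Lemma \ref{lem-zvh} work after multiplying \eqref{cwe} by the small factor $\delta_0$. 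I would also need the elementary interpolation $\|(u-\bar u,\theta-\bar\theta)\|_{L^\infty}^2\le C\|(u-\bar u,\theta-\bar\theta)\|_{L^2}\|(u-\bar u,\theta-\bar\theta)_\xi\|_{L^2}$ to handle the cubic remainder terms, all of which are ultimately dominated by $\mb D$ plus the exponentially decaying wave-interaction contributions. The detailed computation is deferred to Appendix C as indicated in the paper's outline.
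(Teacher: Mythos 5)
Your plan has a genuine gap at its core step. You propose to generate the weighted quantity $\int_0^t\!\int_\bbr W^2|(v-\bar v,u-\bar u,\theta-\bar\theta)|^2$ mainly by testing the mass equation $\eqref{pereq1}_1$ against the Gaussian weight times $(v-\bar v)$. But that equation carries no diffusion and no pressure coupling, and the weight $\Psi(t,\xi)=(1+t)^{-1/2}e^{-C_1|\xi+\sigma t|^2/(1+t)}$ satisfies $\Psi_t-\sigma\Psi_\xi=\tfrac{1}{4C_1}\Psi_{\xi\xi}$, which is not sign-definite; the resulting identity
\[
\frac{d}{dt}\int_\bbr \Psi\,\frac{(v-\bar v)^2}{2}\,d\xi=\frac{1}{4C_1}\int_\bbr \Psi_{\xi\xi}\frac{(v-\bar v)^2}{2}\,d\xi+\dot{\mb X}\int_\bbr \Psi (v-\bar v)(v^S)^{-\mb X}_\xi d\xi+\int_\bbr \Psi (v-\bar v)(u-\bar u)_\xi\,d\xi
\]
contains the target quantity nowhere with a favorable sign, so there is no coercive term to bound it by. Acknowledging that ``the weight must match the effective diffusion seen by $v$ through the pressure coupling'' does not supply the mechanism; the mechanism has to come from the momentum and energy equations, and from test functions that are \emph{antiderivatives} of the weight, not the weight itself.

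The paper's proof (Appendix \ref{app-cont}, following \cite{HLM}) is structured exactly around this point. One works with the combinations $F:=\bar p\,\phi+\frac{R}{\gamma-1}\vartheta$ (from the energy equation, after rewriting $pu_\xi-\bar p\bar u_\xi$ via the mass equation) and $R\vartheta-\bar p\,\phi$ (proportional to $v(p-\bar p)$, from the momentum equation), and tests them against $F h^2$ and $H(R\vartheta-\bar p\phi)$ respectively, where $h=\int_{-\infty}^\xi W\,d\zeta$ and $H=\int_{-\infty}^\xi W^2\,d\zeta$. The coercive weighted terms then appear with the right sign precisely because $h_\xi=W$, $H_\xi=W^2$ and $h_t-\sigma h_\xi=\frac{1}{4\beta}W_\xi$: integration by parts of $F^2 hW_\xi$ produces $+\frac{1}{4\beta}\int F^2W^2$, the derivative falling on $H$ produces $W^2\frac{(R\vartheta-\bar p\phi)^2}{2v}$, and the term $\bar p H\psi\psi_\xi$ produces $W^2\frac{\bar p\psi^2}{2}$; Young's inequality applied to $F$ and $(\gamma-1)F-\gamma\bar p\phi$ then recovers $\int\!\int W^2(\phi^2+\psi^2+\vartheta^2)$. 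Note also that the term $C\int_0^t\|(v-\bar v)_\xi\|_{L^2}^2$ on the right of \eqref{cwe} arises from the cross term $\int FF_\xi hW$ (i.e.\ from $\phi_\xi$ inside $F_\xi$), not from a ``parabolic scaling'' of the weight as you suggest. Your observations about the spatial separation of the shock and rarefaction from the Gaussian core, the use of \eqref{X(t)-bound}, and the origin of the $C\delta_0^{1/3}$ term are consistent with the paper, but without the antiderivative test functions and the correct variable combinations the estimate cannot be closed.
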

\begin{remark}
In the above estimate \eqref{last-ecom}, Lemma \ref{cw-lemma} was used  to control the last term in terms of the $v$ and $\theta$ variables only. However, Lemma  \ref{cw-lemma} also provides the estimate on the $u$ variable: $\di \int_0^t(1+\tau)^{-1}\int_\bbr  e^{-\frac{C_1|\xi+\sigma \tau|^2}{1+\tau}}|u-\bar u|^2 d\xi d\tau$, which will be used in Section \ref{sec-vu}.
\end{remark}

\begin{proof}
The proof uses the same argument as in the paper \cite{HLM} that handles the stability of the composition of two rarefaction and a viscous contact wave. Here we should additionally handle the viscous shock wave with shift. However, since the proof is lengthy and mainly follows the argument of  \cite{HLM}, we present the proof in Appendix \ref{app-cont} for completeness. 
\end{proof}

\section{Proof of Proposition \ref{prop2}}\label{sec-vu}
\setcounter{equation}{0}

We here complete the proof of Proposition \ref{prop2}, by the following lemma.

\begin{lemma}\label{lem-u1}
Under the hypotheses of Proposition \ref{prop2}, there exists $C>0$ (independent of $\delta_0, \eps_1, T$) such that for all $ t\in (0,T]$,
\begin{align*}
\begin{aligned}
&\|(v-\bar v,u-\bar u,\theta-\bar\theta)(t,\cdot)\|_{H^1(\bbr)}^2 +\deltas\int_0^t|\dot{\mb{X}}(\tau)|^2 d\tau \\
&+\int_0^t \left( \mathcal{G}^S(U) + \mathcal{G}^R(U) + \|(v-\bar v,u-\bar u,\theta-\bar\theta)_\x\|^2_{L^2(\bbr)}+ \|(u-\bar u,\theta-\bar\theta)_{\x\x}\|^2_{L^2(\bbr)}\right) d\tau\\
&\le C \|(v-\bar v,u-\bar u,\theta-\bar\theta)(0,\cdot) \|_{H^1(\bbr)}^2 + C\delta_0^{1/2}.
\end{aligned}
\end{align*}
\end{lemma}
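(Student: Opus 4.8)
Lemma \ref{lem-zvh} already controls, uniformly for $t\le T$, the zeroth-order energy $\|(v-\bar v,u-\bar u,\theta-\bar\theta)\|_{L^2}^2$ together with the time-integrals of $\delta_S|\dot{\mb X}|^2$, $\mathcal G^R(U)$, $\mathcal G^S(U)$ and $\mb D(U)$ (hence of $\|(u-\bar u,\theta-\bar\theta)_\xi\|_{L^2}^2$), but only \emph{modulo} the term $C\delta_0\int_0^t\|(v-\bar v)_\xi\|_{L^2}^2\,d\tau$ on the right-hand side. The plan is to adjoin two higher-order energy identities --- one recovering $\|(v-\bar v)_\xi\|_{L^2}$, one recovering $\|(u-\bar u,\theta-\bar\theta)_\xi\|_{L^2}$ and $\|(u-\bar u,\theta-\bar\theta)_{\xi\xi}\|_{L^2}$ --- and then to combine the three with weights $A\gg B\gg1$, fixed independently of $\lambda,\delta_S,\delta_R,\delta_C,\delta_0,\eps_1,T$, using the smallness of $\delta_0,\eps_1,\lambda$ to close.

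\textbf{Step 1: the $v_\xi$-estimate.} Since $v_t=u_\xi+\sigma v_\xi$ and $\bar v_t=\bar u_\xi+\sigma\bar v_\xi-\dot{\mb X}(v^S)^{-\mb X}_\xi$, with $\phi:=\ln(v/\bar v)$ one has
\[
\frac{u_\xi}{v}-\frac{\bar u_\xi}{\bar v}=\phi_t-\sigma\phi_\xi-\frac{\dot{\mb X}(v^S)^{-\mb X}_\xi}{\bar v},
\]
so that the perturbed momentum equation $\eqref{pereq1}_2$ reads
\[
\big((u-\bar u)-\mu\phi_\xi\big)_t-\sigma\big((u-\bar u)-\mu\phi_\xi\big)_\xi+(p-\bar p)_\xi=\dot{\mb X}(u^S)^{-\mb X}_\xi-\mu\Big(\frac{\dot{\mb X}(v^S)^{-\mb X}_\xi}{\bar v}\Big)_\xi-Q_1.
\]
Multiplying by $\phi_\xi$ and integrating over $\bbr$, the decisive dissipative term $\int\tfrac{\bar p}{v^2}|(v-\bar v)_\xi|^2\,d\xi$ is produced by the $-\big(\tfrac{\bar p}{v}(v-\bar v)\big)_\xi$ part of $(p-\bar p)_\xi$ (using $p-\bar p=\tfrac{R}{v}(\theta-\bar\theta)-\tfrac{\bar p}{v}(v-\bar v)$); the companion $\tfrac{R}{v}(\theta-\bar\theta)_\xi\phi_\xi$ is absorbed by Young into $\tfrac{1}{100}\|(v-\bar v)_\xi\|_{L^2}^2+C\mb D$; the total derivative $\tfrac{d}{dt}\int\big((u-\bar u)-\tfrac\mu2\phi_\xi\big)\phi_\xi\,d\xi$ is carried along --- upon time integration its $-\tfrac\mu2\phi_\xi^2$ part yields $\sup_t\|(v-\bar v)_\xi\|_{L^2}^2$ at the price of $C\|(u-\bar u)(t)\|_{L^2}^2$, dominated by $A\times$Lemma \ref{lem-zvh}; the term $\int(u-\bar u)_\xi\phi_t$ reduces to $C\|(u-\bar u)_\xi\|_{L^2}^2\le C\mb D$ plus lower order, and $\sigma\int((u-\bar u)-\mu\phi_\xi)_\xi\phi_\xi=\sigma\int(u-\bar u)_\xi\phi_\xi$ splits into $\tfrac{1}{100}\|(v-\bar v)_\xi\|_{L^2}^2+C\mb D$; the products of wave-derivatives with perturbations split by Cauchy--Schwarz into $\tfrac{1}{100}\mb D$, $C(\mathcal G^S+\mathcal G^R)$ and a $\delta_C$-weighted heat-kernel term; the $\dot{\mb X}$-terms give $C\delta_S|\dot{\mb X}|^2+\tfrac{1}{100}(\mb D+\mathcal G^S)$; and $Q_1$ is handled via Lemma \ref{lemma2.2} and \eqref{Q-est}.

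\textbf{Step 2: first-order parabolic estimates.} Multiply $\eqref{pereq1}_2$ by $-(u-\bar u)_{\xi\xi}$ and $\eqref{pereq1}_3$ by $-(\theta-\bar\theta)_{\xi\xi}$, integrate and add, producing
\[
\frac{d}{dt}\,\frac12\int_\bbr\!\Big(|(u-\bar u)_\xi|^2+\frac{R}{\gamma-1}|(\theta-\bar\theta)_\xi|^2\Big)d\xi+c\int_\bbr\!\Big(\frac{\mu}{v}|(u-\bar u)_{\xi\xi}|^2+\frac{\kappa}{v}|(\theta-\bar\theta)_{\xi\xi}|^2\Big)d\xi
\]
on the left. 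On the right: the advection terms vanish by parts; the pressure terms are split by Young, the genuinely new piece $\tfrac{\bar p}{v}(v-\bar v)_\xi(u-\bar u)_{\xi\xi}$ being controlled by $\tfrac1{100}\int(u-\bar u)_{\xi\xi}^2+C\|(v-\bar v)_\xi\|_{L^2}^2$ and hence absorbed by Step 1; the viscous commutators $(\tfrac1v-\tfrac1{\bar v})\bar u_\xi$, $(\tfrac1v-\tfrac1{\bar v})\bar\theta_\xi$ and the products with $\bar u_\xi^2,\bar\theta_\xi^2$ reduce, as in \eqref{b3model}, to $C(\mathcal G^R+\mathcal G^S)$ plus a $\delta_C$-heat-kernel term; the shift contributions give $\tfrac1{100}\|(u-\bar u,\theta-\bar\theta)_{\xi\xi}\|_{L^2}^2+C\delta_S|\dot{\mb X}|^2$; the cubic self-interactions $\int|(u-\bar u)_\xi|^3$, $\int|(\theta-\bar\theta)_\xi|^2|(u-\bar u)_\xi|$, $\int(u-\bar u)_\xi^2(\theta-\bar\theta)_{\xi\xi}$ are estimated by the Gagliardo--Nirenberg bound $\|f_\xi\|_{L^\infty}^2\le C\|f_\xi\|_{L^2}\|f_{\xi\xi}\|_{L^2}$ and the a priori bound \eqref{apri-ass}, yielding $C\eps_1\big(\|(u-\bar u,\theta-\bar\theta)_\xi\|_{L^2}^2+\|(u-\bar u,\theta-\bar\theta)_{\xi\xi}\|_{L^2}^2\big)$; and $Q_1,Q_2$ are treated with Lemma \ref{lemma2.2}, \eqref{Q-est}, \eqref{vc-21}, with time-integrals $O(\delta_0^{1/3})$.

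\textbf{Step 3: combination; integration; main obstacle.} Integrating the three inequalities over $[0,t]$ and forming $A\cdot(\text{Lemma }\ref{lem-zvh})+B\cdot(\text{Step 1})+(\text{Step 2})$ with $A\gg B\gg1$, then taking $\delta_0,\eps_1,\lambda$ small: the $\delta_0$-defect of Lemma \ref{lem-zvh} is absorbed by the $v_\xi$-dissipation of Step 1; the repeated $\tfrac1{100}\mb D$, $\eps_1\mb D$ and $\eps_1\|(u-\bar u,\theta-\bar\theta)_{\xi\xi}\|_{L^2}^2$ remainders by $A\cdot c\int_0^t\mb D+c\int_0^t\|(u-\bar u,\theta-\bar\theta)_{\xi\xi}\|_{L^2}^2$; the wave-derivative products by $A\cdot c\int_0^t(\mathcal G^R+\mathcal G^S)$; the cross term $B\int((u-\bar u)-\tfrac\mu2\phi_\xi)\phi_\xi\,d\xi$ by Young, its $\|u-\bar u\|_{L^2}^2$ part being dominated by $A$ times Lemma \ref{lem-zvh} and its $\|\phi_\xi\|_{L^2}^2$ part by a fraction of the recovered $v_\xi$-energy; the $\delta_C$-weighted integrals $\delta_C\int_0^t(1+\tau)^{-1}\!\int e^{-C_1|\xi+\sigma\tau|^2/(1+\tau)}|(v-\bar v,u-\bar u,\theta-\bar\theta)|^2\,d\xi\,d\tau$ by Lemma \ref{cw-lemma}, exactly as in the proof of Lemma \ref{lem-zvh}; and the remaining inhomogeneities integrate to $C\delta_0^{1/2}$. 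The left-hand $H^1$-norm is then reconstructed from the $L^2$-control of Lemma \ref{lem-zvh}, the $\|(v-\bar v)_\xi\|_{L^2}^2$ of Step 1 and the $\|(u-\bar u,\theta-\bar\theta)_\xi\|_{L^2}^2$ of Step 2, which is the asserted estimate. The genuine difficulty is Step 1: because $v$ obeys a transport, not a parabolic, equation, $\|(v-\bar v)_\xi\|_{L^2}$ must be extracted purely from the algebraic structure of the momentum equation, and one must verify that \emph{every} error it generates --- in particular the shock-localized term $\int|(v^S)^{-\mb X}_\xi|\,|(v-\bar v)_\xi|\,|u-\bar u|$ and the shift contributions --- splits cleanly into the already-available good quantities $\mb D,\mathcal G^S,\mathcal G^R,\delta_S|\dot{\mb X}|^2$ plus genuinely small or time-integrable remainders, leaving no uncontrolled $\|(v-\bar v)_\xi\|_{L^2}^2$ on the right.
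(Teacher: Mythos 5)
Your proposal is correct and follows essentially the same route as the paper: a Kanel-type estimate extracting $\|(v-\bar v)_\xi\|_{L^2}$ from the momentum equation (your effective-velocity form with $\ln(v/\bar v)$ is algebraically equivalent to the paper's combination of the differentiated mass equation multiplied by $\mu\phi_\xi$ with the momentum equation multiplied by $-v\phi_\xi$), then the multipliers $-(u-\bar u)_{\xi\xi}$ and $-(\theta-\bar\theta)_{\xi\xi}$ for the first-order parabolic estimates, and finally a weighted combination with Lemma \ref{lem-zvh} and Lemma \ref{cw-lemma} in which the $\delta_0\int_0^t\|(v-\bar v)_\xi\|_{L^2}^2\,d\tau$ defect and the heat-kernel-weighted terms are absorbed exactly as in the paper. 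One bookkeeping remark: the $Q_1,Q_2$ contributions in your Step 2 actually integrate to $O(\delta_R)\le O(\delta_0)$ (via \eqref{qr12}, Lemma \ref{lemma2.2} and \eqref{vc-21}), not merely $O(\delta_0^{1/3})$, and this sharper bound is what keeps the final error at the stated $C\delta_0^{1/2}$.
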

\begin{proof}
As in the proof of Lemma \ref{cw-lemma}, we use the following notations
$$
\phi(t,\xi):=v(t,\xi)-\bar v(t,\xi),\quad  \psi (t,\xi):=u(t,\xi)-\bar u(t,\xi), \quad \vartheta(t,\xi) :=\theta(t,\x)-\bar \theta(t,\xi),
$$
and 
\[
W(t,\x):=(1+t)^{-\frac12} e^{-\frac{\beta|\xi+\s t|^2}{1+t}}, \quad \beta :=\frac{C_1}{2}.
\]
First, it follows from \eqref{NS-1} and \eqref{bar-system} that (as in \eqref{psiphire})
\beq\label{psiphire2}
\phi_t -\s \phi_\xi - \dot{\mb X}(t) (v^S)^{-\mb X}_\xi - \psi_\xi =0,
\eeq
and
\beq\label{psi-equ}
\psi_t-\s \psi_\xi -\dot{\mb{X}}(t)(u^S)^{-\mb{X}}_\xi+(p-\bar p)_\xi=\mu\left(\frac{u_\xi}{v} - \frac{\bar u_\xi}{\bar v}\right)_\xi -Q_1.
\eeq
Differentiating \eqref{psiphire2} w.r.t. $\xi$ and multiplying the result by $\mu\phi_\xi$, we have
\beq\label{psiphire3}
\mu\Big(\frac{\phi_\xi^2}{2}\Big)_t -\s \mu\Big(\frac{\phi_\xi^2}{2}\Big)_\xi - \dot{\mb X}(t) (v^S)^{-\mb X}_{\xi\xi}\mu\phi_\xi = \mu\phi_\xi\psi_{\xi\xi}.
\eeq
Multiplying \eqref{psi-equ}  by $-v\phi_{\xi}$ yields that
$$
\begin{array}{ll}
\di -v\phi_{\xi}\psi_t+\s v\phi_{\xi} \psi_\xi + v\phi_{\xi} \dot{\mb{X}}(t)(u^S)^{-\mb{X}}_\xi
\\[3mm]
\di \quad =v\phi_{\xi} (p-\bar p)_\xi- v\phi_{\xi} \mu\left(\frac{u_\xi}{v} - \frac{\bar u_\xi}{\bar v}\right)_\xi +v\phi_{\xi} Q_1.
\end{array}
$$
Then, using the above equation and \eqref{psiphire3} together with
\beq\label{pdiff}
(p-\bar p)_\xi=\frac{R\vartheta_\x}{v}-\frac{R\theta\phi_\x}{v^2}+R\bar\theta_\x\left(\frac 1v-\frac 1{\bar v}\right)+R\bar v_\x \left(\frac{\bar\theta}{\bar v^2}-\frac{\theta}{v^2}\right),
\eeq
and
\beq\label{mudiff}
\left(\frac{u_\xi}{v} - \frac{\bar u_\xi}{\bar v}\right)_\xi= \frac{\psi_{\x\x}}{v} -  \frac{\psi_{\x}v_\xi}{v^2} + \bar u_{\x\x}\left(\frac 1v-\frac 1{\bar v}\right)
+ \bar u_\x \left(\frac{\bar v_\xi}{\bar v^2} - \frac{v_\xi}{v^2} \right),
\eeq
we have
$$
\begin{array}{ll}
\di 
\big(\frac{\mu \phi_\x^2}{2}-v\psi\phi_\x\big)_t-\s \big(\frac{\mu \phi_\x^2}{2}-v\psi\phi_\x\big)_\x+\frac{R\theta\phi_\x^2}{v}+(v_t-\s v_\x)\psi\phi_\x+\big(v\psi\phi_t-\s v\psi\phi_\x\big)_\x\\[3mm]
\di\quad - \big(v_\x\psi+v\psi_\x) \big(\phi_t-\s\phi_\x\big)+\dot{\mb{X}}(t)\phi_\xi\big[v(u^S)^{-\mb{X}}_\xi-\mu(v^S)^{-\mb{X}}_{\x\x}\big]\\[3mm]
\di  =R\vartheta_\x\phi_\x+Rv\phi_\x\bar\theta_\x\left(\frac 1v-\frac 1{\bar v}\right)+Rv\phi_\x\bar v_\x \left(\frac{\bar\theta}{\bar v^2}-\frac{\theta}{v^2}\right) -\mu v\phi_\x\bar u_{\x\x}\left(\frac 1v-\frac 1{\bar v}\right)\\[3mm]
\di\quad  -\mu v\phi_\x \bar u_\x \left(\frac{\bar v_\xi}{\bar v^2} - \frac{v_\xi}{v^2} \right) + \mu\frac{\phi_\x \psi_{\x}v_\xi}{v} +v\phi_\x Q_1.
\end{array}
$$
Integrating the above equation over $\mathbb{R}\times [0,t]$ with respect to $\x$ and $t$, we have
\[
\begin{array}{ll}
\di 
\frac{\mu}{2} \int_\bbr \phi_\x^2 d\xi +\int_0^t\int_\bbr \frac{R\theta\phi_\x^2}{v} d\x d\tau\leq C \|\phi_{0\x}\|^2_{L^2(\bbr)} + \int_\bbr v\psi\phi_\x d\xi -  \int_\bbr v_0\psi_0\phi_{0\x} d\xi  \\[3mm]
\di\quad +C\int_0^t\int_\bbr |(v_t-\s v_\x)\psi\phi_\x|d\x d\tau+C\int_0^t\int_\bbr |\big(v_\x\psi+v\psi_\x) \big(\phi_t-\s\phi_\x\big)|d\x d\tau\\[3mm]
\di\quad +C\int_0^t|\dot{\mb{X}}(\tau)|\int_\bbr \big|\phi_\xi \big[v(u^S)^{-\mb{X}}_\xi-\mu(v^S)^{-\mb{X}}_{\x\x}\big]\big|d\x d\tau+C\int_0^t\int_\bbr |R\vartheta_\x\phi_\x|d\x d\tau  \\[3mm]
\di\quad 
+C\int_0^t\int_\bbr \bigg| Rv\phi_\x\bar\theta_\x\left(\frac 1v-\frac 1{\bar v}\right)+Rv\phi_\x\bar v_\x \left(\frac{\bar\theta}{\bar v^2}-\frac{\theta}{v^2}\right) -\mu v\phi_\x\bar u_{\x\x}\left(\frac 1v-\frac 1{\bar v}\right)  \\[3mm]
\di\qquad\qquad\ \  \quad +\mu v\phi_\x \bar u_\x \left(\frac{\bar v_\xi}{\bar v^2} - \frac{v_\xi}{v^2} \right) \bigg|d\x d\tau  
+C\int_0^t\int_\bbr \left| \mu\frac{\phi_\x \psi_{\x}v_\xi}{v} \right|d\x d\tau \\[3mm]
\di\quad  +C\int_0^t\int_\bbr |v\phi_\x Q_1|d\x d\tau .
\end{array}
\]
In addition, using
\begin{align*}
\begin{aligned}
\int_\bbr v\psi\phi_\x d\xi -  \int_\bbr v_0\psi_0\phi_{0\x} d\xi &\le C(\|\psi\|_{L^2(\bbr)} \|\phi_{\xi}\|_{L^2(\bbr)} +\|\psi_0\|_{L^2(\bbr)}\|\phi_{0\xi}\|_{L^2(\bbr)})\\
&\le \frac{\mu}{4}\|\phi_{\xi}\|_{L^2(\bbr)}^2 + C \|\psi\|_{L^2(\bbr)}^2 + \|(\phi_{0\x},\psi_0)\|^2_{L^2(\bbr)},
\end{aligned} 
\end{align*}
we have
\[
\|\phi_\x(t,\cdot)\|^2_{L^2(\bbr)}+\int_0^t \|\phi_\x(\tau,\cdot)\|^2_{L^2(\bbr)} d\tau\leq C\Big[\|(\phi_{0\x},\psi_0)\|^2_{L^2(\bbr)}+\|\psi(t,\cdot)\|^2_{L^2(\bbr)}\Big] +\sum_{j=1}^7 L_j ,
\]
where
\begin{align*}
\begin{aligned}
&L_1 := C\int_0^t\int_\bbr |(v_t-\s v_\x)\psi\phi_\x|d\x d\tau,\\
&L_2 := C\int_0^t\int_\bbr |\big(v_\x\psi+v\psi_\x) \big(\phi_t-\s\phi_\x\big)|d\x d\tau, \\
&L_3 := C\int_0^t|\dot{\mb{X}}(\tau)|\int_\bbr \big|\phi_\xi \big[v(u^S)^{-\mb{X}}_\xi-\mu(v^S)^{-\mb{X}}_{\x\x}\big]\big|d\x d\tau, \\
&L_4 := C\int_0^t\int_\bbr |R\vartheta_\x\phi_\x|d\x d\tau, \\
&L_5 := C\int_0^t\int_\bbr \bigg| Rv\phi_\x\bar\theta_\x\left(\frac 1v-\frac 1{\bar v}\right)+Rv\phi_\x\bar v_\x \left(\frac{\bar\theta}{\bar v^2}-\frac{\theta}{v^2}\right) -\mu v\phi_\x\bar u_{\x\x}\left(\frac 1v-\frac 1{\bar v}\right)   \\
&\qquad \quad\qquad +\mu v\phi_\x \bar u_\x \left(\frac{\bar v_\xi}{\bar v^2} - \frac{v_\xi}{v^2} \right) \bigg|d\x d\tau  , \\
&L_6 := C\int_0^t\int_\bbr \left| \mu\frac{\phi_\x \psi_{\x}v_\xi}{v} \right|d\x d\tau , \\
&L_7 :=  C\int_0^t\int_\bbr |v\phi_\x Q_1|d\x d\tau .
\end{aligned} 
\end{align*}
Now we estimate $L_j,~j=1,2,\cdots,7$. First, using $\eqref{NS-1}_1$ and the same estimate as in \eqref{b3model}, we have
$$
\begin{array}{ll}
\di 
|L_1|\le C\int_0^t\int_\bbr |u_\x\psi\phi_\x|d\x d\tau\leq C\int_0^t\int_\bbr \big(|\psi_\x|+|\bar u_\x|\big)|\psi||\phi_\x|d\x d\tau\\[3mm]
\di \qquad \leq C\eps_1\int_0^t \big(\|\psi_\x\|^2_{L^2(\bbr)}+\|\phi_\x\|^2_{L^2(\bbr)}\big)d\tau+\frac18 \int_0^t\|\phi_\x\|^2_{L^2(\bbr)} d\tau+C\int_0^t  \int_\bbr |\bar u_\x|^2|\psi|^2 d\x d\tau\\[3mm]
\di\qquad \le C\eps_1\int_0^t \big(\|\psi_\x\|^2_{L^2(\bbr)}+\|\phi_\x\|^2_{L^2(\bbr)}\big)d\tau+\frac18 \int_0^t\|\phi_\x\|^2_{L^2(\bbr)} d\tau +C\int_0^t \Big(\deltas \mathcal{G}^S+\deltar \mathcal{G}^R\Big) d\tau \\
\di \qquad \quad +C(\delta_C)^2 \int_0^t \int_\bbr W^2 \psi^2 d\x d\tau ,
\end{array}
$$
where the last term is obtained by $(1+t)^{-2} e^{-\frac{2C_1|\xi+\sigma t|^2}{1+t}} \le W^2 $. \\
Likewise, we use \eqref{psiphire} and $v_\xi=\phi_\xi + \bar v_\xi$ to have
$$
\begin{array}{ll}
\di 
|L_2|\le C\int_0^t\int_\bbr \big( (|\phi_\xi| + |\bar v_\xi|) |\psi|+|\psi_\x| \big) \big| \dot{\mb{X}}(t)(v^S)^{-\mb{X}}_\xi+\psi_\xi\big|d\x d\tau\\[3mm]
\di \qquad \leq C\eps_1\int_0^t\|\phi_\x\|^2_{L^2(\bbr)} d\tau+C\int_0^t \|\psi_\x\|^2_{L^2(\bbr)} d\tau +C\delta_0 \delta_S\int_0^t|\dot{\mb{X}}(\tau)|^2 d\tau\\[3mm]
\di \qquad \quad +C\delta_0\int_0^t \Big( \mathcal{G}^S+ \mathcal{G}^R\Big) d\tau +C\delta_0 \int_0^t\int_\bbr W^2|\psi|^2 d\x d\tau,
\end{array}
$$
and
$$
\begin{array}{ll}
\di 
|L_5|\leq C\int_0^t\int_\bbr \big|\phi_\x\big|\big|(\bar\theta_\x, \bar v_\x, \bar u_{\x\x}, \bar u_\x \bar v_\x) \big|\big|(\phi,\vartheta) \big|d\x d\tau
+ C\int_0^t\int_\bbr |\bar u_\x||\phi_\xi| (|\phi_\x| + |\bar v_\x| |\phi| )d\x d\tau\\[3mm]
\di \qquad \leq \frac 18 \int_0^t\|\phi_\x\|^2_{L^2(\bbr)} d\tau +C\delta_0\int_0^t \Big( \mathcal{G}^S+ \mathcal{G}^R\Big) d\tau +C\delta_0 \int_0^t\int_\bbr W^2|(\phi,\vartheta)|^2 d\x d\tau.
\end{array}
$$
We easily have
$$
\begin{array}{ll}
\di 
|L_3|\leq  C\deltas \int_0^t|\dot{\mb{X}}(\tau)|^2 d\tau+ C\delta_0\int_0^t \|\phi_\x\|^2_{L^2(\bbr)} d\tau,
\end{array}
$$
and
$$
|L_4|\leq \frac 18 \int_0^t\|\phi_\x\|^2_{L^2(\bbr)} d\tau+C\int_0^t\|\vartheta_\x\|^2_{L^2(\bbr)} d\tau.
$$
For $L_6$, we use the interpolation inequality to have
$$
\begin{array}{ll}
\di 
|L_6|\leq C\int_0^t\int_\bbr |\phi_\x| |\psi_\x| |\phi_\x +\bar v_\x| d\x d\tau\\[3mm]
\di \qquad \leq C \int_0^t \|\psi_{\x\x}\|_{L^2(\bbr)} \|\psi_\x\|_{L^2(\bbr)} \|\phi_\x\|^2_{L^2(\bbr)} d\tau +C\int_0^t\int_\bbr |\bar v_\x| |\phi_\x| |\psi_\x| d\x d\tau   \\
\di \qquad \leq C\eps_1^2 \int_0^t \|\psi_{\x\x}\|_{L^2(\bbr)}^2 d\tau+C(\delta_0+\eps_1^2) \int_0^t\|(\phi_\x,\psi_\x)\|^2_{L^2(\bbr)} d\tau.
\end{array}
$$
Since \eqref{Q-est} and Lemma \ref{lemma1.2} (with the same estimates as in \eqref{dr2}) imply that for each $i=1, 2$,
\beq\label{qr12}
\int_0^t\|Q^R_i\|^2_{L^2(\bbr)}d\tau\leq C\int_0^t\Big[\big\|\big(u^R_{\xi\xi},\theta^R_{\xi\xi}\big)\big\|^2_{L^2(\bbr)}+\big\|\big(v^R_{\xi},u^R_{\xi},\theta^R_{\xi}\big)\big\|_{L^4}^4\Big]d\tau\leq C\deltar,
\eeq
we use Lemma \ref{lemma2.2} and \eqref{vc-21} to have
\beq\label{L7}
\begin{array}{ll}
\di 
|L_7|\leq C\int_0^t\int_\bbr|\phi_\x||Q_1| d\x d\tau \leq  \frac 18 \int_0^t\|\phi_\x\|^2_{L^2(\bbr)} d\tau+C\int_0^t  \|Q_1\|^2_{L^2(\bbr)} d\tau\\[3mm]
\di \qquad \leq \frac 18 \int_0^t\|\phi_\x\|^2_{L^2(\bbr)} d\tau+C\int_0^t  \big[\|Q^I_1\|^2_{L^2(\bbr)}+\|Q^R_1\|^2_{L^2(\bbr)}+\|Q^C_1\|^2_{L^2(\bbr)}\big] d\tau\\[3mm]
\di \qquad \leq  \frac 18 \int_0^t\|\phi_\x\|^2_{L^2(\bbr)} d\tau+C\int_0^t\big[C\delta_0\deltas  e^{-C \deltas \tau}+C\delta_0^2 e^{-C\tau}+\delta_C(1+\tau)^{-\frac 52}\big]d\tau+C\deltar\\[3mm]
\di \qquad \leq  \frac 18 \int_0^t\|\phi_\x\|^2_{L^2(\bbr)} d\tau+C\delta_0 .
\end{array}
\eeq
Therefore, combining the above estimates, we have
\begin{align*}
\begin{aligned}
&\|\phi_\x \|^2_{L^2(\bbr)}+\int_0^t \|\phi_\x\|^2_{L^2(\bbr)} d\tau \\
&\leq C \|(\phi_{0\x},\psi_0)\|^2_{L^2(\bbr)} + C\|\psi \|^2_{L^2(\bbr)} +C\int_0^t \|(\psi_\x,\vartheta_\x)\|^2_{L^2(\bbr)} d\tau +C\eps_1^2 \int_0^t \|\psi_{\x\x}\|_{L^2(\bbr)}^2 d\tau \\
&\quad+ C\deltas \int_0^t|\dot{\mb{X}}(\tau)|^2 d\tau  +C\delta_0\int_0^t \Big( \mathcal{G}^S+ \mathcal{G}^R\Big) d\tau +C\delta_0 \underbrace{\int_0^t\int_\bbr W^2|(\phi,\psi,\vartheta)|^2 d\x d\tau}_{=:\mb W} +C\delta_0.
\end{aligned}
\end{align*} 
Applying Lemma \ref{cw-lemma} to the above term $\mb W$ that is the same as the left-hand side of \eqref{cwe}, and using $\mb D \le C\|(\psi_\x,\vartheta_\x)\|^2_{L^2(\bbr)}$, we have
\begin{align}\label{phifinal}
\begin{aligned}
&\|\phi_\x \|^2_{L^2(\bbr)}+\int_0^t \|\phi_\x\|^2_{L^2(\bbr)} d\tau \\
&\leq C \|(\phi_{0\x},\psi_0)\|^2_{L^2(\bbr)} + C\|(\phi,\psi,\vartheta) \|^2_{L^2(\bbr)} +C\int_0^t \|(\psi_\x,\vartheta_\x)\|^2_{L^2(\bbr)} d\tau +C\eps_1^2 \int_0^t \|\psi_{\x\x}\|_{L^2(\bbr)}^2 d\tau \\
&\quad+ C\deltas \int_0^t|\dot{\mb{X}}(\tau)|^2 d\tau  +C\delta_0\int_0^t \Big( \mathcal{G}^S+ \mathcal{G}^R\Big) d\tau +C\delta_0.
\end{aligned}
\end{align}

To estimate $\psi_\x$, multiplying the equation \eqref{psi-equ} by $-\psi_{\xi\xi}$ and integrating the result w.r.t. $\xi$, we have
\begin{align*}
&\frac{d}{dt}\int_{\bbr} \frac{|\psi_\xi|^2}{2} d\xi=    -\dot{\mb{X}}(t) \int_\bbr (u^S)^{-\mb{X}}_\xi \psi_{\xi\xi} d\xi + \int_\bbr (p-\bar p)_\xi \psi_{\xi\xi} d\xi   \\
&\qquad - \mu\int_\bbr \left(\frac{u_\xi}{v}- \frac{\bar u_\xi}{\bar v}\right)_\xi  \psi_{\xi\xi} d\xi + \int_\bbr Q_1 \psi_{\xi\xi} d\xi:= K_1 + K_2 +K_3 +K_4. 
\end{align*} 
First, we find a good term $$\mb D_\psi:= \int_\bbr \frac{\mu}{v} |\psi_{\xi\xi}|^2 d\xi$$  from $K_3$ as follows: (using \eqref{mudiff})
\begin{align*}
K_3 &= - \int_\bbr \frac{\mu}{v} |\psi_{\xi\xi}|^2 d\xi - \mu\int_\bbr \left(\frac{1}{v}\right)_\xi \psi_\xi \psi_{\xi\xi} d\xi  - \mu\int_\bbr \bar u_{\xi\xi} \left(\frac{1}{v} - \frac{1}{\bar v}\right)  \psi_{\xi\xi} d\xi  \\
&\quad - \mu\int_\bbr \bar u_\xi \left(\frac{1}{v}- \frac{1}{\bar v}\right)_\xi  \psi_{\xi\xi} d\xi=: - \mb D_\psi + K_{31}+ K_{32}+ K_{33}.
\end{align*}
%We use the good terms $\mb D_2, D, D_2, G^S$ and $G^R$ to control the remaining terms as follows.\\
Using $v_\x =\phi_\x +\bar v_\x$ and the interpolation inequality with \eqref{apri-ass}, we have
\begin{align*}
|K_{31}| &\le \|\phi_\xi\|_{L^2} \|\psi_\xi\|_{L^\infty}  \|\psi_{\xi\xi}\|_{L^2}  + \|\bar v_\xi\|_{L^\infty}   \|\psi_{\xi}\|_{L^2} \|\psi_{\xi\xi}\|_{L^2} \\
&\le C\eps_1 \|\psi_\xi\|_{L^2}^{1/2} \|\psi_{\xi\xi}\|_{L^2}^{1/2}  \|\psi_{\xi\xi}\|_{L^2}  + C\delta_0 \|\psi_{\xi}\|_{L^2} \|\psi_{\xi\xi}\|_{L^2} \\
&\le C(\eps_1+\delta_0) \big( \|\psi_{\xi}\|_{L^2}^2 + \|\psi_{\xi\xi}\|_{L^2}^2 \big) \le \frac18 \mb D_\psi + C(\eps_1+\delta_0) \mb D.
\end{align*} 
Using $|\bar u^R_{\xi\xi}| \leq C |\bar u^R_{\xi}| $ (by Lemma \ref{lemma1.2}), we have
\[
\begin{array}{ll}
\di 
|K_{32}| \le C\int_\bbr (|u^R_{\xi\xi}|+ |u^C_{\xi\xi}|+|(u^S)^{-\mb X}_{\xi\xi}| ) |\phi| |\psi_{\xi\xi}| d\xi\\[3mm]
\di \qquad\ \  \leq \frac18 \mb D_\psi + C\delta_0\Big( \mathcal{G}^S + \mathcal{G}^R+ \int_\bbr W^2 \phi^2 d\xi \Big),
 \end{array}
\]
and
\begin{align*}
|K_{33}| &\le C\int_\bbr |\bar u_\x| \big(|\phi_\xi| + |\bar v_\x||\phi| \big) |\psi_{\xi\xi}| d\xi \\
&\le \frac18 \mb D_\psi + C\delta_0 \|\phi_\x\|^2_{L^2(\bbr)} + C\delta_0\Big( \mathcal{G}^S + \mathcal{G}^R+ \int_\bbr W^2 \phi^2 d\xi \Big) .
\end{align*} 
We easily have
\[
|K_1| \le  |\dot{\mb{X}}(t)| \|(u^S)^{-\mb{X}}_\xi\|_{L^2(\bbr)}\|\psi_{\xi\xi}\|_{L^2(\bbr)} \le \deltas |\dot{\mb{X}}(t)|^2 + C\deltas \mb D_\psi \le \deltas|\dot{\mb{X}}(t)|^2 +\frac18 \mb D_\psi,
\]
and
\[
|K_4|   \le \frac18 \mb D_\psi +C\|Q_1\|^2_{L^2(\bbr)}.
\]
Using \eqref{pdiff},
\[
\begin{array}{ll}
\di |K_2|\le \frac18 \mb D_\psi + C \int_\bbr |(p-\bar p)_\xi |^2 d\xi \\
\di \qquad \leq \frac18 \mb D_\psi + C\big( \|\phi_\x\|^2_{L^2(\bbr)}+\mb D\big)+C\int_\bbr |(\bar v_\x, \bar\theta_\x)|^2 |(\phi,\vartheta)|^2 d\xi\\[3mm]
\di \qquad \leq \frac18 \mb D_\psi+ C\big( \|\phi_\x\|^2_{L^2(\bbr)}+\mb D\big)+C\delta_0\Big( \mathcal{G}^S + \mathcal{G}^R+ \int_\bbr W^2 \phi^2 d\xi \Big).
\end{array}
\]
Therefore, we find that 
\begin{align}\label{psi-2nd}
\begin{aligned}
&\frac{d}{dt}\int_{\bbr} \frac{|\psi_\xi|^2}{2} d\xi +\frac18 \mb D_\psi \leq  \deltas|\dot{\mb{X}}|^2 + C \big( \|\phi_\x\|^2_{L^2(\bbr)}+\mb D \big)+C\|Q_1\|^2_{L^2(\bbr)}\\[3mm]
&\quad + C\delta_0\Big( \mathcal{G}^S + \mathcal{G}^R+ \int_\bbr W^2 \phi^2 d\xi \Big).
\end{aligned}
\end{align}
Similarly, we estimate $\vartheta_\x$. Multiplying the  equation \eqref{v-c-1}  by $-\vartheta_{\xi\xi}$ and integrating the result w.r.t. $\xi$, we have
\begin{align*}
&\frac{d}{dt}\int_{\bbr} \frac{|\vartheta_\xi|^2}{2} d\xi= \frac{R}{\gamma-1}\dot{\mb{X}}(t) \int_\bbr (\theta^S)^{-\mb{X}}_\xi \vartheta_{\xi\xi} d\xi + \int_\bbr (pu_\x-\bar p\bar u_\xi) \vartheta_{\xi\xi} d\xi   \\
&\qquad \underbrace{- \int_\bbr \kappa\left(\frac{\theta_\xi}{v}- \frac{\bar \theta_\xi}{\bar v}\right)_\xi  \vartheta_{\xi\xi} d\xi}_{=:K_5}-\int_\bbr \mu\left(\frac{u_\xi^2}{v}- \frac{\bar u_\xi^2}{\bar v}\right) \vartheta_{\xi\xi} d\xi + \int_\bbr Q_2 \vartheta_{\xi\xi} d\xi.
\end{align*}
As above, we find a good term $$\mb D_\theta:= \int_\bbr \frac{\kappa}{v} |\vartheta_{\xi\xi}|^2 d\xi$$  from $K_5$ as follows:
\begin{align*}
K_7 &= - \int_\bbr \frac{\kappa}{v} |\vartheta_{\xi\xi}|^2 d\xi - \kappa\int_\bbr \left(\frac{1}{v}\right)_\xi \vartheta_\xi \vartheta_{\xi\xi} d\xi  - \kappa\int_\bbr \bar \theta_{\xi\xi} \left(\frac{1}{v} - \frac{1}{\bar v}\right)  \vartheta_{\xi\xi} d\xi  \\
&\quad - \kappa\int_\bbr \bar \theta_\xi \left(\frac{1}{v}- \frac{1}{\bar v}\right)_\xi  \vartheta_{\xi\xi} d\xi.
\end{align*}
The all terms above can be estimated in a similar way as before.
Therefore, we get
\begin{align}\label{vartheta-2nd}
\begin{aligned}
\frac{d}{dt}\int_{\bbr} \frac{|\vartheta_\xi|^2}{2} d\xi +\frac18 \mb D_\theta &\leq \deltas|\dot{\mb{X}}|^2 + C \big( \|\phi_\x\|^2_{L^2(\bbr)}+\mb D \big)+C\|Q_1\|^2_{L^2(\bbr)}\\[3mm]
&\quad + C\delta_0\Big( \mathcal{G}^S + \mathcal{G}^R+ \int_\bbr W^2 \phi^2 d\xi \Big).
\end{aligned}
\end{align}
Adding \eqref{psi-2nd} and \eqref{vartheta-2nd} and then integrating the result over $[0,t]$, together with using \eqref{cwe} as in \eqref{phifinal}, we find that for some constants $C_*, C>0$,
\beq\label{fineq}
\begin{array}{ll}
\di \|(\psi_{\x}, \vartheta_{\x})\|^2_{L^2(\bbr)}+\int_0^t\|(\psi_{\x\x}, \vartheta_{\x\x})\|^2_{L^2(\bbr)}d\tau\\
\di \leq C\|(\psi_{0\x}, \vartheta_{0\x})\|^2_{L^2(\bbr)}+C\deltas\int_0^t|\dot{\mb{X}}|^2 d\tau + C_* \int_0^t\|\phi_\x\|^2_{L^2(\bbr)} d\tau +  C \int_0^t \|(\psi_\x,\vartheta_\x)\|^2_{L^2(\bbr)} d\tau \\[3mm]
\di\quad +C\int_0^t\|(Q_1,Q_2)\|^2_{L^2(\bbr)}d\tau+ C\delta_0\int_0^t \Big( \mathcal{G}^S+ \mathcal{G}^R\Big) d\tau + C\delta_0\|(\phi,\psi,\vartheta) \|^2_{L^2(\bbr)} + C\delta_0.
\end{array}
\eeq
Notice that by \eqref{qr12}, Lemma \ref{lemma2.2} and \eqref{vc-21} with the same estimates as in \eqref{L7},
\[
\int_0^t\|(Q_1,Q_2)\|^2_{L^2(\bbr)}d\tau \le C\delta_R.
\]
Finally, multiplying \eqref{fineq} by $\frac{1}{2C_*}$ and then adding the result and \eqref{phifinal}, we have
\begin{align}\label{pptfinal}
\begin{aligned}
&\|(\phi_\x, \psi_{\x}, \vartheta_{\x}) \|^2_{L^2(\bbr)}+\int_0^t \|\phi_\x\|^2_{L^2(\bbr)} d\tau +\int_0^t\|(\psi_{\x\x}, \vartheta_{\x\x})\|^2_{L^2(\bbr)}d\tau\\
&\leq C \|(\phi_{0\x},\psi_{0\x}, \vartheta_{0\x}, \psi_0)\|^2_{L^2(\bbr)} + C_{**}\|(\phi,\psi,\vartheta) \|^2_{L^2(\bbr)} +C_{**}\int_0^t \|(\psi_\x,\vartheta_\x)\|^2_{L^2(\bbr)} d\tau  \\
&\quad+ C_{**}\deltas \int_0^t|\dot{\mb{X}}(\tau)|^2 d\tau  +C\delta_0\int_0^t \Big( \mathcal{G}^S+ \mathcal{G}^R\Big) d\tau +C\delta_0.
\end{aligned}
\end{align}  
Finally, multiplying \eqref{pptfinal} by $\frac{1}{2C_{**}}$ and then adding the result with the $L^2$-estimates \eqref{esthv} of Lemma \ref{lem-zvh}, we have the desired estimates.
\end{proof}

\begin{appendix}
\setcounter{equation}{0}
\section{Relative entropy}  \label{app-ent}
Let $U=(v,u,E)$ with $E=e+\frac{u^2}{2}$ and $e=\frac{R}{\gamma-1}\theta + \mbox{const}$. We here compute the relative entropy defined by the entropy 
\[
s(U):=R\log v + \frac{R}{\gamma-1} \log\theta .
\]
Note that the entropy is computed from the Gibbs relation $\theta ds = de + p dv$ and \eqref{state}. \\
Using the Gibbs relation and $E=e+\frac{u^2}{2}$, we have
\[
\theta ds = dE - udu + p dv,
\]
and so,
\[
\nabla_U s(U) = \left( \frac{p}{\theta}, -\frac{u}{\theta}, \frac{1}{\theta} \right).
\]
Thus, for any $\bar U:=(\bar v,\bar u,\bar E)$ with $\bar E:=\bar e+\frac{\bar u^2}{2}$, $\bar e=\frac{R}{\gamma-1}\bar \theta + \mbox{const}$, and $\bar p = \frac{R\bar\theta}{\bar v}$,
\begin{align*}
(-s)(U|\bar U) &= - s(U) + s(\bar U) + \nabla_U s (\bar U)\cdot (U-\bar U)\\
&=  - R\log \frac{v}{\bar v} -\frac{R}{\gamma-1} \log\frac{\theta}{\bar\theta} +  \frac{\bar p}{\bar \theta} (v-\bar v) - \frac{\bar u}{\bar \theta} (u-\bar u) + \frac{1}{\bar \theta} (E-\bar E)\\
&=  R  \left(\frac{v}{\bar v} -1- \log\frac{v}{\bar v}\right) +\frac{R}{\gamma-1} \left(\frac{\theta}{\bar\theta} -1- \log\frac{\theta}{\bar\theta}\right)  + \frac{(u-\bar u)^2}{2\bar\theta}.
\end{align*}

\section{Sharp estimate for the diffusion}  \label{app-est}

\begin{lemma} 
Let 
\[
p^S:=\frac{R\theta^S}{v^S},\,  p_+:=\frac{R\theta_+}{v_+}, \, p^*:=\frac{R\theta^*}{v^*}.
\]
Then, it holds that
\beq\label{p-est}
\left|\frac{p^S-p_+}{v^S-v_+}-\frac{p^S-p^*}{v^S-v^*}-\sigma^*\alpha^* \frac{\mu R\gamma}{\mu R\gamma+\kappa (\gamma-1)^2} (v_+-v^*)\right| \le C\delta_S^2,
\eeq
where
$$
\alpha^*:= \frac{\gamma(\gamma+1) p^*}{2(v^*)^2\s^*}.
$$
\end{lemma}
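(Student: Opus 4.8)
The plan is to recast both difference quotients as divided differences of a single one–variable pressure function and to read off the claimed leading coefficient from a second divided difference. Since $v^S_\xi>0$ for all $\xi$ by Lemma \ref{lemma1.3}, the map $\xi\mapsto v^S(\xi)$ is a smooth increasing bijection of $\bbr$ onto $(v^*,v_+)$, so one may regard $\theta^S$ as a smooth function $\Theta(v)$ on $[v^*,v_+]$ with $\Theta(v^*)=\theta^*$, $\Theta(v_+)=\theta_+$. Setting $P(v):=R\Theta(v)/v$ one has $P(v^*)=p^*$, $P(v_+)=p_+$, $p^S=P(v^S)$, and the standard divided–difference identity gives
\[
\frac{p^S-p_+}{v^S-v_+}-\frac{p^S-p^*}{v^S-v^*}=(v_+-v^*)\,P[v^*,v^S,v_+].
\]
Hence it suffices to show $P[v^*,v^S,v_+]=\tfrac12 P''(v^*)+O(\deltas)$ uniformly in $v^S\in[v^*,v_+]$ and then to compute $P''(v^*)$ to leading order, since everything is multiplied by $v_+-v^*=O(\deltas)$.

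First I would invoke the mean value theorem for divided differences: $P[v^*,v^S,v_+]=\tfrac12 P''(\zeta)$ for some $\zeta\in[v^*,v_+]$, so $|\zeta-v^*|\le\deltas$. Granting a bound on $\|P\|_{C^3([v^*,v_+])}$ that stays uniform as $\deltas\to0$, this yields $P[v^*,v^S,v_+]=\tfrac12 P''(v^*)+O(\deltas)$. That uniform bound I would obtain by passing to the rescaled profile variable $y=(v^S-v^*)/\deltas\in[0,1]$ and reading off from \eqref{VS2} that $\Theta',\Theta'',\Theta'''$ remain $O(1)$ as $\deltas\to0$ (in the original variable the profile has width $\sim1/\deltas$, which is why the rescaling is needed). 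Moreover, by \eqref{sm1} one may replace $\sigma$ by $\sigma^*$ in any computation of $P''(v^*)$ at cost $O(\deltas)$.

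Next I would compute $P''(v^*)$ from the profile ODE. Writing \eqref{VS2} as $\Theta'(v)=\frac{\mu\sigma^2}{\kappa}\frac{g(v)}{f(v)}$ with
\[
f(v):=\big(P(v)-p^*\big)+\sigma^2(v-v^*),\qquad g(v):=\tfrac{R}{\gamma-1}(\Theta(v)-\theta^*)+p^*(v-v^*)-\tfrac12\sigma^2(v-v^*)^2,
\]
both of which vanish at $v=v^*$, the limit $v\to v^*$ reproduces (exactly as in the proof of Lemma \ref{lemma1.3}, now with $\sigma=\sigma^*$) the value $\Theta'(v^*)=-\frac{(\gamma-1)p^*}{R}$ and, as a byproduct, $f'(v^*)=P'(v^*)+(\sigma^*)^2=0$. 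Differentiating $\Theta'f=\frac{\mu\sigma^2}{\kappa}g$ twice and evaluating at $v^*$ (using $f(v^*)=f'(v^*)=0$) gives one linear relation $\Theta'(v^*)P''(v^*)=\frac{\mu(\sigma^*)^2}{\kappa}\big(\frac{R}{\gamma-1}\Theta''(v^*)-(\sigma^*)^2\big)$; coupling it with $P''(v^*)=\frac{R\Theta''(v^*)}{v^*}-\frac{2R\Theta'(v^*)}{(v^*)^2}+\frac{2p^*}{(v^*)^2}$ and solving (using $(\sigma^*)^2=\gamma p^*/v^*$ and $R\theta^*=p^*v^*$) produces precisely $P''(v^*)=\frac{\gamma(\gamma+1)p^*}{(v^*)^2}\cdot\frac{\mu R\gamma}{\mu R\gamma+\kappa(\gamma-1)^2}$. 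Since $\alpha^*=\frac{\gamma(\gamma+1)p^*}{2(v^*)^2\sigma^*}$, this is exactly $\tfrac12 P''(v^*)=\sigma^*\alpha^*\frac{\mu R\gamma}{\mu R\gamma+\kappa(\gamma-1)^2}$, which combined with the first paragraph gives \eqref{p-est}. (As a sanity check: when $\kappa=0$ one solves \eqref{VS2} explicitly for $\Theta(v)$ and finds $P''(v^*)=\frac{\gamma(\gamma+1)p^*}{(v^*)^2}$, consistent with the factor tending to $1$.)

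The main obstacle is the uniform $C^3$ control of $P$ on $[v^*,v_+]$ as $\deltas\to0$, which forces the passage to the rescaled profile variable $y$; everything else is careful but routine bookkeeping of $O(\deltas)$ versus $O(\deltas^2)$ errors, the replacement of $\zeta$ by $v^*$, of $\sigma$ by $\sigma^*$, and of $\Theta'(v^*)$ by $-\frac{(\gamma-1)p^*}{R}$ all being harmless because each enters multiplied by the prefactor $v_+-v^*$.
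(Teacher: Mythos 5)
Your proposal is correct, and at its core it is the same argument as the paper's: both reduce the difference of the two difference quotients to $\tfrac12\,\frac{d^2}{d(v^S)^2}\big(R\theta^S/v^S\big)\big|_{v^S=v^*}\,(v_+-v^*)$ up to $O(\deltas^2)$, and both compute that second derivative from the profile ODE using $\frac{d\theta^S}{dv^S}\big|_{v^*}=-\frac{(\gamma-1)p^*}{R}+O(\deltas)$ and $\sigma=\sigma^*+O(\deltas)$; your value $\tfrac12P''(v^*)=\sigma^*\alpha^*\frac{\mu R\gamma}{\mu R\gamma+\kappa(\gamma-1)^2}$ agrees with the outcome of \eqref{p-estimate1}--\eqref{theta-v-2} and the paper's final display, and your $\kappa=0$ sanity check is right. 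The packaging differs in two ways that make your version shorter: the second divided difference of $P(v)=R\theta^S(v)/v$ together with the mean value theorem replaces the paper's chain of two-variable Taylor expansions of $p^S$ and one-variable expansions of $\theta^S$ at both endpoints, and differentiating the product identity $\Theta'f=\frac{\mu\sigma^2}{\kappa}g$ twice at $v^*$ replaces the quotient-rule computation \eqref{second-der}. Two caveats. First, $f'(v^*)=P'(v^*)+\sigma^2$ is not exactly zero: it equals $\frac{Rl^*-p^*}{v^*}+\sigma^2=O(\deltas)$ (exact vanishing would force $l^*=\frac{p^*-\sigma^2v^*}{R}$, which is not a root of \eqref{l*}), so in the twice-differentiated relation the term $2\Theta''(v^*)f'(v^*)$ should be carried as an $O(\deltas)$ error rather than dropped as zero --- harmless, since everything is later multiplied by $v_+-v^*$. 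Second, your MVT step (replacing $P''(\zeta)$ by $P''(v^*)$) needs bounds on $\frac{d^2\theta^S}{d(v^S)^2}$ and $\frac{d^3\theta^S}{d(v^S)^3}$ on $[v^*,v_+]$ that are uniform in $\deltas$; your suggested justification "read off from \eqref{VS2} after rescaling by $y$" is not yet an argument, since the difficulty is not the profile width in $\xi$ but the degeneracy of \eqref{first-der0} at the endpoints, where numerator and denominator both vanish, so the higher derivatives there must be extracted by repeated implicit differentiation (as you do at $v^*$ for the second derivative) or by a detailed invariant-manifold argument. Note, however, that the paper's own proof uses exactly the same regularity implicitly (the $O(|v^S-v_\pm|^3)$ Taylor remainders and the replacement of $\frac{d^2\theta^S}{d(v^S)^2}\big|_{v_+}$ by its value at $v^*$), so this is a shared, standard omission rather than a defect specific to your route.
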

\begin{proof} 
First of all, by Taylor expansion with $ \frac{\partial^2 p^S}{\partial (\theta^S)^2}=0$, 
$$
\begin{array}{ll}
\di p^S-p_+=\frac{\partial p^S}{\partial v^S}\Big|_{(v_+,\theta_+)}(v^S-v_+)+\frac{\partial p^S}{\partial \theta^S}\Big|_{(v_+,\theta_+)}(\theta^S-\theta_+)+\frac12\frac{\partial^2 p^S}{\partial (v^S)^2}\Big|_{(v_+,\theta_+)}(v^S-v_+)^2 \\[3mm]
\di\qquad +\frac{\partial^2 p^S}{\partial v^S\partial \theta^S}\Big|_{(v_+,\theta_+)}(v^S-v_+)(\theta^S-\theta_+)+O((\delta_S)^3),
\end{array}
$$
and 
\[
\begin{array}{ll}
\di p^S-p^*=\frac{\partial p^S}{\partial v^S}\Big|_{(v^*,\theta^*)}(v^S-v^*)+\frac{\partial p^S}{\partial \theta^S}\Big|_{(v^*,\theta^*)}(\theta^S-\theta^*)+\frac12\frac{\partial^2 p^S}{\partial (v^S)^2}\Big|_{(v^*,\theta^*)}(v^S-v^*)^2\\[3mm]
\di\qquad +\frac{\partial^2 p^S}{\partial v^S\partial \theta^S}\Big|_{(v^*,\theta^*)}(v^S-v^*)(\theta^S-\theta^*)+O((\delta_S)^3).
\end{array}
\]
Then, we have
\begin{align*}
 \frac{p^S-p_+}{v^S-v_+}-\frac{p^S-p^*}{v^S-v^*} &= \left( \frac{\partial p^S}{\partial v^S}\Big|_{(v_+,\theta_+)}-\frac{\partial p^S}{\partial v^S}\Big|_{(v^*,\theta^*)} \right)\\
 & + \left(\frac{\partial p^S}{\partial \theta^S}\Big|_{(v_+,\theta_+)} \frac{\theta^S-\theta_+}{v^S-v_+}  -\frac{\partial p^S}{\partial \theta^S}\Big|_{(v^*,\theta^*)} \frac{\theta^S-\theta^*}{v^S-v^*}  \right)\\
  & + \left(\frac12\frac{\partial^2 p^S}{\partial (v^S)^2}\Big|_{(v_+,\theta_+)}(v^S-v_+)  -\frac12\frac{\partial^2 p^S}{\partial (v^S)^2}\Big|_{(v^*,\theta^*)}(v^S-v^*) \right)\\
  & + \left(\frac{\partial^2 p^S}{\partial v^S\partial \theta^S}\Big|_{(v_+,\theta_+)}(\theta^S-\theta_+)-\frac{\partial^2 p^S}{\partial v^S\partial \theta^S}\Big|_{(v^*,\theta^*)} (\theta^S-\theta^*) \right)+O((\delta_S)^2)\\  
&=: J_1+J_2+J_3+J_4+O((\delta_S)^2).
\end{align*}
Since
\begin{align*}
J_1 &=\frac{\partial^2 p^S}{\partial (v^S)^2}\Big|_{(v^*,\theta^*)} (v_+-v^*) + \frac{\partial^2 p^S}{\partial \theta^S\partial v^S}\Big|_{(v^*,\theta^*)}  (\theta_+ -\theta^*) +O((\delta_S)^2),\\
J_3 & = -\frac12\frac{\partial^2 p^S}{\partial (v^S)^2}\Big|_{(v^*,\theta^*)} (v_+-v^*) + \left(\frac12\frac{\partial^2 p^S}{\partial (v^S)^2}\Big|_{(v_+,\theta_+)} -\frac12\frac{\partial^2 p^S}{\partial (v^S)^2}\Big|_{(v^*,\theta^*)}\right)(v^S-v_+) \\
&= -\frac12\frac{\partial^2 p^S}{\partial (v^S)^2}\Big|_{(v^*,\theta^*)} (v_+-v^*) +O((\delta_S)^2),
\end{align*}
and
\begin{align*}
J_4 &= -\frac{\partial^2 p^S}{\partial v^S\partial \theta^S}\Big|_{(v^*,\theta^*)}    (\theta_+ -\theta^*)  + \left(\frac{\partial^2 p^S}{\partial v^S\partial \theta^S}\Big|_{(v_+,\theta_+)}-\frac{\partial^2 p^S}{\partial v^S\partial \theta^S}\Big|_{(v^*,\theta^*)}  \right)(\theta^S-\theta_+)\\
& =  -\frac{\partial^2 p^S}{\partial v^S\partial \theta^S}\Big|_{(v^*,\theta^*)}    (\theta_+ -\theta^*) +O((\delta_S)^2),
\end{align*}
we have
\begin{align*}
 \frac{p^S-p_+}{v^S-v_+}-\frac{p^S-p^*}{v^S-v^*} &= \left(\frac{\partial p^S}{\partial \theta^S}\Big|_{(v_+,\theta_+)} \frac{\theta^S-\theta_+}{v^S-v_+}  -\frac{\partial p^S}{\partial \theta^S}\Big|_{(v^*,\theta^*)} \frac{\theta^S-\theta^*}{v^S-v^*}  \right)\\
 &\quad + \frac{1}{2}\frac{\partial^2 p^S}{\partial (v^S)^2}\Big|_{(v^*,\theta^*)} (v_+-v^*) +O((\delta_S)^2).
\end{align*}
Thus, using
$$
 \frac{\partial p^S}{\partial \theta^S}=\frac{R}{v^S},\quad \frac{\partial^2 p^S}{\partial (v^S)^2}=\frac{2p^S}{(v^S)^2},
$$
we have
\begin{align*}
 \frac{p^S-p_+}{v^S-v_+}-\frac{p^S-p^*}{v^S-v^*} &= \left(\frac{R}{v_+}  \frac{\theta^S-\theta_+}{v^S-v_+}  - \frac{R}{v^*} \frac{\theta^S-\theta^*}{v^S-v^*}  \right) +\frac{p^*}{(v^*)^2} (v_+-v^*) +O((\delta_S)^2).
\end{align*}
Observe
\[
\frac{R}{v_+}\frac{\theta^S-\theta_+}{v^S-v_+}-\frac{R}{v^*}\frac{\theta^S-\theta^*}{v^S-v^*}=\frac{\theta^S-\theta_+}{v^S-v_+}\left(\frac{R}{v_+}-\frac{R}{v^*}\right) + \bigg(\frac{\theta^S-\theta_+}{v^S-v_+}-\frac{\theta^S-\theta^*}{v^S-v^*}\bigg)\frac{R}{v^*}.
\]
To estimate the first term of the right-hand side, we find from \eqref{theta-s} that
\begin{align*}
\Big| \theta^S-\theta_+ + \frac{(\gamma-1)p^*}{R} (v^S-v_+) \Big| \le  \int_\xi^\infty \Big| (\theta^S)_\xi + \frac{(\gamma-1)p^*}{R}(v^S)_\xi \Big| d\xi \le C \deltas^2,
\end{align*}
and so,
$$
\frac{\theta^S-\theta_+}{v^S-v_+}=- \frac{(\gamma-1)p^*}{R} +O(\delta_S).
$$
In addition, using
$$
\frac{R}{v_+}-\frac{R}{v^*}=-\frac{R}{(v^*)^2}(v_+-v^*)+O((\delta_S)^2),
$$
we have
\beq\label{p-estimate1}
\begin{array}{ll}
\di \frac{p^S-p_+}{v^S-v_+}-\frac{p^S-p^*}{v^S-v^*}=\bigg(\frac{\theta^S-\theta_+}{v^S-v_+}-\frac{\theta^S-\theta^*}{v^S-v^*}\bigg)\frac{R}{v^*}+ \frac{\gamma p^*}{(v^*)^2}(v_+-v^*)+O((\delta_S)^2).
\end{array}
\eeq

Now it remains to estimate 
\beq\label{theta-v}
\frac{\theta^S-\theta_+}{v^S-v_+}-\frac{\theta^S-\theta^*}{v^S-v^*}.
\eeq
For that, we consider the smooth function $\theta^S=\theta^S(v^S)$ as mentioned in the proof for \eqref{theta-s} of Lemma \ref{lemma1.3}.
Then, using the Taylor expansions of $\theta^S$ at points $v_+$ and $v^*$, we have
$$
\theta^S-\theta_+-\frac{d\theta^S}{dv^S}\bigg|_{v^S=v_+}(v^S-v_+)-\frac12\frac{d^2\theta^S}{d(v^S)^2}\bigg|_{v^S=v_+}(v^S-v_+)^2=O(|v^S-v_+|^3),
$$
and
$$
\theta^S-\theta^*-\frac{d\theta^S}{dv^S}\bigg|_{v^S=v^*}(v^S-v^*)-\frac12\frac{d^2\theta^S}{d(v^S)^2}\bigg|_{v^S=v^*}(v^S-v^*)^2=O(|v^S-v^*|^3).
$$
Thus,
$$
\frac{\theta^S-\theta_+}{v^S-v_+}-\frac{d\theta^S}{dv^S}\bigg|_{v^S=v_+}-\frac12\frac{d^2\theta^S}{d(v^S)^2}\bigg|_{v^S=v_+}(v^S-v_+)=O((\deltas)^2),
$$
and
$$
\frac{\theta^S-\theta^*}{v^S-v^*}-\frac{d\theta^S}{dv^S}\bigg|_{v^S=v^*}-\frac12\frac{d^2\theta^S}{d(v^S)^2}\bigg|_{v^S=v^*}(v^S-v^*)=O((\deltas)^2).
$$
Consequently,
\beq\label{theta-v-1}
\begin{array}{ll}
\di \Bigg|\frac{\theta^S-\theta_+}{v^S-v_+}-\frac{\theta^S-\theta^*}{v^S-v^*}+\frac{d\theta^S}{dv^S}\bigg|_{v^S=v^*}-\frac{d\theta^S}{dv^S}\bigg|_{v^S=v_+}\\[3mm]
\di\ \  +\frac12\frac{d^2\theta^S}{d(v^S)^2}\bigg|_{v^S=v^*}(v^S-v^*)-\frac12\frac{d^2\theta^S}{d(v^S)^2}\bigg|_{v^S=v_+}(v^S-v_+)\Bigg|=O((\deltas)^2).
\end{array}
\eeq
Using
$$
\frac{d\theta^S}{dv^S}\bigg|_{v^S=v^*}-\frac{d\theta^S}{dv^S}\bigg|_{v^S=v_+}-\frac{d^2\theta^S}{d(v^S)^2}\bigg|_{v^S=v_+}(v^*-v_+)=O((\deltas)^2),
$$
and
$$
\begin{array}{ll}
\di 
\frac12\bigg[\frac{d^2\theta^S}{d(v^S)^2}\bigg|_{v^S=v^*}(v^S-v^*)-\frac{d^2\theta^S}{d(v^S)^2}\bigg|_{v^S=v_+}(v^S-v_+)+\frac{d^2\theta^S}{d(v^S)^2}\bigg|_{v^S=v_+}(v^*-v_+)\bigg]\\[5mm]
\di =\frac12\bigg[\frac{d^2\theta^S}{d(v^S)^2}\bigg|_{v^S=v^*}(v^S-v^*)-\frac{d^2\theta^S}{d(v^S)^2}\bigg|_{v^S=v_+}(v^S-v^*)\bigg]=O((\deltas)^2),
\end{array}
$$
we have from \eqref{theta-v-1} that
$$
\begin{array}{ll}
\di \Bigg|\frac{\theta^S-\theta_+}{v^S-v_+}-\frac{\theta^S-\theta^*}{v^S-v^*}+\frac12\frac{d^2\theta^S}{d(v^S)^2}\bigg|_{v^S=v_+}(v^*-v_+)\Bigg|=O((\deltas)^2),
\end{array}
$$
which implies 
\beq\label{theta-v-2}
\begin{array}{ll}
\di \Bigg|\frac{\theta^S-\theta_+}{v^S-v_+}-\frac{\theta^S-\theta^*}{v^S-v^*}-\frac12\frac{d^2\theta^S}{d(v^S)^2}\bigg|_{v^S=v^*}(v_+-v^*)\Bigg|=O((\deltas)^2).
\end{array}
\eeq
Therefore, it remains to compute 
$$
\frac{d^2\theta^S}{d(v^S)^2}\bigg|_{v^S=v^*}.
$$
To this end, we use the following facts as in the proof for \eqref{theta-s} of Lemma \ref{lemma1.3}:
\beq\label{first-der}
\frac{d\theta^S}{dv^S}=\frac{\theta^S_\x}{v^S_\x}=\frac{\mu \sigma^2}{\kappa}\frac{\frac{R}{\gamma-1}(\theta^S-\theta^*)+p^*(v^S -v^*)-\frac12\sigma^2(v^S-v^*)^2}{(p^S-p^*)+\sigma^2(v^S-v^*)},
\eeq
\beq\label{elstar}
 \frac{d\theta^S}{dv^S}\bigg|_{v^S=v^*}=\lim_{v^S\to v^*} \frac{d\theta^S}{dv^S} =  \lim_{v^S\to v^*} \frac{\theta^S-\theta^*}{v^S-v^*} =:l^* <0,
\eeq
\beq\label{limit-p}
\lim_{v^S\rightarrow v^*}\frac{p^S-p^*}{v^S-v^*}=\lim_{v^S\rightarrow v^*}\Big(\frac{R}{v^S}\frac{\theta^S-\theta^*}{v^S-v^*}-\frac{p^*}{v^S}\Big)=\frac{Rl^*-p^*}{v^*},
\eeq
and
\beq\label{l*}
(l^*)^2-\left(\frac{p^*-\s^2v^*}{R}+\frac{\mu\s^2 v^*}{\kappa(\gamma-1)}\right)l^*-\frac{\mu\s^2\theta^*}{\kappa}=0.
\eeq
It holds from \eqref{first-der} that
\beq\label{second-der}
\begin{array}{ll}
\di 
\frac{d^2\theta^S}{d(v^S)^2}=\frac{d}{dv^S}\bigg(\frac{d\theta^S}{dv^S}\bigg)=\frac{\mu \sigma^2}{\kappa}\frac{\frac{R}{\gamma-1}\frac{d\theta^S}{dv^S}+p^*-\sigma^2(v^S-v^*)}{(p^S-p^*)+\sigma^2(v^S-v^*)}\\[4mm]
\di \qquad -\frac{\mu \sigma^2}{\kappa}\frac{\frac{R}{\gamma-1}(\theta^S-\theta^*)+p^*(v^S -v^*)-\frac12\sigma^2(v^S-v^*)^2}{\big[(p^S-p^*)+\sigma^2(v^S-v^*)\big]^2}\bigg(\frac{R\frac{d\theta^S}{dv^S}}{v^S}-\frac{R\theta^S}{(v^S)^2}+\s^2\bigg)\\[4mm]
\di \  =\frac{\mu \sigma^2}{\kappa}\frac{\frac{R}{\gamma-1}\frac{d\theta^S}{dv^S}+p^*-\sigma^2(v^S-v^*)}{(p^S-p^*)+\sigma^2(v^S-v^*)}-\frac{\frac{d\theta^S}{dv^S}}{(p^S-p^*)+\sigma^2(v^S-v^*)}\bigg(\frac{R\frac{d\theta^S}{dv^S}}{v^S}-\frac{R\theta^S}{(v^S)^2}+\s^2\bigg)\\[4mm]
\di\  =-\frac{R}{v^S[(p^S-p^*)+\sigma^2(v^S-v^*)]}\bigg[\bigg(\frac{d\theta^S}{dv^S}\bigg)^2-\bigg(\frac{p^S-\s^2v^S}{R} + \frac{\mu \s^2v^S}{\kappa(\gamma-1)}\bigg)\frac{d\theta^S}{dv^S}\\[4mm]
\di \qquad\qquad\qquad\qquad\qquad\qquad\quad\   -\frac{v^S}{R}\bigg(\frac{\mu\s^2p^*}{\kappa}-\frac{\mu\s^4}{\kappa}\big(v^S-v^*\big)\bigg)\bigg].
\end{array}
\eeq
Then, using \eqref{l*}, 
$$
\begin{array}{ll}
\di 
\frac{d^2\theta^S}{d(v^S)^2}=-\frac{R}{v^S\big[(p^S-p^*)+\sigma^2(v^S-v^*)\big]}\bigg[\bigg(\frac{d\theta^S}{dv^S}\bigg)^2-\bigg(\frac{p^S-\s^2v^S}{R}+\frac{\mu \s^2v^S}{\kappa(\gamma-1)}\bigg)\frac{d\theta^S}{dv^S}\\[4mm]
\di \qquad  -\frac{v^S}{R}\bigg(\frac{\mu\s^2p^*}{\kappa}-\frac{\mu\s^4}{\kappa}\big(v^S-v^*\big)\bigg)-\bigg((l^*)^2-\Big(\frac{p^*-\s^2v^*}{R}+\frac{\mu\s^2 v^*}{\kappa(\gamma-1)}\Big)l^*-\frac{\mu\s^2\theta^*}{\kappa} \bigg)\bigg].
\end{array}
$$
Moreover, since
\[
 -\frac{v^S}{R}\bigg(\frac{\mu\s^2p^*}{\kappa}-\frac{\mu\s^4}{\kappa}\big(v^S-v^*\big)\bigg) +\frac{\mu\s^2\theta^*}{\kappa}=-\frac{\mu\s^2\big(p^*-\s^2 v^S\big)}{R\kappa}\big(v^S-v^*\big),
\]
we have
$$
\begin{array}{ll}
\di 
v^S\big[(p^S-p^*)+\sigma^2(v^S-v^*)\big] \frac{d^2\theta^S}{d(v^S)^2} \\[4mm]
\di =- R\bigg[\bigg(\frac{d\theta^S}{dv^S}\bigg)^2-(l^*)^2-\Big(\frac{p^*-\s^2v^*}{R}+\frac{\mu\s^2 v^*}{\kappa(\gamma-1)}\Big)\Big(\frac{d\theta^S}{dv^S}-l^*\Big)\\[4mm]
\di \qquad  -\bigg(\frac{\big(p^S-p^*\big)-\s^2\big(v^S-v^*\big)}{R} + \frac{\mu \s^2\big(v^S-v^*\big)}{\kappa(\gamma-1)}\bigg)\frac{d\theta^S}{dv^S}-\frac{\mu\s^2\big(p^*-\s^2 v^S\big)}{R\kappa}\big(v^S-v^*\big)\bigg],
\end{array}
$$
and so,
$$
\begin{array}{ll}
\di 
v^S\Big(\frac{p^S-p^*}{v^S-v^*}+\sigma^2\Big)  \frac{d^2\theta^S}{d(v^S)^2} \\[4mm]
\di =-R\bigg[\frac{\frac{d\theta^S}{dv^S}-l^*}{v^S-v^*}\Big(\frac{d\theta^S}{dv^S}+l^*\Big)-\Big(\frac{p^*-\s^2v^*}{R}+\frac{\mu\s^2 v^*}{\kappa(\gamma-1)}\Big)\frac{\frac{d\theta^S}{dv^S}-l^*}{v^S-v^*}\\[5mm]
\di \qquad  -\bigg(\frac{\frac{p^S-p^*}{v^S-v^*}-\s^2}{R} + \frac{\mu \s^2}{\kappa(\gamma-1)}\bigg)\frac{d\theta^S}{dv^S}-\frac{\mu\s^2\big(p^*-\s^2 v^S\big)}{R\kappa}\bigg].
\end{array}
$$
Taking $v^S\rightarrow v^*$ on the both sides of the above equality and using \eqref{limit-p}, we have
$$
\begin{array}{ll}
\di 
\big(Rl^*-p^*+\s^2 v^*\big)\frac{d^2\theta^S}{d(v^S)^2}\bigg|_{v^S=v^*}= -R \bigg(2l^*-\frac{p^*-\s^2v^*}{R}-\frac{\mu\s^2 v^*}{\kappa(\gamma-1)}\bigg)\frac{d^2\theta^S}{d(v^S)^2}\bigg|_{v^S=v^*}\\[5mm]
\di \qquad \qquad\qquad\quad  +\bigg(\frac{Rl^*-p^*}{v^*}-\s^2 + \frac{R \mu \s^2}{\kappa(\gamma-1)}\bigg)l^* + \frac{\mu\s^2\big(p^*-\s^2 v^*\big)}{\kappa} ,
\end{array}
$$
equivalently,
$$
\begin{array}{ll}
\di 
\bigg(3Rl^*-2(p^*-\s^2v^*)-\frac{R\mu\s^2 v^*}{\kappa(\gamma-1)}\bigg)  \frac{d^2\theta^S}{d(v^S)^2}\bigg|_{v^S=v^*}\\[5mm]
\di \qquad \qquad\qquad\quad  =\bigg(\frac{Rl^*-p^*}{v^*}-\s^2 + \frac{R \mu \s^2}{\kappa(\gamma-1)}\bigg)l^* + \frac{\mu\s^2\big(p^*-\s^2 v^*\big)}{\kappa} ,
\end{array}
$$
Using \eqref{elstar} and $\sigma^2 = \frac{\gamma p^*}{v^*} + O(\deltas)$ by \eqref{sm1}, we have
\begin{align*}
3Rl^*-2(p^*-\s^2v^*)-\frac{R\mu\s^2 v^*}{\kappa(\gamma-1)} &= -3(\gamma-1)p^* -2(1-\gamma) p^* - \frac{R\mu\gamma p^*}{\kappa(\gamma-1)} + O(\deltas)\\
&= -\frac{\kappa(\gamma-1)^2 + R\mu\gamma}{\kappa(\gamma-1)}p^* + O(\deltas),
\end{align*}
and
\begin{align*}
\bigg(\frac{Rl^*-p^*}{v^*}-\s^2 + \frac{R \mu \s^2}{\kappa(\gamma-1)}\bigg)l^* + \frac{\mu\s^2\big(p^*-\s^2 v^*\big)}{\kappa}  = \frac{2\gamma(\gamma-1)(p^*)^2}{Rv^*} -\frac{\mu\gamma^2 (p^*)^2}{\kappa v^*}+ O(\deltas),
\end{align*}
which yields
\[
 \frac{d^2\theta^S}{d(v^S)^2}\bigg|_{v^S=v^*} = \frac{(\gamma-1)[R\mu\gamma^2 - 2\kappa\gamma(\gamma-1)]} {R[\kappa(\gamma-1)^2 + R\mu\gamma]} \frac{p^*}{v^*} + O(\deltas).
\]
This together with \eqref{p-estimate1} and \eqref{theta-v-2} yields
\begin{align*}
\frac{p^S-p_+}{v^S-v_+}-\frac{p^S-p^*}{v^S-v^*} &=\frac{1}{2} \frac{d^2\theta^S}{d(v^S)^2}\bigg|_{v^S=v^*} (v_+-v^*) \frac{R}{v^*}+ \frac{\gamma p^*}{(v^*)^2}(v_+-v^*)+O((\delta_S)^2)\\
&=  \frac{\gamma^2(\gamma+1)\mu R p^*}{2[\kappa(\gamma-1)^2 + R\mu\gamma] (v^*)^2} (v_+-v^*)+O((\delta_S)^2),
\end{align*}
which completes the proof.
\end{proof}

\section{Proof of Lemma \ref{cw-lemma} }  \label{app-cont}
For simplicity, let us introduce the following notations:
$$
\phi(t,\xi):=v(t,\xi)-\bar v(t,\xi),\quad  \psi (t,\xi):=u(t,\xi)-\bar u(t,\xi), \quad \vartheta(t,\xi) :=\theta(t,\x)-\bar \theta(t,\xi),
$$
and
\beq\label{W}
W(t,\x):=(1+t)^{-\frac12} e^{-\frac{\beta|\xi+\s t|^2}{1+t}}, \quad \beta :=\frac{C_1}{2},
\eeq
and
\beq\label{hH}
h(t,\x):=\int_{-\infty}^\x W(t,\zeta)d\zeta,\qquad H(t,\xi):=\int_{-\infty}^\x W^2(t,\zeta)d\zeta.
\eeq
Notice that $W(t,\xi)= \Phi (t, \xi+\s t)$ for the fundamental solution $\Phi$ of the heat equation $\Phi_t = \frac{1}{4\beta}\Phi_{xx}$, and so
it holds that
\beq\label{hW-r}
h_t-\s h_\x=\frac{1}{4\beta}h_{\x\x}=\frac{1}{4\beta} W_\x, \, \mbox{\rm and note that } h_\xi=W,\quad H_\xi=W^2 .
\eeq
We will show the two estimates:
\beq\label{b5e1}
\begin{array}{ll}
\di 
\int_0^t\int_\bbr\big(\bar p \phi+\frac{R}{\gamma-1} \vartheta\big)^2W^2d\xi d\tau\leq  C\sup_{t\in[0,T]}\|(\phi,\vartheta)(t,\cdot)\|^2_{L^2(\bbr)} \\[3mm]
\di  
\quad +C\deltas\int_0^t|\dot{\mb X}(\tau)|^2 d\tau+(\nu +C\delta_0)\int_0^t \int_\bbr W^2|(\phi,\vartheta)|^2d\x d\tau\\[3mm]
\di \quad  +C_\nu\int_0^t\|(\phi_\xi, \psi_\x, \vartheta_\xi)\|^2_{L^2(\bbr)} d\tau +C\int_0^t\big({\mb G}^S +{\mb G}^R\big) d\tau+C\delta_0^{\frac13},
\end{array}
\eeq
where $\nu$ is a small positive constant to be determined below, and $C_\nu$ depends on  $\nu$, and
\beq\label{b5e2}
\begin{array}{ll}
\di 
\int_0^t\int_\bbr\Big[\frac{(R\vartheta-\bar p \phi)^2}{2v}+\frac{(\gamma+1)\bar p\psi^2}{2}\Big] W^2d\xi d\tau \leq  C\sup_{t\in[0,T]}\|(\phi,\psi,\vartheta)(t,\cdot)\|^2_{L^2(\bbr)}\\[3mm]
\di\quad  +C\deltas\int_0^t|\dot{\mb X}(\tau)|^2 d\tau+C\delta_0\int_0^t\int_\bbr W^2|(\phi,\vartheta)|^2d\x d\tau\\[3mm]
\di \quad  +C\int_0^t\|(\phi_\xi, \psi_\x, \vartheta_\xi)\|^2_{L^2(\bbr)} d\tau +C(\delta_0+\eps_1)\int_0^t\big({\mb G}^S +{\mb G}^R\big) d\tau+C\deltar^{\frac13}.
\end{array}
\eeq
%In what follows, we will use the following notation for simplicity:
%\[
%F:= \bar p \phi+\frac{R}{\gamma-1} \vartheta.
%\]
Then, the above estimates \eqref{b5e1} and \eqref{b5e2} imply the desired result \eqref{cwe}. \\
Indeed, since $F:= \bar p \phi+\frac{R}{\gamma-1} \vartheta$ satisfies
\[
R\vartheta -\bar p \phi = (\gamma-1) F -\gamma \bar p \phi,
\]
it follows from \eqref{b5e1} and \eqref{b5e2} that
\begin{align*}
\begin{aligned}
&\int_0^t\int_\bbr \Big[ F^2 + \frac{\big((\gamma-1) F -\gamma \bar p \phi\big)^2}{2v} \Big] W^2d\xi d\tau + \int_0^t\int_\bbr  \frac{(\gamma+1)\bar p\psi^2}{2} W^2d\xi d\tau \\
&\quad \le \mbox{[r.h.s. of \eqref{b5e1}]} + \mbox{[r.h.s. of \eqref{b5e2}]}.
\end{aligned} 
\end{align*}
Here, using 
\[
F^2 + \frac{\big((\gamma-1) F -\gamma \bar p \phi\big)^2}{2v} \ge C^{-1} (F^2 +\phi^2) \quad \mbox{ for some $C>0$ by Young's inequality},
\]
we have
\[
\int_0^t\int_\bbr (F^2 +\phi^2 +\psi^2) W^2 d\xi d\tau \le C(  \mbox{[r.h.s. of \eqref{b5e1}]} + \mbox{[r.h.s. of \eqref{b5e2}]} ),
\]
which together with $F^2 \ge C(\vartheta^2-\phi^2)$ implies
\[
\int_0^t\int_\bbr (\vartheta^2 +\phi^2 +\psi^2) W^2 d\xi d\tau  \le C(  \mbox{[r.h.s. of \eqref{b5e1}]} + \mbox{[r.h.s. of \eqref{b5e2}]} ).
\]
Now, use the smallness of $\delta_0, \eps_1$ and choose $\nu$ small enough such that the third terms on the r.h.s. of \eqref{b5e1} and \eqref{b5e2} can be absorbed into the above left-hand side, which completes the proof of \eqref{cwe}.
Therefore, it remains to prove \eqref{b5e1} and \eqref{b5e2}.

\noindent$\bullet$ {\bf Proof of  \eqref{b5e1}:} 
By the energy equations $\eqref{NS-1}_3$ and $\eqref{bar-system}_3$, we have
\beq\label{v-c-1}
\begin{array}{ll}
\di \frac{R}{\gamma-1} \vartheta_t-\frac{R\s}{\gamma-1} \vartheta_\x-\frac{R}{\gamma-1} \dot{\mb X}(t)(\theta^S)^{-\mb X}_\xi+(pu_\xi-\bar p\bar u_\x)\\[3mm]
\di \qquad\qquad  =\kappa \big( \frac{\theta _{ \x}
}{v}-\frac{\bar \theta_{ \x}}{\bar v} \big)_\x
 +  \mu \big( \frac{(u_{\x})^2}{v} -\frac{ (\bar u_\x)^2}{\bar v}
   \big)-Q_2.
\end{array}
\eeq
Observe that since the mass equations $\eqref{NS-1}_1$ and $\eqref{bar-system}_1$ yield
\beq\label{psiphire}
\psi_\xi = \phi_t -\s \phi_\xi - \dot{\mb X}(t) (v^S)^{-\mb X}_\xi,
\eeq
we have
\beq\label{v-c-2}
\begin{array}{ll}
\di pu_\xi-\bar p\bar u_\x=\bar p \psi_\x+(p-\bar p)\psi_\x+\bar u_\xi (p-\bar p)\\[3mm]
\di \qquad =\bar p (\phi_t-\s\phi_\x)-\dot{\mb X}(t)(v^S)^{-\mb X}_\xi\bar p+(p-\bar p)\psi_\x+\bar u_\xi (p-\bar p)\\[3mm]
\di \qquad =(\bar p \phi)_t-\s(\bar p\phi)_\x-\dot{\mb X}(t)(v^S)^{-\mb X}_\xi\bar p-(\bar p_t-\s\bar p_\x)\phi+(p-\bar p)\psi_\x+\bar u_\xi (p-\bar p).
\end{array}
\eeq
Substituting \eqref{v-c-2} into \eqref{v-c-1} yields that
\beq\label{v-c-3}
\begin{array}{ll}
\di \big(\bar p \phi+\frac{R}{\gamma-1} \vartheta\big)_t-\s\big(\bar p \phi+\frac{R}{\gamma-1} \vartheta\big)_\x-\dot{\mb X}(t)\big((v^S)^{-\mb X}_\xi\bar p+\frac{R}{\gamma-1} (\theta^S)^{-\mb X}_\xi\big)\\[3mm]
\di +(p-\bar p)\psi_\x~ =(\bar p_t-\s\bar p_\x)\phi-\bar u_\xi (p-\bar p)+\kappa \big( \frac{\theta _{ \x}
}{v}-\frac{\bar \theta_{ \x}}{\bar v} \big)_\x
 +  \mu \big( \frac{(u_{\x})^2}{v} -\frac{ (\bar u_\x)^2}{\bar v}
   \big)-Q_2.
\end{array}
\eeq
Use the notation $F:= \bar p \phi+\frac{R}{\gamma-1} \vartheta$ as above for simplicity.
Then, multiplying the equation \eqref{v-c-3} by $Fh^2$, and using \eqref{hW-r}, we have
\beq\label{v-c-31}
\begin{array}{ll}
\di \big[F^2\frac{h^2}{2}\big]_t-\big[F^2\frac{\s h^2}{2}\big]_\x-\frac{1}{4\beta}F^2hW_{\x}\\
\di\quad -\dot{\mb X}(t)\big((v^S)^{-\mb X}_\xi\bar p+\frac{R}{\gamma-1} (\theta^S)^{-\mb X}_\xi\big)Fh^2+(p-\bar p)\psi_\x F h^2\\[3mm]
\di =\kappa \big( \frac{\theta _{\x}
}{v}-\frac{\bar \theta_{ \x}}{\bar v} \big)_\x F h^2+\Big[(\bar p_t-\s\bar p_\x)\phi-\bar u_\xi (p-\bar p)+ \mu \big( \frac{(u_{\x})^2}{v} -\frac{ (\bar u_\x)^2}{\bar v}
   \big)-Q_2\Big] F h^2.
\end{array}
\eeq
Observe that
$$
\begin{array}{ll}
\di -\frac{1}{4\beta} F^2hW_{\x}=\big[-\frac{1}{4\beta}F^2hW\big]_\x  +\frac{1}{4\beta}F^2W^2+\frac{1}{4\beta}\big( F^2\big)_\x hW,
\end{array}
$$
where notice that the second term of the right-hand side is a good term as desired.\\
Substituting the above relation into \eqref{v-c-31} and then integrating the result over $\mathbb{R}\times [0,t]$, we have
\beq\label{v-c-4}
\begin{array}{ll}
\di  \frac{1}{4\beta}\int_0^t\int_\bbr F^2W^2d\xi d\tau=\int_\bbr F(0,\xi)^2\frac{h_0^2}{2} d\xi\\[4mm]
\di  ~  -\int_\bbr F^2\frac{h^2}{2} d\xi+\int_0^t\dot{\mb X}(\tau)\int_\bbr \big((v^S)^{-\mb X}_\xi\bar p+\frac{R}{\gamma-1} (\theta^S)^{-\mb X}_\xi\big) F h^2d\xi d\tau\\[4mm]
\di  ~ -\int_0^t\int_\bbr\frac{1}{4\beta}\big[ F^2\big]_\x hW d\xi d\tau
-\int_0^t\int_\bbr (p-\bar p)\psi_\x Fh^2d\xi d\tau\\[3mm]
\di ~ +\int_0^t\int_\bbr\Big[(\bar p_t-\s\bar p_\x)\phi-\bar u_\xi (p-\bar p)+ \mu \big( \frac{(u_{\x})^2}{v} -\frac{ (\bar u_\x)^2}{\bar v}
   \big)-Q_2\Big] F h^2d\xi d\tau\\[3mm]
\di ~
-\int_0^t\int_\bbr \kappa \big( \frac{\theta _{\x}
}{v}-\frac{\bar \theta_{ \x}}{\bar v} \big)\big[ F h^2\big]_\x d\xi d\tau:=\sum_{i=1}^7 J_i.
\end{array}
\eeq
Using the fact that $h$ is bounded as $|h(t,\xi)|\leq \int_\bbr W d\xi = \sqrt{\frac{\pi}{\beta}}$ for all $\xi, t$, we estimate the right hand sides of \eqref{v-c-4} one by one.
First we have
$$
|J_1+J_2|\leq C\big[\|(\phi_0, \vartheta_0)\|^2_{L^2(\bbr)}+\|(\phi, \vartheta)(t,\cdot)\|^2_{L^2(\bbr)}\big],
$$
and 
$$
\begin{array}{ll}
\di |J_3|\leq \deltas \int_0^t|\dot{\mb X}(\tau)|^2 d\tau +\frac{C}{\deltas}\int_0^t\int_\bbr |(v^S)^{-\mb X}_\xi|^2 |(\phi,\vartheta)|^2d\xi d\tau\\[3mm]
\di \qquad \leq \deltas\int_0^t|\dot{\mb X}(\tau)|^2 d\tau +C\deltas\int_0^t\int_\bbr |(v^S)^{-\mb X}_\xi| |(\phi,\vartheta)|^2d\xi d\tau\\[3mm]
\di \qquad =\deltas\int_0^t|\dot{\mb X}(\tau)|^2 d\tau +C\deltas\int_0^t \mathcal{G}^S d\tau.
\end{array}
$$
Since
$$
\begin{array}{ll}
\di 
J_4 \di = -\int_0^t\int_\bbr\frac{1}{2\beta}  FF_\x hW d\xi d\tau \\
\di \qquad \leq \frac{1}{16\beta} \int_0^t\int_\bbr F^2W^2d\xi d\tau+C\int_0^t\int_\bbr\big[|(\phi_\xi,  \vartheta_\xi)|^2+|\bar p_\x\phi|^2\big]d\xi d\tau ,
\end{array}
$$
and $|\bar p_\xi| \le C( |\bar\theta_\xi| +|\bar v_\xi| )$, we use the same estimates as in \eqref{b3model}, and
\[
(1+t)^{-1}  e^{-\frac{2C_1|\xi+\sigma t|^2}{1+t}} \le W^2,
\]
to have
$$
\begin{array}{ll}
\di 
J_4 \di  \leq \frac{1}{16\beta} \int_0^t\int_\bbr F^2W^2d\xi d\tau+C\int_0^t\int_\bbr|(\phi_\xi,  \vartheta_\xi)|^2 d\xi d\tau\\[3mm]
\di\qquad \quad +C\delta_0\int_0^t\int_\bbr \Big[ W^2|\phi|^2+\mathcal{G}^R+\mathcal{G}^S\Big] d\xi d\tau.
\end{array}
$$
For $J_5$, we first use \eqref{psiphire} to have
$$
\begin{array}{ll}
\di -(p-\bar p)\psi_\x F h^2=\frac{\bar p \phi-R\vartheta}{v}\big(\phi_t-\s\phi_\x-\dot{\mb X}(t)(v^S)^{-\mb X}_\x\big) F h^2\\
\di \qquad\qquad  =\frac{1}{v}\big[\gamma\bar p \phi-(\gamma-1)F \big]\big(\phi_t-\s\phi_\x\big) Fh^2  -\dot{\mb X}(t)(v^S)^{-\mb X}_\x\frac{\bar p \phi-R\vartheta}{v}Fh^2\\[3mm]
\di \qquad\qquad  =\frac{\gamma\bar p}{v}\big[(\frac{\phi^2}2)_t-\s (\frac{\phi^2}2)_\x\big]Fh^2-\frac{\gamma-1}{v} \big(\phi_t-\s\phi_\x\big)F^2h^2 -\dot{\mb X}(t)(v^S)^{-\mb X}_\x\frac{\bar p \phi-R\vartheta}{v}Fh^2\\[3mm]
\di \qquad\qquad  =: -\s \Big[ \frac{\gamma\bar p}{2v}\phi^2Fh^2-\frac{\gamma-1}{v}\phi F^2h^2  \Big]_\xi + \sum_{i=1}^5J_{5i},
\end{array}
$$
where
\begin{align*}
\begin{aligned}
& J_{51} : = \Big[\frac{\gamma\bar p}{2v}\phi^2Fh^2-\frac{\gamma-1}{v}\phi F^2h^2 \Big]_t,\\
& J_{52} : = -\Big[\frac{\gamma\bar p}{v}\phi^2F-\frac{\gamma-1}{v}\phi F^2\Big] h(h_t-\s h_\xi) ,\\
& J_{53} : =- \Big[\Big(\frac{\gamma\bar p}{v}F\Big)_t-\sigma \Big(\frac{\gamma\bar p}{v}F\Big)_\x\Big] \frac{\phi^2}{2}h^2,\\
& J_{54} : = \Big[\Big(\frac{\gamma-1}{v}F^2\Big)_t-\sigma \Big(\frac{\gamma-1}{v} F^2\Big)_\x\Big] \phi h^2,\\
& J_{55} : =-\dot{\mb X}(t)(v^S)^{-\mb X}_\x\frac{\bar p \phi-R\vartheta}{v}Fh^2 .
\end{aligned} 
\end{align*}
So, $J_5$ can be written as
$$
J_5=-\int_0^t\int_\bbr (p-\bar p)\psi_\x Fh^2 d\tau:=\sum_{i=1}^5 \int_0^t\int_\bbr J_{5i},
$$
where the integration of the first term of the right-hand side is zero.\\
We use \eqref{smp1} to have
$$
 \int_0^t\int_\bbr |J_{51}|\leq C\big[\|(\phi,\vartheta)(t,\cdot)\|^2_{L^2(\bbr)}+\|(\phi_0,\vartheta_0)\|^2_{L^2(\bbr)}\big]\leq C\sup_{t\in[0,T]}\|(\phi,\vartheta)\|^2_{L^2(\bbr)},
$$
Using \eqref{hW-r} with $|W_\xi| \le \frac{C}{1+t}$ for all $\xi, t$, we have
$$
\begin{array}{ll}
\di 
 \int_0^t\int_\bbr |J_{52}|\leq C\int_0^t (1+\tau)^{-1}\|\phi\|_{L^\infty}\|(\phi,\vartheta)\|^2_{L^2(\bbr)} d\tau\\
\di \qquad\  \leq C\int_0^t (1+\tau)^{-1}\|\phi\|_{L^2(\bbr)}^{\frac 12}\|\phi_\x\|_{L^2(\bbr)}^{\frac 12}\|(\phi,\vartheta)\|^2_{L^2(\bbr)} d\tau\\
\di \qquad\  \leq C\int_0^t\|\phi_\x\|^2_{L^2(\bbr)} d\tau +C\eps_1^{\frac43}\int_0^t(1+\tau)^{-\frac43}\|(\phi,\vartheta)\|^2_{L^2(\bbr)} d\tau\\
\di \qquad\ \leq C\int_0^t\|\phi_\x\|^2_{L^2(\bbr)} d\tau +C\eps_1^{\frac43}\sup_{t\in[0,T]}\|(\phi,\vartheta)\|^2_{L^2(\bbr)}.
\end{array}
$$
Using the equation \eqref{v-c-3} for $F$, we first have
$$
\begin{array}{ll}
\di 
J_{53}=-\int_0^t \int_\bbr\Big[F_t-\s F_\x\Big]\frac{\gamma\bar p\phi^2h^2}{2v} d\x d\tau\\[3mm]
\di \qquad -\int_0^t \int_\bbr\Big[(\frac{\bar p}{v})_t-\s(\frac{\bar p}{v})_\x)\Big]F\frac{\gamma\phi^2h^2}{2} d\x d\tau\\[3mm] 
\di \quad =-\int_0^t \int_\bbr\dot{\mb X}(\tau)\big[(v^S)^{-\mb X}_\xi\bar p+\frac{R}{\gamma-1} (\theta^S)^{-\mb X}_\xi\big]\frac{\gamma\bar p^2\phi^2 h^2}{2v} d\x d\tau\\[3mm]
\di \qquad +\int_0^t \int_\bbr\kappa \big( \frac{\theta _{ \x}
}{v}-\frac{\bar \theta_{ \x}}{\bar v} \big)\big( \frac{\gamma \bar p\phi^2h^2}{2v}\big)_\x d\x d\tau\\[3mm]
\di  \qquad  -\int_0^t \int_\bbr\Big[-(p-\bar p)\psi_\x -\bar u_\x(p-\bar p)
 + \mu \big( \frac{(u_{\x})^2}{v} -\frac{ (\bar u_\x)^2}{\bar v}
   \big)-Q_2\Big]\frac{\gamma \bar p\phi^2h^2}{2v} d\x d\tau\\[3mm]
\di \qquad +O(1)\int_0^t \int_\bbr\big[|\bar p_t-\s\bar p_\x|+|v_t-\s v_\x|\big] |(\phi, \vartheta)||\phi|^2 d\x d\tau.
\end{array}
$$
Using
$$
\begin{array}{ll}
\di \left|\int_0^t \int_\bbr(p-\bar p)\psi_\x\frac{\gamma\bar p^2\phi^2 h^2}{2v} d\x d\tau\right|\di \leq C\int_0^t \int_\bbr|\psi_\x|(|\phi|^3+|\vartheta|^3) d\x d\tau\\
\di\qquad\leq C \int_0^t \|\psi_\x\|_{L^2(\bbr)} \|(\phi,\vartheta)\|_{L^2(\bbr)} \|(\phi,\vartheta)\|_{L_\infty}^2 d\tau\\
\di \qquad \leq C\int_0^t \|\psi_\x\|_{L^2(\bbr)}  \|(\phi_\x,\vartheta_\x)\|_{L^2(\bbr)} \|(\phi,\vartheta)\|^2_{L^2(\bbr)} d \tau\\
\di\qquad\leq C\varepsilon_1^2\int_0^t (\mb D+\|\phi_\x\|^2_{L^2(\bbr)})d\tau,
\end{array}
$$
and
\beq\label{barp-d}
\begin{array}{ll}
\di \bar p_t-\s\bar p_\x &\di =\frac{R(\bar \theta_t-\s\bar \theta_\x)}{\bar v}-\frac{R\bar \theta(\bar v_t-\s\bar v_\x)}{\bar v^2}\\[3mm]
&\di =\frac{R\big[(\theta^R_t-\s\theta^R_\x)+(\theta^C_t-\s\theta^C_\x)-(\dot{\mb X}(t)+\s)(\theta^S)^{-\mb X}_\x\big]}{\bar v}\\[3mm]
&\di \quad-\frac{R\bar\theta\big[(v^R_t-\s v^R_\x)+(v^C_t-\s v^C_\x)-(\dot{\mb X}(t)+\s)(v^S)^{-\mb X}_\x\big]}{\bar v^2},
\end{array}
\eeq
and $v_t+\s v_\x = u_\x =\psi_\x +\bar u_\x$, we have
$$
\begin{array}{ll}
\di 
|J_{53}| \leq C\deltas \int_0^t|\dot{\mb X}(\tau)|^2d\tau+C(\delta_0+\eps_1)\int_0^t \big(\mathcal{G}^R+\mathcal{G}^S+\mb D+\|\phi_\x\|^2_{L^2(\bbr)} \big)d\tau\\[3mm]
 \di \qquad\quad \  +C\dc\int_0^t \int_\bbr W^2|(\phi, \vartheta)|^2 d\xi d\tau+C(\deltar^{\frac13}+\delta_C)
\end{array}
$$
Similar estimates hold for $J_{54}$ and $J_6$.\\
We easily have
$$
\begin{array}{ll}
\di 
|J_{55}|\leq \deltas \int_0^t|\dot{\mb X}(\tau)|^2d\tau+C(\delta_0+\eps_1)\int_0^t \mathcal{G}^Sd\tau.
\end{array}
$$
Using Young's inequality with any small $\nu>0$,
$$
\begin{array}{ll}
\di 
|J_{7}|\leq C\int_0^t \int_\bbr \big(|\vartheta_\x|+|\bar\theta_\x||\phi|\big)\big(|\bar p_\x||\phi|+|(\phi_\x, \vartheta_\x)|+W|(\phi,\vartheta)|\big)d\x d\tau\\[3mm]
\di\qquad \leq (\nu +C\delta_0)\int_0^t \int_\bbr W^2|(\phi,\vartheta)|^2d\x d\tau\\[3mm]
\di\qquad \quad + C_\nu\int_0^t \|(\phi_\x,\vartheta_\x)\|^2_{L^2(\bbr)} d\tau+C\delta_0\int_0^t \big(\mathcal{G}^S+\mathcal{G}^R\big) d\tau.
\end{array}
$$

\noindent$\bullet$ {\bf Proof of  \eqref{b5e2}:} 
From the momentum equations $\eqref{NS-1}_2$ and $\eqref{bar-system}_2$ and the fact $p-\bar p=\frac{R\vartheta-\bar p \phi}{v}$, we have
$$
\frac{(R\vartheta-\bar p \phi)_\x}{v}-v_\x\frac{R\vartheta-\bar p \phi}{v^2}=-(\psi_t-\s \psi_\xi)+\dot{\mb X}(t)(u^S)^{\mb X}_\xi+\mu\left(\frac{u_\x}{v}-\frac{\bar u_\x}{\bar v}\right)_\xi-Q_1.
$$
Multiplying the above equation by $H(R\vartheta-\bar p \phi)$ with $H$ defined in \eqref{hH} implies that
\beq\label{vce4}
\begin{array}{ll}
\di 
\Big[\frac{H(R\vartheta-\bar p \phi)^2}{2v}\Big]_\x-H_\x\frac{(R\vartheta-\bar p \phi)^2}{2v}+v_\x\frac{H(R\vartheta-\bar p \phi)^2}{2v^2}\\
\di =\Big[-(\psi_t-\s \psi_\xi)+\dot{\mb X}(t)(u^S)^{\mb X}_\xi+\mu\left(\frac{u_\x}{v}-\frac{\bar u_\x}{\bar v}\right)_\xi-Q_1\Big]H(R\vartheta-\bar p \phi).
\end{array}
\eeq
By $H_\x=W^2$ and the facts that
$$
\begin{array}{ll}
\di 
(\psi_t-\s \psi_\xi)H(R\vartheta-\bar p \phi)=\big[H\psi(R\vartheta-\bar p \phi)\big]_t-\sigma \big[H\psi(R\vartheta-\bar p \phi)\big]_\x\\[3mm]
\qquad \qquad \qquad \di -\psi(R\vartheta-\bar p \phi)(H_t-\s H_\x)-H\psi \big[(R\vartheta-\bar p \phi)_t-\s (R\vartheta-\bar p \phi)_\x\big],
\end{array}
$$
and 
$$
\begin{array}{ll}
\di 
(R\vartheta-\bar p \phi)_t-\s (R\vartheta-\bar p \phi)_\x\\[2mm]
\di =(\gamma-1)\Big[\dot{\mb X}(t)R(\theta^S)^{\mb X}_\xi-(pu_\xi-\bar p\bar u_\x)+\kappa \big( \frac{\theta _{ \x}
}{v}-\frac{\bar \theta_{ \x}}{\bar v} \big)_\x
 +  \mu \big( \frac{(u_{\x})^2}{v} -\frac{ (\bar u_\x)^2}{\bar v}
\big)-Q_2\Big]\\[3mm]
\di \quad -\bar p\big(\psi_\x+\dot{\mb X}(t)(v^S)^{\mb X}_\xi\big)-(\bar p_t-\s \bar p_\x)\phi,
\end{array}
$$
it follows from \eqref{vce4} that
\beq\label{vce5}
\begin{array}{ll}
\di 
W^2\frac{(R\vartheta-\bar p \phi)^2}{2v}=(\cdots)_\x+ v_\x\frac{H(R\vartheta-\bar p \phi)^2}{2v^2}+\big[H\psi(R\vartheta-\bar p \phi)\big]_t\\[3mm]
\di\quad  -\psi(R\vartheta-\bar p \phi)(H_t-\s H_\x)+\bar p H\psi\big(\psi_\x+\dot{\mb X}(t)(v^S)^{\mb X}_\xi\big)+H\psi\phi(\bar p_t-\s \bar p_\x)\\[3mm]
\di\quad  -(\gamma-1)H\psi\Big[\dot{\mb X}(t)R(\theta^S)^{\mb X}_\xi-(pu_\xi-\bar p\bar u_\x)+\kappa \big( \frac{\theta _{ \x}
}{v}-\frac{\bar \theta_{ \x}}{\bar v} \big)_\x
 +  \mu \big( \frac{(u_{\x})^2}{v} -\frac{ (\bar u_\x)^2}{\bar v}
\big)\\
\di\quad -Q_2\Big]+H(R\vartheta-\bar p \phi)\Big[-\dot{\mb X}(t)(u^S)^{\mb X}_\xi +\mu\left(\frac{u_\x}{v}-\frac{\bar u_\x}{\bar v}\right)_\x+Q_1\Big].
\end{array}
\eeq
Noting that
$$
\bar p H\psi\psi_\x=(\cdots)_\x -(\bar p H)_\x\frac{\psi^2}{2}=(\cdots)_\x -W^2\frac{\bar p\psi^2}{2} -\bar p _\x H\frac{\psi^2}{2},
$$
and 
$$
\begin{array}{ll}
\di (\gamma-1)H\psi (pu_\xi-\bar p\bar u_\x) \di =(\gamma-1)H\psi\big[\bar p\psi_\x+(p-\bar p)\psi_\x+\bar u_\x(p-\bar p)\big]\\[3mm]
\di\qquad =(\cdots)_\x-(\bar p H)_\xi
\frac{(\gamma-1)\psi^2}{2}+(\gamma-1)H\psi\big[(p-\bar p)\psi_\x+\bar u_\x(p-\bar p)\big]\\[3mm]
\di\qquad =(\cdots)_\x-W^2\frac{(\gamma-1)\bar p\psi^2}{2}-\bar p_\x H
\frac{(\gamma-1)\psi^2}{2}+(\gamma-1)H\psi\big[(p-\bar p)\psi_\x+\bar u_\x(p-\bar p)\big],
\end{array}
$$
and then integrating the resulting equation over $\bbr\times [0,t]$ give that
\beq\label{vce6}
\begin{array}{ll}
\di 
\int_0^t\int_\bbr W^2\Big[\frac{(R\vartheta-\bar p \phi)^2}{2v}+\frac{(\gamma+1)\bar p\psi^2}{2}\Big]  d\xi d\tau=\int_\bbr H\psi(R\vartheta-\bar p \phi)|^{\tau=t}_{\tau=0} d\xi\\[3mm]
\di ~ +\int_0^t\int_\bbr  v_\x\frac{H(R\vartheta-\bar p \phi)^2}{2v^2}d\xi d\tau  -\int_0^t\int_\bbr \psi(R\vartheta-\bar p \phi)(H_t-\s H_\x) d\xi d\tau\\[4mm] 
\di ~ + \int_0^t\dot{\mb X}(\tau)\int_\bbr H\big[\bar p \psi(v^S)^{\mb X}_\xi-R(\gamma-1)\psi (\theta^S)^{\mb X}_\xi-(R\vartheta-\bar p \phi)(u^S)^{\mb X}_\xi\big] d\xi d\tau\\[3mm]
\di ~
-\int_0^t\int_\bbr \bar p_\x H\frac{(\gamma+1)\psi^2}{2} d\xi d\tau+ \int_0^t\int_\bbr (\gamma-1)H\psi\big[(p-\bar p)\psi_\x+\bar u_\x(p-\bar p)\big] d\xi d\tau\\[3mm]
\di~
+ \int_0^t\int_\bbr H\psi\phi(\bar p_t-\s \bar p_\x) d\xi d\tau  + \int_0^t\int_\bbr \mu (\gamma-1)H\psi \big( \frac{(u_{\x})^2}{v} -\frac{ (\bar u_\x)^2}{\bar v}
\big)   d\xi d\tau \\[3mm] 
\di ~ + \int_0^t\int_\bbr \big(H(R\vartheta-\bar p \phi)\big)_\xi \mu\big(\frac{u_\x}{v}-\frac{\bar u_\x}{\bar v}\big) d\xi d\tau + \int_0^t\int_\bbr    \kappa(\gamma-1)(H\psi)_\xi \big( \frac{\theta _{ \x}
}{v}-\frac{\bar \theta_{ \x}}{\bar v} \big) d\xi d\tau  \\[3mm]
\di ~+ \int_0^t\int_\bbr \big[(\gamma-1)H Q_2 +H(R\vartheta-\bar p \phi) Q_1\big] d\xi d\tau.
\end{array}
\eeq
The estimations for the right hand side of \eqref{vce6} is almost same as \eqref{v-c-4} and even we have some better estimates since $|H(t,\x)|\leq C (1+t)^{-\frac 12}$ and $
|H_t-\s H_\x|\leq C(1+t)^{-\frac 32}$, and we omit the details.

\end{appendix}

\noindent{\bf Conflict of Interest:} The authors declared that they have no conflicts of interest to this work.

\bibliography{KVW}

\end{document}